\newtheorem{thm}{Theorem}[section]
\newtheorem{lem}[thm]{Lemma}
\newtheorem{prop}[thm]{Proposition}
\newtheorem{cor}[thm]{Corollary}
\theoremstyle{definition}
\newtheorem{defn}{Definition}[section]
\newtheorem{conv}[defn]{Convention}
\theoremstyle{remark}
\newtheorem{rem}{Remark}
\newcommand\supp{\operatorname{supp}}
\newcommand\Per{\operatorname{Per}}
\newcommand\Tr{\operatorname{Tr}}
\newcommand\Coeff{\operatorname{Coeff}}
\begin{document}

\title{Markov loops in discrete spaces}
\author{Yinshan Chang \ Yves Le Jan\\
Equipe Probabilit\'{e}s et Statistiques\\
Universit\'{e} Paris-Sud\\
B\^{a}timent 425\\
91405 Orsay Cedex, France}
\date{}
\maketitle

\tableofcontents

\section{Introduction}
The main topic of  these notes are Markov loops, studied in the context of continuous time  Markov chains on discrete state spaces. We refer to \cite{loop} and \cite{Sznitman} for the  short "history" of the subject.
In contrast with these references, symmetry is not assumed, and more attention is given to the infinite case. All results are presented in terms of the semigroup generator. In comparison with \cite{loop}, some delicate proofs are given in more details or with a better method. We focus mostly on properties of the (multi)occupation field but also included some results about loop clusters (see \cite{sophie} in the symmetric context) and spanning trees.

\section{Preliminaries}
In this section, we present some basic results about continuous time  Markov chains, including a discrete version of Feynman-Kac formula and the transformation by time change.
\subsection{Notations}
\begin{enumerate}
\item Suppose $E_1,E_2$ are two countable sets, $(A_{j}^{i},i\in E_1, j\in E_2)$ is a matrix. For $F_1\subset E_1$ and $F_2\subset E_2$, let $(A|_{F_1\times F_2}, i\in F_1, j\in F_2)$ be the sub-matrix defined by $(A|_{F_1\times F_2})_{j}^{i}=A_{j}^{i}$. By convention, the absolute value $|A|$ will denote the matrix: $(|A|)^i_j=|A^i_j|$.
\item $\mathcal{E}(\lambda),\lambda\in[0,\infty]$ denotes a random variable, exponentially distributed with parameter $\lambda$ with the convention that $\mathcal{E}(0)=\infty$ and $\mathcal{E}(\infty)=0$.
\item If $k$ is a non-negative finite function on the state space $S$, $M_k$ will denote the matrix, $(M_k)^x_y=k(x)\delta^x_y$ where $\delta^x_y=1$ iff $x=y$.
\item $x\in\mathbb{R}^n$ can be extended to an periodic series, $x^{nm+k}=x^k, m\in\mathbb{Z}, k=0,\ldots,n-1$. Given $x\in\mathbb{R}^n$, each time we write $x_{n+j}$, we extend $x$ to the $n$-periodical series.
\item For any countable set $A$, $\#A$ and $|A|$ will denote the number of elements in $A$.
\item Let $\mathfrak{S}_k$ be the collection of permutations on $\{1,\ldots, k\}$ and $S$  some state space. For a permutation $\sigma\in\mathfrak{S}_{k}$ and $x=(x_1, \ldots, x_k)\in S^k$, define $\sigma(x)=(x_{\sigma^{-1}(1)},\ldots,x_{\sigma^{-1}(n)})$. Accordingly, a permutation $\sigma$ can be viewed as a function from $S^k$ to $S^k$. Define the circular permutation $r_j$ as follows: $r_j(1,\ldots,k)=(j+1, \ldots, k, 1,\ldots, j)$. Define $\mathfrak{R}_{k}$ to be the subset of $\mathfrak{S}_k$ consisting of circular permutations on $\{1, \ldots, k\}$. Note that $\sigma$ plays two roles, a function on $\{1,\ldots, k\}$ mapping an integer to another integer and a function on some $S^k$ mapping a $k$-uple to another $k$-uple (for example, $r_1(2,1,3,4)=(4,2,1,3)\neq (r_1(2),r_1(3),r_1(1),r_1(4))=(3,4,2,1)$). We have $\sigma_1(\sigma_2(x_1,\ldots,x_n))=(x_{(\sigma_1\circ\sigma_2)^{-1}(1)},\ldots,x_{(\sigma_1\circ\sigma_2)^{-1}(n)})$.
\end{enumerate}

\subsection{Minimal continuous-time sub-Markov chain in a countable space}
Let $S$ be a countable set equipped with the discrete topology. Add an additional cemetery point $\partial$ to $S$ and set $\overline{S}=S\bigcup\{\partial\}$  (compactification).

\begin{defn}[Generator]
A matrix $L=(L^x_y,x,y\in S)$ is called a sub-Markovian (Markovian resp.) generator iff
$$\begin{array}{llll}
0\leq-L^x_x<\infty & \quad\text{for all }x\in S,\\
L^x_y\geq 0 & \quad\text{for all }x\neq y,\\
\sum\limits_{j}L^x_y\leq 0\quad  (\sum\limits_{j}L^x_y=0\text{ resp.}) & \quad\text{for all }x\in S.
\end{array}$$
In case $L^x_x<0$, set $Q^x_y=\frac{L^x_y}{-L^x_x}\,\text{ for }\, x\neq y$ and  $Q^x_x=0$. In case $L^x_x=0$, set $ Q^x_y=\delta^x_y$.
\end{defn}

\begin{conv}
A sub-Markovian generator $L$ on $S$ can be extended to a Markovian generator $\overline{L}$ on $\overline{S}$ as follows:
$\overline{L}^x_y=L^x_y$ for $x, y\in S$,  $\overline{L}^x_\partial=-\sum\limits_{y\in S}L^x_y$ for $x\in S$, $\overline{L}^\partial_x=0$ for $x\in \overline{S}$.
\end{conv}

\subsubsection*{Construction of the probability on the space of right continuous\footnote{In a discrete space, any right-continuous Markov chain has left limit in its lifetime $[0,\zeta[$ if the path stays at the cemetery $\partial$ after there has been infinitely many jumps. Besides, on $\zeta<\infty$, the left limit at time $\zeta$ is the cemetery point for the process.} paths}
Let $\mu$, a probability measure on $S$, be the initial distribution. Let $\xi_0$ be a random variable with distribution $\mu$ and  $(\tau_{ix}, i\in\mathbb{N},x\in S)$ be independent random variables, exponentially distributed with parameter $-L^x_x$. Let $(J_{ix},i\in\mathbb{N},x\in S)$ be independent random variables such that for $y\in S$
$$\mathbb{P}(J_{ix}=y)=Q^x_y.$$
Moreover, assume that $\xi_0$, $\tau=(\tau_{ix}, i\in\mathbb{N},x\in S)$ and $J=(J_{ix},i\in\mathbb{N},x\in S)$ are independent. For any configuration of $(\mu,\tau, J)$, recursively define:
$$\begin{array}{ll}
\xi_n=J_{n\xi_{n-1}}\text{ for } n\geq 1 & \text{ (discrete Markov chain)}\\
T_0=0,T_{n+1}=T_{n}+\tau_{n\xi_{n}} & \text{ (jumping time)}\\
\zeta=\lim\limits_{n\rightarrow\infty}T_n & \text{ (explosion time)}.
\end{array}$$
Then define the path as follows:
$$\begin{array}{ll}
X_t=\xi_i & \text{ for } T_i\leq t<T_{i+1}, \\
X_t=\partial & \text{ for } t\geq \zeta.
\end{array}$$

\begin{thm}[\textbf{Markov Property}]
Set $(P_t)^x_y=\mathbb{P}[X_t=y|X_0=x]$. Use $\mathbb{P}^{\mu}$ to stand for the law of the process $(X_t, t\geq 0)$. $(X_t,t\geq 0)$ defined above is a Markov process with initial distribution $\mu$. Its semi-group will be denoted $P_t$ and $(P_t)^x_y$ is right-continuous in $t$.
\end{thm}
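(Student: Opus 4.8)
The plan is to read the Markov property straight off the construction, exploiting two independent features of the ingredients $(\xi_0,\tau,J)$: the loss-of-memory property of the exponential holding times and the fact that the jump variables $(J_{ix})$ and holding variables $(\tau_{ix})$ are independent across the level index $i$. Once the Markov property is in hand, the semigroup identity $P_{s+t}=P_sP_t$ is just Chapman--Kolmogorov, and right-continuity follows from the (manifest) right-continuity of the sample paths. To fix notation, write $\mathcal{F}_s=\sigma(X_u:u\le s)$ and let $N_s=\sup\{n\ge 0:T_n\le s\}$ be the number of jumps before time $s$, an $\mathcal{F}_s$-measurable quantity; on $\{N_s=n\}$ we have $X_s=\xi_n$ and the residual holding time is $T_{n+1}-s=\tau_{n\xi_n}-(s-T_n)$.

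To prove the Markov property it suffices, by a monotone-class argument, to show for every $t_1,\dots,t_k\ge 0$, states $y_1,\dots,y_k$, and $A\in\mathcal{F}_s$ that
$$\mathbb{P}^\mu\big[A,\,X_{s+t_1}=y_1,\dots,X_{s+t_k}=y_k\big]=\mathbb{E}^\mu\big[\mathbf{1}_A\,\mathbb{P}^{X_s}[X_{t_1}=y_1,\dots,X_{t_k}=y_k]\big].$$
I would prove this by decomposing over $\{N_s=n\}$ and conditioning on the discrete skeleton $(\xi_0,\dots,\xi_n)$ and on $(T_1,\dots,T_n)$. Given the whole skeleton $(\xi_j)_{j\ge 0}$, the used holding times $(\tau_{j\xi_j})_{j\ge 0}$ are independent with $\tau_{j\xi_j}\sim\mathcal{E}(-L^{\xi_j}_{\xi_j})$, because at each level the construction samples a fresh variable indexed by the level $j$ and the current state. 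Two facts then drive the computation. First, the skeleton $(\xi_n)$ is a discrete-time Markov chain with transition matrix $Q$, so conditionally on $\xi_n=x$ the shifted skeleton $(\xi_{n+m})_{m\ge 0}$ has the law of the $Q$-chain from $x$ and is independent of $(\xi_0,\dots,\xi_n)$. Second, by the loss-of-memory property, conditionally on $\{N_s=n\}$, on $\mathcal{F}_{T_n}$ and on $\xi_n=x$, the residual time $T_{n+1}-s$ is again $\mathcal{E}(-L^x_x)$ and is independent of the past, since $\{N_s=n\}$ only records that $\tau_{n\xi_n}>s-T_n$ without revealing its exact value. Splicing this residual holding time at $x$ together with the fresh post-$s$ jump and holding variables (those of level $>n$) reproduces exactly the recursive recipe of the construction started from $x$; hence the conditional law of $(X_{s+u})_{u\ge0}$ given $\mathcal{F}_s$ is $\mathbb{P}^{X_s}$. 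Taking $s=0$ and $X_0=\xi_0\sim\mu$ identifies the initial distribution as $\mu$.

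For the semigroup statement, setting $k=1$ and $A=\{X_0=x\}$ in the displayed identity and summing over the intermediate state gives
$$(P_{s+t})^x_y=\sum_{z\in S}(P_s)^x_z(P_t)^z_y,$$
i.e.\ $P_{s+t}=P_sP_t$, while $P_0=I$ is immediate from $X_0=x$; thus $(P_t)$ is the semigroup of $X$. For right-continuity in $t$, I would use that each sample path is right-continuous in $\overline{S}$: by construction $X_\cdot$ is constant on each interval $[T_i,T_{i+1})$ and equals $\partial$ on $[\zeta,\infty)$, and $\overline S$ carries the discrete topology, so for a.e.\ $\omega$ and every $t_0$ there is $\varepsilon>0$ with $X_t(\omega)=X_{t_0}(\omega)$ for $t\in[t_0,t_0+\varepsilon)$. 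Consequently $\mathbf{1}_{\{X_t=y\}}\to\mathbf{1}_{\{X_{t_0}=y\}}$ as $t\downarrow t_0$, and dominated convergence yields $(P_t)^x_y\to(P_{t_0})^x_y$, the desired right-continuity.

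The main obstacle is the bookkeeping hidden in the phrase ``condition on the past'': the number of jumps $N_s$ is itself random and depends on the history, so one cannot simply peel off a fixed block of the independent ingredients. The clean way around this is precisely the decomposition over $\{N_s=n\}$ combined with the memoryless property applied to the single residual holding time, after which all post-$s$ randomness consists of genuinely fresh, unused variables of level $>n$ (together with the reconstructed residual). A secondary point to handle with care is explosion: on $\{\zeta\le s\}$ we have $X_s=\partial$, and consistency of the Markov property there is guaranteed by the Convention making $\partial$ absorbing for $\overline L$, so no separate argument is needed.
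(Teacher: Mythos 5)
The paper states this theorem without giving a proof of its own (the construction and the associated semigroup facts are essentially deferred to Norris's book, cited for the very next theorem), so there is nothing to compare against; your argument is the standard one and it is correct. The key points are all in place: the decomposition over $\{N_s=n\}$, the observation that conditioning on $\mathcal{F}_s$ on that event amounts to conditioning on $(\xi_0,\dots,\xi_n,T_1,\dots,T_n)$ together with $\{\tau_{n\xi_n}>s-T_n\}$, the memoryless property for the residual holding time, the independence and identical distribution of the level-$(>n)$ variables which lets you splice them into a fresh copy of the construction started at $\xi_n$, and the pathwise right-continuity plus dominated convergence for the right-continuity of $(P_t)^x_y$. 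The only cosmetic caveat is that $(P_t)^x_y=\mathbb{P}[X_t=y\mid X_0=x]$ only makes literal sense when $\mu_x>0$, so one should either take $\mu=\delta_x$ or define $P_t$ via the family $\mathbb{P}^x$; this does not affect the substance of your argument.
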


The following theorem is taken from the book \cite{NorrisMR1600720}.
\begin{thm}\
\begin{itemize}
\item[a)]\textbf{Backward Equation.}\\
$P_t$ is the minimal non-negative solution of the backward equation:
\begin{align*}
&\frac{dP_t}{dt}=LP_t\\
&P_0=I \text{ (identity)}.
\end{align*}
\item[b)] \textbf{Forward Equation.}\\
$P_t$ is the minimal non-negative solution of the forward equation:
\begin{align*}
&\frac{dP_t}{dt}=P_tL\\
&P_0=I \text{ (identity)}.
\end{align*}
\end{itemize}
(These equations are viewed as an infinite system of differential equations.)
\end{thm}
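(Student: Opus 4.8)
The plan is to pass to the integral (Duhamel) form of both equations and to treat them by a monotone iteration that identifies the constructed semigroup $P_t$ with the minimal solution. Write $D=M_q$ for the diagonal matrix of the holding rates $q_x=-L^x_x$, so that $L=DQ-D=D(Q-I)$ with $Q$ the jump matrix of the embedded chain. Using the integrating factor $e^{Dt}$, the backward equation $\frac{dP_t}{dt}=LP_t$, $P_0=I$, is equivalent to
\[
P_t=e^{-Dt}+\int_0^t e^{-Du}\,DQ\,P_{t-u}\,du,
\]
and the forward equation $\frac{dP_t}{dt}=P_tL$, $P_0=I$, to
\[
P_t=e^{-Dt}+\int_0^t P_{t-u}\,DQ\,e^{-Du}\,du.
\]
First I would verify this equivalence: every non-negative $C^1$ solution of the differential system solves the integral equation, and conversely any non-negative solution of the integral equation is differentiable with the prescribed derivative. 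The only delicate points here are the justification of differentiation under the integral sign and of the interchange of the sum over intermediate states with the integral, both of which I would control by the local boundedness of the entries on a compact time interval together with monotone convergence.

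Next I would show that the probabilistically constructed $P_t$ with $(P_t)^x_y=\mathbb{P}[X_t=y\mid X_0=x]$ solves the backward integral equation by the first-jump decomposition: conditioning on the holding time $\tau_{0x}\sim\mathcal{E}(q_x)$ and on the first jump $J_{0x}=z$ (of law $Q^x_{\bullet}$) and applying the Markov property at the first jumping time reproduces the two terms above, the no-jump event contributing $e^{-q_x t}\delta^x_y$ and the integral over the first-jump time $u$ contributing the convolution term. The mirror last-jump decomposition yields the forward integral equation; I expect this direction to be the more delicate one, since isolating the last jump before time $t$ is less clean than isolating the first, and in the possibly explosive infinite setting one must argue first on the event of finitely many jumps and then pass to the limit.

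For minimality I would run a monotone iteration. Setting $P^{(0)}_t=e^{-Dt}$ and $P^{(n+1)}_t=e^{-Dt}+\int_0^t e^{-Du}DQ\,P^{(n)}_{t-u}\,du$, each $P^{(n)}_t$ is non-negative and, by induction on $n$ using the first-jump decomposition, its $(x,y)$ entry equals $\mathbb{P}[X_t=y,\ N_t\le n\mid X_0=x]$, where $N_t$ is the number of jumps up to time $t$; hence $P^{(n)}_t\uparrow P_t$ by monotone convergence. Given any non-negative solution $M_t$ of the integral equation, dropping its non-negative integral term gives $M_t\ge e^{-Dt}=P^{(0)}_t$, and substituting $P^{(n)}_t\le M_t$ into the iteration, using that $e^{-Du}DQ$ has non-negative entries, yields $P^{(n+1)}_t\le M_t$; letting $n\to\infty$ gives $P_t\le M_t$, which is the asserted minimality. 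The forward statement is obtained by the same iteration with the kernel applied on the right.

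The principal obstacle I anticipate is twofold: establishing the forward integral equation directly from the path construction, since the last-jump decomposition has to be set up on the non-explosion event and then extended, and the analytic bookkeeping in infinite dimension, namely interchanging $\frac{d}{dt}$, the integral, and the infinite summation over intermediate states. Both difficulties are defused by the non-negativity and monotonicity built into the iteration, which turn every interchange into an instance of monotone convergence rather than a bound requiring a separate uniform-integrability argument.
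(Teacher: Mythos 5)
The paper offers no proof of this theorem: it is quoted from Norris's book, and your argument is essentially the proof given there — passing to the Duhamel/integral form, establishing it for the constructed $P_t$ by the first-jump (resp.\ last-jump) decomposition, and proving minimality via the monotone iteration $P^{(n)}_t$ whose $(x,y)$ entry is $\mathbb{P}[X_t=y,\,N_t\le n\mid X_0=x]$, so that any non-negative solution $M_t$ dominates every iterate and hence $P_t$. The plan is correct, and you rightly identify the genuinely delicate half, namely deriving the forward integral equation from the path construction: the last-jump decomposition must be set up on the event $\{N_t\le n\}$ and the interchanges of $\sum_z$, $\int_0^t$ and the limit $n\to\infty$ justified by monotone convergence, exactly as in the cited reference.
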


\begin{rem}
The process we constructed is minimal in the sense of its semi-group as the solution of the forward backward equations. In a more probabilistic language, it is the least conservative process. To be more precise, for any sub-Markovian process with generator $L$, if we kill the process as long as it jumps infinitely many times, we get the minimal sub-Markov process with generator $L$.
\end{rem}

\begin{defn}
The potential $V$ is defined as follows:
$$V^x_y=\mathbb{E}^x[\int\limits_{0}^{\infty}1_{\{X_t=y\}}\,dt]=\int\limits_{0}^{\infty}(P_t)^x_y\,dt.$$
\end{defn}

\begin{thm}[\textbf{Feynman-Kac}]
For a non-negative function $k$ on $S$, define
$$(P_{t,k})^x_y=\mathbb{E}^x\left(e^{-\int\limits_{0}^{t}k(X_s)\,ds}1_{\{X_t=y\}}\right).$$
Then, it is the minimal positive solution of the following equation:
\begin{equation*}
\frac{\partial u}{\partial t}(t,x)=(L-M_k)u(t,x)\quad\footnote{Recall that $(M_kf)(x)=k(x)f(x)$.}
\end{equation*}
with initial condition $u(0,x)=\delta^x_y$. We denote by $V_k$ the associated potential. Denote by $\mathbb{P}_{k}$ the law of the canonical minimal Markov process with generator $L-M_{k}$. Then,
$$\left.\frac{d\mathbb{P}_{k}}{d\mathbb{P}}\right|_{\mathcal{F}_t}=e^{-\int\limits_{0}^{t}k(X_s)\,ds}$$
where $\mathcal{F}_t=\sigma(X_s,s\in [0,t])$.
\end{thm}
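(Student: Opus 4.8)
The plan is to realise the weight $e^{-\int_0^t k(X_s)\,ds}$ as the survival probability of an extra, state-dependent killing superimposed on the trajectory of $X$, and thereby to identify $P_{t,k}$ with the semigroup of the generator $L-M_k$. First I would note that $L-M_k$ is again a sub-Markovian generator: since $k\geq 0$ is finite, the diagonal entries $L^x_x-k(x)$ remain in $]-\infty,0]$, the off-diagonal entries are unchanged and non-negative, and the row sums satisfy $\sum_y L^x_y-k(x)\leq 0$. Hence the backward-equation theorem applies verbatim to $L-M_k$, and its minimal non-negative semigroup $P^{(k)}_t$ is the minimal non-negative solution of $\frac{d}{dt}P^{(k)}_t=(L-M_k)P^{(k)}_t$, $P^{(k)}_0=I$. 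It then suffices to prove the identity $P_{t,k}=P^{(k)}_t$, after which the first assertion (minimality of the column $u(t,x)=(P_{t,k})^x_y$) is immediate.

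To realise $P^{(k)}_t$ I would augment the explicit construction of the excerpt. Alongside the holding times $\tau_{ix}\sim\mathcal{E}(-L^x_x)$, introduce an independent family $\sigma_{ix}\sim\mathcal{E}(k(x))$ (with $\sigma_{ix}=\infty$ when $k(x)=0$), independent of $\xi_0,\tau,J$, and let the augmented process $Y$ follow $X$ but be sent to $\partial$ the first time one of these extra clocks rings within the current holding interval. At a state $x$ the next event for $Y$ occurs at time $\min(\tau_{nx},\sigma_{nx})\sim\mathcal{E}(-L^x_x+k(x))$, and conditionally on which clock rings first $Y$ either performs an $X$-jump (probability $\tfrac{-L^x_x}{-L^x_x+k(x)}$, then distributed by $Q$) or is killed. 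A one-line computation of the jump rates gives, for $y\in S$, the off-diagonal rate $(-L^x_x+k(x))\cdot\tfrac{-L^x_x}{-L^x_x+k(x)}\,Q^x_y=L^x_y$ and total holding rate $-L^x_x+k(x)$; thus $Y$ is exactly the minimal Markov process with generator $L-M_k$, its law is $\mathbb{P}_k$, and $\mathbb{P}^x[Y_t=y]=(P^{(k)}_t)^x_y$.

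The key probabilistic input, which I would isolate as a lemma, is that conditionally on the whole trajectory $(X_s)_{s\geq 0}$ the extra killing time $\rho$ has tail $\mathbb{P}[\rho>t\mid (X_s)]=e^{-\int_0^t k(X_s)\,ds}$; this says that superposing the independent $\mathcal{E}(k(\xi_i))$ clocks over the successive holding intervals produces an inhomogeneous killing of instantaneous rate $k(X_s)$, and follows from $\prod_i e^{-k(\xi_i)(T_{i+1}\wedge t-T_i\wedge t)}=e^{-\int_0^t k(X_s)\,ds}$. Granting this, and since $\{Y_t=y\}=\{X_t=y\}\cap\{\rho>t\}$ for $y\in S$, conditioning on $(X_s)$ gives
\[
(P^{(k)}_t)^x_y=\mathbb{P}^x[Y_t=y]=\mathbb{E}^x\big[1_{\{X_t=y\}}\,\mathbb{P}[\rho>t\mid(X_s)]\big]=\mathbb{E}^x\Big(e^{-\int_0^t k(X_s)\,ds}1_{\{X_t=y\}}\Big)=(P_{t,k})^x_y.
\]
The same conditioning, applied to a bounded $\mathcal{F}_t$-measurable functional $F$ of the path that vanishes once $\partial$ has been reached, yields $\mathbb{E}_k[F]=\mathbb{E}[F\,e^{-\int_0^t k(X_s)\,ds}]$ (with $\mathbb{E}_k$ the expectation under $\mathbb{P}_k$), which is exactly the Radon--Nikodym formula read on $\mathcal{F}_t$. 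I expect the main obstacle to lie in the bookkeeping of this last point: $\mathbb{P}_k$ and $\mathbb{P}$ are mutually singular on the cemetery event, so the absolute continuity holds only on the alive part of the path, and one must set up the extra killing (including explosion and the degenerate case $k(x)=0$) so that $\rho$ is genuinely conditionally independent of $X$ given its trajectory. Once that independence is in place, both the semigroup identity and the density fall out of the single conditioning computation above.
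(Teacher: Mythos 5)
The paper states this Feynman--Kac theorem without any proof (it is quoted as standard material, in the same spirit as the backward/forward equation theorem taken from Norris), so there is no authorial argument to compare yours against; I can only assess your proposal on its own terms, and it is correct. The killing-clock construction is the standard and natural route: your rate computation does identify the augmented process $Y$ as the minimal chain with generator $L-M_k$ (minimality is preserved because the extra killing only removes jumps, so $Y$ is exactly the process produced by the paper's explicit construction applied to $L-M_k$), the conditional-survival lemma $\mathbb{P}[\rho>t\mid (X_s)]=e^{-\int_0^t k(X_s)\,ds}$ is the right key step and follows from the telescoping product over holding intervals you write down, and the single conditioning computation then yields both the semigroup identity and the density formula. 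You also correctly flag the only two delicate points: the Radon--Nikodym statement must be read on $\{t<\zeta\}$ (equivalently for functionals vanishing once the cemetery is reached), and the case $k(x)=0$ requires the convention $\mathcal{E}(0)=\infty$, which the paper has already set up. One small item worth making explicit rather than calling ``immediate'': the minimality claimed in the theorem is for the single column $u(t,\cdot)=(P_{t,k})^{\cdot}_y$ with initial condition $\delta^{\cdot}_y$, whereas the paper's backward-equation theorem is stated for the matrix system with $P_0=I$; the columnwise version is the form actually proved in Norris, so this is only a matter of citing the right formulation, not a gap.
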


\begin{prop}
Suppose $V$ is transient, i.e. $V^x_y<\infty$ for all $x$ and $y$, then $LV=VL=-Id$.
\end{prop}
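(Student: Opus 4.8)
The plan is to integrate the backward and forward equations from the preceding theorem over $[0,\infty)$ and read off the two identities, using transience only to control the boundary term at infinity. Integrating the backward equation $\frac{dP_t}{dt}=LP_t$ componentwise over $[0,T]$ gives
$$(P_T)^x_y-\delta^x_y=\int_0^T (LP_t)^x_y\,dt=\int_0^T\sum_z L^x_z (P_t)^z_y\,dt,$$
and since $\int_0^T (P_t)^x_y\,dt\uparrow V^x_y<\infty$ by monotone convergence, the whole matter reduces to passing to the limit $T\to\infty$ inside the sum and integral on the right.

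The crucial preliminary fact is that $(P_T)^x_y\to 0$ as $T\to\infty$. I would obtain this from transience as follows: each off-diagonal entry is nonnegative and the sub-Markov property gives the finite row bound $\sum_z|L^x_z|=|L^x_x|+\sum_{z\neq x}L^x_z\le 2|L^x_x|<\infty$; since $(P_t)^z_y\le 1$, the backward equation shows that $t\mapsto(P_t)^x_y$ has derivative bounded by $2|L^x_x|$, hence is uniformly continuous. A nonnegative, uniformly continuous function on $[0,\infty)$ whose integral $\int_0^\infty(P_t)^x_y\,dt=V^x_y$ is finite must tend to $0$ (Barbalat's lemma); here integrability alone would not suffice, which is why the uniform-continuity estimate is needed.

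Granting this, I would peel off the single negative (diagonal) term,
$$(P_T)^x_y-\delta^x_y=L^x_x\int_0^T (P_t)^x_y\,dt+\int_0^T\sum_{z\neq x} L^x_z(P_t)^z_y\,dt.$$
As $T\to\infty$ the left side tends to $-\delta^x_y$ and the first term on the right tends to $L^x_x V^x_y$, both finite, so the remaining integral converges. Its integrand is nonnegative, whence by Tonelli its limit equals $\sum_{z\neq x}L^x_z V^z_y$, a sum that is thereby automatically finite. Collecting terms yields $-\delta^x_y=\sum_z L^x_z V^z_y=(LV)^x_y$, that is, $LV=-\mathrm{Id}$. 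Running the identical argument on the forward equation $\frac{dP_t}{dt}=P_tL$, now isolating the term $z=y$, gives in the same way $VL=-\mathrm{Id}$.

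I expect the main obstacle to be precisely this exchange of the limit $T\to\infty$, the sum over $z$, and the time integral: because $L$ carries entries of both signs and its column sums (relevant to the forward case) are not controlled by the sub-Markov property, one cannot invoke dominated convergence directly. The device that resolves it is to separate the unique negative diagonal entry so that the residual sum is nonnegative, let Tonelli's theorem handle that piece, and deduce its finiteness a posteriori from the already-established convergence of the boundary and diagonal terms. The only other point requiring genuine care is the justification of $(P_T)^x_y\to 0$, handled by the uniform-continuity argument above.
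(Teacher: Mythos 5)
The paper states this proposition without any proof, so there is nothing to compare your argument against; on its own merits, your proof is correct and complete. The two points that genuinely require care are exactly the ones you isolate and handle properly: (i) the vanishing of $(P_T)^x_y$ as $T\to\infty$, which does not follow from integrability alone and which you obtain from the Lipschitz bound $\bigl|\frac{d}{dt}(P_t)^x_y\bigr|\le\sum_z|L^x_z|\le 2|L^x_x|$ (valid because the sub-Markov condition forces $\sum_{z\neq x}L^x_z\le -L^x_x$) together with Barbalat's lemma; and (ii) the interchange of the limit $T\to\infty$ with the sum over $z$, which you resolve by peeling off the single signed diagonal term and applying Tonelli and monotone convergence to the remaining nonnegative series, deducing its finiteness a posteriori from the convergence of the other two terms. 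This also shows that the matrix products $LV$ and $VL$ are given by absolutely convergent sums, so the identities $LV=VL=-\mathrm{Id}$ are well posed. A minor alternative for point (i), closer in spirit to the semigroup property, is the inequality $(P_{t+s})^x_y\ge (P_t)^x_y(P_s)^y_y\ge (P_t)^x_y e^{L^y_y}$ for $s\in[0,1]$, which gives $(P_t)^x_y\le e^{-L^y_y}\int_t^{t+1}(P_u)^x_y\,du\to 0$; but your route is equally valid. The only implicit reliance is on the forward equation holding with a convergent sum $\sum_z(P_t)^x_zL^z_y$, which is part of the quoted theorem from Norris and is therefore legitimate to assume.
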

\begin{thm}[\textbf{Resolvent equation}]\
The following identities hold:

\begin{itemize}
\item[a)] $V_k+VM_kV_k=V$.
\item[b)] $V_k+V_kM_kV=V$.
\item[c)] $V_kM_kV=VM_kV_k$.
\end{itemize}
\end{thm}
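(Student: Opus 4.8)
The plan is to prove the three resolvent identities by exploiting the Feynman–Kac relationship between the two semigroups $P_t$ and $P_{t,k}$. The key analytic input is the Duhamel-type identity relating the two semigroups. Since $P_{t,k}$ solves $\frac{\partial u}{\partial t} = (L - M_k)u$ with $P_t$ solving $\frac{\partial u}{\partial t} = Lu$, I would first establish the integrated (variation-of-constants) formula
$$
P_{t,k} = P_t - \int_0^t P_{t-s}\,M_k\,P_{s,k}\,ds,
$$
and symmetrically $P_{t,k} = P_t - \int_0^t P_{s,k}\,M_k\,P_{t-s}\,ds$. The cleanest way to get these is to differentiate $s \mapsto P_{t-s}\,P_{s,k}$: using the backward equation $\frac{d}{ds}P_{t-s} = -L\,P_{t-s}$ and the Feynman–Kac equation $\frac{d}{ds}P_{s,k} = (L-M_k)P_{s,k}$, the product rule gives $\frac{d}{ds}(P_{t-s}P_{s,k}) = -P_{t-s}M_k P_{s,k}$, and integrating from $0$ to $t$ yields the first display. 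The second follows identically by differentiating $s\mapsto P_{s,k}P_{t-s}$ and using the forward equation.

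\textbf{Integrating over time.} To pass from semigroups to potentials, I would integrate each Duhamel formula in $t$ over $[0,\infty)$. Recalling $V = \int_0^\infty P_t\,dt$ and $V_k = \int_0^\infty P_{t,k}\,dt$, integrating the first formula gives
$$
V_k = V - \int_0^\infty\!\!\int_0^t P_{t-s}\,M_k\,P_{s,k}\,ds\,dt.
$$
A Fubini/change-of-variables argument on the double integral (substituting $u = t-s$, so the region $\{0\le s\le t\}$ becomes $\{u\ge 0,\ s\ge 0\}$) factorizes the iterated integral as $\bigl(\int_0^\infty P_u\,du\bigr)M_k\bigl(\int_0^\infty P_{s,k}\,ds\bigr) = V\,M_k\,V_k$. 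This yields identity a), namely $V_k + V M_k V_k = V$. Integrating the second Duhamel formula and applying the same factorization gives $V_k + V_k M_k V = V$, which is identity b).

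\textbf{Obtaining c) and handling convergence.} Identity c) is then immediate: subtracting b) from a) gives $V M_k V_k - V_k M_k V = 0$, i.e. $V M_k V_k = V_k M_k V$. \textbf{The main obstacle} I expect is justifying the interchange of integration (Fubini) and the factorization of the double integral in the possibly non-transient, infinite setting: all the matrix entries here are non-negative (since $P_t$, $P_{t,k}$, and $M_k$ all have non-negative entries, the latter because $k\ge 0$), so Tonelli's theorem applies entrywise and permits the interchange and the factorization without any integrability hypothesis, with all identities holding in $[0,\infty]$. If one prefers to argue with the infinitesimal generators directly rather than through Duhamel, an alternative is to use Proposition stating $LV = VL = -\mathrm{Id}$ together with $(L-M_k)V_k = -\mathrm{Id}$: writing $V - V_k = V(L-M_k)V_k - VLV_k \cdot(\text{adjust}) $ leads to $V - V_k = -V(L - (L-M_k))V_k = V M_k V_k$ after applying $-LV = \mathrm{Id}$ on the left and $-(L-M_k)V_k = \mathrm{Id}$ on the right; this purely algebraic route is faster but requires the transience needed to make $V$ and $V_k$ finite, so I would present the Duhamel argument as the primary proof and note the algebraic shortcut as a remark.
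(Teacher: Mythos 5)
The paper states this theorem without giving any proof, so there is nothing to compare against line by line; judged on its own, your Duhamel/variation-of-constants argument is correct and is one of the two standard routes to these identities. Two small points deserve attention. First, in the product-rule step $\frac{d}{ds}(P_{t-s}P_{s,k})=-P_{t-s}M_kP_{s,k}$ you are differentiating the infinite sum $\sum_z (P_{t-s})^x_z(P_{s,k})^z_y$ term by term and cancelling $LP_{t-s}P_{s,k}$ against $P_{t-s}LP_{s,k}$, which uses that the minimal semigroup satisfies both the forward and the backward equation (so that $LP_u=P_uL$); this is available from the paper's Theorem on the backward/forward equations, but the term-by-term differentiation itself needs a word of justification (e.g.\ local domination) in the countable-state setting. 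A probabilistic variant avoids this entirely: write $1-e^{-\int_0^t k(X_s)\,ds}=\int_0^t k(X_s)e^{-\int_s^t k(X_u)\,du}\,ds=\int_0^t e^{-\int_0^s k(X_u)\,du}k(X_s)\,ds$, take $\mathbb{E}^x[\,\cdot\,1_{\{X_t=y\}}]$, apply the Markov property at time $s$, and integrate in $t$; Tonelli then gives a) and b) directly with no differentiation at all. Second, deducing c) by ``subtracting b) from a)'' is only legitimate when the entries involved are finite; in the general $[0,\infty]$-valued setting you should instead observe that a) and b) each identify their middle term with the same quantity $V-V_k$ (interpreted via the common decomposition above), or simply restrict c) to the case where $V_k$ is finite, which is the case of interest. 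Neither issue is a real gap, but both should be flagged in a complete write-up.
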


\subsection{The time change induced by a non-negative function}
Let $(X_t, t\geq 0)$ be a minimal Markovian process on $S$, with generator $L$ and lifetime $\zeta$.

Given $\lambda: S\rightarrow [0,\infty]$, define
$$A_t=\int\limits_{0}^{t\wedge\zeta}\lambda(X_s)\,ds,\quad\sigma_t=\inf\{s\geq 0, A_s>t\},\quad \hat{\zeta}=\inf\{s\geq 0, \sigma_s=\sigma_{\infty}\}$$
with the convention that $\inf\phi=\infty$. Then, $ \sigma_t$ are stopping times for all $t$ and they are right-continuous with respect to $t\geq 0$. Set $Y_t=X_{\sigma_t}$ for $0\leq t\leq \hat{\zeta}$ and let $Y$ be killed at time $\hat{\zeta}$. By the strong Markov property, $Y_t$ is also a c\`{a}dl\`{a}g sub-Markov process with lifetime $\hat{\zeta}$. It could be constructed directly from its generator $\hat{L}$ as before.

\bigskip

\begin{prop}\label{Expression of the generator for the time changed process}\
\begin{itemize}
\item[a)] If $0<\lambda<\infty$, then $\displaystyle{\hat{L}^x_y=\frac{L^x_y}{\lambda_x}}$ (change of jumping rates).
\item[b)] If $\lambda=1_A+1_{A^c}\cdot\infty$, then
$$\hat{L}^x_y=\left\{\begin{array}{ll}
L^{x}_y & \text{ for } x, y\in A^c\\
0 & \text{ elsewhere.}
\end{array}\right.$$
($Y$ is the restriction of $X$ to  $A$.)
\item[c)] If $\lambda=1_A$, $Y$ is called the trace of $X$ on $A$. The generator $\hat{L}$ of $Y$ will be denoted by $L_{A}$. In this case, $(Y_t, t\geq 0)$ has the same potential as $(X_t,t\geq 0)$.  On $A\times A$:
\begin{equation*}
V^x_y=\mathbb{E}^x[\int\limits_{0}^{\zeta}1_{\{X_s=y\}}\,ds]=\mathbb{E}^x[\int\limits_{0}^{\hat{\zeta}}1_{\{Y_s=y\}}\,ds].
\end{equation*}
Let $T_1$ be the first jumping time and $T_{1,A}=\inf\{s\geq T_1, X_s\in A\}$. \\
Define $(R^A)^x_y=\mathbb{E}^x[X_{T_{1,A}}=y,T_{1,A}<\infty]$ for $y\in S$ and $(R^A)^x_\partial=1-\sum\limits_{y}(R^A)^x_y$. Then, the generator $L_A$ of $Y$ satisfies:
$$(L_A)^x_x=L^x_x(1-(R^A)^x_x) \text{ and } (L_A)^x_y=-L^x_x(R^A)^x_y\;  \text{ for } x\neq y.$$
\end{itemize}
\end{prop}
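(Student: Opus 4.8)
The plan is to identify the generator $\hat L$ of the time-changed process $Y$ by reading off, for each state $x$, two ingredients: the law of the $Y$-sojourn at $x$, which fixes the diagonal entry $\hat L^x_x$, and the embedded one-step kernel of $Y$, which fixes the off-diagonal entries. Recall from the construction of Section~2.2 that a minimal continuous-time chain built from exponential holding rate $c_x$ at $x$ and a (possibly defective, possibly diagonal-carrying) one-step kernel $\Pi$ has generator $c_x(\Pi^x_y-\delta^x_y)$; so once the holding rate and kernel of $Y$ are determined, each assertion follows by specialising this identity. The whole proof thus reduces to tracking how the clock change $A_t=\int_0^{t\wedge\zeta}\lambda(X_s)\,ds$ and its right-inverse $\sigma$ act on the successive sojourns of $X$, which I would control through the strong Markov property at the jump times of $X$.

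For (a), on a sojourn of $X$ at $x$ of real length $\tau\sim\mathcal E(-L^x_x)$ the functional $A$ increases by $\lambda_x\tau$, so in the $Y$-clock the sojourn at $x$ has length $\lambda_x\tau\sim\mathcal E(-L^x_x/\lambda_x)$; since a time change never alters \emph{where} the chain jumps, the one-step kernel of $Y$ is the jump kernel $Q$ of $X$ unchanged. Applying $\hat L^x_y=(-L^x_x/\lambda_x)(Q^x_y-\delta^x_y)$ together with $Q^x_y=L^x_y/(-L^x_x)$ gives $\hat L^x_y=L^x_y/\lambda_x$. For (b), where $\lambda=\infty$ on $A^c$, I would compute directly that $A_s=s$ while $X$ stays in $A$ and $A_s=\infty$ once $X$ has spent positive time in $A^c$; hence $\sigma_t=t$ up to the first exit time $T$ of $A$ and $\hat\zeta=T$, so $Y_t=X_t$ for $t<T$ and $Y$ is killed at $T$. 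Thus $Y$ is $X$ stopped at its first exit from $A$, and its generator is the restriction of $L$ to $A\times A$ extended by $0$.

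Part (c) splits into the occupation identity and the formula for $L_A$. For the former, note that for $\lambda=1_A$ the functional $A_t=\int_0^t 1_A(X_u)\,du$ is continuous and nondecreasing with right-inverse $\sigma$, and $Y_s=X_{\sigma_s}$; the change-of-variables formula for a continuous additive functional and its inverse gives, for nonnegative $f$,
\[
\int_0^{\hat\zeta}f(Y_s)\,ds=\int_0^{\zeta}f(X_u)\,dA_u=\int_0^\zeta f(X_u)\,1_A(X_u)\,du .
\]
Taking $f=1_{\{y\}}$ with $y\in A$ removes the factor $1_A(X_u)$ on $\{X_u=y\}$, and taking $\mathbb E^x$ yields the stated equality of potentials on $A\times A$. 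For the generator, the picture is that $Y$ follows $X$ while in $A$, freezes its clock during each excursion of $X$ into $A^c$ (such an excursion costs zero $Y$-time since $\lambda=0$ there), and resumes where $X$ re-enters $A$. By the strong Markov property applied at the first jump time and at the first return time $T_{1,A}$, after one $X$-sojourn at $x$ the process $Y$ is relocated to $X_{T_{1,A}}$, whose law, with the defect $(R^A)^x_\partial$ routed to the cemetery to record killing or non-return, is exactly $R^A$; successive steps are i.i.d.\ given the current state, so $Y$ is a continuous-time chain with holding rate $-L^x_x$ at $x$ and one-step kernel $R^A$. The diagonal mass $(R^A)^x_x$ consists of genuine returns to $x$ through $A^c$: these do not move $Y$ but glue consecutive $\mathcal E(-L^x_x)$ sojourns, so the true $Y$-holding time at $x$ is a geometric sum of such exponentials, again exponential with rate $-L^x_x(1-(R^A)^x_x)$. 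Feeding the holding rate $-L^x_x$ and kernel $R^A$ into $\hat L^x_y=-L^x_x((R^A)^x_y-\delta^x_y)$ gives $(L_A)^x_x=L^x_x(1-(R^A)^x_x)$ and $(L_A)^x_y=-L^x_x(R^A)^x_y$ for $x\neq y$.

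The delicate points, where I expect to spend most of the effort, are the two regeneration/measure-theoretic justifications. First, the change-of-variables identity must be established cleanly for the possibly exploding minimal process, controlling the behaviour of $\sigma$ on the flat stretches of $A$ (the $A^c$-excursions) and verifying $\hat\zeta=A_\zeta$. Second, and this is the main obstacle, the assertion that $Y$ is genuinely Markov with i.i.d.\ $R^A$-steps requires an application of the strong Markov property that simultaneously handles self-returns $(R^A)^x_x>0$, a possible accumulation of excursions in finite real time, and the routing of killing and non-return into $\partial$ so that $R^A$ is correctly (sub-)stochastic; it is precisely the treatment of the diagonal of $R^A$ that makes the geometric-sum step, and hence the diagonal entry of $L_A$, nontrivial.
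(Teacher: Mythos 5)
Your proposal is correct and, for the main part (c), follows essentially the same route as the paper: describe $Y$ as holding an $\mathcal{E}(-L^x_x)$ time at $x$ and then redistributing according to the kernel $R^A$, with the diagonal mass $(R^A)^x_x$ absorbed by gluing a geometric number of exponential sojourns, which yields $(L_A)^x_x=L^x_x(1-(R^A)^x_x)$ and $(L_A)^x_y=-L^x_x(R^A)^x_y$. The paper's proof in fact only treats (c) (computing $R^A=G_{A^c}QJ_A$ as a series over excursions in $A^c$ and then asserting the generator formula), so your explicit verifications of (a), (b), and of the occupation-time identity via $\int_0^{\hat\zeta}f(Y_s)\,ds=\int_0^\zeta f(X_u)\,dA_u$ supply steps the paper leaves implicit.
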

\begin{proof}
Define $T_A=\inf\{t\geq 0, X_t\in A\}$ and $(H_A)_y^x=\mathbb{E}^x[X_{T_A}=y,T_A<\infty]$. As usual, set $\displaystyle{Q^x_y=-\frac{L^x_y}{L^x_x}}$ for $y\neq x$, $Q^x_x=0$ and $Q^x_y=\delta^x_y$ if $L^x_x=0$. For any subset $B$ of $S$, define $(J_B)^x_y=1_{\{x\in B\}}\delta^{x}_{y}$, $G_B=I+QJ_B+QJ_BQJ_B+\cdots$. Then $$H_A=J_A+J_{A^c}QH_A=J_A+J_{A^c}QJ_A+J_{A^c}QJ_{A^c}QJ_A+\cdots.$$
Next, we see that $(R^A)^x_y=\mathbb{E}^x[X_{T_{1,A}}=y]=Q^{x}_{y}+\sum\limits_{z\in A^c}Q^x_z(H_A)^z_y=(G_{A^c}QJ_A)^x_y$ for $x,y\in A$. Then, $Y$ can be described as follows: from $x$, it waits for an $\mathcal{E}(-L^x_x)$-time, then jumps to $y\in A\cup\{\partial\}$ according to $(R^A)^x_y$ (it does not actually jump if $y=x$). Finally, it is not hard to see that $(L_A)^{x}_{x}=L^x_x(1-(R^A)_x^x)$ and $ (L_A)^x_y=-(R^A)^x_y L^x_x$ for $y\neq x$.
\end{proof}

\begin{defn}\label{defn:the trace of a Markov process and the restriction of a Markov process}
For $A\subset S$, define $V_{A}=V|_{A\times A}$.  $V_A$ is the potential of the trace of the Markov process on $A$ and $L_{A}=-(V_A)^{-1}$ is its generator. Let $L^{A}=L|_{A\times A}$ denote the generator of the Markov process restricted in A (i.e. killed at entering $A^c$) and let $V^{A}=(-L^A)^{-1}$ be its potential.
\end{defn}

\begin{prop}\label{trace and change of time}
Assume that $V$ is transient, $\chi$ is a non-negative function on $S$ and that $F\subset S$ contains the support of $\chi$. Then, $(V_\chi)_F=(V_F)_{\chi}$.
\end{prop}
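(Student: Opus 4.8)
The plan is to prove the identity at the level of the probabilistic (potential) representations supplied by the Feynman--Kac theorem, using the time change by $1_F$ from the previous subsection. Both $(V_\chi)_F$ and $(V_F)_\chi$ are matrices indexed by $F\times F$, so it suffices to compare their entries for $x,y\in F$. By Definition \ref{defn:the trace of a Markov process and the restriction of a Markov process} and the Feynman--Kac potential,
$$((V_\chi)_F)^x_y=(V_\chi)^x_y=\mathbb{E}^x\left[\int_0^\infty e^{-\int_0^t\chi(X_s)\,ds}1_{\{X_t=y\}}\,dt\right],$$
where $X$ is the minimal process with generator $L$. On the other side, writing $Y$ for the trace of $X$ on $F$ (the time change with $\lambda=1_F$, whose generator is $L_F$ and whose potential is $V_F$), the Feynman--Kac potential of $Y$ reads
$$((V_F)_\chi)^x_y=\mathbb{E}^x\left[\int_0^{\hat\zeta}e^{-\int_0^u\chi(Y_v)\,dv}1_{\{Y_u=y\}}\,du\right],$$
with $\hat\zeta$ the lifetime of $Y$. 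The whole proof reduces to matching these two integrals path by path.

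First I would perform the time change inside the first integral. Set $A_t=\int_0^{t\wedge\zeta}1_F(X_s)\,ds$ with right-continuous inverse $\sigma_u=\inf\{s:A_s>u\}$, so that $Y_u=X_{\sigma_u}$, $\hat\zeta=A_\infty$, and $A_{\sigma_u}=u$ by continuity of $A$. Since $y\in F$, the indicator $1_{\{X_t=y\}}$ forces $X_t\in F$, hence $dt=1_F(X_t)\,dt=dA_t$ on the support of the integrand; the change of variable $u=A_t$ (equivalently $t=\sigma_u$) then turns $dt$ into $du$ and yields
$$\int_0^\infty e^{-\int_0^t\chi(X_s)\,ds}1_{\{X_t=y\}}\,dt=\int_0^{A_\infty}e^{-\int_0^{\sigma_u}\chi(X_s)\,ds}1_{\{X_{\sigma_u}=y\}}\,du.$$

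The second step is where the hypothesis $\supp\chi\subset F$ enters. Because $\chi$ vanishes off $F$, we have $\chi(X_s)=\chi(X_s)1_F(X_s)$, so the accumulated killing along $X$ coincides with the accumulated killing along $Y$:
$$\int_0^{\sigma_u}\chi(X_s)\,ds=\int_0^{\sigma_u}\chi(X_s)\,dA_s=\int_0^{A_{\sigma_u}}\chi(X_{\sigma_v})\,dv=\int_0^u\chi(Y_v)\,dv,$$
by the same change of variable and $A_{\sigma_u}=u$. Substituting this together with $X_{\sigma_u}=Y_u$ into the displayed integral gives exactly the integrand defining $((V_F)_\chi)^x_y$; taking $\mathbb{E}^x$ and using that the law of $Y$ under $\mathbb{P}^x$ is that of the trace process finishes the identification.

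I expect the main obstacle to be the rigorous justification of the Stieltjes change of variable $\int_0^\infty g(t)\,dA_t=\int_0^{A_\infty}g(\sigma_u)\,du$ together with the bookkeeping of the lifetimes: one must check that $A$ is continuous, non-decreasing, and strictly increasing exactly on $\{X_t\in F\}$, that $\sigma$ is its genuine right-continuous inverse, and that no mass is lost at the explosion time or on the flat stretches of $A$ that correspond to excursions of $X$ outside $F$. Transience of $V$ guarantees that all the integrals are finite, so monotone convergence lets us freely exchange $\mathbb{E}^x$ with the time integral. (Alternatively, one could verify the identity algebraically from the resolvent equation $V_\chi=V-VM_\chi V_\chi$ restricted to $F\times F$ and compare it with the analogous equation for the trace process; but the time-change argument is more transparent and aligns with the preceding results.)
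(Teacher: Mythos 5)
The paper states this proposition without proof, so there is nothing to match against; judged on its own, your argument is correct and is the natural probabilistic proof. The two identifications you start from are exactly what the paper's framework supplies: $((V_\chi)_F)^x_y=(V_\chi)^x_y$ for $x,y\in F$ because the trace of a process has the same potential on $F\times F$ (Proposition \ref{Expression of the generator for the time changed process} c) and Definition \ref{defn:the trace of a Markov process and the restriction of a Markov process}), and the Feynman--Kac theorem gives the integral representations of $V_\chi$ and of $(V_F)_\chi$. The change-of-variables step is sound: $A$ is continuous, so $\sigma$ is strictly increasing where finite, $A_{\sigma_u}=u$ for $u<A_\infty=\hat\zeta$, and the identity $\int_0^\infty g(t)\,dA_t=\int_0^{A_\infty}g(\sigma_u)\,du$ holds for all nonnegative Borel $g$; applying it once with $g(t)=e^{-\int_0^t\chi(X_s)ds}1_{\{X_t=y\}}$ (using $y\in F$ to replace $dt$ by $dA_t$) and once with $g=\chi\cdot 1_{[0,\sigma_u]}$ (using $\supp\chi\subset F$ to replace $ds$ by $dA_s$) gives exactly the Feynman--Kac functional of $Y$. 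The only point you should make explicit is the one you lean on implicitly at the end: that the trace process $Y$ under $\mathbb{P}^x$ really is the (minimal) process with generator $L_F$, so that its Feynman--Kac potential is the matrix $(V_F)_\chi$ defined from $L_F-M_\chi$; this is asserted in Section 2.3 of the paper and you are entitled to it, but it is the load-bearing hypothesis. Your parenthetical alternative is also worth noting: restricting the resolvent equation $V_\chi=V-VM_\chi V_\chi$ to $F\times F$ and using $\supp\chi\subset F$ gives $(I+V_F(M_\chi)_F)(V_\chi)_F=V_F$, the same equation satisfied by $(V_F)_\chi$, so the two agree whenever $I+V_F(M_\chi)_F$ is invertible; this is the route most consistent with how the proposition is actually used later (Lemma \ref{lem:linear algebra lemma}), though it needs the extra invertibility argument that your path-by-path proof avoids.
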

\section{Loops and Markovian loop measure}
In this section, we introduce the loop measure associated with a continuous time Markov chain. Its properties under various transformations (time change, trace, restriction, Feynman-Kac) are studied as well as the associated occupation and multi-occupation field.
\subsection{Definitions and basic properties}
\begin{defn}[\textbf{Based loops}]\label{defn:based loop}
A based loop $l$ is an element $(\xi_1,\tau_1,\ldots,\xi_p,\tau_p,\xi_{p+1},\tau_{p+1})$ in $\bigcup\limits_{p\in\mathbb{N}}(S\times]0,+\infty[)^{ p+1}$ such that $\xi_{p+1}=\xi_1$ and $\xi_{i+1}\neq\xi_{i}$ for $i=1,\ldots, p$. We call $p$ the number of jumps in $l$ and denote it by $p(l)$. Define $T=\tau_1+\cdots+\tau_{p+1}, T_0=0, T_i=\tau_1+\cdots+\tau_i$.
Then, a based loop can be viewed as a c\`{a}dl\`{a}g piecewise constant path $l$ on $[0,T]$ such that $l(t)=\xi_{i+1}$ for $t\in[T_i,T_{i+1}[,i=1,\ldots,p$ and $l(T)=\xi_{p+1}=\xi_1$. Clearly, we have $l(T)=l(T-)$.
\end{defn}
Let $\mathbb{P}^{x}$ be the law of the minimal sub-Markovian process started from $x$ with semi-group $(P_{t},t\geq 0)$ (or with generator $L$ equivalently). It induces a probability measure on the space of paths $l$ indexed by $[0,t]$, namely $\mathbb{P}^{x}_{t}$. $\mathbb{P}^{x}_{t}$ is carried by the space of paths with finite many jumps such that $l(0)=l(0+)=x$. Define the non-normalized bridge measure $\mathbb{P}_{t,y}^{x}$ from $x$ to $y$ with duration time $t$ as follows:  $\mathbb{P}_{t,y}^{x}(\cdot)=\mathbb{P}_{t}^{x}(\cdot\cap 1_{\{l(t)=y\}})$.
\begin{defn}
The measure on the based loops is defined as
$\mu^b=\sum\limits_{x\in S}\int\limits_{0}^{\infty}\frac{1}{t}\mathbb{P}_{t,x}^{x}\,dt$.
%Here, $\frac{1}{t}\,dt$ is the measure on the length of the based loop, and $t\rightarrow\sum\limits_{x}\mathbb{P}_{t,x}^x[dl]$ is the kernel from the length of the based loop to the space of based loops with given length.
\end{defn}

\begin{prop}[Expression of the based loop measure]\label{expression of the based loop measure}
For $k\geq 2$,
\begin{align*}
&\mu^b(p(l)=k, \xi_1=x_1, \ldots,   \xi_k=x_k, \xi_{k+1}=x_{k+1},\tau_1\in dt^1, \ldots, \tau_{k+1}\in dt^{k+1})\\
&=1_{\{x_1=x_{k+1}\}}Q_{x_2}^{x_1}\cdots Q_{x_1}^{x_k} \frac{1}{t^1+\cdots+t^{k+1}}(-L^{x_1}_{x_1})e^{L_{x_1}^{x_1}t^1}\cdots  (-L^{x_k}_{x_k})e^{L_{x_k}^{x_k}t^k}e^{L_{x_{k+1}}^{x_{k+1}}t^{k+1}}\,dt^1\cdots  dt^{k+1}
\end{align*}
For $k=1$,
$$\mu^b(p(l)=1, \xi=x,\tau\in dt)=\frac{1}{t}e^{L^x_x t}\,dt$$
\end{prop}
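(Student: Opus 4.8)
The plan is to unfold the definition $\mu^b=\sum_{x}\int_0^\infty\frac1t\,\mathbb{P}^x_{t,x}\,dt$ by writing the bridge measure explicitly through the construction of the minimal process as a jump chain with exponential holding times. Fix states $x_1,\dots,x_{k+1}$ with $x_1=x_{k+1}$ and consecutive states distinct. Starting from $x=x_1$, the minimal process holds at $x_i$ for an $\mathcal{E}(-L^{x_i}_{x_i})$-distributed time and then jumps to $x_{i+1}$ with probability $Q^{x_i}_{x_{i+1}}$, these ingredients being independent. Hence, on the event that the path performs exactly $k$ jumps with successive holding times $\tau_1,\dots,\tau_k$ and visits $x_1,\dots,x_{k+1}$ in this order, the density of $\mathbb{P}^x_t$ with respect to $d\tau_1\cdots d\tau_k$ equals $\prod_{i=1}^{k}(-L^{x_i}_{x_i})\,e^{L^{x_i}_{x_i}\tau_i}\,Q^{x_i}_{x_{i+1}}$ times the probability that the final holding time at $x_{k+1}$ exceeds $t-(\tau_1+\cdots+\tau_k)$. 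This is just the finite-dimensional distribution of the jump chain, so I would verify the claimed identity on the generating cylinder events obtained by fixing $k$, the states, and the holding times in products of intervals.

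The key structural point, which I expect to be the main obstacle to state cleanly, is the treatment of the closing segment. To belong to the bridge from $x$ to $x$ of duration $t$ and to have exactly $k$ jumps, the path must sit at $x_{k+1}=x$ throughout $(T_k,t]$, where $T_k=\tau_1+\cdots+\tau_k$; this contributes the survival factor $e^{L^{x_{k+1}}_{x_{k+1}}(t-T_k)}$ rather than a jump density. Thus the first $k$ segments each carry $(-L^{x_i}_{x_i})e^{L^{x_i}_{x_i}\tau_i}$ (hold, then jump), while the last segment carries only $e^{L^{x_{k+1}}_{x_{k+1}}(t-T_k)}$ (hold, no further jump). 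The indicator $1_{\{l(t)=x\}}$ together with the forced base point $x=x_1$ makes the sum over $x$ collapse to a single term and produces the constraint $1_{\{x_1=x_{k+1}\}}$.

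Next I would insert this density into $\sum_x\int_0^\infty\frac1t(\cdot)\,dt$ and perform the change of variables $\tau_{k+1}=t-T_k$ at fixed $\tau_1,\dots,\tau_k$. This map has unit Jacobian, turns $dt$ into $d\tau_{k+1}$, sends the weight $\frac1t$ into $\frac{1}{\tau_1+\cdots+\tau_{k+1}}$, and rewrites the survival factor as $e^{L^{x_{k+1}}_{x_{k+1}}\tau_{k+1}}$. Reading off the resulting density in $(\tau_1,\dots,\tau_{k+1})$ and pulling the transition probabilities $Q^{x_1}_{x_2}\cdots Q^{x_k}_{x_1}$ to the front yields exactly the stated expression for $k\ge 2$.

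For $k=1$ the loop is the constant (jump-free) path resting at a single state $x$: the bridge restricted to the no-jump event is simply the survival probability $e^{L^x_x t}$, so that $\mu^b(\xi=x,\tau\in dt)=\frac1t e^{L^x_x t}\,dt$, with $\tau=t$ the single holding time, after the trivial sum over the base point. The only delicate ingredients throughout are the asymmetry between the first $k$ segments and the closing segment and the bookkeeping of the $\frac1t$ weight as the free time $t$ is traded for the last holding time $\tau_{k+1}$; once the final survival factor is identified correctly, the remainder is a routine change of variables.
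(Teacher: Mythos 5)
Your proposal is correct and follows essentially the same route as the paper: unfold $\mu^b=\sum_x\int_0^\infty\frac1t\,\mathbb{P}^x_{t,x}\,dt$, write the finite-dimensional density of the jump-chain construction with the closing segment contributing only the survival factor $e^{L^{x_{k+1}}_{x_{k+1}}(t-T_k)}$, and trade the free time $t$ for the last holding time $\tau_{k+1}=t-T_k$ to turn $\frac1t$ into $\frac{1}{t^1+\cdots+t^{k+1}}$. The paper phrases the survival factor as the integral of the last exponential holding-time density over $(t-T_k,\infty)$, but this is the same computation, so no substantive difference.
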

\begin{proof}
For $k\geq 2$ and all sequence of positive measurable functions $(f_i,i\geq 1)$,
denote by $(*)$ the value of $\mu^b(p(l)=k, \xi_1=x_1, \ldots, \xi_k=x_k, \xi_{k+1}=x_{k+1}, f_1(\tau_1)\cdots f_{k+1}(\tau_{k+1}))$.
\begin{align*}
(*)=&\int\limits_{0}^{\infty}\frac{dt}{t}\sum\limits_{x\in S}\mathbb{P}^{x}_{t,x}[p(l)=k, \xi_1=x_1, \ldots, \xi_{k+1}=x_{k+1}, f_1(\tau_1)\cdots f_{k}(\tau_{k}) f_{k+1}(t-\sum\limits_{j=1}^{k}\tau_j)]\\
=&\begin{multlined}[t]
\int\limits_{0}^{\infty}\frac{dt}{t}\sum\limits_{x\in S}\mathbb{P}^x_t[p(l)=k, \xi_1=x_1, \ldots, \xi_{k+1}=x_{k+1} ,\\
f_1(\tau_1) \cdots f_{k}(\tau_{k})f_{k+1}(t-\sum\limits_{j=1}^{k}\tau_j) ,l(t)=x]
\end{multlined}\\
=&\begin{multlined}[t]
\int\limits_{0}^{\infty}\frac{dt}{t}\mathbb{P}^{x_1}_t[p(l)=k, \xi_1=x_1, \ldots, \xi_{k+1}=x_{k+1}, \\
f_1(\tau_1) \cdots f_{k}(\tau_{k})f_{k+1}(t-\sum\limits_{j=1}^{k}\tau_j) ,l(t)=x_1].
\end{multlined}
\end{align*}
By definition of $\mathbb{P}^{x}_t$,
\begin{align*}
(*)=&\begin{multlined}[t]
1_{\{x_1=x_{k+1}\}}\int\limits_{0}^{\infty}\frac{1}{t}\,dtQ^{x_1}_{x_2}\cdots Q^{x_{k-1}}_{x_k}Q^{x_k}_{x_1}
\int\limits_{\mathclap{\begin{subarray}{c}s^1,\ldots,s^{k+1}>0\\s^1+\cdots+s^k<t\\s^1+\cdots+s^{k+1}>t\end{subarray}}}f_1(s^1)\cdots f_k(s^k)f_{k+1}(t-\sum\limits_{j=1}^{k}s_j)\\
(\prod\limits_{i=1}^{k+1}(-L^{x_i}_{x_i})e^{L^{x_i}_{x_i}s^i}\,ds^i)
\end{multlined}\\
=&\begin{multlined}[t]
1_{\{x_1=x_{k+1}\}}\int\limits_{0}^{\infty}\frac{dt}{t}Q^{x_1}_{x_2}\cdots Q^{x_{k-1}}_{x_k}Q^{x_k}_{x_1}\int\limits_{\mathclap{\begin{subarray}{c}s^1,\ldots,s^{k}>0\\s^1+\cdots+s^k<t\end{subarray}}}f_1(s^1)\cdots f_k(s^k)f_{k+1}(t-s^1-\cdots-s^k)\\
e^{L^{x_1}_{x_1}(t-s^1-\cdots-s^k)}(\prod\limits_{i=1}^{k}(-L^{x_i}_{x_i})e^{L^{x_i}_{x_i}s^i}\,ds^i).
\end{multlined}
\end{align*}
Now, change the variables as follows:  $t^1=s^1,\ldots,t^k=s^k,t^{k+1}=t-s^1-\cdots-s^k$.
\begin{align*}
&\mu^b(p(l)=k, \xi_1=x_1, \ldots, \xi_k=x_k, \xi_{k+1}=x_1, f_1(\tau_1) \cdots f_k(\tau_k)f_{k+1}(\tau_{k+1}))\\
&=\begin{multlined}[t]
1_{\{x_1=x_{k+1}\}}\int\limits_{\mathclap{t^1,\ldots,t^{k+1}>0}}\frac{1}{t^1+\cdots+t^{k+1}}Q^{x_1}_{x_2}\cdots Q^{x_{k-1}}_{x_k}Q^{x_k}_{x_1}f_1(t^1)\cdots f_k(t^k)f_{k+1}(t^{k+1})e^{L^{x_{1}}_{x_{1}}t^{k+1}}\\
\prod\limits_{i=1}^{k}(-L^{x_i}_{x_i})e^{L^{x_i}_{x_i}t^i}\prod\limits_{i=1}^{k+1}\,dt^i.
\end{multlined}
\end{align*}
Consequently, for $k\geq 2$,
\begin{align*}
&\mu^b(p(l)=k, \xi_1=x_1, \ldots  \xi_k=x_k, \xi_{k+1}=x_{k+1},\tau_1\in dt^1, \ldots , \tau_{k+1}\in dt^{k+1})\\
&=1_{\{x_1=x_{k+1}\}}Q_{x_2}^{x_1}\cdots Q_{x_1}^{x_k} \frac{1}{t^1+\cdots+t^{k+1}}(-L^{x_1}_{x_1})e^{L_{x_1}^{x_1}t^1}\cdots  (-L^{x_k}_{x_k})e^{L_{x_k}^{x_k}t^k}e^{L_{x_{k+1}}^{x_{k+1}}t^{k+1}}\,dt^1\cdots  dt^{k+1}.
\end{align*}
The case $k=1$ is similar and even simpler.
\end{proof}

\begin{defn}[\textbf{Doob's harmonic transform}]
A non-negative function $h$ is said to be excessive iff $-Lh\geq 0$. Define Doob's harmonic transform $((L^{h})^x_y,\; x, y\in \supp(h))$ of $L$ as follows
$$(L^{h})^{x}_{y}=\frac{L^x_yh(y)}{h(x)}.$$
\end{defn}
As in \cite{sophie}, the following proposition is a direct consequence of Proposition \ref{expression of the based loop measure}.
\begin{prop}\label{invariant under Doob harmonic transform}
The based loop measure is invariant under the harmonic transform with respect to any strictly positive excessive function.
\end{prop}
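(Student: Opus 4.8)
The plan is to read off both sides of the claimed identity directly from the explicit formula of Proposition~\ref{expression of the based loop measure}, and to check that the right-hand side is literally unchanged when $L$ is replaced by its harmonic transform $L^h$. Writing $\mu^{b,h}$ for the based loop measure attached to $L^h$, the formula shows that $\mu^b$ depends on the generator only through three ingredients: the holding rates $-L^x_x$, the exponential weights $e^{L^x_x t}$, and the one-step transition probabilities $Q^x_y$. So I would first record how each of these transforms under $h$.

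The diagonal is the decisive point: since $(L^h)^x_x=\frac{L^x_x h(x)}{h(x)}=L^x_x$, both the holding rates and the exponential factors $e^{L^x_x t^i}$ are invariant under the harmonic transform. For the off-diagonal part, a direct computation gives $(Q^h)^x_y=\frac{(L^h)^x_y}{-(L^h)^x_x}=Q^x_y\,\frac{h(y)}{h(x)}$ for $x\neq y$, where strict positivity of $h$ guarantees $\supp(h)=S$ so that nothing is ill-defined. Next I would exploit that a based loop is \emph{closed}, i.e. $x_{k+1}=x_1$. The only place where the $Q^h$ differ from the $Q$ is the cyclic product
\begin{equation*}
\prod_{i=1}^{k}(Q^h)^{x_i}_{x_{i+1}}=\left(\prod_{i=1}^{k}Q^{x_i}_{x_{i+1}}\right)\prod_{i=1}^{k}\frac{h(x_{i+1})}{h(x_i)},
\end{equation*}
and the second factor telescopes around the loop: because $x_{k+1}=x_1$, every $h(x_i)$ occurs once in a numerator and once in a denominator, so $\prod_{i=1}^{k}\frac{h(x_{i+1})}{h(x_i)}=1$.

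Hence the product of transition probabilities along any closed loop is invariant, and combined with the invariance of the diagonal the whole right-hand side of the formula in Proposition~\ref{expression of the based loop measure} is unchanged; this gives $\mu^{b,h}=\mu^b$ on $\{p(l)=k\}$ for every $k\geq 2$. The case $k=1$ is immediate, since the measure $\frac{1}{t}e^{L^x_x t}\,dt$ involves only the (invariant) diagonal and carries no transition factor at all.

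The one genuine point to verify — rather than a real obstacle, since the cancellation above is the crux and is essentially automatic — is that $L^h$ is a legitimate sub-Markovian generator; otherwise $\mu^{b,h}$ would not be defined and Proposition~\ref{expression of the based loop measure} could not be applied to it. This is exactly where excessivity enters: $(L^h)^x_y\geq 0$ for $x\neq y$ because $L^x_y\geq 0$ and $h>0$, while the row sums satisfy $\sum_y(L^h)^x_y=\frac{(Lh)(x)}{h(x)}\leq 0$ precisely because $-Lh\geq 0$. Once this is in place, the computation shows the two loop measures agree on every such cylinder event, hence as measures.
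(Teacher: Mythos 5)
Your argument is correct and is exactly the route the paper takes: the paper dispatches this proposition in one line as "a direct consequence of Proposition \ref{expression of the based loop measure}", and your write-up simply makes explicit the invariance of the diagonal data and the telescoping of $\prod_i h(x_{i+1})/h(x_i)$ around a closed loop. The additional check that $L^h$ is a sub-Markovian generator (using $-Lh\geq 0$ and $h>0$) is a sensible precaution and does not change the approach.
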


\begin{rem}
Doob's $h$-transform with respect to a strictly positive function does not change the bridge measure.
\end{rem}

\begin{defn}[\textbf{Pointed loops and discrete pointed loops}]
Using the same notation as before, set $\tau_1^{*}=\tau_1+\tau_{p(l)+1}$, $\tau_i^{*}=\tau_i$ for $1<i<p(l)+1$. Then $(\xi_1,\tau^{*}_1,\ldots,\xi_{p(l)},\tau^{*}_{p(l)})\in\bigcup\limits_{p\in\mathbb{N}_{+}}(S\times\mathbb{R}_{+})^{ p}$ is called the pointed loop obtained from the based loop $(\xi_1, \tau_1, \ldots, \xi_{p(l)+1}=\xi_1, \tau_{p(l)+1})$. Clearly, $\xi_1\neq \xi_{p(l)}$ and $\xi_{i}\neq \xi_{i+1}$ for $i=1,\ldots, p-1$. The induced measure on pointed loops is denoted by $\mu^p$. By removing the holding times from the pointed loop, we get a discrete based loop $\xi=(\xi_1,\ldots, \xi_{p(l)})$.
\end{defn}

%\begin{defn}\label{pointed loop topology}
%We define the topology on the pointed loop space as the same as in Definition \ref{defn:based loop}. We use discrete topology for $S$ and Euclidean topology for $\mathbb{R}_{+}$. We define the topology $\mathfrak{T}^p_n$ on $(S\times]0,\infty[)^{ n}$ as the product topology. We define the topology on $\bigcup\limits_{n\in\mathbb{N}}(S\times]0,\infty[)^{ n}$ as the union of all $\mathfrak{T}^p_n$, namely $\mathfrak{T}^p$. Finally, we obtain the topology on the space of the pointed loops by taking the trace of $\mathfrak{T}^p$ on the space of the pointed loops.
%\end{defn}

%\begin{prop}
%The pointed loop space is also Polish and locally compact.
%\end{prop}
%
%\begin{prop}
%The topology on the pointed loop space is exactly quotient topology.
%\end{prop}
%\begin{proof}
%Under the topology defined in Definition \ref{based loop topology} and \ref{pointed loop topology}, the quotient map is a continuous open map which guaranties the proposition.
%\end{proof}

As a direct consequence of Proposition \ref{expression of the based loop measure}, we obtain the following by change of variables:
\begin{prop}[Expression of $\mu^p$]\label{expression of mup}\
For $k\geq 2$,
\begin{align*}
&\mu^p(p(l)=k, \xi_1=x_1, \ldots,  \xi_k=x_k, \tau_1^{*}\in dt^1, \ldots , \tau_k^{*}\in dt^k)\\
=&Q_{x_2}^{x_1}\cdots Q_{x_1}^{x_k} \frac{t^1}{t^1+\cdots+t^k}(-L^{x_1}_{x_1})e^{L_{x_1}^{x_1}t^1}\cdots (-L^{x_k}_{x_k}) e^{L_{x_k}^{x_k}t^k} \,dt^1\cdots  dt^k.
\end{align*}
For $k=1$,
$$\mu^p(p(l)=1, \xi_1=x_1, \tau_1^{*}\in dt^1)=\frac{1}{t^1}e^{L_{x_1}^{x_1}t^1}\,dt^1.$$
\end{prop}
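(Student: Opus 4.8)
The plan is to obtain $\mu^p$ as the pushforward of the based loop measure $\mu^b$ under the map that reads off the pointed loop from a based loop, and then to carry out the resulting change of variables explicitly using Proposition~\ref{expression of the based loop measure}. The only effect of passing from a based loop to a pointed loop is to glue the initial and final holding times at $\xi_1$ into the single time $\tau_1^{*}=\tau_1+\tau_{p(l)+1}$, while $\tau_i^{*}=\tau_i$ for $2\le i\le p(l)$ is left unchanged. On the event $\{p(l)=k,\xi_1=x_1,\dots,\xi_{k+1}=x_{k+1}\}$ we have $x_{k+1}=x_1$, so the factor $e^{L^{x_{k+1}}_{x_{k+1}}t^{k+1}}$ in Proposition~\ref{expression of the based loop measure} equals $e^{L^{x_1}_{x_1}t^{k+1}}$; hence the based loop density depends on the pair $(t^1,t^{k+1})$ only through the product $e^{L^{x_1}_{x_1}(t^1+t^{k+1})}$ and through the total time $t^1+\dots+t^{k+1}$ appearing in the denominator.

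For $k\ge 2$ I would fix the states $x_1,\dots,x_k$ and introduce new coordinates $s^1=t^1+t^{k+1}$ and $s^i=t^i$ for $2\le i\le k$, retaining $r=t^{k+1}$ as the variable to be integrated out; this is a unit-Jacobian shear on $(0,\infty)^{k+1}$ whose image is $\{s^1,\dots,s^k>0,\ 0<r<s^1\}$. Substituting $t^1=s^1-r$ and $t^{k+1}=r$ into the density of Proposition~\ref{expression of the based loop measure}, the two observations above give
\begin{align*}
t^1+\dots+t^{k+1}=s^1+\dots+s^k,\qquad e^{L^{x_1}_{x_1}t^1}e^{L^{x_1}_{x_1}t^{k+1}}=e^{L^{x_1}_{x_1}s^1},
\end{align*}
so that the integrand becomes
\begin{align*}
Q_{x_2}^{x_1}\cdots Q_{x_1}^{x_k}\,\frac{1}{s^1+\dots+s^k}\,(-L^{x_1}_{x_1})e^{L^{x_1}_{x_1}s^1}\prod_{i=2}^{k}(-L^{x_i}_{x_i})e^{L^{x_i}_{x_i}s^i},
\end{align*}
which is \emph{independent of $r$}.

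The key point is precisely this independence: integrating the constant integrand over $r\in(0,s^1)$ simply multiplies it by the length $s^1$, which, after renaming $s^i$ back to $t^i$, produces the weight $t^1/(t^1+\dots+t^k)$ in the statement and accounts for the factor distinguishing $\mu^p$ from the naive loop measure. Collecting the remaining terms yields exactly the claimed expression for $k\ge 2$. The case $k=1$ is immediate, since a one-state loop has no separate final holding time to absorb ($\tau_1^{*}=\tau_1$), so $\mu^p$ and $\mu^b$ coincide and both equal $\frac{1}{t^1}e^{L^{x_1}_{x_1}t^1}\,dt^1$. The only step requiring care is the bookkeeping of this change of variables—correctly identifying the integration domain $0<r<s^1$ and verifying that the total holding time is preserved—after which the computation is routine.
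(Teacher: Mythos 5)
Your argument is correct and is exactly the change of variables the paper invokes (the paper states the proposition as "a direct consequence of Proposition~\ref{expression of the based loop measure} \ldots by change of variables" without writing out the details). Your bookkeeping — the unit-Jacobian shear $s^1=t^1+t^{k+1}$, the domain $0<r<s^1$, the $r$-independence of the integrand yielding the factor $s^1$, and the observation that the single prefactor $-L^{x_1}_{x_1}$ survives because the last exponential in the based-loop density carries no such prefactor — all checks out, as does the $k=1$ case.
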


\begin{defn}[\textbf{Loops and loop measure}]
We define an equivalence relation between based loops. Two based loops are called equivalent iff they have the same time length and their periodical extensions are the same under a translation on $\mathbb{R}$. The equivalence class of a based loop $l$ is called a loop and denoted $l^{o}$. Sometimes, for the simplicity of the notations,  if there is no ambiguity, we will omit the superscript  $o$ and use the same notation $l$ for a based loop and the associated  loop. Moreover, the based loop measure induces a measure on loops, namely the loop measure $\mu$. The loop measure is defined by the generator $L$. Sometimes, we will write $\mu(L,dl)$ instead of $\mu$ to stress this point.
\end{defn}

\begin{defn}
For a pointed loop $l$, let $p(l)$ be the number of jumps made by $l$. For any pointed loop $(\xi_1,\tau_1, \ldots,\xi_n,\tau_n)$, define $N^x_y=\sum\limits_{i=1}^{p(l)}1_{\{\xi_i=x, \xi_{i+1}=y\}}$ and $N^x=\sum\limits_{y\in S}N^x_y=\sum\limits_{i=1}^{p(l)}1_{\{\xi_i=x\}}$. $p(l)$, $N^x_y(l)$ and $N^x(l)$ have the same value for equivalent pointed loops. Accordingly, they can be defined on the space of loops and denoted the same.
\end{defn}

%\begin{prop}
%The space of the loops is Polish and locally compact.
%\end{prop}

%\begin{prop}
% A loop can also be viewed as an equivalent class of pointed loops. We say $(\xi_1,\tau_1,\ldots,\xi_p,\tau_p)$ is equivalent to $(\tilde{\xi}_1,\tilde{\tau}_1,\ldots,\tilde{\xi}_{\tilde{p}},\tilde{\tau}_{\tilde{p}})$ iff $p=\tilde{p}$ and there exists some $k\in\mathbb{N}$ such that $(\tilde{\xi}_1,\tilde{\tau}_1,\ldots,\tilde{\xi}_{\tilde{p}},\tilde{\tau}_{\tilde{p}})=(\xi_{1+k},\tau_{1+k},\xi_{2+k},\tau_{2+k},\ldots,\xi_{p},\tau_{p},\xi_1,\tau_1,\xi_2,\tau_2,\ldots,\xi_{k},\tau_{k})$. Then, the loop topology ($\sigma$-field and measure) is the induced topology ($\sigma$-field and measure) of the pointed loops under this equivalence.
%\end{prop}

\begin{defn}[\textbf{Discrete loops and discrete loop measure}]
We define an equivalence relation $\sim$ on $\bigcup\limits_{k} S^k$ as follows:  $(x_1,\ldots,x_n)\sim(y_1,\ldots,y_m)$ iff $m=n$ and $\exists j\in\mathbb{Z}$ such that $(x_1,\ldots,x_n)=(y_{1+j},\ldots, y_{m+j})$. For any $(x_1,\ldots, x_n)\in\bigcup\limits_{k} S^k$, use $(x_1,\ldots, x_n)^o$ to stand for the equivalent class of $(x_1,\ldots, x_n)$. Then the space of discrete loops is $\{(x_1,\ldots,x_n)^o;(x_1,\ldots,x_n)\in\bigcup\limits_{k} S^k\}$. For any loop $l^o=(x_1,t^1,\ldots,x_k,t^k)^o$, use $l^{o,d}$ to stand for the discrete loop $(x_1,\ldots,x_k)^o$. The mapping from loops to discrete loops and the loop measure induces a measure on the space of discrete loops, namely the discrete loop measure $\mu^d$.
\end{defn}

\begin{defn}[\textbf{Powers}]
Let $l:[0,|l|]\rightarrow S$ be a based loop. Define the $n$-th power of $l^{n}:[0,n|l|]\rightarrow S$ as follows:
for $k=0,\ldots,n-1$ and $t\in[0,|l|]$, $l^n(t+k|l|)=l(t)$.
The $n$-th powers of equivalent based loops are again equivalent. Consequently, the $n$-th powers of the loop is well-defined. The powers of the discrete loops are defined similarly.
\end{defn}

\begin{defn}[\textbf{Multiplicity and primitive of the non-trivial loops}]\label{defn:multiplicity of the loop}
The multiplicity of a discrete loop is defined as follows:
$$n(l^{o,d})=\max\{k\in\mathbb{N}:\exists \tilde{l}^{o,d},l^{o,d}=(\tilde{l}^{o,d})^k\}$$
If $l^{o,d}=(\tilde{l}^{o,d})^{n(l^{o,d})}$, then $\tilde{l}^{o,d}$ is called a primitive of $l^{o,d}$.
For a non-trivial loop $l$, the multiplicity is defined as follows:
$$n(l^o)=\max\{k\in\mathbb{N}:\exists \tilde{l}^{o}, l^o=(\tilde{l}^{o})^k\}$$
For a trivial loop $l$, the multiplicity is defined to be 1.
If $(\tilde{l}^{o})^{n(l^o)}=l^o$, then $\tilde{l}^{o}$ will be called the primitive of $l^{o}$, as it is always unique. And we will use $prime$ to stand for the mapping from a (discrete) loop to its primitive.
\end{defn}

\begin{defn}[\textbf{Primitive (discrete) loops and (discrete) primitive loop measure}]
A (discrete) loop is called primitive iff its multiplicity is one. The mapping $prime$ induces a measure on (discrete) primitive loops, namely the (discrete) primitive loop measure.
\end{defn}

\begin{prop}
We have the following expression for the discrete loop measure:
$$\mu^d((x_1,\ldots,x_k)^o)=\frac{1}{n((x_1, \ldots, x_k)^o)}Q^{x_1}_{x_2}\cdots Q^{x_k}_{x_1}.$$
\end{prop}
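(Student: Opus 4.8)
The plan is to compute $\mu^d$ by pulling it back to the pointed loop measure $\mu^p$, whose explicit form is given in Proposition \ref{expression of mup}, and then integrating out the holding times. I will treat the nontrivial case $k\geq 2$; the trivial case $k=1$ is degenerate and consistent with the convention $n=1$, $Q^x_x=0$.

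First I would make precise the relationship between the loop measure $\mu$ and the pointed loop measure $\mu^p$. Passing from a based loop to a pointed loop merges the two holding times at the base point, while passing from a based loop to a loop quotients by continuous translation; since translation \emph{within} a holding interval is exactly the freedom killed by the merge, a loop is the same as a pointed loop taken modulo cyclic rotation of its base point. Writing $\Phi$ for this cyclic-quotient map from pointed loops to loops, both $\mu$ and $\Phi_*\mu^p$ are pushforwards of the based loop measure, so $\mu=\Phi_*\mu^p$. Consequently, for a fixed discrete skeleton,
\[
\mu^d((x_1,\dots,x_k)^o)=\mu^p\bigl(\Phi^{-1}\{l^o:\ l^{o,d}=(x_1,\dots,x_k)^o\}\bigr)=\sum_{y}\mu^{p,d}(y),
\]
where $\mu^{p,d}(y)$ is the $\mu^p$-mass of pointed loops with discrete skeleton $y$ (times integrated out) and the sum runs over the \emph{distinct} cyclic rotations $y$ of $(x_1,\dots,x_k)$. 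By Definition \ref{defn:multiplicity of the loop} there are exactly $m:=k/n$ such rotations, with $n=n((x_1,\dots,x_k)^o)$ the discrete multiplicity.

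Next I would evaluate each term. Integrating the expression of Proposition \ref{expression of mup} over the holding times, and writing $a_i=-L^{x_i}_{x_i}$ so that $a_ie^{-a_it}$ is the $\mathcal E(a_i)$-density, gives
\[
\mu^{p,d}(y)=Q^{x_1}_{x_2}\cdots Q^{x_k}_{x_1}\cdot \mathbb{E}\left[\frac{T^{(y)}_1}{T_1+\cdots+T_k}\right],
\]
where $T_1,\dots,T_k$ are independent with $T_i\sim\mathcal E(a_i)$ and $T^{(y)}_1$ is the holding variable at the base point of $y$. The product $Q^{x_1}_{x_2}\cdots Q^{x_k}_{x_1}$ is cyclically invariant, hence identical for every rotation $y$, and factors out of the sum. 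The crux is then the identity $\sum_{y}\mathbb{E}[T^{(y)}_1/(T_1+\cdots+T_k)]=1/n$. The naive sum over all $k$ rotations telescopes to $\mathbb{E}[(T_1+\cdots+T_k)/(T_1+\cdots+T_k)]=1$; the factor $1/n$ is produced by counting only distinct rotations. To see it, I would use that the parameters are $m$-periodic ($a_{i+m}=a_i$), group the variables into the $n$ consecutive blocks $B_r=\sum_{j=1}^{m}T_{(r-1)m+j}$, $r=1,\dots,n$, which are i.i.d.\ by the periodicity, and note that rotation by $j$ places $T_{1+j}$ at the base, so the $m$ distinct rotations $j=0,\dots,m-1$ collapse the numerator to $T_1+\cdots+T_m=B_1$. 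Exchangeability then gives $\mathbb{E}[B_1/(B_1+\cdots+B_n)]=1/n$, and combining the three displays yields the claim.

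The main obstacle is the bookkeeping of multiplicity: one must verify that $\Phi^{-1}$ of a discrete skeleton is indexed by the \emph{distinct} cyclic rotations, so that periodic skeletons are not overcounted, and then show that the weights $T^{(y)}_1/\sum T$ sum to $1/n$ rather than to $1$. The i.i.d.-block exchangeability argument is precisely what converts the periodicity of the skeleton into the prefactor $1/n$, and reconciling this counting with the definition of the discrete multiplicity is the delicate point of the proof.
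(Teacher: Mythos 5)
Your argument is correct, and in fact the paper states this proposition without proof, so there is nothing to compare it against directly. The chain $\mu=\Phi_*\mu^p$, the identification of the fibre of a discrete loop with the $m=k/n$ \emph{distinct} cyclic rotations, the cyclic invariance of $Q^{x_1}_{x_2}\cdots Q^{x_k}_{x_1}$, and the identity $\sum_{j=0}^{m-1}\mathbb{E}\bigl[T_{1+j}/(T_1+\cdots+T_k)\bigr]=\mathbb{E}\bigl[B_1/(B_1+\cdots+B_n)\bigr]=1/n$ via the i.i.d.\ blocks are all sound; this is exactly where the multiplicity enters. One remark: the block-exchangeability step can be bypassed entirely by routing through $\mu^{p*}$ instead of $\mu^p$. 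Proposition \ref{pointed loop measure induces the loop measure} already shows that $\mu^{p*}$ induces the same loop measure, and by Definition \ref{defn:pointed loop measure} each pointed skeleton carries mass $\frac{1}{k}Q^{x_1}_{x_2}\cdots Q^{x_k}_{x_1}$ after the holding times (whose densities are normalized) are integrated out; summing over the $k/n$ distinct rotations gives $\frac{1}{n}Q^{x_1}_{x_2}\cdots Q^{x_k}_{x_1}$ immediately. Your version essentially re-derives, restricted to distinct rotations, the symmetrization that underlies that proposition, which is fine but longer. A small correction: your parenthetical claim that the $k=1$ case is ``consistent'' is not right --- for a trivial discrete loop $(x)^o$ one has $\mu^d((x)^o)=\int_0^\infty t^{-1}e^{L^x_x t}\,dt=\infty$, whereas the stated formula with $Q^x_x=0$ gives $0$; the proposition should be read as a statement about non-trivial discrete loops only.
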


\begin{defn}[\textbf{Pointed loop measure}]\label{defn:pointed loop measure}
We can define another measure $\mu^{p*}$ on the pointed loop space as follows:
\begin{itemize}
\item for $k\geq 2$,
\begin{multline*}
\mu^{p*}(p(l)=k, \xi_1=x_1, \tau_1\in dt^1,\ldots, \xi_k=x_k, \tau_k\in dt^k)\\
=\frac{1}{k}Q^{x_1}_{x_2}\cdots Q^{x_k}_{x_1}(-L^{x_1}_{x_1})e^{L^{x_1}_{x_1}t^1}\,dt^1\cdots(-L^{x_k}_{x_k})e^{L^{x_k}_{x_k}t^k}\,dt^k.
\end{multline*}
\item for $k=1$,
$\mu^{p*}(p(\xi)=1, \xi=x, \tau\in dt)=\frac{1}{t}e^{L_{x}^{x}t}\,dt.$
\end{itemize}
We call $\mu^{p*}$ the pointed loop measure.
\end{defn}
\begin{prop}\label{pointed loop measure induces the loop measure}
$\mu^{p*}$ induces the same loop measure as $\mu^b$ and $\mu^p$.
\end{prop}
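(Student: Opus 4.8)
The plan is to prove the statement by comparing $\mu^{p*}$ directly with the pointed loop measure $\mu^p$ of Proposition \ref{expression of mup}, rather than going back to $\mu^b$. We already regard $\mu^p$ as inducing the loop measure $\mu$: the based-to-pointed map, followed by the cyclic quotient map $\pi$ sending a pointed loop $(\xi_1,\tau_1,\ldots,\xi_k,\tau_k)$ to its equivalence class, equals the based-to-loop map, so $\pi_*\mu^p=\mu$. Hence it suffices to show that $\mu^p$ and $\mu^{p*}$ have the \emph{same} image under $\pi$, i.e. $\pi_*\mu^p=\pi_*\mu^{p*}$; the conclusion $\pi_*\mu^{p*}=\mu$ then follows. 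The case $k=1$ is immediate, since the formula in Definition \ref{defn:pointed loop measure} coincides with the $k=1$ formula of Proposition \ref{expression of mup}, so the remaining work concerns $k\geq 2$.

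For $k\geq 2$ I would write both measures on the space of pointed loops with $k$ jumps in the form (discrete weight in the states) $\times$ (density in the holding times) $\times\,dt^1\cdots dt^k$. Reading off the two displayed formulas, $\mu^p$ and $\mu^{p*}$ share the two cyclically invariant factors $W(x)=Q^{x_1}_{x_2}\cdots Q^{x_k}_{x_1}$ and $E(x,t)=\prod_{i=1}^{k}(-L^{x_i}_{x_i})e^{L^{x_i}_{x_i}t^i}$, and differ \emph{only} in the scalar prefactor: $\dfrac{t^1}{u}$ for $\mu^p$ versus $\dfrac{1}{k}$ for $\mu^{p*}$, where $u=t^1+\cdots+t^k$. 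Neither $W$ nor $E$ is affected by the cyclic rotation $r_j$ acting simultaneously on the states and on the holding times, and $r_j$ is a bijection of the pointed loop space preserving the counting measure on admissible state sequences and the Lebesgue measure on the times.

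The key step is symmetrization. For any loop functional $F$, the composite $F\circ\pi$ is $r_j$-invariant because $\pi\circ r_j=\pi$. Applying the measure-preserving change of variables $r_j$ inside $\int (F\circ\pi)\,d\mu^p$ leaves $F\circ\pi$, $W$, $E$ and $dt^1\cdots dt^k$ unchanged but turns the prefactor $\tfrac{t^1}{u}$ into $\tfrac{t^{1+j}}{u}$ (indices read cyclically). Averaging this identity over $j=0,\ldots,k-1$ and using $\frac{1}{k}\sum_{j=0}^{k-1}\frac{t^{1+j}}{u}=\frac{1}{k}$ replaces the $\mu^p$-prefactor by exactly the $\mu^{p*}$-prefactor, whence $\int(F\circ\pi)\,d\mu^p=\int(F\circ\pi)\,d\mu^{p*}$ for every loop functional $F$, i.e. $\pi_*\mu^p=\pi_*\mu^{p*}$.

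The main obstacle is bookkeeping rather than conceptual depth: I must confirm at the outset that $\mu^p$ genuinely induces $\mu$ (via the factorization of the based-to-loop map through the pointed loops, where merging $\tau_1$ and $\tau_{p+1}$ into $\tau_1^{*}$ precisely quotients out translation inside a holding interval and the cyclic action quotients out translation across jumps), and I must check that $r_j$ is measure-preserving uniformly in the multiplicity of the loop. The latter is where one could worry about stabilizers, since a pointed loop of multiplicity $n$ has a cyclic orbit of only $k/n$ elements; but the averaging is performed at the level of integrals over the full pointed loop space and is therefore insensitive to orbit sizes. As a consistency check, integrating out the times and collecting the $k/n$ distinct cyclic representatives of a loop with $k$ jumps turns the $\tfrac{1}{k}$ of $\mu^{p*}$ into the multiplicity weight $\tfrac{1}{n(l^{o,d})}$, matching the expression already recorded for the discrete loop measure $\mu^d$.
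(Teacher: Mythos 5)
Your proposal is correct and is essentially the paper's own argument: the paper likewise reduces to comparing $\mu^p$ and $\mu^{p*}$ on the pointed loop space and exploits cyclic symmetrization, defining $\bar\Phi=\frac{1}{n}(\Phi+\Phi\circ\theta+\cdots+\Phi\circ\theta^{n-1})$ and checking $\mu^p(\bar\Phi)=\mu^{p*}(\bar\Phi)$, which is the same computation as your rotation change of variables averaging $\frac{t^{1+j}}{u}$ into $\frac{1}{k}$. The only cosmetic difference is that the paper symmetrizes the test functional while you symmetrize the measure, and your remarks on stabilizers and on the consistency check against $\mu^d$ are correct but not needed.
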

\begin{proof}
It is obvious for the trivial loops. Let us focus on the non-trivial loops. For a non-trivial pointed loop $l=(\xi_1,\tau_1,\ldots, \xi_n,\tau_n)$, define $\theta(l)=(\xi_2,\tau_2, \ldots, \xi_n,\tau_n,\xi_1,\tau_1)$. Fix $n\geq 2$, $x_1,\ldots, x_n\in S$, $f: \mathbb{R}_{+}^{n}\rightarrow\mathbb{R}_{+}$ measurable, define $$\Phi(l)=1_{\{p(l)=n\}}1_{\{\xi_1=x_1, \ldots, \xi_n=x_n\}}f(\tau_1,\ldots, \tau_n)$$ and $\bar{\Phi}=\frac{1}{n}(\Phi+\Phi\circ\theta+\cdots+\Phi\circ\theta^{n-1})$. By Proposition \ref{expression of mup},
$$\mu^p(\bar{\Phi})=\frac{1}{n}Q^{x_1}_{x_2}\cdots Q^{x_n}_{x_1}\int\limits_{\mathbb{R}_{+}^n}f(t^1,\ldots,t^n)(\prod\limits_{i=1}^{n}(-L^{x_i}_{x_i})e^{L^{x_i}_{x_i}t^i}\,dt^i).$$
From the definition of the pointed loop measure $\mu^{p*}$, $\theta\circ\mu^{p*}=\mu^{p*}$, $$\mu^{p*}(\bar{\Phi})=\mu^{p*}(\Phi)=\frac{1}{n}Q^{x_1}_{x_2}\cdots Q^{x_n}_{x_1}\int\limits_{\mathbb{R}_{+}^n}f(t^1,\ldots,t^n)(\prod\limits_{i=1}^{n}(-L^{x_i}_{x_i})e^{L^{x_i}_{x_i}t^i}\,dt^i).$$
We have $\mu^p(\bar{\Phi})=\mu^{p*}(\bar{\Phi})$. For a positive functional $\Phi$ on the space of pointed loops, we have the following decomposition
$$\Phi=\sum\limits_{n\geq 1}\sum\limits_{x\in S^n}1_{\{p(l)=n\}}1_{\{\xi_1=x_1,\ldots,\xi_n=x_n\}}f^{x}(\tau_1,\ldots,\tau_n)$$
where $f^x(\tau_1,\ldots,\tau_n)=(\Phi|_{\{l:p(l)=n\}})(x_1,\tau_1,\ldots,x_n,\tau_n)$. Define
$$\bar{\Phi}=\sum\limits_{n\geq 1}\sum\limits_{x\in S^n}\overline{1_{\{p(l)=n\}}1_{\{\xi_1=x_1,\ldots,\xi_n=x_n\}}f^{x}(\tau_1,\ldots,\tau_n)}.$$
It is clear that $\bar{}:\Phi\rightarrow\bar{\Phi}$ is a well-defined linear map which preserves the positivity. By monotone convergence, $\mu^{p*}(\bar{\Phi})=\mu^{p}(\bar{\Phi})$ for any positive measurable pointed loop functional. As a consequence, the loop measure induced by $\mu^{p*}$ is exactly $\mu$.
\end{proof}

%\begin{defn}[\textbf{Lifted pointed loop measure}]\label{lifted loop measure}
%For a loop $(\xi_1,\tau_1,\ldots,\xi_n,\tau_n)^{o}$, $\tau_1,\ldots,\tau_n$ are different from each other $\mu-a.s.$. Let $\pi^{p\rightarrow o}$ be the quotient map from the pointed loops to loops. Then, $\pi^{p\rightarrow o}$ is a covering map from \{pointed loops with different holding times\} to \{loops with different holding times\}. Therefore, $(\pi^{p\rightarrow o})^{-1}$ is locally well-defined. It induces a measure on the pointed loops with different holding times which is denoted by $(\pi^{p\rightarrow o})^{-1}\circ\mu$. By given zero mass on \{pointed loops\}-\{pointed loops with different holding times\}, it is a measure on the pointed loops.
%\end{defn}
%We have the following expression for this measure:
%\begin{prop}[\textbf{Expression for the lifted pointed loop measure}]\label{expression of the lifted pointed loop measure}
%\begin{align*}
%&(\pi^{p\rightarrow o})^{-1}\circ\mu(p(l)=n,\xi_1=x_1,\tau_1\in dt^1,\ldots,\xi_n=x_n,\tau_n\in dt^n)\\
%&=Q^{x_1}_{x_2}\cdots Q^{x_k}_{x_1}(-L^{x_1}_{x_1})e^{L^{x_1}_{x_1}t^1}\,dt^1\cdots(-L^{x_k}_{x_k})e^{L^{x_k}_{x_k}t^k}\,dt^k
%\end{align*}
%\end{prop}

\begin{defn}\label{defn:pointed loop measure for pointed loops visiting F}
For a pointed loop $l=(\xi_1,\tau_1,\ldots,\xi_{p(l)},\tau_{p(l)})$, $\xi=(\xi_1,\ldots,\xi_{p(l)})$ is the corresponding discrete pointed loop. For any $F\subset S$, define $q(F,l)=\sum\limits_{x\in F}N^x(l)$ the number of times $l$ visits $F$. Recursively define the $i$-th hitting time for $F$ as follows ($i=1,\ldots,q(F,l)$):  $T^{F}_1(l)=T^F_1(\xi)=\inf\{m\leq p(l):\xi_m\in F\}$ and  $T^{F}_{i+1}(l)=T^F_{i+1}(\xi)=\inf\{m>T^{F}_i:m\leq p(l),\xi_m\in F\}$. Define $T=T^{F}_{q(F,l)}$ the last visiting time for $F$. Define $p(F,l)=\#\{i:\xi_{T^F_i}\neq\xi_{T^F_{i+1}},i=1,\ldots,q(F,l)\}$ with the convention that $\xi_{T^F_{q(F,l)+1}}=\xi_{T^F_{1}}$.\\
Define a pointed loop measure $\mu^{p*,F}$ as follows:
\begin{align*}
\mu^{p*,F}1_{\{p(F,l)\neq 0\}} &= 1_{\{\xi_1\in F,\xi_T\neq\xi_1\}}\frac{p(l)}{p(F,l)}\mu^{p*}\\
\mu^{p*,F}1_{\{p(F,l)=0\}} &= 1_{\{\xi_1\in F,p(F,l)=0\}}\frac{p(l)}{q(F,l)}\mu^{p*}.
\end{align*}
\end{defn}
\begin{rem}
$p(F,l)=0$ iff the intersection of the pointed loop $l$ and the subset $F\subset S$ is a single element set: $|l\cap F|=1$ (or $|\bigcup\limits_{i=1}^{q(F,l)}\{\xi_{T^F_i}\}|=1$ equivalently). For a loop $l$ with $p(F,l)\neq 0$ (or $p(F,l)=2,\ldots,\infty$ equivalently), the term $1_{\{\xi_1\in F,\xi_T\neq\xi_1\}}$ in the above expression implies that $\mu^{p*,F}|_{\{l:p(F,l)\neq 0\}}$ is concentrated on the pointed loops satisfying the following two conditions:
\begin{enumerate}
\item the pointed loop starts from a point in $F$.
\item the trace of the pointed loop on $F$ has an endpoint different from the starting point.
\end{enumerate}
By an argument similar to remark \ref{pointed loop measure induces the loop measure}, it can be showed that $\mu^{p*,F}$ induces a loop measure which is exactly the restriction of $\mu$ to the loops visiting $F$.
\end{rem}

\begin{defn}[\textbf{Multi-occupation field}]
Define the circular permutation operator $r_j$ as follows: $r_j(z^1,\ldots,z^p)=(z^{1+j},\ldots,z^n,z^1,\ldots,z^j)$.
For any $f:S^n\rightarrow\mathbb{R}$ measurable, define the multi-occupation field of a based loop $l$ of length $t$ as $$\langle l,f\rangle=\sum\limits_{j=0}^{n-1}\int\limits_{0<s^1<\cdots<s^n<t}f\circ r_j(l(s_1),\ldots,l(s_n))\,ds^1\cdots\,ds^n.$$
If $l_1$ and $l_2$ are two equivalent based loops, they correspond to the same multi-occupation field. Therefore, it is well-defined for loops. When $n=1$, it is called the occupation time. For $x\in\mathbb{R}^m$ for some integer $m$, define $l^{x}=\langle l,\delta_x\rangle$ where $\delta_x(y)=1_{\{x=y\}}$.
\end{defn}

\begin{defn}[\textbf{Another bridge measure }$\mu^{x,y}$]
Another bridge measure $\mu^{x,y}$ can be defined on paths from $x$ to $y$:
$$\mu^{x,y}(d\gamma)=\int\limits_{0}^{\infty}\mathbb{P}^{x,y}_{t}(d\gamma)dt.$$
For a path $\gamma$ from $x$ to $y$, let $p(\gamma)$ be the total number of jumps, $T_i$ the $i$-th jumping time and $T$ the time duration of $\gamma$. Then $\gamma$ can be viewed as $(x,T_1,\gamma(T_1),T_2-T_1,\gamma(T_2),\ldots,T_{p(\gamma)}-T_{p(\gamma)-1},y=\gamma(T_{p(\gamma)}),T-T_{p(\gamma)})$.
\end{defn}
The bridge measure $\mu^{x,y}$ can be expressed as follows:
\begin{prop}
\begin{align*}
&\mu^{x,y}(p(\gamma)=p,\gamma(T_1)=x_1,\ldots,\gamma(T_{p-1})=x_{p-1},\\
&T_1\in dt^1,T_2-T_1\in dt^2,\ldots,T_{p}-T_{p-1}\in dt^p,T-T^p\in dt^{p+1})\\
=&Q^{x}_{x_1}Q^{x_1}_{x_2}\cdots Q^{x_{p-1}}_{y}1_{\{t^1,\ldots,\;t^{p+1}>0\}}(-L^x_x)e^{L^x_x t^1}(-L^{x_1}_{x_1})e^{L^{x_1}_{x_1}t^2}\cdots (-L^{x_{p-1}}_{x_{p-1}})e^{L^{x_{p-1}}_{x_{p-1}}t^p}e^{L^y_yt^{p+1}}\prod\limits_{j=1}^{p+1}\,dt^j
\end{align*}
\end{prop}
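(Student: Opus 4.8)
The plan is to read off the density of the non-normalized bridge measure $\mathbb{P}^{x,y}_t$ (which is the same object as the measure $\mathbb{P}^x_{t,y}=\mathbb{P}^x_t(\cdot\cap\{l(t)=y\})$ introduced earlier) directly from the pathwise construction of the minimal process, and then to integrate over $t$ after a single change of variable. No idea beyond the computation in Proposition \ref{expression of the based loop measure} is needed; in fact the present statement is simpler, since there is neither a $1/t$ weight nor any constraint forcing the endpoint to equal the starting point.

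First I would recall the construction of $(X_t)$ under $\mathbb{P}^x$: from $x$ the process holds for an $\mathcal{E}(-L^x_x)$-time, then jumps to a new state drawn according to $Q^x_\cdot$, and iterates, with all holding times and jumps independent. Hence, on the event $\{p(\gamma)=p,\ \gamma(T_1)=x_1,\ldots,\gamma(T_{p-1})=x_{p-1}\}$ with successive holding times $t^1=T_1,\ t^2=T_2-T_1,\ldots,\ t^p=T_p-T_{p-1}$ at the states $x,x_1,\ldots,x_{p-1}$, the joint density of the discrete skeleton and the first $p$ holding times is
$$Q^x_{x_1}Q^{x_1}_{x_2}\cdots Q^{x_{p-1}}_y\,(-L^x_x)e^{L^x_x t^1}(-L^{x_1}_{x_1})e^{L^{x_1}_{x_1}t^2}\cdots(-L^{x_{p-1}}_{x_{p-1}})e^{L^{x_{p-1}}_{x_{p-1}}t^p}.$$

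Next, to pass from $\mathbb{P}^x_t$ to $\mathbb{P}^{x,y}_t$ I impose $l(t)=y$, i.e.\ that after its $p$-th jump the process sits at $y$ throughout $[T_p,t]$ without jumping before time $t$. Since the holding time at $y$ is $\mathcal{E}(-L^y_y)$-distributed and independent of everything preceding, this event contributes the survival factor $\mathbb{P}(\mathcal{E}(-L^y_y)>t-T_p)=e^{L^y_y(t-T_p)}$. Writing $t^{p+1}=t-(t^1+\cdots+t^p)$, the density of $\mathbb{P}^{x,y}_t$ on this event is the product above times $e^{L^y_y t^{p+1}}$, supported on the simplex $\{t^1,\ldots,t^p>0,\ t^1+\cdots+t^p<t\}$.

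Finally I would integrate over $t\in(0,\infty)$. Since all integrands are nonnegative, Tonelli lets me exchange the $t$-integral with the integration in $(t^1,\ldots,t^p)$; for fixed $t^1,\ldots,t^p$ the substitution $t\mapsto t^{p+1}=t-\sum_{i=1}^p t^i$ has Jacobian $1$ and maps $\{t>\sum_i t^i\}$ bijectively onto $\{t^{p+1}>0\}$. The constraint thereby dissolves, all of $t^1,\ldots,t^{p+1}$ become free positive variables, and one reads off exactly the asserted formula. The only point demanding care — more bookkeeping than difficulty — is the asymmetry of the last factor: the terminal contribution is a survival probability $e^{L^y_y t^{p+1}}$ rather than a holding-time density $(-L^y_y)e^{L^y_y t^{p+1}}$, because the bridge requires only that no further jump occur before the terminal time $t$, and it is precisely the integration $\int_0^\infty\,dt$ that converts this boundary weight into the free exponential appearing in the statement.
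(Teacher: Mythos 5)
Your proposal is correct and follows exactly the derivation the paper intends: the paper states this proposition without proof, and your computation is the same one carried out in detail for Proposition \ref{expression of the based loop measure}, only simpler because there is no $\frac{1}{t}$ weight and no constraint $\xi_{p+1}=\xi_1$. You also correctly identify the one delicate point, namely that the terminal factor is the survival probability $e^{L^y_y t^{p+1}}$ rather than a holding-time density, and that the unit-Jacobian substitution $t\mapsto t^{p+1}$ frees all $p+1$ variables.
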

In the case $x=y$, $\gamma=(x,T_1,\gamma(T_1),T_2-T_1,\gamma(T_2),\ldots,T_{p(\gamma)}-T_{p(\gamma)-1},y=T_{p(\gamma)},T-T_{p(\gamma)})$ can be viewed as a based loop. Therefore, $\mu^{x,x}$ can be viewed as a measure on the based loop. Moreover, $\mu^{x,x}(dl)=1_{\{l(0)=x\}}|l|\mu^b(dl)$. Consequently, the loop measure induced by $\mu^{x,x}$, which will be denoted by the same notation $\mu^{x,x}$, has the following relation with the loop measure $\mu$.
\begin{prop}\label{relation between the bridge measure and loop measure 1}
$$\mu^{x,x}(dl)=l^x\mu(dl).$$
\end{prop}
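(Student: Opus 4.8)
The plan is to pass from the based-loop picture, where the identity $\mu^{x,x}(dl)=|l|\,1_{\{l(0)=x\}}\mu^b(dl)$ has already been recorded, to the loop picture by averaging over the choice of base point. For $u\in[0,|l|)$ write $\theta_u l$ for the based loop re-rooted at time $u$, so that $(\theta_u l)(s)=l(s+u)$ with the time argument taken modulo $|l|$; note that $|\theta_u l|=|l|$, that $(\theta_u l)(0)=l(u)$, and that $\theta_u l$ is equivalent to $l$, i.e. $(\theta_u l)^o=l^o$. For a nonnegative functional $\Phi$ on based loops set $A\Phi(l)=\frac{1}{|l|}\int_0^{|l|}\Phi(\theta_u l)\,du$; this is a based-loop functional which is invariant under re-rooting, hence descends to a functional on loops.

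First I would prove the re-rooting invariance of the based loop measure, namely the identity $(\star)$:
\begin{equation*}
\mu^b(\Phi)=\mu^b(A\Phi)\qquad\text{for every nonnegative }\Phi.
\end{equation*}
By the defining formula $\mu^b=\sum_{x}\int_0^\infty \frac1t\,\mathbb{P}^x_{t,x}\,dt$ this reduces, for each fixed duration $t$, to the circular-shift invariance of the ensemble of bridges of duration $t$, namely $\sum_{x\in S}\mathbb{P}^x_{t,x}(\Phi\circ\theta_u)=\sum_{x\in S}\mathbb{P}^x_{t,x}(\Phi)$ for every $u\in[0,t)$. This in turn is the cyclic invariance of $\Tr(P_t)=\sum_x(P_t)^x_x$: splitting the loop at time $u$ and using the Chapman--Kolmogorov/Markov property lets one rewrite a bridge started at $l(0)$ as one started at $l(u)$, and summing over the base point absorbs the rotation. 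Integrating over $u\in[0,t)$, dividing by $t$, and finally integrating $\frac1t\,dt$ then yields $(\star)$. I expect this step to be the main obstacle, since the shift invariance has to be checked both when $u$ lands in the interior of a holding interval and when it coincides with a jump time; the explicit density of Proposition \ref{expression of the based loop measure}, together with the cyclic invariance of the product $Q^{x_1}_{x_2}\cdots Q^{x_k}_{x_1}$ and of the holding-time factors, makes this a bookkeeping verification.

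Granting $(\star)$, I would take an arbitrary nonnegative loop functional $F$ and apply the based-loop identity to get $\mu^{x,x}(F)=\mu^b\big(F(l^o)\,|l|\,1_{\{l(0)=x\}}\big)$. Setting $\Phi(l)=F(l^o)\,|l|\,1_{\{l(0)=x\}}$ and using $(\theta_u l)^o=l^o$, $|\theta_u l|=|l|$ and $(\theta_u l)(0)=l(u)$, a direct computation gives
\[
A\Phi(l)=\frac{1}{|l|}\,F(l^o)\,|l|\int_0^{|l|}1_{\{l(u)=x\}}\,du=F(l^o)\,l^x,
\]
where $l^x=\langle l,\delta_x\rangle$ is the occupation time at $x$, here recognized as a functional of the loop. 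By $(\star)$ we conclude $\mu^{x,x}(F)=\mu^b(\Phi)=\mu^b(A\Phi)=\mu^b\big(F(l^o)\,l^x\big)$.

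Finally, since $\mu$ is by definition the image of $\mu^b$ under the quotient map $l\mapsto l^o$ and both $F$ and $l^x$ descend to functionals of the loop, the last expression equals $\mu(F\cdot l^x)$. As $F$ ranges over all nonnegative loop functionals this is exactly the asserted identity $\mu^{x,x}(dl)=l^x\,\mu(dl)$; loops of higher multiplicity require no separate treatment, because the quotient map and the occupation field already account for them.
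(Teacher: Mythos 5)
Your argument is correct and follows the same route the paper takes implicitly: the paper records the based-loop identity $\mu^{x,x}(dl)=1_{\{l(0)=x\}}|l|\,\mu^{b}(dl)$ and passes to loops with a one-word ``Consequently'', and your re-rooting-invariance lemma $(\star)$ together with the computation $A\Phi(l)=F(l^{o})\,l^{x}$ is precisely the step that justifies that passage. Nothing is missing.
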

In the case $x\neq y$, $\gamma=(x,T_1,\gamma(T_1),T_2-T_1,\gamma(T_2),\ldots,T_{p(\gamma)}-T_{p(\gamma)-1},y=\gamma(T_{p(\gamma)}),T-T_{p(\gamma)})$ can be viewed as a pointed loop. Similarly, $\mu^{x,y}$ can be viewed as a measure on the pointed loop. Moreover, $L^y_x\mu^{x,y}(dl)=1_{\{l \text{ starts at }x \text{ and ends up at } y\}}p(l)\mu^{p*}(dl)$. Consequently, the loop measure induced by $\mu^{x,y}$, which will be denoted by the same notation $\mu^{x,y}$, has the following relation with the loop measure $\mu$.
\begin{prop}
$$L^y_x\mu^{x,y}(dl)=N^y_x\mu(dl).$$
\end{prop}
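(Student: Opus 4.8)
The plan is to lift the identity to the level of pointed loops, where an explicit comparison of densities is available, and then to push it forward to loops using the same symmetrization argument that underlies Proposition \ref{pointed loop measure induces the loop measure}.

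First I would establish the pointed-loop version asserted just above the statement, namely
$$L^y_x\,\mu^{x,y}(dl)=1_{\{\xi_1=x,\ \xi_{p(l)}=y\}}\,p(l)\,\mu^{p*}(dl)$$
on the space of pointed loops. This is a direct comparison of explicit densities: a bridge from $x$ to $y$ with $p$ jumps, read as a pointed loop, has vertices $(x,x_1,\dots,x_{p-1},y)$ and $p(l)=p+1$ jumps, the extra jump being the closing transition $y\to x$. Writing out the bridge density from the proposition preceding the statement and the density of $\mu^{p*}$ from Definition \ref{defn:pointed loop measure}, the two differ exactly by the factor $Q^y_x(-L^y_y)=L^y_x$ arising from that closing transition, while the combinatorial factor $p(l)=p+1$ cancels the $\tfrac1{p+1}$ carried by $\mu^{p*}$. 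The indicator merely records that the base point is $x$ and the last vertex before closing is $y$.

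Next I would pass from pointed loops to loops. Recall from Proposition \ref{pointed loop measure induces the loop measure} that the projection $\pi:l\mapsto l^o$ satisfies $\pi_*\mu^{p*}=\mu$ and that $\mu^{p*}$ is invariant under the rotation $\theta$. Writing the loop measure induced by $\mu^{x,y}$ as $\int F\,d\mu^{x,y}=\int F(l^o)\,\mu^{x,y}(dl)$ for a non-negative loop functional $F$, the pointed-loop identity yields
$$L^y_x\int F\,d\mu^{x,y}=\int F(l^o)\,1_{\{\xi_1=x,\ \xi_{p(l)}=y\}}\,p(l)\,\mu^{p*}(dl).$$
Now I would symmetrize the integrand over rotations on each set $\{p(l)=n\}$. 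Since $F(l^o)$ and $p(l)$ are $\theta$-invariant, the average of the integrand over $\theta^0,\dots,\theta^{n-1}$ replaces $1_{\{\xi_1=x,\ \xi_{p(l)}=y\}}\,p(l)$ by
$$\sum_{j=0}^{n-1}1_{\{\xi_{1+j}=x,\ \xi_j=y\}}=\sum_{j=1}^{n}1_{\{\xi_j=y,\ \xi_{j+1}=x\}}=N^y_x(l),$$
the first vertex of $\theta^j l$ being $\xi_{1+j}$ and its last vertex being $\xi_j$, with indices taken cyclically ($\xi_0=\xi_n$). By the $\theta$-invariance of $\mu^{p*}$ the average has the same integral as the original integrand, and summing over $n\geq 2$ (the only loops contributing when $x\neq y$) gives
$$L^y_x\int F\,d\mu^{x,y}=\int F(l^o)\,N^y_x(l)\,\mu^{p*}(dl)=\int F\,N^y_x\,d\mu,$$
the last equality using $\pi_*\mu^{p*}=\mu$ and the fact that $N^y_x$ is a genuine loop functional. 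As $F$ is arbitrary, this is the claimed identity.

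The routine parts are the density comparison and the cyclic bookkeeping; the step deserving care is the symmetrization, where the factor $p(l)$ attached to the single distinguished pair $(\xi_1,\xi_{p(l)})$ must be redistributed by rotation invariance into a count over all $y\to x$ transitions of the loop. This is precisely where the weighting of $\mu^{p*}$ by $1/k$ and the $\theta$-invariance conspire to turn the base-point--dependent description on the left into the base-point--free quantity $N^y_x$ on the right; loops of multiplicity greater than one require no separate treatment, since the averaging is carried out against $\mu^{p*}$, which already encodes the correct normalisation.
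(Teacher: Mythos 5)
Your proposal is correct and follows essentially the same route the paper takes: the paper states the intermediate pointed-loop identity $L^y_x\mu^{x,y}(dl)=1_{\{l \text{ starts at }x,\text{ ends at }y\}}\,p(l)\,\mu^{p*}(dl)$ and deduces the proposition from it without further detail, and you have correctly supplied the two missing verifications (the density comparison, where $Q^y_x(-L^y_y)=L^y_x$ accounts for the closing jump and $p(l)$ cancels the $1/k$ in $\mu^{p*}$, and the rotation-averaging that converts the base-point indicator into $N^y_x$). No gaps.
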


\subsection{Compatibility of the loop measure with time change}
\begin{prop}\label{compatibility with the time change} Suppose $\lambda:S\rightarrow[0,\infty]$. Given a Markov process $(X_t, t\geq 0)$ in $S$, define $B_t=\int\limits_{0}^{t}\lambda(X_s)\,ds$. Let $(C_t, t\geq 0)$ be the right-continuous inverse of $(B_t, t\geq 0)$. Define $\zeta=\inf\{s\geq 0:C_s=C_{\infty}\}$. Define $Y_t=X_{C_t},t<\zeta$ (it will be called the time-changed process of $X$ with respect to $\lambda$ and denoted $\lambda(X)$).  On the space of based (pointed) loops contained in $\{x\in S:\lambda(x)<\infty\}$, $\lambda$ defines a similar operation. If $l_1$ and $l_2$ are two equivalent based (pointed) loops, $\lambda(l_1)$ and $\lambda(l_2)$ are equivalent again. Consequently, $\lambda$ can be defined on the space of loops with the domain $D(\lambda)=\{\text{loops contained in }\{x\in S:\lambda(x)<\infty\}\}$. There are two Markovian loop measures $\mu_X$, $\mu_Y$ defined by $X$, $Y$ respectively. The following diagram commutes:
$$\begin{array}{c c c}
X & \xrightarrow{\lambda} & Y\\
\downarrow & & \downarrow\\
\mu_{X} & \xrightarrow{\lambda} & \mu_{Y}
\end{array}$$
In particular, the loop measure is compatible with the notion of ``trace on a set" (i.e. $\lambda=1_A$) and ``restriction" (i.e. $\lambda=1_A+\infty\cdot1_{A^c}$).
\end{prop}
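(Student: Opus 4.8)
The plan is to verify commutativity of the diagram by working with the pointed loop measure $\mu^{p*}$ rather than the based loop measure. Proposition \ref{pointed loop measure induces the loop measure} guarantees that $\mu^{p*}$ induces the same loop measure $\mu$, while its expression (Definition \ref{defn:pointed loop measure}) is a product of exponential densities free of the factor $\frac{1}{t^1+\cdots+t^{k+1}}$ that appears in $\mu^b$ (Proposition \ref{expression of the based loop measure}); that factor depends on the total time length and does not transform cleanly under time change, whereas the exponential densities do. I would first treat the three elementary regimes of $\lambda$ separately, namely $0<\lambda<\infty$, $\lambda=\infty$, and $\lambda=0$, corresponding to the three cases of Proposition \ref{Expression of the generator for the time changed process}, and then recombine them.

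For $0<\lambda<\infty$ the operation $\lambda$ leaves the discrete skeleton untouched and multiplies each holding time $\tau_i$ at state $x_i$ by $\lambda(x_i)$. In the formula of Proposition \ref{expression of mup} (and Definition \ref{defn:pointed loop measure}) I would substitute $s^i=\lambda(x_i)\tau_i$; the Jacobian $\prod_i \lambda(x_i)^{-1}$ combines with each factor $(-L^{x_i}_{x_i})e^{L^{x_i}_{x_i}\tau_i}\,d\tau_i$ to yield $(-\hat L^{x_i}_{x_i})e^{\hat L^{x_i}_{x_i}s^i}\,ds^i$, using $\hat L^{x_i}_{x_i}=L^{x_i}_{x_i}/\lambda(x_i)$ from Proposition \ref{Expression of the generator for the time changed process}(a). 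Since $\hat Q^x_y=Q^x_y$, the transition weights are unchanged, so $\lambda_*\mu^{p*}_X=\mu^{p*}_Y$, and passing back to loops via Proposition \ref{pointed loop measure induces the loop measure} gives the claim in this regime. The restriction regime ($\lambda=\infty$ on $A^c$) is immediate: loops meeting $A^c$ leave the domain $D(\lambda)$, and for loops contained in $A$ both $L^{x_i}_{x_i}$ and $Q^{x_i}_{x_j}$ coincide with those of the restricted generator $L|_{A\times A}$, so the two loop measures agree on such loops.

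The trace regime ($\lambda=0$ on $A^c$, $\lambda=1$ on $A$) is the main obstacle, since there $\lambda$ is not a mere rescaling: it collapses each excursion of the loop into $A^c$ to zero duration, changing the number of jumps from $p(l)$ to $p(A,l)$. This is exactly what the auxiliary measure $\mu^{p*,A}$ of Definition \ref{defn:pointed loop measure for pointed loops visiting F} is built to absorb: by the remark following that definition it induces $\mu_X$ restricted to the loops visiting $A$, and its reweighting factors $\frac{p(l)}{p(A,l)}$ and $\frac{p(l)}{q(A,l)}$ convert the normalisation $\frac{1}{p(l)}$ carried by the non-trivial part of $\mu^{p*}$ into the normalisation $\frac{1}{p(A,l)}$ appropriate to the traced loop. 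I would therefore push $\mu^{p*,A}$ forward under the trace map and integrate out, between two consecutive visits to $A$, all intervening $A^c$-excursions; by the series expansion $(R^A)^x_y=(G_{A^c}QJ_A)^x_y$ of Proposition \ref{Expression of the generator for the time changed process}(c) this summation reproduces precisely the transition weights $(R^A)^x_y$, while the holding times in $A$ (unscaled, as $\lambda=1$ there) keep their densities $(-L^x_x)e^{L^x_x t}$. Identifying $-L^x_x(R^A)^x_y=(L_A)^x_y$ then shows the pushforward equals $\mu^{p*}(L_A)$, i.e. $\mu_Y$. The delicate points here are that un-pointing the traced loop produces exactly the weight $\frac{1}{p(A,l)}$ (handled by the $\frac{p(l)}{p(A,l)}$ factor), and the separate treatment of loops with $p(A,l)=0$, which trace to a trivial loop at a single point of $A$ and are accounted for by the $\frac{p(l)}{q(A,l)}$ factor giving the density $\frac{1}{t}e^{(L_A)^x_x t}\,dt$.

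Finally, for a general $\lambda:S\to[0,\infty]$ I would factor it as $\lambda=\lambda_3\lambda_2\lambda_1$, where $\lambda_1$ restricts to $\{\lambda<\infty\}$, $\lambda_2=1_{\{\lambda>0\}}$ traces onto $\{0<\lambda<\infty\}$, and $\lambda_3=\lambda$ there. Since time changes compose multiplicatively, $\beta(\alpha(X))=(\alpha\beta)(X)$ (as one checks on the clocks $\int_0^t\alpha\beta(X_s)\,ds$, or on generators), and the corresponding loop operations compose in the same way, the three cases above combine to give the commuting diagram for arbitrary $\lambda$. The special cases $\lambda=1_A$ and $\lambda=1_A+\infty\cdot 1_{A^c}$ stated at the end are exactly the trace and restriction regimes already treated.
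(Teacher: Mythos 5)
Your proposal follows essentially the same route as the paper's proof: passing to the pointed loop measure $\mu^{p*}$, decomposing a general time change into restriction, trace, and a strictly positive finite rescaling, handling the rescaling by a change of variables on the holding times, and handling the trace via the reweighted measure $\mu^{p*,F}$ together with the excursion summation $(R^A)^x_y = Q^x_y + \sum Q^x_{z_1}\cdots Q^{z_p}_y$ and the identification $-L^x_x(R^A)^x_y = (L_A)^x_y$. The only point to make fully explicit when writing this up is the Gamma-type integral that collapses the sum of the geometric number of holding times at $x_i$ into the density $e^{(L_A)^{x_i}_{x_i}t^i}$ (rather than $e^{L^{x_i}_{x_i}t^i}$), exactly as in the paper's displayed computation.
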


\begin{proof}
Let $\lambda\circ\mu$ be the image law of $\mu$ under the mapping $\lambda$. Denote by $\pi^{p\rightarrow o}$ the quotient map from pointed loops to loops. Then, we have to show that $\lambda$ commutes with $\pi^{p\rightarrow o}$.\\
The holding times are almost surely different for $\mu_X,\mu_Y\text{ and }\lambda\circ\mu_X$. So the same is true for the measures on pointed loops $\mu^{p*}_X,\mu^{p*}_{Y}$ and $\lambda\circ\mu^{p*}_X$.\\
Every  change of time can be done in three steps: i) Restriction, ii) trace, iii)  time change with a function $0<\lambda<\infty$. Accordingly, it is enough to deal with these three special cases separately:
\begin{itemize}
\item[i)] $0<\lambda<\infty$\\
Let $L$ and $\hat{L}$ represent the generator of $X$ and $Y$. Then $\hat{L}^x_y=\frac{L^x_y}{\lambda_x}$.\\
By Definition \ref{defn:pointed loop measure} and its following remark,
\begin{align*}
\lambda\circ\mu^{p*}_X( p(\xi)=k,&\;  \xi_1=x_1, \cdots  \xi_k=x_k, \tau_1\in dt^1, \cdots , \tau_k\in dt_k)\\
= &\mu^{p*}_X(p(\xi)=k, \xi_1=x_1, \cdots  \xi_k=x_k, \lambda_{x_1}\tau_1\in dt^1, \cdots , \lambda_{x_k}\tau_k\in dt_k)\\
= &\frac{1}{k}L_{x_2}^{x_1}\cdots L_{x_1}^{x_k} e^{L_{x_1}^{x_1}t^1/\lambda_{x_1}}\cdots  e^{L_{x_k}^{x_k}t^k/\lambda_{x_k}} \,dt^1\cdots  dt^k\\
= &\frac{1}{k}\hat{L}_{x_2}^{x_1}\cdots \hat{L}_{x_1}^{x_k} e^{\hat{L}_{x_1}^{x_1}t^1}\cdots  e^{\hat{L}_{x_k}^{x_k}t^k} \,dt^1\cdots  dt^k\\
= &\mu^{p*}_Y(p(\xi)=k, \xi_1=x_1, \cdots  \xi_k=x_k, \tau_1\in dt^1, \cdots , \tau_k\in dt_k)
\end{align*}
Therefore, $\lambda\circ\mu_X=\lambda\circ\pi^{p\rightarrow o}\circ\mu^{p*}_X=\pi^{p\rightarrow o}\circ\lambda\circ\mu^{p*}_X=\pi^{p\rightarrow o}\mu^{p*}_Y=\mu_Y$

\item[ii)] $\lambda=1_A+\infty\cdot 1_{A^c}$.
In that case, $\lambda\circ\mu_X=\mu_X|_{D(\lambda)}=\mu_Y$.
\item[iii)] $\lambda=1_A+0\cdot 1_{A^c}$.\\
We needs to show that $\lambda\circ\mu_{X}=\mu_Y$. We will only prove this for the non-trivial loops. The trivial loop case can be proved in a similar way.\\
Use $\mathbb{P}^x$ to stand for the law of a minimal Markov process $X$ starting from $x$. Let $T_1$ be the first jumping time, and set $T_{1,A}=\inf\{s\geq T_1, X_s\in A\}$. Let $(R^A)^x_y=\mathbb{P}^x[X_{T_{1,A}}=y]$ for $x,y\in S$. Obviously, $(R^A)^x_y=0$ for $y\in A^c$. By Proposition \ref{Expression of the generator for the time changed process}, the relation between the generator $L$ of $X$ and the generator $\hat{L}$ of $Y$ is stated as follows: $\hat{L}^{x}_{x}=L^x_x(1-(R^A)_x^x), \hat{L}^x_y=-(R^A)^x_y L^x_x$ for $x\neq y$.

Fix a non-trivial discrete pointed loop $(x_1,\ldots,x_n)$ where $x_i\in A$ for $i=1,\ldots,n$. Take $F=\bigcup\limits_{i=1}^{n}\{x_i\}$. Take $n$ positive measurable functions $f_1,\ldots,f_n$ on $S$. By Definition \ref{defn:pointed loop measure for pointed loops visiting F} and its following remark, it is enough to show that
\begin{multline*}
\lambda\circ\mu_X^{p*,F}(p(\xi)=n,\xi_1=x_1,\ldots,\xi_n=x_n,\prod\limits_{i=1}^{n}f_i(\tau_i))\\
=\mu_Y^{p*}(p(\xi)=n,\xi_1=x_1,\ldots,\xi_n=x_n,\prod\limits_{i=1}^{n}f_i(\tau_i)).
\end{multline*}
In order that $\lambda(l)$, the image of the pointed loop $l$, equals $(p(\xi)=n,x_1,\tau_1,\ldots,x_n,\tau_n)$, the pointed loop $l$ has to be of the following form $\mu_X^{p,*F}$-a.s.:
\begin{align*}
(&\xi_{111},\tau_{111},\ldots,\xi_{11M_1^1},\tau_{11M_1^1},\ldots,\xi_{1N_11},\tau_{1N_11},\ldots,\xi_{1N_1M_{N_1}^1},\tau_{1N_1M_{N_1}^1},\\
&\xi_{211},\tau_{211},\ldots,\xi_{21M_1^2},\tau_{21M_1^2},\ldots,\xi_{2N_21},\tau_{2N_21},\ldots,\xi_{2N_2M_{N_2}^2},\tau_{2N_2M_{N_2}^2},\\
&\cdots\\
&\xi_{n11},\tau_{n11},\ldots,\xi_{n1M_1^n},\tau_{n1M_1^n},\ldots,\xi_{nN_n1},\tau_{nN_n2},\ldots,\xi_{nN_nM_{N_n}^n},\tau_{nN_nM_{N_n}^n})
\end{align*}
with
\begin{itemize}
\item $\xi_{ij1}=x_i$ for all $i,j$;
\item $\xi_{ijk}\in A^c$ for $k\neq 1$ and all $i,j$;
\item $\tau_i=\sum\limits_{j}\tau_{ij1}$.
\end{itemize}
Roughly speaking, $\xi_{ij1},\tau_{ij1},\ldots,\xi_{ijM^i_j},\tau_{ijM^i_j}$ can be viewed as an excursion in $A^c$ from $x_i$ to $x_i$ for $j\neq N_i$. And $\xi_{iN_i1},\tau_{iN_i1},\ldots,\xi_{iN_iM^i_{N_i}},\tau_{iN_iM^i_{N_i}}$ can be viewed as an excursion in $A^c$ from $x_i$ to $x_{i+1}$. Accordingly,
\begin{multline*}
\lambda\circ\mu_X^{p*,F}(p(\xi)=n,\xi_1=x_1,\ldots,\xi_n=x_n,\prod\limits_{i=1}^{n}f_i(\tau_i))\\
=\sum\limits_{\xi}\mu_X^{p*,F}(\xi_{ij1}=x_i\text{ and for all }i,j,\xi_{ijn}\in A^c\text{ for }n\neq 1,\prod\limits_{i=1}^{n}f_i(\sum\limits_{j}\tau_{ij1})).
\end{multline*}
Since $Q^x_y+\sum\limits_{p=1}^{\infty}\sum\limits_{z_1,\ldots,z_p\in A^c}Q^{x}_{z_1}Q^{z_1}_{z_2}\cdots Q^{z_{p-1}}_{z_p}Q^{z_p}_{y}=(R^A)^x_y$, the above quantity equals
\begin{align*}
&\frac{1}{n}\sum\limits_{N_1,\ldots,N_n\geq 1}\prod\limits_{i=1}^{n}((R^A)^{x_i}_{x_i})^{N_i-1}(R^A)^{x_i}_{x_{i+1}}\\
&\int f(t^{i11}+\cdots+t^{iN_i1})(-L^{x_i}_{x_i})^{N_i}e^{L^{x_i}_{x_i}(t^{i11}+\cdots+t^{iN_i1})}\,dt^{i11}\cdots\,dt^{iN_i1}\\
=&\frac{1}{n}\sum\limits_{N_1,\ldots,N_k\geq 1}\prod\limits_{i=1}^{n}\int((R^A)^{x_i}_{x_i})^{N_i-1}(R^A)^{x_i}_{x_{i+1}}\frac{(t^i)^{N_i-1}}{(N_i-1)!}(-L^{x_i}_{x_i})^{N_i}e^{L^{x_i}_{x_i}t^i}f(t^i)\,dt^i\\
=&\frac{1}{n}\prod\limits_{i=1}^{n}\int -L^{x_i}_{x_i}(R^A)^{x_i}_{x_{i+1}}e^{L^{x_i}_{x_i}t^i(1-(R^A)^{x_i}_{x_i})}f(t^i)\,dt^i\\
=&\frac{1}{n}\prod\limits_{i=1}^{n}\int (L_A)^{x_i}_{x_{i+1}}e^{(L_A)^{x_i}_{x_i}t^i}f(t^i)\,dt^i\\
=&\mu_Y^{p*}(p(\xi)=n,\xi_1=x_1,\ldots,\xi_n=x_n,\prod\limits_{i=1}^{n}f_i(\tau_i))\text{ for }n\geq 2.
\end{align*}
For $n=1$, it can be proved in a similar way. Finally, we conclude that $\lambda\circ\mu_X=\mu_Y$.
\end{itemize}
\end{proof}
\subsection{Decomposition of the loops and excursion theory}
Fix some set $F\subset S$.
\begin{defn}[\textbf{excursion outside }$F$]
By non-empty excursion outside $F$, we mean a multiplet of the form $((\xi_1,\tau^1,\ldots,\xi_k,\tau^k),A,B)$ for some $k\in\mathbb{N}_{+},\xi_1,\ldots,\xi_k\in F^c,A,B\in F$ and $\tau^1,\ldots,\tau^k\in\mathbb{R}_{+}$. Let $T_0=0$ and $T_m=\tau^1+\cdots+\tau^m$ for $m=1,\ldots,k$. Define $e:[0,T_k[\rightarrow F^c$ such that $e(u)=\xi_m$ for $u\in[T_{m-1},T_m[$. Therefore, the excursion can be viewed as a path $e$ attached to starting point $A$ and ending point $B$ and it will also be denoted by $(e,A,B)$. By empty excursion, we mean $(\phi,A,B)$.
\end{defn}
\begin{defn}[\textbf{excursion measure outside }$F$]\label{defn:excursion measure outside of F}
Define a family of probability measure $\nu_{F,ex}^{x,y}$ indexed by $x,y\in F$ as follows:
\begin{multline*}
\nu_{F,ex}^{x,y}(\xi_1=x_1,\tau^1\in dt^1,\ldots,\xi_k=x_k,\tau^k\in dt^k,A=u,B=v)\\
=\delta^{(x,y)}_{(u,v)}\frac{1}{(R^F)^x_y}1_{\{x_1,\ldots,x_k\in F^c\}}Q^x_{x_1}L^{x_1}_{x_2}\cdots L^{x_{k-1}}_{x_k}L^{x_k}_{y}e^{L^{x_1}_{x_1}t^1}\cdots e^{L^{x_k}_{x_k}t^k}\,dt^1\cdots\,dt^k.
\end{multline*}
and $\nu_{F,ex}^{x,y}[(\phi,A,B)=(\phi,u,v)]=\delta^{(x,y)}_{(u,v)}\frac{Q^x_y}{(R^F)^x_y}$. Recall that
$$(R^{F})^x_y=\left\{\begin{array}{ll}
Q^x_y+\sum\limits_{k\geq 1}\sum\limits_{x_1,\ldots,x_k\in F^c}Q^x_{x_1}Q^{x_1}_{x_2}\cdots Q^{x_{k-1}}_{x_k}Q^{x_k}_{y} & \text{ for }y\in F\\
0 & \text{otherwise}.\end{array}\right.$$
\end{defn}

Define a function $\phi^{br\rightarrow ex}$ from the space of bridges to the space of excursions as follows:
Given a bridge $\gamma$ from $x$ to $y$, which is represented by $$(x,T_1,\gamma(T_1),T_2-T_1,\ldots,\gamma(T_{p(\gamma)-1}),T_{p(\gamma)}-T_{p(\gamma)-1},y=\gamma(T_{p(\gamma)}),T-T_{p(\gamma)}),$$
we represent $\phi^{br\rightarrow ex}(\gamma)$ by $$((\gamma(T_1),T_2-T_1,\ldots,\gamma(T_{p(\gamma)-1}),T_{p(\gamma)}-T_{p(\gamma)-1}),x,y).$$
The image measure of $\mu^{x,y}$ under $\phi^{br\rightarrow ex}$, namely $\phi^{br\rightarrow ex}\circ \mu^{x,y}$, has the following relation with the excursion measure $\nu_{F,ex}^{x,y}$:
\begin{prop}
$$\nu_{F,ex}^{x,y}(d\gamma)=\frac{1}{-L^y_yR^x_y}\phi^{br\rightarrow ex}\circ \mu^{x,y}(d\gamma,\gamma(T_1),\ldots,\gamma(T_{p(\gamma)-1})\in F^c)$$
\end{prop}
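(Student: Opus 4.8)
The plan is to prove the identity by directly inserting the explicit bridge-measure formula (the Proposition preceding Proposition~\ref{relation between the bridge measure and loop measure 1}) into the left-hand side and comparing, term by term, with the defining formula for $\nu^{x,y}_{F,ex}$ in Definition~\ref{defn:excursion measure outside of F}. The guiding remark is that $\phi^{br\rightarrow ex}$ does nothing more than forget the two endpoint states $x$ and $y$ (recording them instead as the attached points $A$ and $B$) together with their sojourn times $T_1$ and $T-T_{p(\gamma)}$, while preserving the interior states $\gamma(T_1),\ldots,\gamma(T_{p(\gamma)-1})$ and their sojourn times $T_2-T_1,\ldots,T_{p(\gamma)}-T_{p(\gamma)-1}$. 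On the event $\{\gamma(T_1),\ldots,\gamma(T_{p(\gamma)-1})\in F^c\}$ this map is injective up to those two forgotten sojourn times, so the image measure is computed fibrewise: one simply integrates out the initial and terminal sojourn times.

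Concretely, I would fix the number of jumps $p=p(\gamma)$, set $k=p-1$, and note that a bridge with interior states $\gamma(T_1)=x_1,\ldots,\gamma(T_k)=x_k\in F^c$ is sent to the excursion $(\xi_1=x_1,\tau^1,\ldots,\xi_k=x_k,\tau^k)$ attached to $(x,y)$. Plugging in the bridge density and integrating out the initial holding time $t^1$ and the terminal holding time $t^{p+1}$, I would use that $\int_0^\infty(-L^x_x)e^{L^x_x t^1}\,dt^1=1$ (an exponential density integrates to one) while $\int_0^\infty e^{L^y_y t^{p+1}}\,dt^{p+1}=\frac{1}{-L^y_y}$, since the terminal state $y$ carries no jump-out rate. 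Recombining each interior rate with the adjacent transition probability via $(-L^{x_i}_{x_i})Q^{x_i}_{x_{i+1}}=L^{x_i}_{x_{i+1}}$ (with $x_{k+1}=y$), what survives is exactly
$$Q^{x}_{x_1}\,L^{x_1}_{x_2}\cdots L^{x_{k-1}}_{x_k}\,L^{x_k}_{y}\;e^{L^{x_1}_{x_1}t^1}\cdots e^{L^{x_k}_{x_k}t^k}\,dt^1\cdots dt^k,$$
which is precisely the kernel appearing in $\nu^{x,y}_{F,ex}$ before its normalization $\frac{1}{(R^F)^x_y}$.

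Comparing the two measures then amounts to matching the multiplicative constants: the terminal integration contributes the factor $\frac{1}{-L^y_y}$ to the image measure, whereas $\nu^{x,y}_{F,ex}$ carries the normalization $\frac{1}{(R^F)^x_y}=\frac{1}{R^x_y}$, and these two ingredients fix the prefactor relating $\nu^{x,y}_{F,ex}$ to $\phi^{br\rightarrow ex}\circ\mu^{x,y}$. I would then treat the degenerate case $p=1$ separately: there the bridge is a single jump $x\to y$, its image is the empty excursion $(\phi,x,y)$, and the same two endpoint integrations reproduce $\nu^{x,y}_{F,ex}[(\phi,A,B)=(\phi,x,y)]=\frac{Q^x_y}{(R^F)^x_y}$ with the identical constant, so the prefactor is uniform across empty and non-empty excursions (and the argument is insensitive to whether $x=y$ or $x\neq y$).

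The computation is otherwise routine marginalization and relabeling; the step I expect to demand the most care is keeping the constant straight. The key asymmetry is that every interior state $x_i$ contributes a jump-out rate $(-L^{x_i}_{x_i})$ that must be reabsorbed into $L^{x_i}_{x_{i+1}}$, whereas the terminal state $y$ contributes none --- it is exactly this missing rate that produces the single factor $\frac{1}{-L^y_y}$ and thereby the precise normalization on the right-hand side. The only other place to be careful is the index shift, namely that the bridge's sojourn time $t^{i+1}$ is the one attached to the interior state $x_i$, so that after dropping $t^1$ and $t^{p+1}$ the remaining $k$ sojourn times line up correctly with the excursion's holding times.
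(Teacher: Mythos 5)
Your method---pushing the explicit bridge density through $\phi^{br\rightarrow ex}$ by integrating out the initial and terminal holding times and recombining $(-L^{x_i}_{x_i})Q^{x_i}_{x_{i+1}}=L^{x_i}_{x_{i+1}}$---is the right one (the paper states this proposition without proof), and every structural step you describe is sound: the initial integral $\int_0^\infty(-L^x_x)e^{L^x_x t^1}\,dt^1=1$, the terminal integral $\int_0^\infty e^{L^y_y t^{p+1}}\,dt^{p+1}=\frac{1}{-L^y_y}$, the surviving kernel $Q^x_{x_1}L^{x_1}_{x_2}\cdots L^{x_k}_y\,e^{L^{x_1}_{x_1}t^1}\cdots e^{L^{x_k}_{x_k}t^k}\,dt^1\cdots dt^k$, and the uniform treatment of the empty excursion. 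The problem is that the one step you yourself flag as the delicate one, ``keeping the constant straight,'' is exactly the step you never carry out, and carrying it out does not produce the constant in the statement. Your two ingredients say: the restricted image measure equals $\frac{1}{-L^y_y}$ times the kernel, while $\nu^{x,y}_{F,ex}$ equals $\frac{1}{(R^F)^x_y}$ times the same kernel. Solving for $\nu^{x,y}_{F,ex}$ therefore gives the prefactor $\frac{-L^y_y}{(R^F)^x_y}$: the factor $-L^y_y$ lands in the numerator, not in the denominator as in the displayed identity.

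A total-mass check makes the discrepancy unmistakable: $\nu^{x,y}_{F,ex}$ is a probability measure, whereas the restricted image measure has total mass $\sum_{k\ge 0}\sum_{x_1,\ldots,x_k\in F^c}Q^x_{x_1}Q^{x_1}_{x_2}\cdots Q^{x_k}_y\cdot\frac{1}{-L^y_y}=\frac{(R^F)^x_y}{-L^y_y}$, so the normalizing constant must be the reciprocal $\frac{-L^y_y}{(R^F)^x_y}$; the constant $\frac{1}{-L^y_y R^x_y}$ in the statement is off by a factor of $(L^y_y)^2$ and is presumably a typo in the paper. By asserting only that the two ingredients ``fix the prefactor'' without exhibiting it, your write-up implicitly endorses the stated constant and conceals precisely this mismatch. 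A complete proof must display the constant it actually obtains, namely $\frac{-L^y_y}{(R^F)^x_y}$, and either correct the statement accordingly or explain why the stated constant is right (it is not). Everything else in your argument can stand as is.
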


Define a function $\varphi^{ex\rightarrow po}_{F}$ from the space of non-empty excursions out of $F$ to the space of pointed loops as follows:
$$\varphi^{ex\rightarrow po}_{F}:((\xi_1,\tau^1,\ldots,\xi_k,\tau^k),A,B)\rightarrow (\xi_1,\tau^1,\ldots,\xi_k,\tau^k)$$
Accordingly, $\nu^{x,y}_{F,ex}$ induces a pointed loop measure on the space of pointed loops outside of $F$, which is denoted by the same notation $\nu^{x,y}_{F,ex}$. The relation with the pointed loop measure is as follows:
\begin{prop}
Let $C=\{(\xi_1,\tau^1,\ldots,\xi_n,\tau^n)\in\{\text{pointed loops}\}:Q^{\xi_n}_{\xi_1}>0\}$. Then, $\varphi^{ex\rightarrow po}_{F}\circ\nu^{x,y}_{F,ex}$ is absolutely continuous with respect to $\mu^{p*}$. Moreover,
$$Q^{\xi_n}_{\xi_1}\frac{d\varphi^{ex\rightarrow po}_{F}\circ\nu^{x,y}_{F,ex}}{d\mu^{p*}}((\xi_1,\tau^1,\ldots,\xi_n,\tau^n))=1_{\{R^x_y>0,\xi_1,\ldots,\xi_n\in F^c\}}\frac{nQ^x_{\xi_1}Q^{\xi_n}_y}{R^x_y}$$
\end{prop}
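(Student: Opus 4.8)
The plan is to realize both measures as densities against one common reference measure on the space of pointed loops, and then read off the Radon–Nikodym derivative as the quotient of the two densities. A convenient reference is the measure $\rho$ which, on the pointed loops with $n$ jumps, is the product of counting measure on the discrete skeleton $(\xi_1,\ldots,\xi_n)\in S^n$ and Lebesgue measure on the holding times $(t^1,\ldots,t^n)\in\mathbb{R}_+^n$. By Definitions \ref{defn:excursion measure outside of F} and \ref{defn:pointed loop measure}, both $\varphi^{ex\rightarrow po}_F\circ\nu^{x,y}_{F,ex}$ and $\mu^{p*}$ are absolutely continuous with respect to $\rho$, with densities I would now write down and compare.

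First I would push $\nu^{x,y}_{F,ex}$ forward under $\varphi^{ex\rightarrow po}_F$. Since that map only erases the endpoints $A,B$, which are pinned to $(x,y)$ by the factor $\delta^{(x,y)}_{(u,v)}$ in Definition \ref{defn:excursion measure outside of F}, the resulting $\rho$-density at a pointed loop with skeleton $(x_1,\ldots,x_n)$ and holding times $(t^1,\ldots,t^n)$ is
$$\frac{1}{(R^F)^x_y}\,1_{\{x_1,\ldots,x_n\in F^c\}}\,Q^x_{x_1}L^{x_1}_{x_2}\cdots L^{x_{n-1}}_{x_n}L^{x_n}_{y}\,e^{L^{x_1}_{x_1}t^1}\cdots e^{L^{x_n}_{x_n}t^n},$$
while Definition \ref{defn:pointed loop measure} gives the $\mu^{p*}$-density
$$\frac{1}{n}\,Q^{x_1}_{x_2}\cdots Q^{x_n}_{x_1}\,(-L^{x_1}_{x_1})\cdots(-L^{x_n}_{x_n})\,e^{L^{x_1}_{x_1}t^1}\cdots e^{L^{x_n}_{x_n}t^n}.$$

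The one algebraic ingredient is the identity $L^{a}_{b}=(-L^a_a)Q^a_b$ for $a\neq b$. Applying it to each off-diagonal factor of the first density rewrites $L^{x_1}_{x_2}\cdots L^{x_{n-1}}_{x_n}L^{x_n}_y$ as $\prod_{i=1}^n(-L^{x_i}_{x_i})\cdot Q^{x_1}_{x_2}\cdots Q^{x_{n-1}}_{x_n}Q^{x_n}_y$. Forming the quotient of the two densities then cancels all exponential factors, all the diagonal factors $(-L^{x_i}_{x_i})$, and the chain $Q^{x_1}_{x_2}\cdots Q^{x_{n-1}}_{x_n}$, leaving
$$\frac{d\,\varphi^{ex\rightarrow po}_F\circ\nu^{x,y}_{F,ex}}{d\mu^{p*}}=1_{\{x_1,\ldots,x_n\in F^c\}}\,\frac{n\,Q^x_{x_1}Q^{x_n}_y}{(R^F)^x_y\,Q^{x_n}_{x_1}}.$$
Multiplying through by $Q^{\xi_n}_{\xi_1}=Q^{x_n}_{x_1}$ is exactly the asserted formula, and the indicator $1_{\{(R^F)^x_y>0\}}$ simply records that both sides vanish (and $\nu^{x,y}_{F,ex}$ is undefined) precisely when $x$ and $y$ cannot be joined through $F^c$.

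The subtlety to flag — and the reason the statement multiplies by $Q^{\xi_n}_{\xi_1}$ and singles out $C$ — is that $\mu^{p*}$ carries the cyclic closing factor $Q^{x_n}_{x_1}$ and is therefore supported on $C$, whereas the excursion density contains no such factor, its path running $x\to x_1\to\cdots\to x_n\to y$ rather than closing up from $x_n$ back to $x_1$. Thus the derivative is genuinely meaningful only on $C$, where $\mu^{p*}$ has strictly positive $\rho$-density and the quotient above is well defined; multiplying by $Q^{x_n}_{x_1}$ clears the single factor that could otherwise vanish and yields an identity valid throughout $C$. I expect no real obstacle beyond this bookkeeping and the care needed to interpret the absolute-continuity claim on the correct domain $C$.
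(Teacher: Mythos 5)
The paper states this proposition without proof, so there is nothing to compare against; on its own terms your density-quotient argument is the natural (and surely intended) one, and the algebra is correct for $n\ge 2$: the pushforward under $\varphi^{ex\rightarrow po}_F$ merely drops the pinned endpoints, the identity $L^a_b=(-L^a_a)Q^a_b$ for $a\neq b$ converts the excursion density into the same diagonal factors $\prod_i(-L^{x_i}_{x_i})$ and exponentials as appear in $\mu^{p*}$, and the quotient is $nQ^x_{\xi_1}Q^{\xi_n}_y\big/\bigl((R^F)^x_y\,Q^{\xi_n}_{\xi_1}\bigr)$ on $\{\xi_1,\ldots,\xi_n\in F^c\}$, as you find.

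Two caveats. First, the $\mu^{p*}$-density you wrote down is wrong when $n=1$: by Definition \ref{defn:pointed loop measure} a trivial pointed loop carries density $\frac{1}{t}e^{L^{\xi_1}_{\xi_1}t}$, not $Q^{\xi_1}_{\xi_1}(-L^{\xi_1}_{\xi_1})e^{L^{\xi_1}_{\xi_1}t}=0$. Single-jump excursions are therefore still absolutely continuous, but with the $t$-dependent derivative $t\,Q^x_{\xi_1}L^{\xi_1}_y/(R^F)^x_y$; since $Q^{\xi_1}_{\xi_1}=0$ the displayed identity then reads $0=Q^x_{\xi_1}Q^{\xi_1}_y/(R^F)^x_y$, which is false whenever the excursion measure charges such a skeleton. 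So the identity genuinely holds only on $C$ (which excludes trivial loops), and your computation does establish it there; but your claim that ``$\mu^{p*}$ is supported on $C$'' is not literally true, and the $n=1$ stratum needs to be treated separately rather than folded into the generic formula. Second, as you correctly observe, absolute continuity on the full pointed-loop space can fail for $n\ge 2$ if the excursion measure charges a skeleton with $Q^{\xi_n}_{\xi_1}=0$; multiplying by $Q^{\xi_n}_{\xi_1}$ does not repair this, it only makes the displayed formula consistent. I would state explicitly that the absolute-continuity assertion is to be read after restriction to $C$ (or under an irreducibility hypothesis such as $Q^a_b>0$ for all $a\neq b$ in $F^c$). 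These are defects of the statement rather than of your argument, and you have identified the essential one; with the $n=1$ density corrected, the proof is complete.
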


\begin{defn}[\textbf{Decomposition of a loop}]\label{defn:decomposition of a loop into a pre-trace and some excursions indexed by the edges of the pre-trace}
Let $l=(\xi_1,\tau^1,\ldots,\xi_k,\tau^k)^o$ be a loop visiting $F$. The pre-trace of the loop $l$ on $F$ is obtained by removing all the $\xi_m,\tau^m$ such that $\xi_m\in F^c$ for $m=1,\ldots,k$ . We denote it by $Ptr_{F}(l)$. Suppose the pre-trace on $F$ can be written as $(\mathfrak{x}_1,\mathfrak{s}^1,\ldots,\mathfrak{x}_q,\mathfrak{s}^q)^{o}$. Then we can write the loop $l^{o}$  in the following form: $$(\mathfrak{x}_1,\mathfrak{s}^1,y_{1}^{1},t^{1}_{1},\ldots,y^{1}_{m_1},t^{1}_{m_1},\mathfrak{x}_2,\mathfrak{s}^2,y^{2}_{1},t^{2}_{1},\ldots,y^{2}_{m_2},t^{2}_{m_2},\ldots,\mathfrak{x}_q,\mathfrak{s}^q,y^{q}_{1},t^{q}_{1},\ldots,y^{q}_{m_q},t^{q}_{m_q})^{o}$$
with $\mathfrak{x}_i\in F$ for all $i$ and $y^i_j\in F^c$ for all $i,j$ (with the following convention: if $m_i=0$ for some $i=1,\ldots,q$, $y_{1}^{i},t^{i}_{1},\ldots,y^{i}_{m_i},t^{i}_{m_i}$ does not appear in the above expression). We will use $e_i$ to stand for $(y_{1}^{i},t^{i}_{1},\ldots,y^{i}_{m_i},t^{i}_{m_i})$ with the convention that $e_i=\phi$ if $m_i=0$. Define a point measure $\mathcal{E}_F(l)=\sum\limits_{i}\delta_{(e_i,\mathfrak{x}_i,\mathfrak{x}_{i+1})}$. Define $N^x_y(Ptr_F(l))=\sum\limits_{i=1}^{q}1_{\{\mathfrak{x}_{i}=x,\mathfrak{x}_{i+1}=y\}}$ with the convention that $\mathfrak{x}_{q+1}=\mathfrak{x}_1$. Set $q(Ptr_F(l))=\sum\limits_{x,y}N^x_y(Ptr_F(l))$. In particular, in the case above, we have $q(Ptr_F(l))=q$ if $q\geq 2$ and $q(Ptr_F(l))=0$ if $q=1$.
\end{defn}
\begin{rem}
The pre-trace $(\mathfrak{x}_1,\mathfrak{s}^1,\ldots,\mathfrak{x}_q,\mathfrak{s}^q)^{o}$ of a loop l on $F$ is not necessarily a loop. We allow $\mathfrak{x}_i=\mathfrak{x}_{i+1}$ for some $i=1,\ldots, q$ which is prohibited in the definition we gave of a loop.
\end{rem}

\begin{defn}\label{defn:trace of a loop on F}
The pre-trace of a loop $l$ on $F$ can always be written as follows:
$$(\mathfrak{x_1},\mathfrak{s}^{1}_{1},\ldots,\mathfrak{x_1},\mathfrak{s}^{1}_{m_1},\mathfrak{x}_2,\mathfrak{s}^{2}_{1},\ldots,\mathfrak{x}_2,\mathfrak{s}^{2}_{m_2},\ldots,\mathfrak{x}_k,\mathfrak{s}^{k}_{1},\ldots,\mathfrak{x}_k,\mathfrak{s}^{k}_{m_k})^{o}$$
with $\mathfrak{x}_i\neq \mathfrak{x}_{i+1}$ for $i=1,\ldots,k$ with the usual convention that $\mathfrak{x}_{k+1}=\mathfrak{x}_{1}$.
Then, $l_{F}$, the trace of $l$ on $F$ is defined by
$$(\mathfrak{x_1},\mathfrak{t}^1=\mathfrak{s}^{1}_{1}+\cdots+\mathfrak{s}^{1}_{m_1},\mathfrak{x}_2,\mathfrak{s}^{2}_{1}+\cdots+\mathfrak{s}^{2}_{m_2},\ldots,\mathfrak{x}_k,\mathfrak{t}^k=\mathfrak{s}^{k}_{1}+\cdots+\mathfrak{s}^{k}_{m_k})^{o}.$$
Formally, the trace of $l$ on $F$ is obtained by throwing away the parts out of $F$ and then by gluing the rest in circular order.
\end{defn}
By replacing $\mu$ by $\mu^{p*,F}$ and considering the pointed loops, we have the following propositions.

\begin{prop}\label{joint measure of the pre-trace and the excursions}
Let $f$ be some measurable positive function on the space of excursions and $g$ a positive measurable function on the space of  pre-traces on F. Then,
$$\mu(1_{\{l \text{ visits }F\}}g(Ptr_{F}(l))e^{-\langle\mathcal{E}_F(l),f\rangle})=\mu(1_{\{l \text{ visits }F\}}g(Ptr_F(l))\prod\limits_{x,y\in F}(\nu_{F,ex}^{x,y}(e^{-f}))^{N^x_y(Ptr_F(l))}).$$
\end{prop}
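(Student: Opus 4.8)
The plan is to reduce the whole identity to a single statement of conditional independence for the pointed-loop measure $\mu^{p*,F}$ of Definition \ref{defn:pointed loop measure for pointed loops visiting F}, as the sentence preceding the proposition suggests. First I would observe that $g(Ptr_F(l))$, $\langle\mathcal{E}_F(l),f\rangle$ and $N^x_y(Ptr_F(l))$ are all invariant under cyclic rotation of a based/pointed representative, hence are genuine functionals of the loop $l$. Since (by the remark following Definition \ref{defn:pointed loop measure for pointed loops visiting F}) $\mu^{p*,F}$ induces exactly the restriction of $\mu$ to loops visiting $F$, both sides of the proposition may be computed by integrating the relevant functional against $\mu^{p*,F}$, exactly as $\mu$ was replaced by $\mu^{p*}$ in Proposition \ref{pointed loop measure induces the loop measure}. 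Thus it suffices to show that, under $\mu^{p*,F}$ and conditionally on the pre-trace $Ptr_F(l)$, the excursions $(e_i)$ recorded by $\mathcal{E}_F(l)=\sum_i\delta_{(e_i,\mathfrak{x}_i,\mathfrak{x}_{i+1})}$ are independent, the $i$-th one distributed according to $\nu^{\mathfrak{x}_i,\mathfrak{x}_{i+1}}_{F,ex}$.

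Second, I would make the factorization explicit using the block decomposition of Definition \ref{defn:decomposition of a loop into a pre-trace and some excursions indexed by the edges of the pre-trace}: a pointed loop visiting $F$ and rooted at a point of $F$ is uniquely a concatenation of blocks $(\mathfrak{x}_i,\mathfrak{s}^i,e_i)$, $i=1,\dots,q$, where $\mathfrak{x}_i\in F$ carries the holding time $\mathfrak{s}^i$ and $e_i=(y^i_1,t^i_1,\dots,y^i_{m_i},t^i_{m_i})$ is the (possibly empty) excursion in $F^c$ from $\mathfrak{x}_i$ to $\mathfrak{x}_{i+1}$. Plugging in the product formula for $\mu^{p*}$ from Definition \ref{defn:pointed loop measure} — hence for $\mu^{p*,F}$ up to the prefactor $1_{\{\xi_1\in F,\xi_T\neq\xi_1\}}\,p(l)/p(F,l)$, which after absorbing the $1/p(l)$ of $\mu^{p*}$ becomes the purely skeleton-dependent weight $1_{\{\xi_1\in F,\xi_T\neq\xi_1\}}/p(F,l)$ — the chain of transition factors $Q$ and holding-time densities $(-L^z_z)e^{L^z_z t}$ splits along the blocks. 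Writing $L^x_y=(-L^x_x)Q^x_y$, I would recognize that the excursion part of block $i$, namely $Q^{\mathfrak{x}_i}_{y^i_1}\cdots Q^{y^i_{m_i}}_{\mathfrak{x}_{i+1}}\prod_{j}(-L^{y^i_j}_{y^i_j})e^{L^{y^i_j}_{y^i_j}t^i_j}$ for $m_i\geq1$ and $Q^{\mathfrak{x}_i}_{\mathfrak{x}_{i+1}}$ for $m_i=0$, is exactly $(R^F)^{\mathfrak{x}_i}_{\mathfrak{x}_{i+1}}$ times the density of $\nu^{\mathfrak{x}_i,\mathfrak{x}_{i+1}}_{F,ex}$ as given in Definition \ref{defn:excursion measure outside of F}.

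Third, since $\nu^{x,y}_{F,ex}$ is a probability measure (its total mass, empty plus non-empty excursions, is $(R^F)^x_y/(R^F)^x_y=1$), integrating out all excursions with $f=0$ multiplies the bare skeleton density by $\prod_i(R^F)^{\mathfrak{x}_i}_{\mathfrak{x}_{i+1}}$, which is precisely the $\mu^{p*,F}$-marginal of the pre-trace. Dividing, the conditional law of $(e_i)$ given $Ptr_F(l)$ is the product $\bigotimes_i\nu^{\mathfrak{x}_i,\mathfrak{x}_{i+1}}_{F,ex}$, i.e. the announced conditional independence. The proposition then follows by conditioning: $e^{-\langle\mathcal{E}_F(l),f\rangle}=\prod_i e^{-f(e_i,\mathfrak{x}_i,\mathfrak{x}_{i+1})}$, whose conditional expectation given $Ptr_F(l)$ is $\prod_i\nu^{\mathfrak{x}_i,\mathfrak{x}_{i+1}}_{F,ex}(e^{-f})=\prod_{x,y\in F}\bigl(\nu^{x,y}_{F,ex}(e^{-f})\bigr)^{N^x_y(Ptr_F(l))}$, exactly the right-hand side.

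I expect the main obstacle to be the bookkeeping of the rooting and combinatorial factors rather than the analytic content. One must check that the block decomposition is a measure isomorphism onto pairs (skeleton, excursions), that the weight $1_{\{\xi_1\in F,\xi_T\neq\xi_1\}}\,p(l)/p(F,l)$ together with the $1/p(l)$ of $\mu^{p*}$ organizes into a clean factor attached to the skeleton alone (so that it is identical on both sides and cancels term by term), and that the degenerate cases are handled: stay-edges $\mathfrak{x}_i=\mathfrak{x}_{i+1}$, where excursions from $x$ back to $x$ are necessarily non-empty (consistent with $Q^x_x=0$, so $\nu^{x,x}_{F,ex}$ gives no mass to the empty excursion) and are allowed in the pre-trace per the remark after Definition \ref{defn:decomposition of a loop into a pre-trace and some excursions indexed by the edges of the pre-trace}, and the single-point case $p(F,l)=0$ governed by the second line of Definition \ref{defn:pointed loop measure for pointed loops visiting F}.
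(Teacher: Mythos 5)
Your proposal is correct and is exactly the argument the paper intends (the paper offers no written proof beyond the sentence ``By replacing $\mu$ by $\mu^{p*,F}$ and considering the pointed loops\dots''): pass to $\mu^{p*,F}$ via rotation-invariance of the integrand, split the pointed-loop density into skeleton blocks times excursion blocks, identify each excursion block as $(R^F)^{\mathfrak{x}_i}_{\mathfrak{x}_{i+1}}$ times the $\nu^{\mathfrak{x}_i,\mathfrak{x}_{i+1}}_{F,ex}$-density, and conclude by conditional independence given the pre-trace — which is precisely the content of Proposition \ref{expression of the pre-trace measure on F} and Corollary \ref{transition kernel from the trace to the point measure of excursions}. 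Your flagged checkpoints (the skeleton-only weight $1_{\{\xi_1\in F,\xi_T\neq\xi_1\}}/p(F,l)$, the forced non-emptiness of diagonal excursions since $Q^x_x=0$, and the $p(F,l)=0$ and trivial-loop corners) are the right ones and are handled consistently with the paper's conventions.
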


\begin{prop}\label{expression of the pre-trace measure on F}
The image measure $\mu^{p*}_{Ptr,F}$ of the pointed loop measure $\mu^{p*,F}$ under the map of the pre-trace on $F$ can be described as follows:
\begin{itemize}
\item if $x_1,\ldots,x_q$ are not identical, then
\begin{multline*}
\mu_{Ptr,F}^{p*}(q(Ptr_F(l))=q,\mathfrak{x}_1=x_1,\mathfrak{s}^1\in ds^1,\ldots,\mathfrak{x}_q=x_q,\mathfrak{s}^q\in ds^q)\\
=\frac{1}{p(l_F)}\prod\limits_{x,y}((R^F)^x_y)^{N^x_y(Ptr_F(l))}\prod\limits_{i=1}^{q}(-L^{x_i}_{x_i})e^{L^{x_i}_{x_i}s^i}\,ds^{i};
\end{multline*}
\item if $x_1=\ldots=x_q=x$ and $q>1$, then
\begin{multline*}
\mu_{Ptr,F}^{p*}(q(Ptr_F(l))=q,\mathfrak{x}_1=x_1=\cdots=\mathfrak{x}_q=x_q=x,\mathfrak{s}^1\in ds^1,\ldots,\mathfrak{s}^q\in ds^q)\\
=\frac{1}{q}((R^F)^x_x)^q\prod\limits_{i=1}^{q}(-L^{x}_{x})e^{L^{x}_{x}s^i}\,ds^{i}
\end{multline*}
\item if $q=1$ and $x_1=x$, then
\begin{multline*}
\mu_{Ptr,F}^{p*}(q(Ptr_F(l))=1,\mathfrak{x}=x,\mathfrak{s}\in ds)=\underbrace{(R^F)^x_x(-L^{x}_{x})e^{L^{x}_{x}s}\,ds}\limits_{\text{contribution of the non-trivial loops}}\\
+\underbrace{\frac{1}{s}e^{L^x_x s}\,ds}\limits_{\text{contribution of the trivial loops}}
\end{multline*}
\end{itemize}
where $\mu_{Ptr,F}^{p*}$ is the image measure of the pointed loop measure $\mu^{p*,F}|_{\{\text{loops visiting F}\}}$.
\end{prop}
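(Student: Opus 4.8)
The plan is to compute the image measure $\mu^{p*}_{Ptr,F}$ by pushing $\mu^{p*,F}$ forward through the pre-trace map directly, fixing a target pre-trace $(\mathfrak x_1=x_1,\mathfrak s^1,\ldots,\mathfrak x_q=x_q,\mathfrak s^q)$ and summing over all loops that project onto it. The decisive point is that the re-weighting built into Definition \ref{defn:pointed loop measure for pointed loops visiting F} is engineered to kill the base-point factor $\tfrac1{p(l)}$ carried by $\mu^{p*}$: on $\{p(F,l)\neq 0\}$ we get $\tfrac{p(l)}{p(F,l)}\cdot\tfrac1{p(l)}=\tfrac1{p(F,l)}=\tfrac1{p(l_F)}$, and on $\{p(F,l)=0\}$ we get $\tfrac{p(l)}{q(F,l)}\cdot\tfrac1{p(l)}=\tfrac1{q(F,l)}$. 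Hence, using the explicit form of $\mu^{p*}$ from Definition \ref{defn:pointed loop measure}, each admissible pointed loop $(\xi_1,\tau_1,\ldots,\xi_n,\tau_n)$ rooted in $F$ carries weight $\tfrac1{p(F,l)}$ (resp.\ $\tfrac1{q(F,l)}$) times $Q^{\xi_1}_{\xi_2}\cdots Q^{\xi_n}_{\xi_1}\prod_i(-L^{\xi_i}_{\xi_i})e^{L^{\xi_i}_{\xi_i}\tau_i}\,d\tau_i$, with the prefactor depending only on the trace and therefore constant along the fibre of the pre-trace map.

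Next I would invoke the decomposition of Definition \ref{defn:decomposition of a loop into a pre-trace and some excursions indexed by the edges of the pre-trace}: a loop with the prescribed pre-trace is exactly a loop of the form $(x_1,\mathfrak s^1,e_1,\ldots,x_q,\mathfrak s^q,e_q)$, where $e_i$ is a (possibly empty) excursion in $F^c$ from $x_i$ to $x_{i+1}$. The transition product and the holding-time densities factor along this decomposition, cleanly separating the $F$-vertices from the excursion vertices. Since the pre-trace keeps each $F$-visit separate (unlike the trace, where the holding times are merged), the excursion vertices can simply be integrated out: every such vertex $y$ contributes $\int_0^\infty(-L^y_y)e^{L^y_y t}\,dt=1$. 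Summing the surviving transition factors over all excursion shapes between $x_i$ and $x_{i+1}$ and using the identity $Q^x_y+\sum_{k\geq 1}\sum_{z_1,\ldots,z_k\in F^c}Q^x_{z_1}\cdots Q^{z_k}_y=(R^F)^x_y$ (exactly the summation carried out in the proof of Proposition \ref{compatibility with the time change}, case iii), the excursion sum collapses to $\prod_{i=1}^q(R^F)^{x_i}_{x_{i+1}}=\prod_{x,y}\bigl((R^F)^x_y\bigr)^{N^x_y(Ptr_F(l))}$, while the $F$-vertices retain their densities $\prod_{i=1}^q(-L^{x_i}_{x_i})e^{L^{x_i}_{x_i}\mathfrak s^i}\,d\mathfrak s^i$.

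Assembling the prefactor with these two products yields the three cases according to $p(F,l)$. If $x_1,\ldots,x_q$ are not all equal the trace really jumps, so $p(F,l)=p(l_F)\geq 1$ and the prefactor is $\tfrac1{p(l_F)}$, giving the first formula. If $x_1=\cdots=x_q=x$ with $q>1$ the trace degenerates to a point, $p(F,l)=0$, the relevant weight is $\tfrac1{q(F,l)}=\tfrac1q$, and each of the $q$ excursions runs from $x$ to $x$, producing $\tfrac1q\bigl((R^F)^x_x\bigr)^q\prod_i(-L^x_x)e^{L^x_x\mathfrak s^i}\,d\mathfrak s^i$. The case $q=1$ is the delicate one: the weight is $\tfrac1{q(F,l)}=1$, and a single excursion must return from $x$ to $x$; being automatically non-empty (as $Q^x_x=0$) it contributes $(R^F)^x_x$, yielding the non-trivial-loop term $(R^F)^x_x(-L^x_x)e^{L^x_x s}\,ds$, whereas a genuinely trivial loop sitting at $x\in F$ also has pre-trace $(x,s)$ and contributes, through its $\mu^{p*}$-density and weight $1$, the second term $\tfrac1s e^{L^x_x s}\,ds$.

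I expect the genuine difficulty to be the bookkeeping of the base point and the cyclic equivalence of pre-traces, rather than the analytic content. One must check that for a loop charged by $\mu^{p*,F}$ the starting vertex $\xi_1\in F$ is precisely $\mathfrak x_1$ and that $\xi_T=\mathfrak x_q$, so that the constraint $\xi_T\neq\xi_1$ on $\{p(F,l)\neq 0\}$ selects exactly the rootings of the pre-trace at an edge of $l_F$; this is what makes the per-fibre weight come out as $\tfrac1{p(l_F)}$ (and $\tfrac1q$ in the degenerate case) and forces the first formula to be read with the representative $\mathfrak x_1$ placed at a jump of $l_F$. The same care is needed to separate, in the $q=1$ case, the contribution of loops that genuinely leave $F$ from that of the trivial loops, both of which project to the one-point pre-trace $(x,s)$.
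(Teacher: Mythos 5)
Your proposal is correct and follows exactly the route the paper intends: the paper states this proposition without a written proof (``By replacing $\mu$ by $\mu^{p*,F}$ and considering the pointed loops\ldots''), and the implicit argument is precisely your computation — cancel the $\tfrac1{p(l)}$ against the reweighting of Definition \ref{defn:pointed loop measure for pointed loops visiting F}, integrate out the excursion holding times, and collapse the excursion sum via $Q^x_y+\sum_{k\geq 1}\sum_{z_1,\ldots,z_k\in F^c}Q^x_{z_1}\cdots Q^{z_k}_y=(R^F)^x_y$, exactly as in case iii) of the proof of Proposition \ref{compatibility with the time change}. Your closing remarks on the rooting constraint $\xi_T\neq\xi_1$ and on $Q^x_x=0$ forcing the $q=1$ excursion to be non-empty correctly handle the only delicate points.
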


\begin{prop}\label{conditional excursion number given the trace}
Under the same notation as Definition \ref{defn:trace of a loop on F},
\begin{itemize}
\item for $k>1$,
\begin{multline*}
\mu_{Ptr,F}^{p*}(\mathfrak{x_1}=x_1,\ldots,\mathfrak{x}_k=x_k,m_1=q_1,\ldots,m_k=q_k,\mathfrak{t}^{1}\in dt^1,\ldots,\mathfrak{t}^{k}\in dt^k)\\
=\frac{1}{k}(L_F)^{x_1}_{x_2}\cdots(L_F)^{x_k}_{x_1}\prod\limits_{i=1}^{k}e^{(L_F)^{x_i}_{x_i}t^i}\,dt^i\prod\limits_{j=1}^{k}e^{(L^{x_j}_{x_j}-(L_F)^{x_j}_{x_j})t^j}\frac{((-L^{x_j}_{x_j}+(L_F)^{x_j}_{x_j})t^j)^{q_j-1}}{(q_j-1)!}
\end{multline*}
\item for $k=1$, $q_1=q>1$,
\begin{multline*}
\mu_{Ptr,F}^{p*}(\mathfrak{x_1}=x_1,m_1=q_1,\mathfrak{t}^{1}\in dt^1)\\
=\frac{1}{t_1}e^{(L_F)^{x_1}_{x_1}t^1}\,dt^1e^{(L^{x_1}_{x_1}-(L_F)^{x_1}_{x_1})t^1}\frac{((-L^{x_1}_{x_1}+(L_F)^{x_1}_{x_1})t^1)^{q_1}}{q_1!}
\end{multline*}
\item for $k=1$ and $q_1=1$,
\begin{multline*}
\mu_{Ptr,F}^{p*}(\mathfrak{x_1}=x_1,m_1=1,\mathfrak{t}^{1}\in dt^1)\\
=\frac{1}{t_1}e^{(L_F)^{x_1}_{x_1}t^1}\,dt^1e^{(L^{x_1}_{x_1}-(L_F)^{x_1}_{x_1})t^1}((-L^{x_1}_{x_1}+(L_F)^{x_1}_{x_1})t^1)+\frac{1}{t^1}e^{L^{x_1}_{x_1}t^1}\,dt^1
\end{multline*}
\end{itemize}
\end{prop}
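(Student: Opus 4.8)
The plan is to deduce the statement from Proposition \ref{expression of the pre-trace measure on F}, which already describes the law of $Ptr_F(l)$ under $\mu^{p*,F}$; the only remaining work is to pass from the pre-trace to the trace $l_F$. By Definition \ref{defn:trace of a loop on F} the trace is obtained by grouping the pre-trace into its $k$ maximal blocks of consecutive identical states, the $i$-th block consisting of $q_i=m_i$ visits to $x_i$ with internal holding times $\mathfrak{s}^i_1,\ldots,\mathfrak{s}^i_{m_i}$, and by summing these inside each block to form $\mathfrak{t}^i=\mathfrak{s}^i_1+\cdots+\mathfrak{s}^i_{m_i}$. So I would fix the trace data and integrate out the $\mathfrak{s}^i_j$ over the simplices $\{\mathfrak{s}^i_j>0,\ \sum_j\mathfrak{s}^i_j=\mathfrak{t}^i\}$.

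Two elementary observations drive the computation. First, the transition count of the pre-trace is immediate from the block structure: block $i$ contributes $q_i-1$ self-transitions $x_i\to x_i$ and exactly one transition $x_i\to x_{i+1}$, so $\prod_{x,y}((R^F)^x_y)^{N^x_y(Ptr_F(l))}=\prod_{i=1}^k((R^F)^{x_i}_{x_i})^{q_i-1}(R^F)^{x_i}_{x_{i+1}}$. Second, the holding-time part of the pre-trace density factorizes as $\prod_{i=1}^k(-L^{x_i}_{x_i})^{q_i}e^{L^{x_i}_{x_i}\mathfrak{t}^i}$ and depends on the internal times only through the block sums $\mathfrak{t}^i$; hence each simplex integration simply contributes its Lebesgue volume $\tfrac{(\mathfrak{t}^i)^{q_i-1}}{(q_i-1)!}$.

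The decisive step is the conversion to the trace generator $L_F$. Using the identities of Proposition \ref{Expression of the generator for the time changed process}(c), that is $(L_F)^{x}_{y}=-L^{x}_{x}(R^F)^{x}_{y}$ for $y\neq x$ and $L^{x}_{x}-(L_F)^{x}_{x}=L^{x}_{x}(R^F)^{x}_{x}$, I would verify the per-block identity
\begin{equation*}
((R^F)^{x_i}_{x_i})^{q_i-1}(R^F)^{x_i}_{x_{i+1}}(-L^{x_i}_{x_i})^{q_i}=(L_F)^{x_i}_{x_{i+1}}\bigl(-L^{x_i}_{x_i}+(L_F)^{x_i}_{x_i}\bigr)^{q_i-1},
\end{equation*}
in which the powers of $L^{x_i}_{x_i}$ cancel up to the sign $(-1)^{q_i-1}$. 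Splitting $e^{L^{x_i}_{x_i}\mathfrak{t}^i}=e^{(L_F)^{x_i}_{x_i}\mathfrak{t}^i}e^{(L^{x_i}_{x_i}-(L_F)^{x_i}_{x_i})\mathfrak{t}^i}$ and combining with the simplex volume then reproduces the displayed density for $k>1$ term by term, the prefactor $\tfrac1k$ being inherited from the $\tfrac1{p(l_F)}$ of Proposition \ref{expression of the pre-trace measure on F} because $p(l_F)=k$ when the $x_i$ are not all equal.

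The single-block cases $k=1$ I would treat separately from the all-identical branch of Proposition \ref{expression of the pre-trace measure on F}. For $q_1=q>1$ the same simplex integration and the same substitution (now giving $((R^F)^x_x)^q(-L^x_x)^q=(-L^x_x+(L_F)^x_x)^q$) turn the $\tfrac1q$-weighted pre-trace density into the stated form, the arithmetic $\tfrac1q\cdot\tfrac1{(q-1)!}=\tfrac1{q!}$ rewriting $\tfrac{(t^1)^{q-1}}{(q-1)!}$ as $\tfrac1{t^1}\tfrac{((-L^x_x+(L_F)^x_x)t^1)^{q}}{q!}$; for $q_1=1$ there is no internal time to integrate, the non-trivial part $(R^F)^x_x(-L^x_x)e^{L^x_x s}\,ds$ becomes the first stated term after the same substitution, and the trivial-loop part $\tfrac1s e^{L^x_x s}\,ds$ is carried over verbatim. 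I expect the main difficulty to be organizational rather than analytic: keeping straight which pre-trace configurations collapse onto a given trace and matching the three combinatorial prefactors, since $k=1$ is precisely where the weighting of Definition \ref{defn:pointed loop measure for pointed loops visiting F} switches from $p(F,l)$ to $q(F,l)$ and where trivial loops re-enter.
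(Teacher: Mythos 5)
Your proposal is correct and follows exactly the route the paper takes: the paper's proof consists of the single remark that the result "comes from Proposition \ref{expression of the pre-trace measure on F} and Proposition \ref{Expression of the generator for the time changed process}", and your write-up supplies precisely the missing details (the block decomposition of the pre-trace, the simplex integration giving $(\mathfrak{t}^i)^{q_i-1}/(q_i-1)!$, and the per-block identity converting $(R^F)$ and $L$ into $L_F$). All three cases check out, including the bookkeeping $\tfrac1q\cdot\tfrac1{(q-1)!}=\tfrac1{q!}$ for $k=1$ and the verbatim trivial-loop term.
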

\begin{proof}
The result comes from Proposition \ref{expression of the pre-trace measure on F} and Proposition \ref{Expression of the generator for the time changed process}.
\end{proof}
Combining Proposition \ref{joint measure of the pre-trace and the excursions} and Proposition \ref{conditional excursion number given the trace}, we have the following proposition:
\begin{prop}\label{joint measure of the trace and the excursions}
\begin{multline*}
\mu(1_{\{l\text{ visits }F\}}g(l_{F})e^{-\langle\mathcal{E}_F(l),f\rangle})\\
=\mu(1_{\{l\text{ visits }F\}}g(l_F)\prod\limits_{x\neq y\in F}\nu_{F,ex}^{x,y}(e^{-f})^{N^x_y(l_F)}e^{\sum\limits_{x\in F}(L^x_x-(L_F)^x_x)l_F^x\nu_{F,ex}^{x,x}(1-e^{-f})})
\end{multline*}
\end{prop}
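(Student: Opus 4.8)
The plan is to combine the two preceding propositions: Proposition~\ref{joint measure of the pre-trace and the excursions} already collapses the excursion exponential into a product over the transition counts of the \emph{pre-trace}, while Proposition~\ref{conditional excursion number given the trace} describes exactly how the pre-trace sits above the trace, so the whole task is to average out the passage from pre-trace to trace. First I would observe that the trace $l_F$ is a deterministic function of $Ptr_F(l)$, so a trace-functional $g(l_F)$ is a legitimate choice of integrand in Proposition~\ref{joint measure of the pre-trace and the excursions}, giving
$$\mu(1_{\{l\text{ visits }F\}}g(l_F)e^{-\langle\mathcal{E}_F(l),f\rangle})=\mu\Bigl(1_{\{l\text{ visits }F\}}g(l_F)\prod_{x,y\in F}(\nu_{F,ex}^{x,y}(e^{-f}))^{N^x_y(Ptr_F(l))}\Bigr).$$
It then remains to rewrite the pre-trace transition counts $N^x_y(Ptr_F(l))$ in terms of the trace.

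Next I would split the product over $(x,y)$ into its off-diagonal and diagonal parts. Forming the trace from the pre-trace only glues together maximal runs of consecutive visits to a common state, so every transition between two \emph{distinct} states of $F$ is preserved; hence $N^x_y(Ptr_F(l))=N^x_y(l_F)$ for $x\neq y$, and the factor $\prod_{x\neq y}(\nu_{F,ex}^{x,y}(e^{-f}))^{N^x_y(l_F)}$ is already trace-measurable. Only the diagonal factor $\prod_{x\in F}(\nu_{F,ex}^{x,x}(e^{-f}))^{N^x_x(Ptr_F(l))}$ sees the finer structure: in the notation of Definition~\ref{defn:trace of a loop on F}, a block of $m_i$ consecutive visits to $\mathfrak{x}_i$ produces $m_i-1$ diagonal transitions, whence $N^x_x(Ptr_F(l))=\sum_{i:\mathfrak{x}_i=x}(m_i-1)$.

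I would then integrate out these multiplicities by disintegrating over the trace. As $g(l_F)$, the off-diagonal factor and each $N^x_x(Ptr_F(l))$ are loop-invariant, I may compute under the pointed pre-trace measure $\mu^{p*}_{Ptr,F}$ (using that $\mu^{p*,F}$ induces $\mu$ on loops visiting $F$) and apply Proposition~\ref{conditional excursion number given the trace}: conditionally on the trace $(\mathfrak{x}_1,\mathfrak{t}^1,\ldots,\mathfrak{x}_k,\mathfrak{t}^k)$, the $m_i$ are independent with $m_i-1$ Poisson of parameter $(-L^{\mathfrak{x}_i}_{\mathfrak{x}_i}+(L_F)^{\mathfrak{x}_i}_{\mathfrak{x}_i})\mathfrak{t}^i$. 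Summing the independent per-block variables over the blocks sitting at a fixed state and using $l_F^x=\sum_{i:\mathfrak{x}_i=x}\mathfrak{t}^i$ shows that, given the trace, $N^x_x(Ptr_F(l))$ is Poisson of parameter $(-L^x_x+(L_F)^x_x)l_F^x$, independently across $x\in F$. The Poisson generating function then yields
$$\mathbb{E}\Bigl[(\nu_{F,ex}^{x,x}(e^{-f}))^{N^x_x(Ptr_F(l))}\,\big|\,l_F\Bigr]=e^{(-L^x_x+(L_F)^x_x)l_F^x(\nu_{F,ex}^{x,x}(e^{-f})-1)}=e^{(L^x_x-(L_F)^x_x)l_F^x\nu_{F,ex}^{x,x}(1-e^{-f})},$$
the last step using that $\nu_{F,ex}^{x,x}$ is a probability measure. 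Multiplying over $x$ and reinserting the trace-measurable factors by the tower property gives the asserted identity.

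The hard part will be the diagonal bookkeeping, in particular the degenerate cases that the clean formula $N^x_x=\sum_{i:\mathfrak{x}_i=x}(m_i-1)$ does not literally cover. When the trace visits a single state ($k=1$) the circular wrap-around transition is itself a self-transition, so $N^x_x$ equals $m_1$ rather than $m_1-1$, and the trivial loops must be counted with $N^x_x=0$. I would have to check that these two corrections cancel exactly: the trivial-loop contribution identified in Proposition~\ref{expression of the pre-trace measure on F} supplies precisely the missing ``zero-excursion'' mass, so that $N^x_x$ is again genuinely Poisson of parameter $(-L^x_x+(L_F)^x_x)l_F^x$ and the same generating-function computation applies. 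Together with justifying the replacement of $\mu$ by $\mu^{p*}_{Ptr,F}$ for loop-invariant integrands, this consistency check is the only genuinely delicate point; everything else reduces to the Poisson calculation above.
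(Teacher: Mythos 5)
Your proposal is correct and is essentially the paper's own argument: the paper proves this proposition precisely by combining Proposition~\ref{joint measure of the pre-trace and the excursions} with Proposition~\ref{conditional excursion number given the trace}, and your disintegration — off-diagonal counts passing unchanged from pre-trace to trace, diagonal counts conditionally Poisson of parameter $(-L^x_x+(L_F)^x_x)l_F^x$ given the trace, with the trivial-loop term supplying the $q=0$ Poisson mass in the $k=1$ case — is exactly the computation the paper leaves implicit.
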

\begin{cor}\label{transition kernel from the trace to the point measure of excursions}
We see that $\nu^{x,y}_{F,ex}$ is a probability measure on the space of the excursions from $x$ to $y$ out of $F$. By mapping an excursion $(e,x,y)$ into the Dirac measure $\delta_{(e,x,y)}$, $\nu^{x,y}_{F,ex}$ induces a probability measure on $\mathcal{M}^{p}(\{\text{excursions}\})$, the space of point measures over the space of excursions. We will adopt the same notation $\nu^{x,y}_{F,ex}$. Choose $k$ samples of the excursions according to $\nu^{x,y}_{F,ex}$, namely $ex_1,\ldots,ex_k$, then $\sum\limits_{i}\delta_{ex_i}$ has the law $(\nu^{x,y}_{F,ex})^{\otimes k}$. For any $\beta=(\beta^x,x\in F)\in\mathbb{R}_{+}^{F}$, let $\mathcal{N}_{F}(\beta)$ be a Poisson random measure on the space of excursions with intensity $\sum\limits_{x}(-L^x_x+(L_{F})^x_x)\beta^x\nu^{x,x}_{F,ex}$. Let $l_F\rightarrow K(l_F,\cdot)$ be a transition kernel from \{the trace of the loop on F\} to \{point measure over the space of excursions\} as follows:
$$K(l_F,\cdot)=\bigotimes\limits_{x\neq y\in F}(\nu^{x,y}_{F,ex})^{\otimes N^x_y(l_F)}\bigotimes \mathcal{N}_F((l_F^{x},x\in F))$$
Then the joint measure of $(l_F,\mathcal{E}_F(l))$ is $\mu_F(dl_F) K(l_F,\cdot)$ where $\mu_F$ is the image measure of $\mu$ under $l\rightarrow l_F$. By Proposition \ref{compatibility with the time change}, $\mu_F$ is actually the loop measure associated with the trace of the Markov process on $F$ or with $L_F$ equivalently.
\end{cor}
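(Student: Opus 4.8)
The plan is to prove the factorization by matching the joint Laplace functional of the pair $(l_F,\mathcal{E}_F(l))$ in two test arguments: a function $g$ of the trace and a nonnegative function $f$ on the space of excursions. Everything substantive is already packaged in Proposition \ref{joint measure of the trace and the excursions}; what remains is to recognize its right-hand side as the Laplace functional of the candidate kernel $\mu_F(dl_F)\,K(l_F,\cdot)$ and then to invoke a uniqueness statement.

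First I would compute $K(l_F,e^{-\langle\cdot,f\rangle})$, the Laplace transform of the candidate kernel. Since $K(l_F,\cdot)$ is an independent product of its three ingredients, its Laplace functional factorizes. For each ordered pair $x\neq y$ in $F$, the $N^x_y(l_F)$ independent directed excursions distributed as $\nu^{x,y}_{F,ex}$ contribute $\big(\nu^{x,y}_{F,ex}(e^{-f})\big)^{N^x_y(l_F)}$, because the Laplace functional of a sum of independent Dirac masses is the product of the individual transforms. For the Poisson random measure $\mathcal{N}_F((l_F^x,x\in F))$ with intensity $\sum_x(-L^x_x+(L_F)^x_x)l_F^x\,\nu^{x,x}_{F,ex}$, the Campbell exponential formula gives the factor $\exp\!\big(-\sum_x(-L^x_x+(L_F)^x_x)l_F^x\,\nu^{x,x}_{F,ex}(1-e^{-f})\big)$.

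Next I would reconcile signs. By Proposition \ref{Expression of the generator for the time changed process}(c) one has $(L_F)^x_x=L^x_x(1-(R^F)^x_x)$, so $(L_F)^x_x-L^x_x=-L^x_x(R^F)^x_x\geq 0$ and hence $L^x_x-(L_F)^x_x=-(-L^x_x+(L_F)^x_x)$. Therefore the Poisson factor above is exactly $\exp\!\big(\sum_x(L^x_x-(L_F)^x_x)l_F^x\,\nu^{x,x}_{F,ex}(1-e^{-f})\big)$. Multiplying the three pieces, $K(l_F,e^{-\langle\cdot,f\rangle})$ coincides, as a function of $l_F$, with the integrand appearing on the right-hand side of Proposition \ref{joint measure of the trace and the excursions}. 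Integrating this pointwise identity against $g(l_F)\,\mu_F(dl_F)$ and recalling that $\mu_F$ is the image of $\mu$ under $l\mapsto l_F$, I obtain precisely the right-hand side of that proposition, which by the proposition equals $\mu\big(1_{\{l\text{ visits }F\}}g(l_F)e^{-\langle\mathcal{E}_F(l),f\rangle}\big)$, the Laplace functional of the genuine joint law. (The Poisson nature of the self-excursion counts is already visible in the factors $(q_j-1)!^{-1}$ of Proposition \ref{conditional excursion number given the trace}.)

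Since this holds for all admissible $g$ and all nonnegative measurable $f$, the two joint measures share the same Laplace functional and hence agree, giving the decomposition $\mu_F(dl_F)\,K(l_F,\cdot)$. The closing assertion that $\mu_F$ is the loop measure attached to $L_F$ is Proposition \ref{compatibility with the time change} specialized to $\lambda=1_F$. The hard part will be the uniqueness step: $\mu$ is only $\sigma$-finite, so matching Laplace functionals does not immediately identify the law. I would address this by working throughout on the $\sigma$-finite measure $\mu|_{\{l\text{ visits }F\}}$, exhausting it by pieces on which the relevant trace functionals are bounded, and letting $f$ range over a measure-determining class on the space of excursions so that the standard point-process Laplace-functional uniqueness theorem applies on each piece; passing to the limit then yields the global identity of kernels.
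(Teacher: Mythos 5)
Your proposal is correct and follows essentially the route the paper intends: the corollary is stated as a direct repackaging of Proposition~\ref{joint measure of the trace and the excursions}, and identifying its right-hand side as the Laplace functional of the candidate kernel $K(l_F,\cdot)$ (multinomial factors for the $x\neq y$ excursions, Campbell's formula for the Poissonian self-excursions, with the sign reconciliation $L^x_x-(L_F)^x_x=L^x_x(R^F)^x_x\leq 0$) is exactly the implicit argument. Your extra care about uniqueness of Laplace functionals under a $\sigma$-finite reference measure is a sensible addition that the paper leaves unstated.
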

\begin{rem}
$K(l_F,\cdot)$ can also be viewed as a Poisson random measure on the space of excursions with intensity $\sum\limits_{x}(-L^x_x+(L_{F})^x_x)l_F^x\nu^{x,x}_{F,ex}+\sum\limits_{x\neq y\in F}\nu^{x,y}_{F,ex}$ conditioned to have exactly $N^x_y(l_F)$ excursions from $x$ to $y$ out of $F$ for all $x\neq y\in F$.
\end{rem}
\begin{defn}
Suppose $\chi$ is a non-negative function on $S$ vanishing on $F$. For an excursion $(e,A,B)$, define the real-valued function $\langle\chi,\cdot\rangle$ of the excursion as follows:
$$\langle \chi,(e,A,B)\rangle=\int\limits\chi(e(t))\,dt.$$
\end{defn}
\begin{lem}\label{lem:laplace transform for the excursion measure}
We see that the excursion measure $\nu^{x,y}_{F,ex}$ varies as the generator changes. Let $\nu^{x,y,\chi}_{F,ex}$ be the excursion measure when $L$ is replaced by $L-M_{\chi}$. Define $(R^F_{\chi})^x_y$ as $(R^F)^x_y$ when $L$ is replaced by $L-M_{\chi}$. Then,
$$e^{-\langle\chi,\cdot\rangle}\cdot\nu^{x,y}_{F,ex}=\frac{(R^F_{\chi})^x_y}{(R^F)^x_y}\nu^{x,y,\chi}_{F,ex}.$$
In particular,
$$\nu^{x,y}_{F,ex}[e^{-\langle\chi,\cdot\rangle}]=\frac{(R^F_{\chi})^x_y}{(R^F)^x_y}.$$
\end{lem}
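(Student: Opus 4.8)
The plan is to prove the identity by a direct comparison of the explicit densities supplied by Definition \ref{defn:excursion measure outside of F}, exploiting the fact that passing from $L$ to $L-M_\chi$ changes only the diagonal of the generator. Write $\tilde L=L-M_\chi$ and let $\tilde Q$, $(R^F_\chi)$ denote the objects $Q$, $(R^F)$ built from $\tilde L$. Then $\tilde L^z_z=L^z_z-\chi(z)$ while $\tilde L^z_w=L^z_w$ for $z\neq w$, so all off-diagonal jump data is untouched and only the holding-rate data at points of $F^c$ is reweighted. I expect the factor produced by $\chi$ to match exactly the change in the exponential densities, so that the whole discrepancy between the two excursion measures collapses into the ratio of normalizing constants $(R^F)^x_y$ and $(R^F_\chi)^x_y$.

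First I would treat a non-empty excursion on the event $\{\xi_1=x_1,\tau^1=t^1,\ldots,\xi_k=x_k,\tau^k=t^k\}$ with $x_1,\ldots,x_k\in F^c$. Since $e(u)=\xi_i$ on $[T_{i-1},T_i[$ and the sojourn at $\xi_i$ has length $\tau^i$, the additive functional is $\langle\chi,(e,x,y)\rangle=\sum_{i=1}^k\chi(x_i)t^i$, whence $e^{-\langle\chi,\cdot\rangle}=\prod_{i=1}^k e^{-\chi(x_i)t^i}$. Multiplying this against the density in Definition \ref{defn:excursion measure outside of F} turns each factor $e^{L^{x_i}_{x_i}t^i}$ into $e^{(L^{x_i}_{x_i}-\chi(x_i))t^i}=e^{\tilde L^{x_i}_{x_i}t^i}$, while each off-diagonal factor $L^{x_{j-1}}_{x_j}$ and $L^{x_k}_y$ is unchanged and hence equals $\tilde L^{x_{j-1}}_{x_j}$, $\tilde L^{x_k}_y$. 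The only prefactor left to inspect is $Q^x_{x_1}$: because $x\in F$ and $\chi$ vanishes on $F$, we have $\tilde L^x_x=L^x_x$ and therefore $\tilde Q^x_{x_1}=Q^x_{x_1}$. Consequently the density of $e^{-\langle\chi,\cdot\rangle}\cdot\nu^{x,y}_{F,ex}$ coincides with that of $\nu^{x,y,\chi}_{F,ex}$ except that the former still carries the normalization $1/(R^F)^x_y$ and the latter carries $1/(R^F_\chi)^x_y$; dividing, the non-empty part of the identity reads $e^{-\langle\chi,\cdot\rangle}\cdot\nu^{x,y}_{F,ex}=\bigl((R^F_\chi)^x_y/(R^F)^x_y\bigr)\,\nu^{x,y,\chi}_{F,ex}$.

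It then remains to dispose of the empty excursion $(\phi,x,y)$, on which $\langle\chi,\cdot\rangle=0$ and the atomic weight is $Q^x_y/(R^F)^x_y$; once more $\chi(x)=0$ gives $Q^x_y=\tilde Q^x_y$, so the same ratio $(R^F_\chi)^x_y/(R^F)^x_y$ relates the two atoms, and the measure identity therefore holds on the entire excursion space. The ``in particular'' statement is then immediate: integrating the identity over all excursions and using that $\nu^{x,y,\chi}_{F,ex}$ is a probability measure of total mass one yields $\nu^{x,y}_{F,ex}[e^{-\langle\chi,\cdot\rangle}]=(R^F_\chi)^x_y/(R^F)^x_y$. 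There is no genuine analytic obstacle here; the only point demanding care is the bookkeeping role of the hypothesis that $\chi$ vanishes on $F$, which is exactly what guarantees that the boundary factors ($Q^x_{x_1}$, and $Q^x_y$ in the empty case) are left invariant under $L\mapsto L-M_\chi$, so that the two excursion measures differ solely by the single normalizing ratio.
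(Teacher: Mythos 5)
Your proof is correct and complete. The paper states this lemma without proof, and your argument — multiplying the explicit density from Definition \ref{defn:excursion measure outside of F} by $e^{-\langle\chi,\cdot\rangle}=\prod_i e^{-\chi(x_i)t^i}$, absorbing the factors into $e^{(L-M_\chi)^{x_i}_{x_i}t^i}$, noting that the off-diagonal entries and the boundary factor $Q^x_{x_1}$ (resp.\ the atom $Q^x_y$) are unchanged because $\chi$ vanishes on $F$, and then reading off the ratio of normalizing constants — is exactly the intended verification, including the correct handling of the empty excursion and the use of the fact that $\nu^{x,y,\chi}_{F,ex}$ has total mass one for the ``in particular'' claim.
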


Accordingly, we have the following corollary,
\begin{cor}
\begin{multline*}
\mu(1_{\{\text{l visits F}\}}g(l_{F})e^{-\sum\limits_{\mathcal{E}_{F}(l)}\langle\chi,\cdot\rangle})\\
=\mu\left(1_{\{\text{l visits F}\}}g(l_F)\prod\limits_{x\neq y\in F}\left(\frac{(R^{F}_{\chi})^x_y}{(R^F)^x_y}\right)^{N^x_y(l_F)}e^{\sum\limits_{x\in F}(L^x_x-(L_F)^x_x)l_F^x\left(1-\frac{(R^F_\chi)^x_x}{(R^F)^x_x}\right)}\right).
\end{multline*}
\end{cor}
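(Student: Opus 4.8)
The plan is to read this off directly from Proposition~\ref{joint measure of the trace and the excursions} by specializing the test function $f$ on the space of excursions to $f=\langle\chi,\cdot\rangle$. First I would observe that with this choice the pairing $\langle\mathcal{E}_F(l),f\rangle=\sum_i f(e_i,\mathfrak{x}_i,\mathfrak{x}_{i+1})$ is exactly the quantity written $\sum_{\mathcal{E}_F(l)}\langle\chi,\cdot\rangle$ in the statement; since $\chi$ vanishes on $F$, only the excursion pieces $e_i$ out of $F$ contribute to the occupation integral $\langle\chi,l\rangle$ of the whole loop, so this sum in fact also coincides with $\langle\chi,l\rangle$. Hence $e^{-\sum_{\mathcal{E}_F(l)}\langle\chi,\cdot\rangle}=e^{-\langle\mathcal{E}_F(l),f\rangle}$ and the left-hand sides of the corollary and of Proposition~\ref{joint measure of the trace and the excursions} agree verbatim.

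Next I would evaluate the two excursion-measure factors appearing on the right-hand side of Proposition~\ref{joint measure of the trace and the excursions} using Lemma~\ref{lem:laplace transform for the excursion measure}. For the off-diagonal edges $x\neq y$ the factor is $\nu^{x,y}_{F,ex}(e^{-\langle\chi,\cdot\rangle})$, which Lemma~\ref{lem:laplace transform for the excursion measure} identifies as $(R^F_\chi)^x_y/(R^F)^x_y$; raising to the power $N^x_y(l_F)$ reproduces the product $\prod_{x\neq y\in F}\bigl((R^F_\chi)^x_y/(R^F)^x_y\bigr)^{N^x_y(l_F)}$. For the diagonal terms the exponent contains $\nu^{x,x}_{F,ex}(1-e^{-\langle\chi,\cdot\rangle})=1-\nu^{x,x}_{F,ex}(e^{-\langle\chi,\cdot\rangle})=1-(R^F_\chi)^x_x/(R^F)^x_x$, again by the same lemma, which is precisely the factor multiplying $(L^x_x-(L_F)^x_x)l_F^x$ in the desired exponential. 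Substituting these two evaluations term by term turns the right-hand side of Proposition~\ref{joint measure of the trace and the excursions} into the right-hand side of the corollary.

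There is essentially no analytic obstacle here: the corollary is a specialization of Proposition~\ref{joint measure of the trace and the excursions} combined with the explicit Laplace transform from Lemma~\ref{lem:laplace transform for the excursion measure}. The only point that deserves care is the bookkeeping identification in the first step, namely that the formal sum $\sum_{\mathcal{E}_F(l)}\langle\chi,\cdot\rangle$ is the integral of the excursion functional $\langle\chi,\cdot\rangle$ against the point measure $\mathcal{E}_F(l)=\sum_i\delta_{(e_i,\mathfrak{x}_i,\mathfrak{x}_{i+1})}$ of Definition~\ref{defn:decomposition of a loop into a pre-trace and some excursions indexed by the edges of the pre-trace}, and that the hypothesis $\chi|_F\equiv 0$ is exactly what makes the excursions carry the entire contribution of $\chi$, so that no extra term coming from the trace part $l_F$ appears. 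Once this matching is made, the result follows by direct substitution.
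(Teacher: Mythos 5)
Your proposal is correct and matches the paper's own (implicit) argument: the corollary is stated immediately after Lemma \ref{lem:laplace transform for the excursion measure} with the word ``Accordingly,'' meaning exactly the specialization $f=\langle\chi,\cdot\rangle$ in Proposition \ref{joint measure of the trace and the excursions} followed by the substitution $\nu^{x,y}_{F,ex}(e^{-\langle\chi,\cdot\rangle})=(R^F_\chi)^x_y/(R^F)^x_y$. Your extra remark identifying $\sum_{\mathcal{E}_F(l)}\langle\chi,\cdot\rangle$ with $\langle l,\chi\rangle$ (using $\chi|_F\equiv 0$) is a correct aside, though not needed for the derivation.
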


\subsection{Further properties of the multi-occupation field}
We know the loop measure varies as the generator varies. To emphasize this, we write $\mu(L,dl)$ instead of $\mu(dl)$.
\begin{prop}\label{compatibility with Feynman-Kac}
 $e^{-\langle l,\chi\rangle}\mu(L,dl)=\mu(L-M_{\chi},dl)$ for positive measurable function $\chi$ on $S$.
\end{prop}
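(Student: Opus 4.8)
The plan is to reduce the identity, which is an equality of two measures on the space of loops, to an elementary computation at the level of the \emph{pointed} loop measure $\mu^{p*}$. This is legitimate because the occupation functional $\langle l,\chi\rangle$ (the $n=1$ multi-occupation field) is invariant under the equivalence relation defining loops and under the rotation map $\theta$; hence it descends to a well-defined functional on pointed loops and on loops, and multiplying $\mu^{p*}$ by $e^{-\langle l,\chi\rangle}$ commutes with the quotient map $\pi^{p\to o}$ from pointed loops to loops. By Proposition \ref{pointed loop measure induces the loop measure}, $\mu^{p*}$ built from $L$ induces $\mu(L,\cdot)$ and the one built from $L-M_\chi$ induces $\mu(L-M_\chi,\cdot)$; so it suffices to prove $e^{-\langle l,\chi\rangle}\mu^{p*}(L,dl)=\mu^{p*}(L-M_\chi,dl)$.

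For this it is convenient to rewrite the formula of Definition \ref{defn:pointed loop measure} by absorbing the prefactors $Q^{x_i}_{x_{i+1}}(-L^{x_i}_{x_i})=L^{x_i}_{x_{i+1}}$, so that for $k\ge 2$
$$\mu^{p*}(L)(p(l)=k,\xi_1=x_1,\tau_1\in dt^1,\ldots,\xi_k=x_k,\tau_k\in dt^k)=\frac{1}{k}L^{x_1}_{x_2}\cdots L^{x_k}_{x_1}\prod_{i=1}^{k}e^{L^{x_i}_{x_i}t^i}\,dt^i,$$
with the convention $x_{k+1}=x_1$. The crucial observation is that $L-M_\chi$ has the \emph{same} off-diagonal entries as $L$ (since $M_\chi$ is diagonal), while its diagonal is $(L-M_\chi)^x_x=L^x_x-\chi(x)$. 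Hence in this ``$L$-form'' the product $L^{x_1}_{x_2}\cdots L^{x_k}_{x_1}$ is untouched by the perturbation, and the only effect of replacing $L$ by $L-M_\chi$ is to turn each factor $e^{L^{x_i}_{x_i}t^i}$ into $e^{(L^{x_i}_{x_i}-\chi(x_i))t^i}$.

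On the other hand, for a pointed loop $(\xi_1,\tau_1,\ldots,\xi_k,\tau_k)$ the occupation functional is exactly $\langle l,\chi\rangle=\sum_{i=1}^{k}\chi(\xi_i)\tau_i$, so that $e^{-\langle l,\chi\rangle}=\prod_{i=1}^{k}e^{-\chi(x_i)t^i}$. Multiplying the $L$-form of $\mu^{p*}$ by this weight produces precisely $\prod_{i=1}^{k}e^{(L^{x_i}_{x_i}-\chi(x_i))t^i}$, which matches the $L-M_\chi$ measure term by term; the trivial-loop case $k=1$, where $\mu^{p*}=\frac{1}{t}e^{L^x_x t}\,dt$ and $\langle l,\chi\rangle=\chi(x)t$, is identical and even simpler. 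This establishes the pointed identity, and pushing forward under $\pi^{p\to o}$ yields the claim.

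I expect no serious obstacle: the single point deserving care is the bookkeeping that multiplication by the invariant weight $e^{-\langle l,\chi\rangle}$ genuinely commutes with the quotient to loops, so that the pointed identity implies the loop identity; this is handled exactly as in the proof of Proposition \ref{pointed loop measure induces the loop measure}. One should also note that although the jump kernel $Q$ itself changes when $L$ is replaced by $L-M_\chi$, this dependence disappears once the measure is written in the $L$-form, which is precisely why that rewriting is the right first move.
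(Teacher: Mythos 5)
Your proof is correct, but it takes a genuinely different route from the paper's. The paper's argument is a one-liner at the level of the based loop measure: writing $\mu(L,dl)=\sum_{x}\int\frac{1}{t}\mathbb{P}^{x}_{t,x}(dl)\,dt$ and invoking the Feynman--Kac change of measure $\left.\frac{d\mathbb{P}_{\chi}}{d\mathbb{P}}\right|_{\mathcal{F}_t}=e^{-\int_0^t\chi(X_s)\,ds}$ from the preliminaries, the weight $e^{-\langle l,\chi\rangle}$ turns each bridge measure for $L$ into the corresponding bridge measure for $L-M_{\chi}$, and the identity follows immediately. You instead work with the explicit density of the pointed loop measure $\mu^{p*}$ from Definition \ref{defn:pointed loop measure}, rewrite it in the ``$L$-form'' $\frac{1}{k}L^{x_1}_{x_2}\cdots L^{x_k}_{x_1}\prod_i e^{L^{x_i}_{x_i}t^i}\,dt^i$ so that the off-diagonal prefactors are insensitive to the diagonal perturbation $M_\chi$, observe that $e^{-\langle l,\chi\rangle}=\prod_i e^{-\chi(\xi_i)\tau_i}$ exactly converts the exponential holding-time factors, and then push forward under $\pi^{p\to o}$ using Proposition \ref{pointed loop measure induces the loop measure} for both generators. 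Both arguments are sound; yours is more elementary and self-contained (it needs no Feynman--Kac input, only the explicit formula for $\mu^{p*}$ and the routine fact that multiplication by a rotation-invariant weight commutes with the quotient to loops, which you correctly flag as the one point needing care), while the paper's is shorter and makes transparent that the statement is nothing but Feynman--Kac transported to the loop measure. Your observation that the $Q$-dependence disappears in the $L$-form is exactly the right move and is what makes the term-by-term comparison work, including degenerate cases such as $L^x_x=0$ where both sides vanish on nontrivial loops through $x$.
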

\begin{proof}
It is the direct consequence of the Feynman-Kac formula. To be more precise,
\begin{multline*}
e^{-\langle l,\chi\rangle}\mu(L,dl)=\sum\limits_{x\in S}\int\frac{1}{t}\mathbb{P}_{t,x}^x[e^{-\langle l,\chi\rangle},dl]\,dt=\sum\limits_{x\in S}\int\frac{1}{t}\mathbb{P}_{t}^x[e^{-\langle l,\chi\rangle}1_{\{l(t)=x\}},dl]\,dt\\
=\sum\limits_{x\in S}\int\frac{1}{t}\mathbb{P}_{\!(\chi)\,t}^x[1_{\{l(t)=x\}},dl]\,dt=\mu(L-M_{\chi},dl).
\end{multline*}
\end{proof}

\begin{prop}\label{expectation of the multi-occupation field}
Suppose $f:S^n\rightarrow\mathbb{R}$ is positive measurable, then
$$\mu(\langle l,f\rangle)=\sum\limits_{(y_1,\ldots,y_n)\in S^n}\!\!V^{y_1}_{y_2}\cdots V^{y_n}_{y_1}f(y_1,\ldots,y_n).$$
\end{prop}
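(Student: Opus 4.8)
The goal is to compute the expectation under $\mu$ of the multi-occupation field $\langle l,f\rangle$ for a positive measurable $f:S^n\to\mathbb{R}$, and to show it equals $\sum_{(y_1,\ldots,y_n)\in S^n}V^{y_1}_{y_2}\cdots V^{y_n}_{y_1}f(y_1,\ldots,y_n)$. The plan is to work on the space of \emph{based} loops, since $\langle l,f\rangle$ is well-defined there and $\mu$ is the image of the based-loop measure $\mu^b$. Recall that
$$\langle l,f\rangle=\sum_{j=0}^{p(l)-1}\int_{0<s^1<\cdots<s^n<T}f\circ r_j\big(l(s^1),\ldots,l(s^n)\big)\,ds^1\cdots ds^n,$$
so by linearity and monotone convergence it suffices to treat $f=\delta_{(y_1,\ldots,y_n)}$, i.e.\ to compute the expected time the loop spends visiting the ordered $n$-tuple of states $(y_1,\ldots,y_n)$ (in cyclic order, accounting for all $n$ cyclic shifts via the sum over $j$).

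First I would unwind $\mu(\langle l,f\rangle)$ through the defining formula $\mu^b=\sum_{x}\int_0^\infty \frac1t\,\mathbb{P}^x_{t,x}\,dt$. The factor $\frac1t$ is the crucial bookkeeping device: for a based loop of length $t$, the occupation functional $\langle l,f\rangle$ involves an integral over $0<s^1<\cdots<s^n<t$ together with the sum over the $n$ cyclic rotations $r_j$. The key combinatorial/measure-theoretic step is that summing over the $n$ rotations $r_0,\ldots,r_{n-1}$ of the ordered integration simplex unfolds the constrained region $\{0<s^1<\cdots<s^n<t\}$ into the full product region, but shifted cyclically around the loop; concretely the $n$ rotations together let the ``first'' marked time range freely, which is exactly what cancels the $\frac1t$ normalization. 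After interchanging the $\frac1t\int_0^\infty dt$ and the expectation $\mathbb{P}^x_{t,x}$, I expect the cyclic sum to convert $\frac1t\int_0^\infty\!dt\int_{\text{simplex}}$ into an unconstrained integral $\int_0^\infty\!dt\int_{[0,t]^{n-1}}$ over the \emph{differences} of consecutive marked times, with the loop re-based at the first marked point.

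The heart of the computation is then to recognize the resulting expression as a product of potentials. Writing $u^i=s^{i+1}-s^i$ for the gaps between consecutive marked times and $u^0$ for the wrap-around gap closing the loop, the bridge structure of $\mathbb{P}^x_{t,x}$ factorizes: the expected contribution of a loop that passes through $y_1,\ldots,y_n$ in cyclic order with prescribed inter-visit durations is governed by the transition kernels $P_{u^i}$. Summing/integrating each gap $u^i$ over $[0,\infty)$ and using $V^{x}_{y}=\int_0^\infty (P_u)^x_y\,du$ (the definition of the potential) should collapse each factor into one $V$, yielding the telescoping product $V^{y_1}_{y_2}V^{y_2}_{y_3}\cdots V^{y_n}_{y_1}$. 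The cyclic closure of the loop is precisely what produces the cyclic product $V^{y_1}_{y_2}\cdots V^{y_n}_{y_1}$ rather than an open chain, and the sum over the base point $x$ in $\mu^b$ is absorbed because re-basing at $y_1$ (the first marked state) already accounts for all base points.

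The main obstacle I anticipate is the bookkeeping around the $\frac1t$ factor and the cyclic sum: one must argue carefully that summing over the $n$ rotations $r_j$ of the ordered integration domain, combined with the $\frac1t\,dt$ weight, exactly produces a single unconstrained integration that is then independent of any choice of base point, so that the factor $\frac1t$ and the sum $\sum_{x\in S}$ both disappear cleanly. A clean way to handle this is to pass instead through the pointed-loop measure $\mu^{p*}$ (Definition~\ref{defn:pointed loop measure}), where the $\frac1k$ symmetrization and the explicit holding-time densities $(-L^{x_i}_{x_i})e^{L^{x_i}_{x_i}t^i}$ make the gap integrals $\int_0^\infty e^{L^{x_i}_{x_i}t^i}\cdots$ transparent; there one must still verify that the multi-occupation field's cyclic sum interacts correctly with the $\frac1k$ weight. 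Once the reduction to a product of gap-integrals is in place, each integral is elementary and identifying the product with $\prod V$ via the definition of the potential is routine; the real care is entirely in the measure-theoretic unfolding that removes the normalizing factor.
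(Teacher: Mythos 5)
Your proposal matches the paper's proof in all essentials: unwind $\mu(\langle l,f\rangle)$ through $\mu^b=\sum_x\int_0^\infty\frac1t\mathbb{P}^x_{t,x}\,dt$, factor the bridge via the semigroup, change variables to the inter-visit gaps (with a wrap-around gap closing the cycle), use the sum over the $n$ cyclic rotations to cancel the $\frac1t$ weight, and integrate each gap against $V^x_y=\int_0^\infty(P_u)^x_y\,du$ to obtain the cyclic product of potentials. The only slip is writing the rotation sum as $\sum_{j=0}^{p(l)-1}$ instead of $\sum_{j=0}^{n-1}$ in your displayed formula, which you correct implicitly later; otherwise the argument is the paper's.
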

\begin{proof}
\begin{align*}
\mu(\langle l,f\rangle)=&\int\limits_0^{\infty}\frac{1}{t}\sum\limits_{x\in S}\mathbb{P}_{t,x}^{x}[\sum\limits_{j=0}^{n-1}\int\limits_{0<s^1<\cdots<s^n<t}f\circ r_j(l(s^1),\ldots,l(s^n))\prod\limits_{i=1}^{n}\,ds^i]\,dt\\
=&\int\limits_{0<s^1<\cdots<s^n<t<\infty}\sum\limits_{j=1}^{n}\sum\limits_{(x,x_1,\ldots,x_n)\in S^{n+1}}\,ds^1\cdots\,ds^n\,dt\\
&\frac{1}{t}f\circ r_j(x_1,\ldots,x_n)(P_{s^1})^{x}_{x_1}(P_{s^2-s^1})^{x_2}_{x_1}\cdots(P_{t-s^n})^{x_n}_x\\
=&\int\limits_{0<s^1<\cdots<s^n<t<\infty}\sum\limits_{j=1}^{n}\sum\limits_{(x_1,\ldots,x_n)\in S^n}\,ds^1\cdots\,ds^n\,dt\\
&\frac{1}{t}f\circ r_j(x_1,\ldots,x_n)(P_{s^2-s^1})^{x_1}_{x_2}(P_{s^3-s^2})^{x_2}_{x_3}\cdots(P_{t-s^n+s^1})^{x_n}_{x_1}.
\end{align*}
Performing the change of variables $a^0=s^1,a^1=s^2-s^1,\ldots,a^{n-1}=s^{n}-s^{n-1},a^n=t-s^n+s^1$,
\begin{align*}
\mu(\langle l,f\rangle)=&\int\limits_{a^0,\ldots,a^n>0,a^n>a^0}\,da^0\cdots\,da^n\sum\limits_{(x_1,\ldots,x_n)\in S^n}\sum\limits_{j=0}^{n-1}\frac{1}{a^1+\cdots+a^n}\\
&f\circ r_j(x_1,\ldots,x_n)(P_{a^1})^{x_1}_{x_2}\cdots(P_{a^n})^{x_n}_{x_1}\\
=&\int\limits_{a^1,\ldots,a^n>0}\,da^1\cdots\,da^n\sum\limits_{(x_1,\ldots,x_n)\in S^n}\sum\limits_{j=0}^{n-1}\frac{a_n}{a^1+\cdots+a^n}\\
&f\circ r_j(x_1,\ldots,x_n)(P_{a^1})^{x_1}_{x_2}\cdots(P_{a^n})^{x_n}_{x_1}.
\end{align*}
Changing again variables with  $b^{1+j}=a^1,\ldots,b^n=a^{n-j},b^1=a^{n-j+1},\ldots,b^{j}=a^{n}$ and $y_{1+j}=x_1,\ldots,y_n=x_{n-j},y_1=x_{n-j+1},\ldots,y_{j}=x_{n}$, and summing the integrals for all $j$,
\begin{align*}
\langle l,f\rangle=&\int\limits_{b^1,\ldots,b^n>0}\sum\limits_{(y_1,\ldots,y_n)\in S^n}(P_{b^1})^{y_1}_{y_2}\cdots(P_{b^n})^{y_n}_{y_1}f(y_1,\ldots,y_n)\,db^1\cdots\,db^n\\
=&\sum\limits_{(y_1,\ldots,y_n)\in S^n}V^{y_1}_{y_2}\cdots V^{y_n}_{y_1}f(y_1,\ldots,y_n).
\end{align*}
\end{proof}
 Define $\tilde{\mathfrak{S}}_{n,m} \subset\mathfrak{S}_{n+m}$ to be the collection of permutations $\sigma$ on $\{1,\ldots,n+m\}$ such that the order of $1,\ldots,n$ and $n+1,\ldots,n+m$ is preserved under the permutation $\sigma$ respectively, i.e.
$$\tilde{\mathfrak{S}}_{n,m}=\{\sigma\in\mathfrak{S}_{n+m}:\sigma(1)<\cdots<\sigma(n)\text{ and }\sigma(n+1)<\cdots<\sigma(n+m)\}.$$
Define $\mathfrak{S}^1_{n,m}=\{\sigma\in\mathfrak{S}_{n,m};\sigma(1)=1\}$. Then, we have $\sigma(1)<\cdots<\sigma(n)$ for $\sigma\in\mathfrak{S}^1_{n,m}$.

\begin{prop}[Shuffle product]\label{shuffle product}
Suppose $f:S^n\rightarrow\mathbb{R},g:S^m\rightarrow\mathbb{R}$ bounded or positive and measurable. Then,
$$\langle l,f\rangle \langle l,g\rangle=\sum\limits_{j=0}^{m-1}\sum_{\sigma\in \tilde{\mathfrak{S}}_{n,m}}\langle l,(f\otimes (g\circ r_j))\circ\sigma^{-1}\rangle.$$
\end{prop}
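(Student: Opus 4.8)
\emph{Plan.} This is a pathwise identity for a fixed based loop $l$ of length $|l|$, so I would prove it by regarding $l$ as a map on the circle $\mathbb{R}/|l|\mathbb{Z}$ and rewriting each multi-occupation field as a single integral over \emph{cyclically ordered} tuples. Concretely, I would first record the reformulation
$$\langle l,f\rangle=\int_{\mathcal{C}_n}f(l(u_1),\ldots,l(u_n))\,du_1\cdots du_n,\qquad \mathcal{C}_n:=\{u\in(0,|l|)^n:\ u_1,\ldots,u_n\text{ occur in this cyclic order}\}.$$
The point is that the cyclic shift $r_j$ carries the open simplex $\{0<s^1<\cdots<s^n<|l|\}$ onto the slice of $\mathcal{C}_n$ whose minimal coordinate sits in position $n-j+1$; these $n$ slices are pairwise disjoint and exhaust $\mathcal{C}_n$ up to a null set, and each shift is a coordinate permutation, hence Lebesgue–measure preserving. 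Summing the $n$ simplex integrals in the definition of $\langle l,f\rangle$ therefore produces exactly the displayed integral over $\mathcal{C}_n$.

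Multiplying the two fields then gives
$$\langle l,f\rangle\,\langle l,g\rangle=\int_{\mathcal{C}_n\times\mathcal{C}_m}f(l(u))\,g(l(v))\,du\,dv,$$
and for almost every $(u,v)$ the $n+m$ times are pairwise distinct. I would decompose $\mathcal{C}_n\times\mathcal{C}_m$ according to how the cyclically ordered $u$-points interleave with the cyclically ordered $v$-points on the circle. Anchoring the reading of the merged configuration at $u_1$, list all $n+m$ points in increasing cyclic order starting from $u_1$: this yields a tuple $w\in\mathcal{C}_{n+m}$ with $w_1=u_1$, together with the shuffle $\sigma$ recording which positions of $w$ carry $u$-points and which carry $v$-points, and a rotation index $j\in\{0,\ldots,m-1\}$ recording the cyclic offset of the $v$-block of $w$ relative to the base point $v_1$. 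Since position $1$ of $w$ is the $f$-point $u_1$, the shuffle satisfies $\sigma(1)=1$, i.e. $\sigma\in\mathfrak{S}^1_{n,m}$, the set introduced just before the statement.

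This packages as a family of maps, indexed by $(\sigma,j)\in\mathfrak{S}^1_{n,m}\times\{0,\ldots,m-1\}$,
$$\Psi_{\sigma,j}:\mathcal{C}_{n+m}\longrightarrow\mathcal{C}_n\times\mathcal{C}_m,\qquad w\longmapsto\bigl((w_{\sigma(1)},\ldots,w_{\sigma(n)}),\ r_j(w_{\sigma(n+1)},\ldots,w_{\sigma(n+m)})\bigr),$$
where I use that an increasing-index subsequence of a cyclically ordered tuple is again cyclically ordered, and that $r_j$ preserves $\mathcal{C}_m$. Each $\Psi_{\sigma,j}$ is a coordinate permutation followed by a rotation, hence measure preserving, and unwinding the definitions shows that writing $a=(w_{\sigma(1)},\ldots,w_{\sigma(n)})$, $b=(w_{\sigma(n+1)},\ldots,w_{\sigma(n+m)})$ gives integrand $f(l(a))\,g(l(r_j(b)))$, which is precisely $\bigl((f\otimes(g\circ r_j))\circ\sigma^{-1}\bigr)(l(w_1),\ldots,l(w_{n+m}))$. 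Thus the change of variables turns $\int_{\mathcal{C}_{n+m}}$ of that integrand into $\int_{\Psi_{\sigma,j}(\mathcal{C}_{n+m})}f(l(u))g(l(v))\,du\,dv$, i.e. into $\langle l,(f\otimes(g\circ r_j))\circ\sigma^{-1}\rangle$; summing over $(\sigma,j)$ and using that the images tile $\mathcal{C}_n\times\mathcal{C}_m$ then yields the identity.

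The main obstacle I expect is exactly this tiling/bijection step: verifying that ``anchor at $u_1$, read cyclically'' is a well-defined inverse on each branch, so that the pieces $\Psi_{\sigma,j}(\mathcal{C}_{n+m})$ are disjoint and cover $\mathcal{C}_n\times\mathcal{C}_m$ up to null sets, and pinning down the bookkeeping of the rotation (which $j$ realizes a given geometric offset). A clean consistency check, which I would carry out first to guide the combinatorics, is the volume identity $|\mathfrak{S}^1_{n,m}|\cdot m\cdot|\mathcal{C}_{n+m}|=|\mathcal{C}_n\times\mathcal{C}_m|$, that is, $\tfrac{(n+m-1)!}{(n-1)!\,(m-1)!}$ branches each of volume $\tfrac{|l|^{n+m}}{(n+m-1)!}$ filling the volume $\tfrac{|l|^{n+m}}{(n-1)!\,(m-1)!}$. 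This count confirms that the correct index set is $\mathfrak{S}^1_{n,m}$ (the shuffles with $\sigma(1)=1$), for which no branch is missed or double counted; using all of $\tilde{\mathfrak{S}}_{n,m}$ instead would over-count the volume by the factor $\tfrac{n+m}{n}$, so the anchoring at $u_1$ is essential and I would emphasize it as the crux of the argument.
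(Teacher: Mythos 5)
Your argument is correct, and it takes a genuinely different route from the paper's. The paper keeps each occupation field as a sum of integrals over increasing simplices, merges the $n+m$ times and sorts them into a single increasing simplex (producing the sum over shuffles $\sigma\in\tilde{\mathfrak{S}}_{n,m}$), and only at the very end tries to reabsorb the two rotation sums over $j$ and $k$ into the definition of $\langle l,\cdot\rangle$. You instead absorb the rotations at the outset by rewriting each field as a single integral over the set $\mathcal{C}_n$ of cyclically ordered tuples, and then tile $\mathcal{C}_n\times\mathcal{C}_m$ by the branches $\Psi_{\sigma,j}(\mathcal{C}_{n+m})$ anchored at $u_1$. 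This makes the rotation bookkeeping transparent (the offset $j$ of the $g$-block is read off geometrically), and your volume identity $|\mathfrak{S}^1_{n,m}|\cdot m\cdot|\mathcal{C}_{n+m}|=|\mathcal{C}_n|\cdot|\mathcal{C}_m|$ supplies exactly the consistency check that the paper's last step lacks. The two verifications you flag as the crux are both routine: a subsequence taken at increasing indices of a cyclically ordered tuple is again cyclically ordered because deleting an entry cannot increase the number of cyclic descents, and ``anchor at $u_1$ and read cyclically'' is visibly a two-sided inverse of $\Psi_{\sigma,j}$ on each branch. I see no gap.

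One point deserves emphasis: the identity you prove is
$$\langle l,f\rangle\langle l,g\rangle=\sum\limits_{j=0}^{m-1}\sum\limits_{\sigma\in\mathfrak{S}^1_{n,m}}\langle l,(f\otimes(g\circ r_j))\circ\sigma^{-1}\rangle,$$
with the inner sum over the anchored shuffles $\mathfrak{S}^1_{n,m}$ rather than over all of $\tilde{\mathfrak{S}}_{n,m}$ as in the displayed statement. You are right that the displayed version over-counts by the factor $\frac{n+m}{n}$: already for $n=m=1$ it would give $2\langle l,f\rangle\langle l,g\rangle$, and it would yield $\mu(l^xl^y)=2V^x_yV^y_x$, contradicting Corollary \ref{moments of the 1-occupation field} (whose factor $\frac{1}{n}$ is consistent precisely with the anchored index set). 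The paper's own derivation becomes ambiguous exactly at the final regrouping of the rotations, so treat this as a normalization slip in the statement rather than as a defect of your argument; your version is the one that is consistent with everything used downstream.
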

\begin{proof}
Let $t$ be the length of $l$.
\begin{align*}
\langle l,f\rangle\langle l,g\rangle=&\sum\limits_{j=0}^{n-1}\sum\limits_{k=0}^{m-1}\int\limits_{0<u^1<\cdots<u^n<t}f\circ r_j(l(u^1),\ldots,l(u^n))\,du^1\cdots\,du^n\\
&\int\limits_{0<v^1<\cdots<v^m<t}g\circ r_k(l(v^1),\ldots,l(v^m))\,dv^1\cdots\,dv^m\\
=&\sum\limits_{j=0}^{n-1}\sum\limits_{k=0}^{m-1}\int\limits_{\begin{subarray}{l}0<u^1<\cdots<u^n<t\\
0<v^1<\cdots<v^m<t\end{subarray}}f\circ r_j(l(u^1),\ldots,l(u^n))\\
&g\circ r_k(l(v^1),\ldots,l(v^m))\,du^1\cdots\,du^n\,dv^1\cdots\,dv^m.
\end{align*}
Let $w=(u^1,\ldots,u^n,v^1,\ldots,v^m)$. Almost surely, $u^1<\cdots<u^n,v^1<\cdots<v^m$ are different from each other. Let $s=(s^1,\ldots,s^{m+n})$ be the rearrangement of $w$ in increasing order. Almost surely, for each $w$, there exists a unique $\sigma\in \tilde{\mathfrak{S}}_{n,m}$ such that $s=\sigma(w)$. We change $w$ by $\sigma^{-1}(s)$,
\begin{align*}
\langle l,f\rangle\langle l,g\rangle=&\begin{multlined}[t]
\sum\limits_{j=0}^{n-1}\sum\limits_{k=0}^{m-1}\sum\limits_{\sigma\in \tilde{\mathfrak{S}}_{n,m}}\int\limits_{0<s^1<\cdots<s^{n+m}<t}\,ds^1\cdots\,ds^{n+m}\\
(f\circ r_j)\otimes(g\circ r_k)\circ\sigma^{-1}(l(s^1),\ldots,l(s^{m+n}))
\end{multlined}\\
=&\sum\limits_{\sigma\in \mathfrak{S}_{n,m}}\int\limits_{0<s^1<\cdots<s^{n+m}<t}(f\otimes g)\circ\sigma^{-1}(l(s^1),\ldots,l(s^{m+n}))\,ds^1\cdots\,ds^{n+m}\\
=&\sum\limits_{\begin{subarray}{l}
\sigma\in\mathfrak{S}^1_{n,m}\\
r\in R\end{subarray}}\int\limits_{0<s^1<\cdots<s^{n+m}<t}(f\otimes g)\circ r\circ\sigma^{-1}(l(s^1),\ldots,l(s^{m+n}))\,ds^1\cdots\,ds^{n+m}\\
=&\sum\limits_{\sigma\in\mathfrak{S}^1_{n,m}}\langle l,(f\otimes g)\circ\sigma^{-1}\rangle\\
=&\sum\limits_{j=0}^{m-1}\sum_{\sigma\in \tilde{\mathfrak{S}}_{n,m}}\langle l,(f\otimes (g\circ r_j))\circ\sigma^{-1}\rangle.
\end{align*}
\end{proof}
\begin{cor}\label{moments of the 1-occupation field}
\begin{equation*}
\mu(l^{x_1}\cdots l^{x_n})=\frac{1}{n}\sum\limits_{\sigma\in\mathfrak{S}_n}V_{x_{\sigma_2}}^{x_{\sigma_1}}\cdots V_{x_{\sigma_1}}^{x_{\sigma_n}}.
\end{equation*}
\end{cor}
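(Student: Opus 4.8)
The plan is to reduce the product of the $n$ one-point occupation fields to a single sum of multi-occupation fields and then apply Proposition \ref{expectation of the multi-occupation field} term by term. Recall first that $l^{x_i}=\langle l,\delta_{x_i}\rangle$, the degree-one occupation field. The crux is to establish, for a fixed loop $l$ of length $t=|l|$, the loopwise identity
$$l^{x_1}\cdots l^{x_n}=\frac{1}{n}\sum_{\sigma\in\mathfrak{S}_n}\langle l,\delta_{x_{\sigma_1}}\otimes\cdots\otimes\delta_{x_{\sigma_n}}\rangle.\qquad(\star)$$
Once $(\star)$ is in hand, the corollary follows immediately: applying $\mu$ and Proposition \ref{expectation of the multi-occupation field} to $f=\delta_{x_{\sigma_1}}\otimes\cdots\otimes\delta_{x_{\sigma_n}}$, the only surviving summand in $\sum_{(y_1,\ldots,y_n)}V^{y_1}_{y_2}\cdots V^{y_n}_{y_1}f(y_1,\ldots,y_n)$ is the one with $y_k=x_{\sigma_k}$, which gives $V^{x_{\sigma_1}}_{x_{\sigma_2}}\cdots V^{x_{\sigma_n}}_{x_{\sigma_1}}$; summing over $\sigma$ and dividing by $n$ is exactly the asserted formula.

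To prove $(\star)$ I would argue directly from the definition of the multi-occupation field. Writing $l^{x_i}=\int_0^t\delta_{x_i}(l(s_i))\,ds_i$, the product equals the integral over $[0,t]^n$ of $\prod_i\delta_{x_i}(l(s_i))$. Off the Lebesgue-null diagonal where two of the $s_i$ coincide, $[0,t]^n$ decomposes into the $n!$ ordered simplices indexed by the orderings $(i_1,\ldots,i_n)\in\mathfrak{S}_n$; on the simplex $s_{i_1}<\cdots<s_{i_n}$ the contribution is $I_\sigma:=\int_{0<u^1<\cdots<u^n<t}\prod_{k=1}^{n}\delta_{x_{\sigma_k}}(l(u^k))\,du^1\cdots du^n$ with $\sigma=(i_1,\ldots,i_n)$. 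Hence $l^{x_1}\cdots l^{x_n}=\sum_{\sigma\in\mathfrak{S}_n}I_\sigma$. On the other hand, the built-in sum over the cyclic rotations $r_j$ in the definition of $\langle l,\cdot\rangle$ gives $\langle l,\delta_{x_{\sigma_1}}\otimes\cdots\otimes\delta_{x_{\sigma_n}}\rangle=\sum_{j=0}^{n-1}I_{r_j(\sigma)}$, where $r_j(\sigma)$ is the cyclic rotation of the tuple $\sigma$. Summing this over all $\sigma\in\mathfrak{S}_n$ counts each $I_\tau$ exactly $n$ times, once for each $j\in\{0,\ldots,n-1\}$ (via $\sigma=r_{-j}(\tau)$), so $\sum_{\sigma}\langle l,\delta_{x_{\sigma_1}}\otimes\cdots\otimes\delta_{x_{\sigma_n}}\rangle=n\sum_{\tau}I_\tau=n\,l^{x_1}\cdots l^{x_n}$, which is $(\star)$.

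As an \emph{alternative} route honoring the placement of this statement as a corollary of the shuffle product, $(\star)$ can also be obtained by iterating Proposition \ref{shuffle product} starting from $l^{x_1}=\langle l,\delta_{x_1}\rangle$: each application turns a product into a sum of higher-degree multi-occupation fields, and using the cyclic invariance of $\langle l,\cdot\rangle$ to symmetrize over rotations recovers the same $\frac{1}{n}\sum_{\sigma\in\mathfrak{S}_n}$. (Equivalently, one may verify directly that the product $V^{x_{\sigma_1}}_{x_{\sigma_2}}\cdots V^{x_{\sigma_n}}_{x_{\sigma_1}}$ is cyclically invariant, so that $\frac1n\sum_{\sigma\in\mathfrak{S}_n}$ collapses to the $(n-1)!$-term sum over cyclic classes that the iterated shuffle produces.)

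The main obstacle is the combinatorial bookkeeping of the cyclic redundancy: one must see precisely that the cyclic sum already present in $\langle l,\cdot\rangle$ makes each ordered simplex $I_\tau$ appear exactly $n$ times in $\sum_{\sigma\in\mathfrak{S}_n}$, which is the origin of the prefactor $\frac1n$. The only other point requiring a word of care is the a.s.\ distinctness of the integration times under Lebesgue measure, which lets the null diagonal be discarded when splitting $[0,t]^n$ into simplices; everything else is a routine application of the two quoted propositions.
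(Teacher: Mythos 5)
Your proof is correct and follows essentially the same route as the paper: both decompose $\int_{[0,t]^n}\prod_i 1_{\{l(s_i)=x_i\}}$ into the $n!$ ordered simplices and use the cyclic sum built into the definition of $\langle l,\cdot\rangle$ to account for the factor $\frac1n$, before applying Proposition \ref{expectation of the multi-occupation field}. Your phrasing (each $I_\tau$ counted $n$ times) and the paper's (averaging over the rotations $r_j$ using $\mathfrak{S}_n r_j=\mathfrak{S}_n$) are the same combinatorial observation.
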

\begin{proof}
$$l^{x_1}\cdots l^{x_n}=\prod\limits_{i=1}^{n}\int\limits_{0}^{|l|}1_{\{l(t_i)=x_{i}\}}\,dt_i=\int\limits_{0}^{|l|}\prod\limits_{i=1}^{n}1_{\{l(t_i)=x_{i}\}}\,dt_i.$$
In the above expression, almost surely, one can write $t_1,\ldots,t_n$ in increasing order, $s_1=t_{\sigma(1)}<\cdots<s_n=t_{\sigma(n)}$ for a unique $\sigma\in\mathfrak{S}_n$. Then,
\begin{align*}
\int\limits_{0}^{|l|}\prod\limits_{i=1}^{n}1_{\{l(t_i)=x_{i}\}}\,dt_i=&\sum\limits_{\sigma\in\mathfrak{S}_n}\int\limits_{0<t_{\sigma(1)}<\cdots<t_{\sigma(n)}<|l|}\prod\limits_{i=1}^{n}1_{\{l(t_{\sigma(i)})=x_{\sigma(i)}\}}\,dt_{\sigma(i)}\\
=&\sum\limits_{\sigma\in\mathfrak{S}_n}\int\limits_{0<s_1<\cdots<s_n<|l|}\prod\limits_{i=1}^{n}1_{\{l(s_i)=x_{\sigma(i)}\}}\,ds_i.
\end{align*}
Since $\mathfrak{S}_nr_{j}=\mathfrak{S}_n$ for all $j=1,\ldots,n$, the above expression equals to
$$\sum\limits_{\sigma\in\mathfrak{S}_n}\int\limits_{0<s_1<\cdots<s_n<|l|}\prod\limits_{i=1}^{n}1_{\{l(s_i)=x_{\sigma(i+j)}\}}\,ds_i.$$
for all $j=1,\ldots,n$. Finally,
\begin{align*}
l^{x_1}\cdots l^{x_n}=&\frac{1}{n}\sum\limits_{j=1}^{n}\sum\limits_{\sigma\in\mathfrak{S}_n}\int\limits_{0<s_1<\cdots<s_n<|l|}\prod\limits_{i=1}^{n}1_{\{l(s_i)=x_{\sigma(i+j)}\}}\,ds_i\\
=&\frac{1}{n}\sum\limits_{\sigma\in\mathfrak{S}_n}l^{x_{\sigma(1)},\ldots,x_{\sigma(n)}}.
\end{align*}
Then, by Proposition \ref{expectation of the multi-occupation field}, we are done.
\end{proof}
\begin{cor}
The linear space generated by all the multi-occupation fields is an algebra.
\end{cor}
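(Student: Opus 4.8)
The plan is to read the result off directly from the Shuffle product (Proposition~\ref{shuffle product}), since the linear span is a vector space by construction and the only remaining point is stability under multiplication. Throughout, each $\langle l,f\rangle$ is regarded as a real-valued functional on loop space, and the product is the pointwise product of such functionals, which is automatically bilinear, commutative and associative; so it suffices to show that the span is closed under this product.

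First I would fix two generators $\langle l,f\rangle$ and $\langle l,g\rangle$, with $f\colon S^n\to\mathbb{R}$ and $g\colon S^m\to\mathbb{R}$ bounded or positive and measurable, and apply Proposition~\ref{shuffle product} to write
\[
\langle l,f\rangle\,\langle l,g\rangle=\sum_{j=0}^{m-1}\sum_{\sigma\in\tilde{\mathfrak{S}}_{n,m}}\langle l,(f\otimes(g\circ r_j))\circ\sigma^{-1}\rangle .
\]
The key observation is that each integrand $(f\otimes(g\circ r_j))\circ\sigma^{-1}$ is again a bounded (resp. positive) measurable function, now on $S^{n+m}$: the tensor product $f\otimes(g\circ r_j)$ is measurable on $S^{n+m}$, and precomposition with the coordinate permutation $\sigma^{-1}$ preserves measurability and boundedness (resp. positivity). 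Hence every summand on the right-hand side is itself a multi-occupation field, of order $n+m$, and the double sum is finite. Thus the product of two generators is a finite linear combination of generators, i.e. it lies in the span.

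Second I would extend this bilinearly. For arbitrary elements $F=\sum_i c_i\langle l,f_i\rangle$ and $G=\sum_j d_j\langle l,g_j\rangle$ of the span, bilinearity of the product gives $FG=\sum_{i,j}c_id_j\,\langle l,f_i\rangle\langle l,g_j\rangle$, a finite linear combination of products of generators; by the previous step each such product lies in the span, and since the span is closed under linear combinations, so is $FG$. Together with the fact that the span is by definition a vector space, this exhibits it as a (commutative, associative) subalgebra of the algebra of real-valued functionals on loop space.

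There is essentially no genuine obstacle here: the statement is a formal corollary of the shuffle identity. The only point requiring care — and the one I would make explicit — is that the shuffle formula outputs functions living on the enlarged space $S^{n+m}$ rather than on $S^n$ or $S^m$, so one must recognise these as legitimate multi-occupation fields of the higher order; the finiteness of the sum then guarantees that we remain inside the linear span.
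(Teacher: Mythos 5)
Your proposal is correct and follows exactly the paper's argument, which also derives the corollary directly from the shuffle product (Proposition \ref{shuffle product}): each product of generators is a finite sum of multi-occupation fields on $S^{n+m}$, hence the span is closed under multiplication. The additional details you spell out (measurability and boundedness of $(f\otimes(g\circ r_j))\circ\sigma^{-1}$, and the bilinear extension) are exactly the routine checks the paper leaves implicit.
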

\begin{proof}
By shuffle product, the operation of multiplication is closed.
\end{proof}

\begin{thm}[Blackwell's theorem, \cite{meyer}]
Suppose $(E,\mathcal{E})$ is a Blackwell space, $\mathcal{S},\mathcal{F}$ are sub-$\sigma$-field of $\mathcal{E}$ and $\mathcal{S}$ is separable. Then $\mathcal{F}\subset\mathcal{S}$ iff every atom of $\mathcal{F}$ is a union of atoms of $\mathcal{S}$.
\end{thm}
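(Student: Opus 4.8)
The plan is to prove the two implications separately and to isolate the single place where the Blackwell hypothesis on $(E,\mathcal{E})$ is genuinely used. Throughout I write $A_{\mathcal{S}}(x)=\bigcap\{S\in\mathcal{S}:x\in S\}$ for the atom of $\mathcal{S}$ through $x$, and likewise $A_{\mathcal{F}}(x)$ for $\mathcal{F}$. Since $\mathcal{S}$ is separable, say generated by $\{S_n:n\geq 1\}$, one has $A_{\mathcal{S}}(x)=\bigcap_n S_n^{\varepsilon_n(x)}$, where $S_n^{1}=S_n$, $S_n^{0}=S_n^c$ and $\varepsilon_n(x)=1_{S_n}(x)$; in particular every set of $\mathcal{S}$ is a union of atoms of $\mathcal{S}$, a fact I shall use repeatedly.

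For the necessity, assume $\mathcal{F}\subset\mathcal{S}$ and fix an atom $A_{\mathcal{F}}(x)$ of $\mathcal{F}$; I would show it is a union of atoms of $\mathcal{S}$. If $z\in A_{\mathcal{F}}(x)$, then $z\in F$ for every $F\in\mathcal{F}$ with $x\in F$; each such $F$ lies in $\mathcal{S}$ and is therefore a union of $\mathcal{S}$-atoms, so $z\in F$ forces $A_{\mathcal{S}}(z)\subset F$. Intersecting over all $F\in\mathcal{F}$ containing $x$ gives $A_{\mathcal{S}}(z)\subset A_{\mathcal{F}}(x)$, whence $A_{\mathcal{F}}(x)=\bigcup_{z\in A_{\mathcal{F}}(x)}A_{\mathcal{S}}(z)$ is a union of atoms of $\mathcal{S}$. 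This direction uses only the separability of $\mathcal{S}$.

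For the sufficiency, assume every atom of $\mathcal{F}$ is a union of atoms of $\mathcal{S}$, and fix $F\in\mathcal{F}$; the goal is $F\in\mathcal{S}$. Since $F$ and $F^c$ are unions of $\mathcal{F}$-atoms, the hypothesis makes both of them unions of $\mathcal{S}$-atoms, i.e.\ they are saturated for the canonical measurable map $\phi:E\to\{0,1\}^{\mathbb{N}}$, $\phi(x)=(1_{S_n}(x))_{n}$, whose fibres are exactly the atoms of $\mathcal{S}$ and which satisfies $\mathcal{S}=\phi^{-1}(\mathcal{B}(\{0,1\}^{\mathbb{N}}))$. Saturation gives $F=\phi^{-1}(\phi(F))$, $F^c=\phi^{-1}(\phi(F^c))$ and $\phi(F)\cap\phi(F^c)=\emptyset$. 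Because $(E,\mathcal{E})$ is a Blackwell space, $F$ and $F^c$ are analytic, and since a measurable image of an analytic set is analytic, $\phi(F)$ and $\phi(F^c)$ are disjoint analytic subsets of the Polish space $\{0,1\}^{\mathbb{N}}$.

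The key step, which I expect to be the main obstacle, is to separate these two images by a Borel set: by Lusin's first separation theorem for analytic sets there is a Borel $D\subset\{0,1\}^{\mathbb{N}}$ with $\phi(F)\subset D$ and $\phi(F^c)\cap D=\emptyset$. Pulling back, $C:=\phi^{-1}(D)\in\mathcal{S}$, and saturation yields $F=\phi^{-1}(\phi(F))\subset C$ while $F^c\cap C=\phi^{-1}(\phi(F^c)\cap D)=\emptyset$; hence $C=F$ and $F\in\mathcal{S}$. The whole difficulty is concentrated in invoking the analytic separation theorem correctly, which is precisely where ``Blackwell'' enters, guaranteeing both that the $\mathcal{E}$-sets $F,F^c$ are analytic and that their measurable images remain analytic; the reductions around it, namely passing to $\phi$, checking saturation, and pulling $D$ back, are routine.
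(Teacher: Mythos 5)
Your proof is correct. The paper does not actually prove this theorem---it is quoted directly from \cite{meyer}---and your argument (the easy atom-containment computation for necessity, and for sufficiency the reduction to the map $\phi=(1_{S_n})_n$ into $\{0,1\}^{\mathbb{N}}$ followed by Lusin's first separation theorem applied to the disjoint analytic images $\phi(F)$ and $\phi(F^c)$) is precisely the standard proof found in that reference, with the Blackwell/Souslin hypothesis entering exactly where you isolate it.
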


\begin{thm}\label{thm:multi-occupation fields generate the Borel-sigma-field on the loops}
The family of all multi-occupation fields generates the Borel-$\sigma$-field on the loops.
\end{thm}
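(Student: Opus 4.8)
The plan is to deduce the theorem from Blackwell's theorem, which has just been recalled; the whole difficulty then collapses to a separation-of-points statement. Write $E$ for the space of loops and $\mathcal{E}$ for its Borel $\sigma$-field, and let $\mathcal{S}$ be the sub-$\sigma$-field generated by the family of all multi-occupation fields $l\mapsto\langle l,f\rangle$. Since each $\langle\cdot,f\rangle$ is a Borel-measurable real function on $E$, we have $\mathcal{S}\subset\mathcal{E}$, and the goal $\mathcal{E}=\mathcal{S}$ follows once $\mathcal{E}\subset\mathcal{S}$ is checked. First I would record the two hypotheses needed to invoke Blackwell's theorem. On one hand, $E$ is a Blackwell (indeed standard Borel) space: the space of pointed loops is a Borel subset of the Polish space $\bigsqcup_{p\ge 1}(S\times\mathbb{R}_+)^p$ (here $S$ is countable discrete), and $E$ is its quotient by the finite cyclic-rotation actions, which is again standard Borel; in particular the atoms of $\mathcal{E}$ are the singletons. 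On the other hand, $\mathcal{S}$ is separable: by linearity and a monotone-class argument the whole family of fields generates the same $\sigma$-field as the countable subfamily $\{\langle\cdot,\delta_{(y_1,\ldots,y_n)}\rangle:n\ge 1,\ y_1,\ldots,y_n\in S\}$ (using that $S$ is countable), and a $\sigma$-field generated by countably many real functions is countably generated.

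Applying Blackwell's theorem with $\mathcal{F}=\mathcal{E}$ then reduces $\mathcal{E}\subset\mathcal{S}$ to the statement that every atom of $\mathcal{E}$ is a union of atoms of $\mathcal{S}$. As the $\mathcal{E}$-atoms are singletons, and the $\mathcal{S}$-atom of a loop $l$ is exactly $\{l':\langle l',f\rangle=\langle l,f\rangle\text{ for every }f\text{ in the countable family}\}$, the condition is equivalent to the assertion that the multi-occupation fields separate loops: if two loops have all their multi-occupation fields equal, they coincide. This separation is the heart of the matter and the step I expect to be the main obstacle.

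To prove separation I would reconstruct a loop from the numbers $\langle l,\delta_{(y_1,\ldots,y_n)}\rangle$. The first-order fields $l^x=\langle l,\delta_x\rangle$ give the occupation time of each state, hence the finite set of visited states and the total length $|l|=\sum_x l^x$. The fields of order $\ge 3$ carry the cyclic-order and holding-time information: for three distinct states $\langle l,\delta_{(x,y,z)}\rangle$ already measures the cyclically ordered triple occupation and thus distinguishes the orientation $x\to y\to z$ from $x\to z\to y$, and by taking patterns $(y_1,\ldots,y_n)$ of growing length one reads off, through the volumes of the time-simplices, the successive arc-lengths of the trajectory along the circle $\mathbb{R}/|l|\mathbb{Z}$. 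The claim to establish is that the whole collection of these cyclically ordered occupation integrals determines the piecewise-constant trajectory up to rotation, i.e. determines the loop.

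The delicate point, which I would isolate as a lemma, is that no finite order suffices and that low-order fields genuinely collapse: second-order fields only return products $l^xl^y$, and for loops visiting few states even several higher orders may coincide for a loop and its time-reversal, so orientation is only detected at sufficiently high order by patterns $(y_1,\ldots,y_n)$ whose cyclic word is not invariant under reversal. The rigorous argument is therefore a combinatorial reconstruction: encode the loop as a finite cyclic word of (state, arc-length) pairs, observe that each $\langle l,\delta_{(y_1,\ldots,y_n)}\rangle$ is an explicit polynomial in the arc-lengths obtained by summing, over cyclic placements of the sample points into the arcs, the corresponding simplex volumes, and show that knowing all these polynomials pins down the cyclic word up to rotation. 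Establishing this reconstruction cleanly — equivalently, that the family of all cyclically ordered scattered-subword integrals determines a weighted cyclic word — is where the real work lies; granting it, Blackwell's theorem closes the proof.
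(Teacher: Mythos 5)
Your reduction via Blackwell's theorem is exactly the route the paper takes: you correctly identify that the family $\{l^{x_1,\ldots,x_m}\}$ is countable, that the loop space is a Blackwell space with singleton atoms, and that everything therefore collapses to showing the multi-occupation fields separate loops. The problem is that you then stop at precisely the point where the proof begins. You write that the reconstruction of the loop from its cyclically ordered occupation integrals ``is where the real work lies; granting it, Blackwell's theorem closes the proof.'' That reconstruction \emph{is} the theorem; deferring it leaves a genuine gap, not a routine verification. Moreover, the sketch you do give (``reading off the successive arc-lengths through the volumes of the time-simplices'' from patterns of growing length) is not an argument, and it glosses over the one real obstruction: when the discrete loop is a nontrivial power of a primitive word, the holding-time data can only ever be recovered up to a residual cyclic ambiguity, and something must be said to resolve it.

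For comparison, here is how the paper carries out the separation step. First, the discrete loop is recovered combinatorially: let $J(l)$ be the largest $n$ for which some pattern $(x_1,\ldots,x_n)$ with $x_i\neq x_{i+1}$ and $x_n\neq x_1$ has $l^{x_1,\ldots,x_n}>0$; this is the number of jumps, and the set of such maximal patterns is exactly the discrete loop $l^d$ (as an equivalence class of pointed words). Second, writing $l=(x^1_1,\tau^1_1,\ldots,x^n_q,\tau^n_q)^o$ where $n$ is the multiplicity and $q$ the primitive period, one evaluates the multi-occupation field on patterns $y(k)$ obtained by repeating each letter $x^i_j$ exactly $k^i_j$ times; a direct computation gives
$$l^{y(k)}=\frac{1}{k!}\sum_{i=1}^{n}\bigl(f^{K^1}\otimes\cdots\otimes f^{K^n}\bigr)\circ r_i(\tau^1,\ldots,\tau^n),$$
i.e.\ the cyclic symmetrization of an arbitrary monomial in the holding times. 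By Weierstrass approximation this determines $\sum_{i=1}^{n}\delta_{r_i(\tau^1,\ldots,\tau^n)}$, and the residual cyclic ambiguity (present exactly when $n>1$) is removed by fixing the representative with $(\tau^1,\ldots,\tau^n)$ maximal in lexicographic order. Your proposal contains none of this: no mechanism for extracting the holding times (the repeated-index patterns are the key device), no use of Weierstrass to pass from monomials to the empirical measure of rotations, and no treatment of the multiplicity issue. As written, the proof is incomplete at its central step.
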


\begin{lem}\label{lem:a corollary of the theorem of Blackwell}
Suppose $(E,\mathcal{B}(E))$ is a Polish space with the Borel-$\sigma$-field. Let $\{f_i,i\in\mathbb{N}\}$ be measurable functions and denote $\mathcal{F}=\sigma(f_i,i\in\mathbb{N})$. Then, $\mathcal{F}=\mathcal{B}(E)$ iff for all $x\neq y\in E$, there exists $f_i$ such that $f_i(x)\neq f_i(y)$.
\end{lem}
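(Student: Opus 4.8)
The plan is to deduce the lemma from Blackwell's theorem by specializing it to the case where the separable sub-$\sigma$-field is $\mathcal{F}=\sigma(f_i,i\in\mathbb{N})$ and the ambient $\sigma$-field is $\mathcal{B}(E)$ itself. Recall that a Polish space equipped with its Borel $\sigma$-field is a Blackwell space, so the hypotheses of Blackwell's theorem are met once I take $(E,\mathcal{E})=(E,\mathcal{B}(E))$, $\mathcal{S}=\mathcal{B}(E)$, and the candidate $\mathcal{F}=\sigma(f_i,i\in\mathbb{N})$. The field $\mathcal{F}$ is separable because it is generated by the countable family $\{f_i\}$. By Blackwell's theorem applied with these choices, $\mathcal{F}=\mathcal{B}(E)$ holds if and only if every atom of $\mathcal{F}$ is a union of atoms of $\mathcal{B}(E)$; since $E$ is Polish (hence its Borel atoms are singletons), the atoms of $\mathcal{B}(E)$ are exactly the points $\{x\}$, so this condition says that every atom of $\mathcal{F}$ is a set of points, which is automatic. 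The real content is therefore to translate ``$\mathcal{F}=\mathcal{B}(E)$'' into the separation condition on the $f_i$.

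The key step is to identify the atoms of $\mathcal{F}=\sigma(f_i,i\in\mathbb{N})$ explicitly. For a $\sigma$-field generated by a countable family of measurable functions, the atom containing a point $x$ is precisely the set $\{y\in E: f_i(y)=f_i(x)\text{ for all }i\in\mathbb{N}\}$. First I would verify this characterization: the set on the right is an $\mathcal{F}$-measurable set containing $x$, and any $\mathcal{F}$-measurable set containing $x$ must, by a standard monotone-class / good-sets argument on the generators $f_i^{-1}(B)$, either contain this whole set or be disjoint from it. Granting this, the atom of $\mathcal{F}$ through $x$ is a singleton $\{x\}$ if and only if there is no $y\neq x$ with $f_i(y)=f_i(x)$ for all $i$, i.e. if and only if some $f_i$ separates $x$ from $y$ for every $y\neq x$.

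Now I would assemble the two directions. For the ``if'' direction: if for all $x\neq y$ some $f_i$ separates them, then by the atom characterization every atom of $\mathcal{F}$ is a singleton, hence equal to an atom $\{x\}$ of $\mathcal{B}(E)$, so every atom of $\mathcal{F}$ is trivially a union of atoms of $\mathcal{B}(E)$; Blackwell's theorem then gives $\mathcal{F}\supset\mathcal{B}(E)$, and since $\mathcal{F}\subset\mathcal{B}(E)$ always holds (each $f_i$ is Borel measurable), we get equality. For the ``only if'' direction: if $\mathcal{F}=\mathcal{B}(E)$, then the singletons $\{x\}$, being Borel and hence in $\mathcal{F}$, must be $\mathcal{F}$-atoms, which by the characterization forces the separation property. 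I would run the argument so that the ``only if'' direction falls out cleanly from the atom description rather than re-invoking Blackwell.

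The main obstacle I anticipate is the precise verification that the atoms of $\sigma(f_i,i\in\mathbb{N})$ are exactly the level sets $\{y:f_i(y)=f_i(x)\ \forall i\}$, together with the correct bookkeeping of what ``atom'' means for a non-separating field (an atom of $\mathcal{F}$ need not be a singleton, and one must be careful that Blackwell's criterion compares atoms of $\mathcal{F}$ against atoms of the \emph{finer} field $\mathcal{B}(E)$, i.e. singletons). The countability of the family $\{f_i\}$ is essential here, both to guarantee separability of $\mathcal{F}$ and to make the intersection defining the atom an $\mathcal{F}$-measurable set; I would flag explicitly where countability is used so the reader sees why a finite or uncountable family would not suffice in the same way.
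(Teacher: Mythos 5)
Your overall architecture is the same as the paper's: identify the atoms of $\mathcal{F}=\sigma(f_i,i\in\mathbb{N})$ as the joint level sets $\{y:f_i(y)=f_i(x)\ \forall i\}$, observe that the separation condition is equivalent to all of these atoms being singletons, and use Blackwell's theorem to convert ``all atoms of $\mathcal{F}$ are singletons'' into $\mathcal{B}(E)\subset\mathcal{F}$. The atom identification, the role of countability (it makes $\bigcap_i f_i^{-1}(\{f_i(x)\})$ an $\mathcal{F}$-set and $\mathcal{F}$ separable), and the ``only if'' direction are all fine.

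However, your application of Blackwell's theorem is backwards, and as written the crucial ``if'' direction does not follow. With your role assignment --- $\mathcal{S}=\mathcal{B}(E)$ and the theorem's $\mathcal{F}$ equal to $\sigma(f_i)$ --- the conclusion of Blackwell's theorem is the inclusion $\sigma(f_i)\subset\mathcal{B}(E)$ (the theorem reads ``$\mathcal{F}\subset\mathcal{S}$ iff every atom of $\mathcal{F}$ is a union of atoms of $\mathcal{S}$''), which is the trivial inclusion you already have from Borel measurability of the $f_i$; you even notice that the atom condition in this direction is ``automatic'', which should have been the warning sign. Yet in the ``if'' step you verify exactly this automatic condition (every atom of $\mathcal{F}$ is a union of singletons) and then claim the theorem yields $\mathcal{F}\supset\mathcal{B}(E)$ --- that is not what the theorem gives for your choice of $\mathcal{S}$ and $\mathcal{F}$. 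To obtain the nontrivial inclusion $\mathcal{B}(E)\subset\sigma(f_i)$ you must swap the roles: take the separable field to be $\mathcal{S}=\sigma(f_i)$ and the theorem's $\mathcal{F}$ to be $\mathcal{B}(E)$; the criterion then becomes that every atom of $\mathcal{B}(E)$, i.e.\ every singleton $\{x\}$, is a union of atoms of $\sigma(f_i)$, which (atoms being nonempty) means every singleton is itself an atom of $\sigma(f_i)$, i.e.\ all atoms of $\sigma(f_i)$ are singletons, i.e.\ the $f_i$ separate points. This is exactly how the paper runs the argument. The fix is a one-line correction, but as stated your proof does not establish the implication it needs.
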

\begin{proof}
Since $E$ is Polish, $\mathcal{B}(E)$ is separable and $(E,\mathcal{B}(E))$ is Blackwell space. The atoms of $\mathcal{B}(E)$ are all the one point sets. Obviously, $\mathcal{F}\subset\mathcal{B}(E)$ and $\mathcal{F}$ is separable. By Blackwell's theorem, $\mathcal{F}=\mathcal{B}(E)$ iff the atoms of $\mathcal{F}$ are all the one point sets which is equivalent to the following: for all $x\neq y\in E$, there exists $f_i$ such that $f_i(x)\neq f_i(y)$.
\end{proof}

\begin{proof}[Proof for Theorem \ref{thm:multi-occupation fields generate the Borel-sigma-field on the loops}]
By Lemma \ref{lem:a corollary of the theorem of Blackwell} and the fact that $$\{l^{x_1,\ldots,x_m}:m\in\mathbb{N}_{+},(x_1,\ldots,x_m)\in S^m\}$$ is countable, it is sufficient to show that given all the multi-occupation fields of the loop $l$, the loop is uniquely determined.\\
Note first that the length of the loop can be recovered from the occupation field as $|l|=\sum\limits_{x\in S}l^x$.\\
Let $J(l)=\max\{n\in\mathbb{N}:\exists (x_1,\ldots,x_n)\in S^n\text{ such that }x_i\neq x_{i+1}\text{ for }i=1,\ldots,n-1,x_1\neq x_n\text{ and }l^{x_1,\ldots,x_n}>0\}$, the total number of the jumps in the loop $l$. Define $D(l)$ to be the set of discrete pointed loop such that $l^{x_1,\ldots,x_{J(l)}}>0$. As a discrete loop is viewed as an equivalent class of discrete pointed loop, it appears that $D(l)$ is actually the discrete loop $l^d$. A loop is defined by the discrete loop with the corresponding holding times.  It remains to show that the corresponding holding times can be recovered from the multi-occupation field. Suppose we know that the multiplicity of the discrete loop $n(l^d)=n$, the length of the discrete loop $J(l)=qn$ and that $(x_1,\ldots,x_q,\ldots,x_1,\ldots,x_q)\in D(l)$ is a pointed loop representing $l^d$. Then the loop $l$ can be written in the following form:
$$(x^1_1,\tau^1_1,\ldots,x^1_{q},\tau^1_{q},\ldots,x^{n}_1,\tau^{n}_1,\ldots,x^{n}_{q},\tau^{n}_{q})^o$$
with $x^i_j=x_j, i=1,\ldots,q$ and $(\tau^1_1,\ldots,\tau^1_{q})\geq\cdots\geq(\tau^{n}_1,\ldots,\tau^{n}_{q})$ in the lexicographical order.  For $k\in M_{n\times q}(\mathbb{N}_{+})$ a $n$ by $q$ matrix, define $y(k)\in S^{\sum\limits_{i,j}k^i_j}$ as follows
$$y(k)=(\underbrace{x^1_1,\ldots,x^1_1}\limits_{k^1_1\text{ times}},\underbrace{x^1_2,\ldots,x^1_2}\limits_{k^1_2\text{ times}},\ldots,\underbrace{x^n_q,\ldots,x^n_q}\limits_{k^n_q\text{ times}}).$$
Define $k!=\prod\limits_{i,j}k^i_j!$. Define $K^i=(k^i_1,\ldots,k^i_q)$ for $i=1,\ldots,n$. Define $\tau^i=(\tau^i_1,\ldots,\tau^i_{q})$ for $i=1,\ldots,n$. For $K\in \mathbb{N}^{q}$ and $t\in  \mathbb{R}^{q}$, define the polynomial $f^K(t)=\prod\limits_{j=1}^{q} (t_i)^{K_j}$.
We have the following expression,
$$l^{y(k)}=\frac{1}{k!}\sum\limits_{i=1}^{n}(f^{K^1}\otimes\cdots\otimes f^{K^n})\circ r_i(\tau^1,\ldots,\tau^n)$$
where $r_i(\tau^1,\ldots,\tau^n)=(\tau^{n-i+1},\ldots,\tau^n,\tau^1,\ldots,\tau^{n-i})$.
All the holding times are bounded by the length $|l|$ of the loop. By the theorem of Weierstrass, for any continuous function $f$ on $(\mathbb{R}^q)^n$, the following quantity is determined by the family of occupation fields:
$$\sum\limits_{i=1}^{n}f\circ r_i(\tau^1,\ldots,\tau^n).$$
As a consequence, $\sum\limits_{i=1}^{n}\delta_{ r_i(\tau^1,\ldots,\tau^n)}$ is uniquely determined. Since we order $\tau^1\geq\cdots\geq\tau^n$ in the lexicographical order, $(\tau^1,\ldots,\tau^n)$ is uniquely determined. Finally, the loop $l$ is determined by the family of the multi-occupation fields of $l$ and we are done.
\end{proof}

\subsection{The occupation field in the transient case}
\textbf{Assumption}: Throughout this section, assume we are in the transient case.
\begin{prop}\label{laplace transform of the occupation field}
Suppose $\chi$ is a non-negative function on $S$ with compact support $F$. Let $\rho(M_{\sqrt{\chi}}VM_{\sqrt{\chi}})$ be the spectral radius of $M_{\sqrt{\chi}}VM_{\sqrt{\chi}}$.
Then, for $z\in D=\{z\in\mathbb{C}:\operatorname{Re}(z)<\frac{1}{\rho(M_{\sqrt{\chi}}VM_{\sqrt{\chi}})}\}$, the following equation holds:
$$\mu(e^{z\langle l,\chi\rangle}-1)=-\ln\det(I-zM_{\sqrt{\chi}}VM_{\sqrt{\chi}}).$$
Outside of $D$, $\mu(|e^{z\langle l,\chi\rangle}-1|)=\infty$.

\end{prop}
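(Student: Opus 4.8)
The plan is to reduce everything to the moment formula of Corollary \ref{moments of the 1-occupation field} and then to pass from the resulting power series to a logarithm of a determinant by analytic continuation. Write $A=M_{\sqrt\chi}VM_{\sqrt\chi}$ and $\rho=\rho(A)$; note that $A$ has non-negative entries, is supported on $F=\supp\chi$, and has strictly positive diagonal there (since $V^x_x>0$ in the transient case). First I would compute the moments of the linear functional $\langle l,\chi\rangle=\sum_x\chi(x)l^x$. Expanding $\langle l,\chi\rangle^n$ and applying Corollary \ref{moments of the 1-occupation field}, each of the $n!$ permutations contributes the same quantity after relabelling the summation indices (the weight $\prod_i\chi(x_i)$ is permutation-invariant), namely $\Tr((M_\chi V)^n)$; hence
\begin{equation*}
\mu(\langle l,\chi\rangle^n)=(n-1)!\,\Tr((M_\chi V)^n)=(n-1)!\,\Tr(A^n),
\end{equation*}
the last equality by cyclic invariance of the trace. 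Summing the exponential series, for $|z|<1/\rho$ the interchange of sum and integral is legitimate (the $|z|$-series $\sum_n\frac{|z|^n}{n}\Tr(A^n)$ converges since $\Tr(A^n)^{1/n}\to\rho$), so $\mu(e^{z\langle l,\chi\rangle}-1)=\sum_{n\ge1}\frac{z^n}{n}\Tr(A^n)=-\ln\det(I-zA)$, which proves the identity on the disc $\{|z|<1/\rho\}$.

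The core point is then to upgrade this from the disc to the half-plane $D$, for which I must check that $-\ln\det(I-zA)$ is genuinely holomorphic on $D$, i.e. that $\det(I-zA)\neq0$ there. Equivalently, every nonzero eigenvalue $\lambda$ of $A$ must satisfy $\operatorname{Re}(1/\lambda)\ge1/\rho$. Restricting to $F$, where $\chi>0$, the matrix $A$ is invertible with $A^{-1}=M_{1/\sqrt\chi}(-L_F)M_{1/\sqrt\chi}$, using $V_F=V|_{F\times F}$ and $L_F=-(V_F)^{-1}$ from Definition \ref{defn:the trace of a Markov process and the restriction of a Markov process}. Since $-L_F$ has non-positive off-diagonal and positive diagonal entries, so does $A^{-1}$, and its inverse $A$ is entrywise non-negative, so $A^{-1}$ is a nonsingular M-matrix. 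Writing $A^{-1}=sI-P$ with $P\ge0$ and $s\ge\rho(P)$, every eigenvalue of $A^{-1}$ has real part at least $s-\rho(P)$, the minimal (real, Perron) eigenvalue of $A^{-1}$, which is precisely the reciprocal $1/\rho$ of the Perron eigenvalue of $A$. Thus $\operatorname{Re}(1/\lambda)\ge1/\rho$ for every nonzero eigenvalue of $A$, so all zeros of $z\mapsto\det(I-zA)=\prod_i(1-z\lambda_i)$ lie in $\{\operatorname{Re} z\ge1/\rho\}$, outside $D$. Since $D$ is a half-plane, hence simply connected, the branch of $-\ln\det(I-zA)$ vanishing at $z=0$ is therefore well-defined and holomorphic on all of $D$.

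Next I would verify that the left-hand side is an absolutely convergent integral and holomorphic on $D$. For $t\ge0$ one has $|e^{zt}-1|\le|z|\int_0^t e^{(\operatorname{Re} z)s}\,ds$, giving $|e^{zt}-1|\le|z|t$ when $\operatorname{Re} z\le0$ and $|e^{zt}-1|\le\frac{|z|}{\operatorname{Re} z}(e^{(\operatorname{Re} z)t}-1)$ when $0<\operatorname{Re} z<1/\rho$. Combined with $\mu(\langle l,\chi\rangle)=\Tr(A)<\infty$ (the case $n=1$ of Proposition \ref{expectation of the multi-occupation field}) and the finiteness of $\mu(e^{a\langle l,\chi\rangle}-1)$ for $0<a<1/\rho$ already obtained in Step one, this bounds $\mu(|e^{z\langle l,\chi\rangle}-1|)$ locally uniformly on $D$ by an integrable majorant. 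Holomorphy of $z\mapsto\mu(e^{z\langle l,\chi\rangle}-1)$ then follows by differentiation under the integral sign (or Morera together with Fubini). As this function and $-\ln\det(I-zA)$ are both holomorphic on $D$ and agree on $\{|z|<1/\rho\}$, the identity theorem yields the equality on all of $D$.

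Finally, for the divergence outside $D$ I would use the elementary lower bound $|e^{zt}-1|\ge e^{(\operatorname{Re} z)t}-1\ge0$, valid for $t\ge0$ and $\operatorname{Re} z\ge0$. Hence for $\operatorname{Re} z=a\ge1/\rho$,
\begin{equation*}
\mu(|e^{z\langle l,\chi\rangle}-1|)\ge\mu(e^{a\langle l,\chi\rangle}-1)=\sum_{n\ge1}\frac{a^n}{n}\Tr(A^n)=\infty,
\end{equation*}
the series diverging because $\Tr(A^n)\ge0$ with $\Tr(A^n)\sim c\,\rho^n$ for a positive constant $c$ (the algebraic multiplicity of the Perron root), so the general term is asymptotic to a nonzero multiple of $1/n$ when $a=1/\rho$ and does not tend to $0$ when $a>1/\rho$. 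I expect Step two — locating the zeros of $\det(I-zA)$, where the absence of symmetry forces one to exploit the inverse-M-matrix structure of $A^{-1}$ in place of positive-definiteness — to be the main obstacle; the remaining steps are routine complex analysis and domination estimates.
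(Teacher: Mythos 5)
Your proof is correct, and on the decisive step it takes a genuinely different route from the paper's. Both arguments share the moment identity $\mu(\langle l,\chi\rangle^m)=(m-1)!\,\Tr(A^m)$ for $A=M_{\sqrt{\chi}}VM_{\sqrt{\chi}}$, the domination showing $\mu(|e^{z\langle l,\chi\rangle}-1|)<\infty$ and the holomorphy of $\Phi(z)=\mu(e^{z\langle l,\chi\rangle}-1)$ on $D$, and the summation of the series on the disc $|z|<1/\rho$. They differ on how one knows that $-\ln\det(I-zA)$ is holomorphic on all of $D$, i.e.\ that every nonzero eigenvalue $\lambda$ of $A$ satisfies $\operatorname{Re}(1/\lambda)\geq 1/\rho$. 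The paper obtains this \emph{a posteriori} from probability: since $\Phi$ is analytic on $D$, agrees with $-\ln\det(I-zA)$ near $0$, and $|\ln\det(I-zA)|$ blows up at each $1/\lambda_i$, the points $1/\lambda_i$ must lie in $D^c$. You instead prove the spectral localization directly: on $F$ one has $A^{-1}=M_{1/\sqrt{\chi}}(-L_F)M_{1/\sqrt{\chi}}$, a matrix with non-positive off-diagonal entries and non-negative inverse, hence a nonsingular M-matrix $sI-P$, and $\operatorname{Re}(s-\mu)\geq s-\rho(P)=1/\rho(A)$ for every eigenvalue $\mu$ of $P$; the identity theorem on the simply connected half-plane $D$ then finishes. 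Your route is more self-contained on the complex-analytic side (the right-hand side is known to be holomorphic on $D$ before any comparison is made), at the price of importing Perron--Frobenius/M-matrix theory, which the paper avoids. For the divergence outside $D$ you also streamline matters: the paper treats real $x\geq 1/\rho$ by monotone convergence and then handles general $z$ by a triangle inequality, whereas you use $|e^{zt}-1|\geq e^{(\operatorname{Re}z)t}-1$ directly together with $\Tr(A^n)\sim m\rho^n$ — an asymptotic that is indeed justified here because the strictly positive diagonal of $A$ on $F$ makes the dominant classes primitive, so no eigenvalue other than $\rho$ lies on the circle $|\lambda|=\rho$ (without that observation the claim would fail for general non-negative matrices). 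Both treatments are sound.
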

\begin{proof}
Suppose $n=|\supp(\chi)|$ and $\lambda_1,\ldots,\lambda_n$ are the eigenvalues of $M_{\sqrt{\chi}}VM_{\sqrt{\chi}}$ ordered in the sense of non-increasing module. Then, $|\lambda_1|=\rho(M_{\sqrt{\chi}}VM_{\sqrt{\chi}})$. By Corollary \ref{moments of the 1-occupation field} and Proposition \ref{expectation of the multi-occupation field},
$$\mu(\langle l,\chi\rangle^m)=(m-1)!\sum\limits_{(x^1,\ldots,x^m)\in S^m}V^{y_1}_{y_2}\cdots V^{y_m}_{y_1}\chi(y_1)\cdots \chi(y_m)=(m-1)!\Tr((M_{\sqrt{\chi}}VM_{\sqrt{\chi}})^m).$$
We have: $$e^{z}=1+z+\cdots+\frac{z^n}{n!}+z^{n+1}\int\limits_{0<s_1<\cdots<s_{n+1}<1}e^{s_1 z}\,ds_1\cdots\,ds_{n+1}.$$
Therefore
$$|e^{z+h}-1-z-\cdots-\frac{z^n}{n!}|\leq e^{\max(\operatorname{Re}(z),0)}\frac{|z|^{n+1}}{(n+1)!}.$$

In particular, $|e^{x}-1|\leq e^{\max(\operatorname{Re}(x),0)}|x|$ and $|e^{x}-1-x|\leq e^{\max(\operatorname{Re}(x),0)}\frac{|x|^2}{2}$.

For $z\in\mathbb{C}$ such that $\operatorname{Re}(z)< 1/\rho(M_{\sqrt{\chi}}VM_{\sqrt{\chi}})=1/|\lambda_1|$, let $b=\max(\operatorname{Re}(z),0)$,
\begin{align*}
\mu(|e^{z\langle l,\chi\rangle}-1|)\leq & \mu(e^{b\langle l,\chi\rangle}|z|\langle l,\chi\rangle)=\mu(\sum\limits_{m=0}^{\infty}\frac{|z|b^m\langle l,\chi\rangle^{m+1}}{m!})\\
=&\sum\limits_{m=0}^{\infty}\frac{|z|b^m\mu(\langle l,\chi\rangle^{m+1})}{m!}=\sum\limits_{m=0}^{\infty}|z|b^m\Tr((M_{\sqrt{\chi}}VM_{\sqrt{\chi}})^{m+1})\\
\leq & |z||\supp(\chi)|\sum\limits_{m=0}^{\infty}b^m|(\rho(M_{\sqrt{\chi}}VM_{\sqrt{\chi}}))^{m+1}<\infty
\end{align*}
Consequently, $\Phi(z)=\mu(e^{z\langle l,\chi\rangle}-1)$ is well-defined for $z\in D$. Next, we will show that $\Phi(z)$ is analytic in D. Fix $z_0\in D$, take $h$ small enough that $z_0+h\in D$. By an argument very similar to the above one, we have that $\mu(e^{z_0\langle l,\chi\rangle}\langle l,\chi\rangle)$ and $\mu(e^{(\operatorname{Re}(z_0)+\max(\operatorname{Re}(h),0))\langle l,\chi\rangle}\frac{\langle l,\chi\rangle^2}{2})$ are well-defined and finite.
\begin{multline*}
|\Phi(z_0+h)-\Phi(z_0)-h\mu(e^{z_0\langle l,\chi\rangle}\langle l,\chi\rangle)|\\
=\mu(|e^{z_0\langle l,\chi\rangle}(e^{h\langle l,\chi\rangle}-1-h\langle l,\chi\rangle)|)\\
\leq\mu\left(e^{\operatorname{Re}(z_0)\langle l,\chi\rangle}e^{\max(\operatorname{Re}(h),0)\langle l,\chi\rangle}\frac{h^2\langle l,\chi\rangle^2}{2}\right)=O(h^2).
\end{multline*}
Finally, by dominated convergence, for $|z|<1/\rho(M_{\sqrt{\chi}}VM_{\sqrt{\chi}})$,
$$\Phi(z)=\sum\limits_{n\geq 1}\frac{1}{n}\Tr((M_{\sqrt{\chi}}VM_{\sqrt{\chi}})^n)=-\ln\det(1-zM_{\sqrt{\chi}}VM_{\sqrt{\chi}}).$$
Since $\Phi(z)$ is analytic in $D=\{z\in\mathbb{C}:\operatorname{Re}(z)<1/\rho(M_{\sqrt{\chi}}VM_{\sqrt{\chi}})\}$, $\Phi(z)$ is the unique analytic continuation of  $-\ln\det(1-zM_{\sqrt{\chi}}VM_{\sqrt{\chi}})$ in $D$.\\

$\ln\det(I-zM_{\sqrt{\chi}}VM_{\sqrt{\chi}})$ cannot be defined on $\mathbb{C}$ as an analytic function. Nevertheless, after cutting down several half lines starting from $1/\lambda_1,\ldots,1/\lambda_n$, it is analytic and equals $-\sum\limits_{i=1}^{n}\ln(1-z\lambda_i)$. Moreover, when $z$ converges to some $\lambda_i$, $|-\ln\det(I-zM_{\sqrt{\chi}}VM_{\sqrt{\chi}})|$ tends to infinity. But we have showed that $\mu(e^{z\langle l,\chi\rangle}-1)=-\ln\det(I-zM_{\sqrt{\chi}}VM_{\sqrt{\chi}})$ is well-defined as an analytic function on $D$. Consequently, $1/\lambda_1,\ldots,1/\lambda_n$ lie in $D^c$, i.e. $\operatorname{Re}(\frac{1}{\lambda_i})\geq \frac{1}{\rho(M_{\sqrt{\chi}}GM_{\sqrt{\chi}})}=\frac{1}{|\lambda_1|}$. In particular, $\lambda_1=\rho(M_{\sqrt{\chi}}GM_{\sqrt{\chi}})$. For $x\geq \frac{1}{\rho(M_{\sqrt{\chi}}GM_{\sqrt{\chi}})}$, $$\mu(|e^{x\langle l,\chi\rangle}-1|)=\mu(e^{x\langle l,\chi\rangle}-1)\geq \mu(e^{\frac{1}{\rho(M_{\sqrt{\chi}}GM_{\sqrt{\chi}})}\langle l,\chi\rangle}-1).$$
By monotone convergence,
\begin{align*}
\mu(e^{\frac{1}{\rho(M_{\sqrt{\chi}}GM_{\sqrt{\chi}})}\langle l,\chi\rangle}-1)=&\lim\limits_{y\uparrow\lambda_1}\mu(e^{y\langle \chi,l\rangle}-1)=\lim\limits_{y\uparrow\lambda_1}-\ln\det(I-yM_{\chi}VM_{\chi})\\
=&\lim\limits_{y\uparrow\lambda_1}|-\ln\det(I-yM_{\chi}VM_{\chi})|=\infty.
\end{align*}
Consequently, for $x\geq \frac{1}{\rho(M_{\sqrt{\chi}}GM_{\sqrt{\chi}})}$, $\mu(|e^{x\langle l,\chi\rangle}-1|)=\infty$. For all $y\in\mathbb{R}$, $\mu(|e^{iy\langle l,\chi\rangle}-1|)<\infty$. Therefore, by the triangular inequality, for $z=x+iy\notin D$,
\begin{align*}
\mu(|e^{z\langle l,\chi\rangle}-1|)\geq & |\mu(|e^{z\langle l,\chi\rangle}-e^{iy\langle l,\chi\rangle}|)-\mu(|e^{iy\langle l,\chi\rangle}-1|)|\\
=&|\mu(|e^{x\langle l,\chi\rangle}-1|)-\mu(|e^{iy\langle l,\chi\rangle}-1|)|=\infty.
\end{align*}
\end{proof}

\begin{lem}\label{lem:linear algebra lemma}
Suppose $\chi$ is a finitely supported non-negative function on $S$ and $F$ contains the support of $\chi$. Then,
\begin{align*}
\frac{\det(V_{F})}{\det((V_{\chi})_F)}=&\det(I+(M_{\chi})_FV_F)=\det(I+M_{\sqrt{\chi}}VM_{\sqrt{\chi}})\\
=&\begin{vmatrix}
I_F & V_F\\
-(M_{\chi})_F & I_F
\end{vmatrix}=1+\sum\limits_{A\subset F,A\neq \phi}\prod\limits_{x\in A}\chi(x)V_A.
\end{align*}
\end{lem}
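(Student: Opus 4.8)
The statement is a chain of four equalities among determinants of finite matrices, so the entire proof reduces to linear algebra once the analytic content is extracted. The plan is to first collapse everything onto the finite set $F$ (equivalently onto $\supp(\chi)$), record one key identity among inverse matrices, and then verify each equality by a standard determinant manipulation.

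\emph{Reduction to finite matrices.} First I would invoke Proposition~\ref{trace and change of time} to write $(V_\chi)_F=(V_F)_\chi$. On the finite set $F$ the trace process has generator $L_F=-(V_F)^{-1}$ by Definition~\ref{defn:the trace of a Markov process and the restriction of a Markov process}, and since $\supp(\chi)\subset F$, killing this trace process by $\chi$ in the sense of Feynman--Kac produces a process with generator $L_F-(M_\chi)_F$ and potential $(V_F)_\chi=\bigl(-(L_F-(M_\chi)_F)\bigr)^{-1}$. This yields the key identity
\[
(V_\chi)_F^{-1}=(V_F)_\chi^{-1}=V_F^{-1}+(M_\chi)_F
\]
among invertible matrices indexed by $F$ (all inverses exist in the transient regime and because $(M_\chi)_F\geq 0$).

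\emph{The four equalities.} Given this identity, the first equality follows by multiplying through by $V_F$:
\[
\frac{\det(V_F)}{\det((V_\chi)_F)}=\det(V_F)\,\det\bigl(V_F^{-1}+(M_\chi)_F\bigr)=\det\bigl(I+V_F(M_\chi)_F\bigr)=\det\bigl(I+(M_\chi)_F V_F\bigr),
\]
the last step being Sylvester's determinant identity $\det(I+AB)=\det(I+BA)$. For the equality with $\det(I+M_{\sqrt\chi}VM_{\sqrt\chi})$ I would apply Sylvester once more: writing $(M_\chi)_F=(M_{\sqrt\chi})_F(M_{\sqrt\chi})_F$ gives $\det(I+(M_\chi)_F V_F)=\det\bigl(I+(M_{\sqrt\chi})_F V_F(M_{\sqrt\chi})_F\bigr)$, and since $M_{\sqrt\chi}VM_{\sqrt\chi}$ vanishes off $\supp(\chi)\times\supp(\chi)\subset F\times F$, enlarging the index set back to $S$ only adjoins identity blocks and leaves the determinant unchanged. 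The block-matrix equality is the Schur-complement expansion
\[
\det\begin{pmatrix}A&B\\C&D\end{pmatrix}=\det(A)\,\det(D-CA^{-1}B)
\]
applied with $A=D=I_F$, $B=V_F$, $C=-(M_\chi)_F$. Finally, the closed form comes from the principal-minor expansion $\det(I+B)=\sum_{A}\det(B|_{A\times A})$ with $B=M_{\sqrt\chi}VM_{\sqrt\chi}$; factoring $\sqrt{\chi(x)}$ out of the row and the column indexed by each $x\in A$ turns $\det(B|_{A\times A})$ into $\prod_{x\in A}\chi(x)\,\det(V|_{A\times A})$, and blocks with $A\not\subset\supp(\chi)$ contribute zero, leaving exactly the sum over $A\subset F$.

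The computations are routine; the only step drawing on the theory rather than pure algebra is the reduction, namely justifying $(V_\chi)_F^{-1}=V_F^{-1}+(M_\chi)_F$ through Proposition~\ref{trace and change of time} together with the Feynman--Kac description of the killed trace process. The main thing to keep straight will be the bookkeeping among the three nested index sets $\supp(\chi)\subset F\subset S$, ensuring that each determinant is taken over the correct block and that passing between them only inserts identity blocks.
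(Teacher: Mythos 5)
Your proposal is correct and follows essentially the same route as the paper: the paper also combines Proposition~\ref{trace and change of time} with the resolvent equation for the trace process on $F$ (namely $V_F=(V_F)_{\chi}+(V_F)_{\chi}(M_{\chi})_FV_F$, which is exactly your identity $(V_\chi)_F^{-1}=V_F^{-1}+(M_\chi)_F$ rewritten) to obtain the determinant ratio, and then dismisses the remaining equalities as simple linear algebra. Your write-up merely makes explicit the Sylvester, Schur-complement, and principal-minor steps that the paper leaves to the reader.
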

\begin{proof}
By the resolvent equation, we have $V_F=(V_F)_{\chi}+(V_F)_{\chi}(M_{\chi})_FV_F$. By Proposition \ref{trace and change of time}, we have $(V_\chi)_F=(V_F)_\chi$. Combining these two results, we have $V_F=(V_{\chi})_F+(V_{\chi})_F(M_{\chi})_FV_F$. Consequently,
$$\frac{\det(V_{F})}{\det((V_{\chi})_F)}=\det(I+(M_{\chi})_FV_F).$$
The last equality follows from simple calculations in linear algebra.
\end{proof}
\begin{cor}\label{positive laplace transform of the occupation field}
For non-negative $\chi$ not necessarily finitely supported,
$$e^{\mu(1-e^{-\langle l,\chi\rangle})}=1+\sum\limits_{F\subset S, 0<|F|<\infty}\prod\limits_{x\in F}\chi(x)\det(V_{F})$$
\end{cor}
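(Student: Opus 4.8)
The plan is to settle the identity first for finitely supported $\chi$, where it is a direct combination of Proposition~\ref{laplace transform of the occupation field} and Lemma~\ref{lem:linear algebra lemma}, and then to promote it to arbitrary non-negative $\chi$ by a monotone approximation. For the finitely supported case, I would specialize Proposition~\ref{laplace transform of the occupation field} to $z=-1$. The matrix $M_{\sqrt{\chi}}VM_{\sqrt{\chi}}$ is non-negative, so its spectral radius $\rho$ is $\geq 0$; hence $\operatorname{Re}(-1)=-1<0\leq 1/\rho$ (with the convention $1/0=\infty$), and $z=-1$ lies in $D$. The proposition therefore gives
\[
\mu\bigl(1-e^{-\langle l,\chi\rangle}\bigr)=\ln\det\bigl(I+M_{\sqrt{\chi}}VM_{\sqrt{\chi}}\bigr),
\]
and exponentiating yields $e^{\mu(1-e^{-\langle l,\chi\rangle})}=\det(I+M_{\sqrt{\chi}}VM_{\sqrt{\chi}})$.

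Next I would invoke Lemma~\ref{lem:linear algebra lemma} with any finite $F\supset\supp(\chi)$, which expands this determinant as $1+\sum_{A\subset F,\,A\neq\emptyset}\prod_{x\in A}\chi(x)\det(V_A)$. Since $\prod_{x\in A}\chi(x)=0$ as soon as $A\not\subset\supp(\chi)$, the terms indexed by subsets not contained in $F$ vanish, so the sum may be freely enlarged to run over \emph{all} finite non-empty subsets of $S$. This already proves the corollary for finitely supported $\chi$.

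For a general non-negative $\chi$, I would fix an increasing exhaustion $F_n\uparrow S$ by finite sets and put $\chi_n=\chi\,1_{F_n}$, so that $\chi_n\uparrow\chi$ pointwise and each $\chi_n$ is finitely supported. The identity thus holds for every $\chi_n$. On the left, $\langle l,\chi_n\rangle=\int_0^{|l|}\chi_n(l(s))\,ds$ increases to $\langle l,\chi\rangle$, hence $1-e^{-\langle l,\chi_n\rangle}\uparrow 1-e^{-\langle l,\chi\rangle}$, and monotone convergence gives $\mu(1-e^{-\langle l,\chi_n\rangle})\uparrow\mu(1-e^{-\langle l,\chi\rangle})$, whence $e^{\mu(1-e^{-\langle l,\chi_n\rangle})}\uparrow e^{\mu(1-e^{-\langle l,\chi\rangle})}$ (with $e^{\infty}=\infty$). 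On the right, each summand $\prod_{x\in F}\chi_n(x)\det(V_F)$ increases to $\prod_{x\in F}\chi(x)\det(V_F)$, so monotone convergence for series lets me pass to the limit term by term and obtain $1+\sum_{F\subset S,\,0<|F|<\infty}\prod_{x\in F}\chi(x)\det(V_F)$. Equating the two limits closes the argument.

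The one point that needs care, and the only real obstacle, is the non-negativity of the determinants $\det(V_F)$; this is exactly what legitimizes both monotone-convergence steps (it makes each right-hand summand non-decreasing in $n$ and allows the term-by-term limit of the series). I would justify it either by observing that $-L_F$ is a nonsingular M-matrix, so $\det(V_F)=1/\det(-L_F)>0$, or directly from the finitely supported identity already proved: taking $\chi=t\,1_F$ shows that $\det(I+tV_F)=\sum_{A\subset F}t^{|A|}\det(V_A)$ is a polynomial in $t$ equal to $e^{\mu(1-e^{-t\langle l,1_F\rangle})}>0$ for all $t>0$, and a polynomial positive on $(0,\infty)$ has non-negative leading coefficient $\det(V_F)$. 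With positivity secured, the case where both sides are infinite is harmless, since equality holds at each finite stage $n$ and both sides are non-decreasing in $n$, so their limits coincide.
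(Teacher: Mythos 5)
Your proposal is correct and follows essentially the same route as the paper: establish the finitely supported case by combining Proposition~\ref{laplace transform of the occupation field} (at $z=-1$) with Lemma~\ref{lem:linear algebra lemma}, note $\det(V_F)>0$, and pass to general $\chi$ by monotone convergence. You merely supply more detail than the paper does at the two points it leaves implicit (that $-1\in D$, and why $\det(V_F)\geq 0$ — the paper simply cites $\det(-\tilde L)>0$ for the generator of the trace on $F$).
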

\begin{proof}
For $\chi$ a non-negative finitely supported function, by Proposition \ref{laplace transform of the occupation field} with Lemma \ref{lem:linear algebra lemma},
\begin{align*}
e^{\mu(1-e^{-\langle l,\chi\rangle})}=&\det(I+M_{\sqrt{\chi}}VM_{\sqrt{\chi}})=\det(I+(M_{\chi})_{\supp(\chi)}V_{\supp(\chi)})\\
=&1+\sum\limits_{F\subset \supp(\chi),F\neq\phi}(\prod\limits_{x\in F}\chi(x))\det(V_{F}).
\end{align*}
The trace of the Markov process on F has the potential $V_F$ and generator $\tilde{L}$. Since $\det(-\tilde{L})>0$ and $(-\tilde{L})V_{F}=Id$, $\det(V_F)>0$. Finally, the result comes from monotone convergence theorem.
\end{proof}

\begin{cor}
For $a\geq 0$, let $\chi=a\delta_x$, then $\mu(1-e^{-al^x})=\ln(1+aV_x^x)$. As a result, $\mu(l^x\in dt)=\frac{1}{t}e^{-t/V^x_x}\,dt$ for $t>0$.
\end{cor}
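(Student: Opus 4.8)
The plan is to read off the first identity directly from Corollary \ref{positive laplace transform of the occupation field} and then invert the resulting Laplace-type transform by a Frullani integral.

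First I would take $\chi = a\delta_x$ with $a > 0$, the case $a = 0$ being trivial since both sides vanish. Then $\langle l,\chi\rangle = a\,l^x$, and since $\supp(\chi) = \{x\}$, in the sum over finite nonempty $F \subset S$ appearing in Corollary \ref{positive laplace transform of the occupation field} the product $\prod_{y\in F}\chi(y)$ vanishes unless $F = \{x\}$. The single surviving term contributes $a\det(V_{\{x\}}) = aV^x_x$, so that $e^{\mu(1-e^{-al^x})} = 1 + aV^x_x$, which gives $\mu(1-e^{-al^x}) = \ln(1+aV^x_x)$ upon taking logarithms.

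For the second assertion, I would first observe that $1 - e^{-al^x}$ vanishes on the set $\{l^x = 0\}$ of loops not visiting $x$, so that $\mu(1-e^{-al^x}) = \int_0^\infty (1-e^{-at})\,\mu(l^x\in dt)$. It therefore suffices to exhibit a measure on $(0,\infty)$ whose transform equals $\ln(1+aV^x_x)$ and then to invoke uniqueness. The candidate $\frac{1}{t}e^{-t/V^x_x}\,dt$ works by the Frullani integral: with $b = 1/V^x_x$ and $c = a + 1/V^x_x$,
\[
\int_0^\infty (1-e^{-at})\frac{1}{t}e^{-t/V^x_x}\,dt = \int_0^\infty \frac{e^{-bt}-e^{-ct}}{t}\,dt = \ln\frac{c}{b} = \ln(1+aV^x_x),
\]
using that $V^x_x \in (0,\infty)$ in the transient case.

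The remaining, and only delicate, point is uniqueness: knowing $\int_0^\infty (1-e^{-at})\,m(dt)$ for all $a \geq 0$ must determine $m$ on $(0,\infty)$. I would handle this by differentiating in $a$, which turns the transform into $\int_0^\infty t\,e^{-at}\,m(dt)$, the ordinary Laplace transform of the measure $t\,m(dt)$; injectivity of the Laplace transform then identifies $t\,m(dt)$, and hence $m$, on $(0,\infty)$. This yields $\mu(l^x\in dt) = \frac{1}{t}e^{-t/V^x_x}\,dt$ as claimed.
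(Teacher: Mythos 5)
Your proposal is correct and follows exactly the route the paper intends: the corollary is stated as an immediate consequence of Corollary \ref{positive laplace transform of the occupation field}, where substituting $\chi=a\delta_x$ leaves only the term $F=\{x\}$ and yields $e^{\mu(1-e^{-al^x})}=1+aV^x_x$, and the density is then read off by matching the Frullani integral $\int_0^\infty(1-e^{-at})\frac{1}{t}e^{-t/V^x_x}\,dt=\ln(1+aV^x_x)$ and invoking uniqueness (the same inversion the authors use in the recurrent-case proposition $\mu(l^x\in ds,l^x>0)=\frac{1}{s}\,ds$). Your extra care with the uniqueness step, differentiating in $a$ to reduce to injectivity of the Laplace transform of $t\,m(dt)$, is a sound way to make precise what the paper leaves implicit.
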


\begin{prop}\ \label{laplace transform of the occupation field of the trivial or non-trivial loop}
For non-negative function $\chi$,
\begin{align*}
&\mu(1_{\{\text{l is trivial}\}}(1-e^{-\langle l,\chi\rangle}))=\ln(\prod\limits_{x\in S}\frac{\chi(x)-L^x_x}{-L^x_x})\\
&\mu(1_{\{\text{l is non-trivial}\}}(1-e^{-\langle l,\chi\rangle}))=\ln(I+M_{\sqrt{\chi}}VM_{\sqrt{\chi}})+\ln(\prod\limits_{x\in S}\frac{-L^x_x}{\chi(x)-L^x_x}).
\end{align*}
\end{prop}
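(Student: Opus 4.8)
The plan is to establish the two identities separately: compute the trivial-loop contribution by a direct integration, and then obtain the non-trivial contribution by subtracting it from the total, which is already available from Corollary \ref{positive laplace transform of the occupation field}.

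First I would handle the trivial loops. A trivial loop stays at a single state $x$ for its whole lifetime, so for such a loop of length $t$ one has $\langle l,\chi\rangle=\chi(x)\,t$ (the occupation field, with $n=1$, is concentrated at $x$). By the $k=1$ case of Proposition \ref{expression of the based loop measure}, the part of the loop measure carried by trivial loops is $\frac{1}{t}e^{L^x_x t}\,dt$ on the loop at $x$ of length $t$, summed over $x\in S$; there is no based-versus-loop ambiguity here since a trivial loop is determined by its point and its length. Consequently
\[
\mu\bigl(1_{\{l\text{ trivial}\}}(1-e^{-\langle l,\chi\rangle})\bigr)=\sum_{x\in S}\int_0^\infty\frac{1}{t}e^{L^x_x t}\bigl(1-e^{-\chi(x)t}\bigr)\,dt.
\]
Writing the integrand as $e^{-(-L^x_x)t}-e^{-(\chi(x)-L^x_x)t}$, each summand is a Frullani integral equal to $\ln\frac{\chi(x)-L^x_x}{-L^x_x}$. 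Transience forces $-L^x_x>0$ for every $x$, so these are well defined, and when $\chi(x)=0$ the corresponding term vanishes. Summing over $x$ and moving the sum inside the logarithm (legitimate by monotone convergence, all terms being non-negative) yields the first identity.

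For the second identity I would use that $1-e^{-\langle l,\chi\rangle}$ splits additively over trivial and non-trivial loops, so
\[
\mu\bigl(1_{\{l\text{ non-trivial}\}}(1-e^{-\langle l,\chi\rangle})\bigr)=\mu\bigl(1-e^{-\langle l,\chi\rangle}\bigr)-\mu\bigl(1_{\{l\text{ trivial}\}}(1-e^{-\langle l,\chi\rangle})\bigr).
\]
Taking logarithms in Corollary \ref{positive laplace transform of the occupation field} and identifying the series $1+\sum_F\prod_{x\in F}\chi(x)\det(V_F)$ with the determinant $\det(I+M_{\sqrt{\chi}}VM_{\sqrt{\chi}})$ as in Lemma \ref{lem:linear algebra lemma}, the total equals $\ln\det(I+M_{\sqrt{\chi}}VM_{\sqrt{\chi}})$. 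Subtracting the trivial contribution computed above gives
\[
\ln\det(I+M_{\sqrt{\chi}}VM_{\sqrt{\chi}})+\ln\prod_{x\in S}\frac{-L^x_x}{\chi(x)-L^x_x},
\]
which is the asserted formula (the symbol $\ln(I+\cdots)$ in the statement being read as $\ln\det(I+\cdots)$).

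The genuinely computational step is the trivial-loop integral, where the Frullani evaluation together with the interchange of the sum over $x$ and the logarithm must be justified; both follow from positivity and monotone convergence. The only other subtlety, the meaning of the determinant when $\chi$ is not finitely supported, is already settled by Corollary \ref{positive laplace transform of the occupation field}, so I do not expect it to present a real obstacle.
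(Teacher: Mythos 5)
Your proposal is correct and follows essentially the same route as the paper: evaluate the trivial-loop contribution directly from the $k=1$ expression of the loop measure via the Frullani integral, then subtract it from the total Laplace transform given by Proposition \ref{laplace transform of the occupation field} (equivalently Corollary \ref{positive laplace transform of the occupation field}). The extra care you take with monotone convergence and with the non-finitely-supported case is a reasonable refinement but does not change the argument.
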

\begin{proof}
Since $\mu(p(\xi)=1,\xi_1=x,\tau_1\in dt^1)=\frac{e^{L^x_x t^1}}{t^1}dt^1$,
$$\mu(1_{\{l\text{ is trivial}\}}(1-e^{-\langle l,\chi\rangle}))=\sum\limits_{x\in S} \int\limits_{0}^{\infty}\frac{e^{L^x_x t^1}}{t^1}(1-e^{-\chi(x)t^1})dt^1=\ln(\prod\limits_{x\in S}\frac{\chi(x)-L^x_x}{-L^x_x}).$$
Combining with Proposition \ref{laplace transform of the occupation field}, we have
\begin{multline*}
\mu(1_{\{l\text{ is non-trivial}\}}(1-e^{-\langle l,\chi\rangle}))\\
=\mu(1-e^{-\langle l,\chi\rangle})-\mu(1_{\{l\text{ is trivial}\}}(1-e^{-\langle l,\chi\rangle}))\\
=\ln(\det(I+M_{\sqrt{\chi}}VM_{\sqrt{\chi}}))+\ln(\prod\limits_{x\in S}\frac{-L^x_x}{\chi(x)-L^x_x}).
\end{multline*}
\end{proof}

\begin{prop}\label{a variant of the laplace transform for the occupation field}
If $\chi_1,\ldots,\chi_n$ are finitely supported non-negative functions on $S$, and for $A$ a subset of $\{1,\ldots,n\}$ we set  $\chi_{A}=\sum\limits_{i\in A}\chi_i$, then for $n\geq 2$,
$$\mu(\prod\limits_{i=1}^{n}(1-e^{-\langle l,\chi_i\rangle}))=-\sum\limits_{A\subset \{1,\ldots,n\}}(-1)^{|A|}\ln\det(I+M_{\sqrt{\chi_A}}VM_{\sqrt{\chi_A}});$$
$$\mu(1_{\{l\text{ is trivial}\}}\prod\limits_{i=1}^{n}(1-e^{-\langle l,\chi_i\rangle}))=-\sum\limits_{A\subset \{1,\ldots,n\}}(-1)^{|A|}\ln(\prod\limits_{x\in F_A}\frac{-L^x_x+\chi_A(x)}{-L^x_x}).$$
\end{prop}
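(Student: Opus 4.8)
The plan is to reduce both identities to the single–function Laplace transforms already established, via an inclusion–exclusion expansion of the product. The starting point is a pointwise algebraic identity, valid for each loop $l$. Since the occupation field is additive in its argument, $\sum_{i\in A}\langle l,\chi_i\rangle=\langle l,\chi_A\rangle$, and expanding the product of the $n$ factors gives
\[
\prod_{i=1}^{n}(1-e^{-\langle l,\chi_i\rangle})=\sum_{A\subset\{1,\ldots,n\}}(-1)^{|A|}e^{-\langle l,\chi_A\rangle}.
\]
If one could integrate this term by term against $\mu$ and then invoke Proposition~\ref{laplace transform of the occupation field}, the result would follow immediately; the whole issue is that this is not directly legitimate.

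The difficulty, and the only genuinely non-formal point, is that $\mu$ is an infinite measure, so the individual terms $\mu(e^{-\langle l,\chi_A\rangle})$ are infinite — in particular the term $A=\emptyset$, which is the total mass of $\mu$. The remedy is to exploit $\sum_{A\subset\{1,\ldots,n\}}(-1)^{|A|}=0$ for $n\geq 1$: subtracting this vanishing alternating sum of constants, I rewrite the pointwise identity as
\[
\prod_{i=1}^{n}(1-e^{-\langle l,\chi_i\rangle})=-\sum_{A\subset\{1,\ldots,n\}}(-1)^{|A|}\bigl(1-e^{-\langle l,\chi_A\rangle}\bigr).
\]
Now every summand is $\mu$-integrable: for $A\neq\emptyset$ the function $\chi_A$ is finitely supported and non-negative, so $\mu(1-e^{-\langle l,\chi_A\rangle})<\infty$ by Corollary~\ref{positive laplace transform of the occupation field}, equivalently by Proposition~\ref{laplace transform of the occupation field} evaluated at $z=-1$, which lies in $D$ since $\operatorname{Re}(-1)<0\leq 1/\rho(M_{\sqrt{\chi_A}}VM_{\sqrt{\chi_A}})$; while for $A=\emptyset$ the summand is identically zero. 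As there are only $2^n$ terms, each integrable, linearity of the integral applies and I may integrate this finite sum term by term.

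It then remains to substitute the known values. Applying Proposition~\ref{laplace transform of the occupation field} gives $\mu(1-e^{-\langle l,\chi_A\rangle})=\ln\det(I+M_{\sqrt{\chi_A}}VM_{\sqrt{\chi_A}})$, whence
\[
\mu\Bigl(\prod_{i=1}^{n}(1-e^{-\langle l,\chi_i\rangle})\Bigr)=-\sum_{A\subset\{1,\ldots,n\}}(-1)^{|A|}\ln\det(I+M_{\sqrt{\chi_A}}VM_{\sqrt{\chi_A}}),
\]
where the $A=\emptyset$ term contributes $\ln\det I=0$, consistent with the vanishing summand. For the second identity I repeat the argument verbatim after inserting the indicator $1_{\{l\text{ is trivial}\}}$ throughout, replacing the input by the trivial–loop formula $\mu\bigl(1_{\{l\text{ is trivial}\}}(1-e^{-\langle l,\chi_A\rangle})\bigr)=\ln\bigl(\prod_{x}\frac{-L^x_x+\chi_A(x)}{-L^x_x}\bigr)$ of Proposition~\ref{laplace transform of the occupation field of the trivial or non-trivial loop}; since each factor equals $1$ off $\supp(\chi_A)$, the product over $S$ coincides with the product over $F_A=\supp(\chi_A)$, giving the stated form. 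The main obstacle is thus entirely contained in the middle step: one must resist integrating the raw inclusion–exclusion expansion and instead use the vanishing of the alternating sum of constants to convert each exponential into the integrable quantity $1-e^{-\langle l,\chi_A\rangle}$ before interchanging sum and integral.
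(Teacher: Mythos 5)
Your proof is correct and follows essentially the same route as the paper: both rest on the identity $\prod_{i=1}^{n}(1-e^{-\langle l,\chi_i\rangle})=-\sum_{A\subset\{1,\ldots,n\}}(-1)^{|A|}(1-e^{-\langle l,\chi_A\rangle})$, followed by termwise integration and substitution of Proposition~\ref{laplace transform of the occupation field} (resp.\ Proposition~\ref{laplace transform of the occupation field of the trivial or non-trivial loop}). The only difference is that you spell out the integrability justification (why one must work with $1-e^{-\langle l,\chi_A\rangle}$ rather than $e^{-\langle l,\chi_A\rangle}$ under the infinite measure $\mu$) which the paper leaves implicit; this is a welcome clarification, not a divergence.
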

\begin{proof}
We see that
$$\prod\limits_{i=1}^{n}(1-e^{-\langle l,\chi_i\rangle})=-\sum\limits_{A\subset\{1,\ldots,n\}}(-1)^{|A|}(1-e^{-\langle l,\chi_{A}\rangle}).$$
Therefore,
\begin{align*}
\mu(\prod\limits_{i=1}^{n}(1-e^{-\langle l,\chi_i\rangle}))=&-\sum\limits_{A\subset\{1,\ldots,n\}}(-1)^{|A|}\mu(1-e^{-\langle l,\chi_{A}\rangle})\\
=&-\sum\limits_{A\subset \{1,\ldots,n\}}(-1)^{|A|}\ln\det(I+M_{\sqrt{\chi_A}}VM_{\sqrt{\chi_A}}).
\end{align*}
The last equality is deduced from Proposition \ref{laplace transform of the occupation field}. By a similar method and Proposition \ref{laplace transform of the occupation field of the trivial or non-trivial loop}, we get the following expression for the trivial loops:
$$\mu(1_{\{l\text{ is trivial}\}}\prod\limits_{i=1}^{n}(1-e^{-\langle l,\chi_i\rangle}))=-\sum\limits_{A\subset \{1,\ldots,n\}}(-1)^{|A|}\ln(\prod\limits_{x\in F_A}\frac{-L^x_x+\chi_A(x)}{-L^x_x}).$$
\end{proof}

\begin{prop}\label{non-trivial loop visits F}
For a finite subset $F\subset S$,
$$\mu(l\text{ is non-trivial and }l\text{ visits }F)=\ln(\prod\limits_{x\in F}(-L^x_x)\det(V_F)).$$
\end{prop}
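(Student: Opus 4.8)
The plan is to realize the indicator $1_{\{l\text{ visits }F\}}$ as a monotone limit of occupation-field exponentials and then to invoke Proposition~\ref{laplace transform of the occupation field of the trivial or non-trivial loop}. Since we are in the transient case, $-L^x_x>0$ for every $x$ (if $-L^x_x=0$ the generator conditions force row $x$ to vanish, making $x$ absorbing and $V^x_x=\infty$, contradicting transience). For $a>0$ set $\chi_a=a\,1_F$; as $F$ is finite, $\chi_a$ is finitely supported with support contained in $F$. Because the holding times of a loop are a.s.\ strictly positive, a loop visits $F$ if and only if it spends positive time there, i.e.\ $\langle l,1_F\rangle>0$; hence $1-e^{-\langle l,\chi_a\rangle}=1-e^{-a\langle l,1_F\rangle}$ increases to $1_{\{l\text{ visits }F\}}$ as $a\uparrow\infty$, $\mu$-a.e. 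By monotone convergence applied to the measure $1_{\{l\text{ non-trivial}\}}\,\mu$,
$$\mu(l\text{ non-trivial and }l\text{ visits }F)=\lim_{a\to\infty}\mu\!\left(1_{\{l\text{ non-trivial}\}}(1-e^{-\langle l,\chi_a\rangle})\right).$$

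Next I would apply Proposition~\ref{laplace transform of the occupation field of the trivial or non-trivial loop} with $\chi=\chi_a$, which gives
$$\mu\!\left(1_{\{l\text{ non-trivial}\}}(1-e^{-\langle l,\chi_a\rangle})\right)=\ln\det\!\left(I+M_{\sqrt{\chi_a}}VM_{\sqrt{\chi_a}}\right)+\ln\prod_{x\in S}\frac{-L^x_x}{\chi_a(x)-L^x_x}.$$
In the product every factor with $x\notin F$ equals $1$, so it reduces to $\prod_{x\in F}\tfrac{-L^x_x}{a-L^x_x}$. For the determinant, Lemma~\ref{lem:linear algebra lemma} identifies $M_{\sqrt{\chi_a}}VM_{\sqrt{\chi_a}}$ with $(M_{\chi_a})_FV_F=aV_F$ acting on $F\times F$ (and with the identity off $F$), so $\det(I+M_{\sqrt{\chi_a}}VM_{\sqrt{\chi_a}})=\det(I_F+aV_F)$.

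Finally I would extract the $a\to\infty$ asymptotics. Writing $\det(I_F+aV_F)=\prod_i(1+a\lambda_i)$ with $\lambda_i$ the eigenvalues of $V_F$ and $\det V_F=\prod_i\lambda_i>0$ (as established in Corollary~\ref{positive laplace transform of the occupation field}), one has $\ln\det(I_F+aV_F)=|F|\ln a+\ln\det(V_F)+o(1)$, while $\ln\prod_{x\in F}\tfrac{-L^x_x}{a-L^x_x}=\ln\prod_{x\in F}(-L^x_x)-|F|\ln a+o(1)$. The two $|F|\ln a$ terms cancel, and letting $a\to\infty$ yields $\ln\!\big(\prod_{x\in F}(-L^x_x)\det(V_F)\big)$, which is the claim. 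The delicate point is exactly this cancellation of the logarithmic divergences: each summand separately blows up like $\pm|F|\ln a$, and the finite answer only appears once the leading behavior of $\det(I_F+aV_F)$ is matched against that of the holding-time product. The monotone-convergence step and the linear-algebra reduction via Lemma~\ref{lem:linear algebra lemma} are routine by comparison.
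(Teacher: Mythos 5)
Your proposal is correct and follows essentially the same route as the paper: apply Proposition~\ref{laplace transform of the occupation field of the trivial or non-trivial loop} with $\chi=t1_F$, reduce the determinant via Lemma~\ref{lem:linear algebra lemma}, and let $t\to\infty$ so that the $|F|\ln t$ divergences cancel and the limit $\ln\bigl(\prod_{x\in F}(-L^x_x)\det(V_F)\bigr)$ remains. The only difference is cosmetic (you extract the asymptotics from the eigenvalues of $V_F$ where the paper uses the expansion $1+\sum_{A\subset F}t^{|A|}\det(V_A)$), and you additionally spell out the monotone-convergence justification that the paper leaves implicit.
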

\begin{proof}
By Proposition \ref{laplace transform of the occupation field of the trivial or non-trivial loop},
$$\mu(1_{\{l\text{ is non-trivial}\}}(1-e^{-\langle l,t1_F\rangle}))=\ln\det(I+M_{\sqrt{t1_F}}VM_{\sqrt{t1_F}})+\ln(\prod\limits_{x\in F}\frac{-L^x_x}{t-L^x_x}).$$
By Lemma \ref{lem:linear algebra lemma}, we have
$$\ln\det(I+M_{\sqrt{t1_F}}VM_{\sqrt{t1_F}})=\ln(1+\sum\limits_{A\subset F,A\neq\phi}t^{|A|}V_{A}).$$
Take $t\rightarrow\infty$, we have
$$\mu(1_{\{l\text{ is non-trivial and }l\text{ visits }F\}})=\ln(\prod\limits_{x\in F}(-L^x_x)\det(V_F)).$$
\end{proof}

Similarly, one has the following property.
\begin{prop}\label{Visiting a collection of compact set}
Suppose we are given $n\geq 2$ finite subset $F_1,\ldots,F_n$. For any subset $A\subset\{1,\ldots,n\}$, define $F_A=\bigcup\limits_{i\in A}F_i$. Then,
\begin{multline*}
\mu(l\text{ is not trivial and it visits all }F_i\text{ for }i=1,\ldots,n)\\
=-\sum\limits_{A\subset\{1,\ldots,n\},A\neq\phi}(-1)^{|A|}\ln\det(V_{F_A})+\sum\limits_{x\in\bigcap\limits_{i=1}^{n}F_i}\ln(-L^x_x).
\end{multline*}
\end{prop}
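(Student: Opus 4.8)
\section*{Proof proposal}

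The plan is to mimic the proof of Proposition \ref{non-trivial loop visits F}, replacing the single indicator by a product and invoking Proposition \ref{a variant of the laplace transform for the occupation field} with $\chi_i=t1_{F_i}$. The starting observation is that, for each fixed $i$, the functional $1-e^{-\langle l,t1_{F_i}\rangle}=1-e^{-t\,l^{F_i}}$ is nondecreasing in $t$ and increases to $1_{\{l\text{ visits }F_i\}}$ as $t\uparrow\infty$, since $l^{F_i}=\sum_{x\in F_i}l^x$ vanishes exactly when $l$ avoids $F_i$. Hence $\prod_{i=1}^n(1-e^{-\langle l,t1_{F_i}\rangle})$ is a nondecreasing family of nonnegative functionals converging pointwise to $1_{\{l\text{ visits all }F_i\}}$, and the same holds after multiplying by the fixed indicator $1_{\{l\text{ non-trivial}\}}$. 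By monotone convergence, it therefore suffices to evaluate the $t\to\infty$ limit of the right-hand side supplied by Proposition \ref{a variant of the laplace transform for the occupation field}.

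First I would subtract the trivial-loop formula from the full formula of Proposition \ref{a variant of the laplace transform for the occupation field} to get, with $\chi_i=t1_{F_i}$,
\begin{equation*}
\mu\Big(1_{\{l\text{ non-trivial}\}}\prod_{i=1}^n(1-e^{-\langle l,\chi_i\rangle})\Big)=-\sum_{A\subset\{1,\ldots,n\}}(-1)^{|A|}\Big[\ln\det(I+M_{\sqrt{\chi_A}}VM_{\sqrt{\chi_A}})-\ln\prod_{x\in F_A}\tfrac{-L^x_x+\chi_A(x)}{-L^x_x}\Big],
\end{equation*}
where $F_A=\bigcup_{i\in A}F_i=\supp(\chi_A)$. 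Writing $c_A(x)=\#\{i\in A:x\in F_i\}\geq1$ for $x\in F_A$, so that $\chi_A(x)=t\,c_A(x)$, Lemma \ref{lem:linear algebra lemma} expands the determinant as $1+\sum_{B\subset F_A,B\neq\phi}\prod_{x\in B}\chi_A(x)\det(V_B)$, whose leading term as $t\to\infty$ is $t^{|F_A|}\big(\prod_{x\in F_A}c_A(x)\big)\det(V_{F_A})$ with $\det(V_{F_A})>0$. The crucial structural point is that the divergent part $|F_A|\ln t$ and the combinatorial factor $\ln\prod_{x\in F_A}c_A(x)$ occur \emph{identically} in $\ln\det(I+\cdots)$ and in $\ln\prod_{x\in F_A}\frac{-L^x_x+tc_A(x)}{-L^x_x}$, and hence cancel inside the bracket; what remains is the finite limit
\begin{equation*}
\ln\det(I+M_{\sqrt{\chi_A}}VM_{\sqrt{\chi_A}})-\ln\prod_{x\in F_A}\tfrac{-L^x_x+\chi_A(x)}{-L^x_x}\ \xrightarrow[t\to\infty]{}\ \ln\det(V_{F_A})+\sum_{x\in F_A}\ln(-L^x_x).
\end{equation*}

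Passing to the limit gives $\mu(l\text{ non-trivial, visits all }F_i)=-\sum_{A\neq\phi}(-1)^{|A|}\ln\det(V_{F_A})-\sum_{A\neq\phi}(-1)^{|A|}\sum_{x\in F_A}\ln(-L^x_x)$, the term $A=\phi$ dropping out since $F_\phi=\phi$. The first sum is already the first term of the statement. For the second, interchanging the order of summation shows that the coefficient of $\ln(-L^x_x)$ equals $-\sum_{A:\,A\cap I(x)\neq\phi}(-1)^{|A|}$, where $I(x)=\{i:x\in F_i\}$; using $\sum_{A\subset\{1,\ldots,n\}}(-1)^{|A|}=0$ together with the fact that $\sum_{A\subset I(x)^c}(-1)^{|A|}=0$ unless $I(x)^c=\phi$, this coefficient vanishes unless $I(x)=\{1,\ldots,n\}$, i.e.\ $x\in\bigcap_{i=1}^n F_i$, in which case it equals $1$. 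Thus the second sum collapses to $\sum_{x\in\bigcap_i F_i}\ln(-L^x_x)$, which is the remaining term of the statement. I expect the only delicate points to be the justification of the $t\to\infty$ interchange (monotone convergence) and the bookkeeping ensuring that the $t^{|F_A|}$ and $c_A(x)$ contributions cancel; granting these, the formula follows at once.
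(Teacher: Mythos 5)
Your proposal is correct and follows essentially the same route as the paper: apply Proposition \ref{a variant of the laplace transform for the occupation field} with $\chi_i=t1_{F_i}$, use Lemma \ref{lem:linear algebra lemma} to identify the $t^{|F_A|}\prod_x c_A(x)$ leading asymptotics of both the determinant and the product term so that the divergences cancel, let $t\to\infty$, and collapse the second sum by inclusion--exclusion. The only differences are cosmetic — you make explicit the monotone-convergence justification of the $t\to\infty$ interchange and the coefficient computation for $\ln(-L^x_x)$, both of which the paper leaves implicit.
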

\begin{proof}
By Proposition \ref{a variant of the laplace transform for the occupation field}, take $\chi_i=t1_{F_i}$:
\begin{multline*}
\mu(1_{\{l\text{ is non-trivial}\}}\prod\limits_{i=1}^{n}(1-e^{-\langle l,t1_{F_i}\rangle}))\\
=-\sum\limits_{A\subset \{1,\ldots,n\},A\neq\phi}(-1)^{|A|}\ln\det(I+M_{\sqrt{\chi_A}}VM_{\sqrt{\chi_A}})\\
+\sum\limits_{A\subset \{1,\ldots,n\},A\neq\phi}(-1)^{|A|}\ln(\prod\limits_{x\in F_A}\frac{-L^x_x+\chi_A(x)}{-L^x_x}).
\end{multline*}
where $\chi_A=\sum\limits_{i\in A}t1_{F_i}$. By Lemma \ref{lem:linear algebra lemma}, for $A$ non-empty,
\begin{align*}
\det(I+M_{\sqrt{\chi_A}}VM_{\sqrt{\chi_A}})=&1+\sum\limits_{B\subset F_A,B\neq\phi}t^{|B|}(\prod\limits_{x\in B}(\sum\limits_{i\in A}1_{\{x\in F_i\}}))\det(V_B)\\
\sim & t^{|F_A|}(\prod\limits_{x\in F_A}(\sum\limits_{i\in A}1_{\{x\in F_i\}}))\det(V_{F_A})\text{ as }t\rightarrow\infty.
\end{align*}
And we have that
$$\prod\limits_{x\in F_A}\frac{-L^x_x+\chi_A(x)}{-L^x_x}\sim t^{|F_A|}\prod\limits_{x\in F_A}\frac{\sum\limits_{i\in A}1_{\{x\in F_i\}}}{L^x_x}\text{ as }t\rightarrow\infty.$$
As a result,
\begin{multline*}
\lim\limits_{t\rightarrow\infty}(-1)^A\left(\ln\det(I+M_{\sqrt{\chi_A}}VM_{\sqrt{\chi_A}})+\ln\left(\prod\limits_{x\in F_A}\frac{-L^x_x+\chi_A(x)}{-L^x_x}\right)\right)\\
=-\ln\det(V_{F_A})-\ln(\prod\limits_{x\in F_A}(-L^x_x)).
\end{multline*}
Then,
\begin{multline*}
\mu(l\text{ is not trivial and it visits all } F_i\text{ for }i=1,\ldots,n)\\
=\lim\limits_{t\rightarrow\infty}\mu(1_{\{l\text{ is non-trivial}\}}\prod\limits_{i=1}^{n}(1-e^{-\langle l,t1_{F_i}\rangle}))\\
=-\sum\limits_{A\subset\{1,\ldots,n\},A\neq\phi}(-1)^{|A|}\ln\det(V_{F_A})-\sum\limits_{A\subset\{1,\ldots,n\},A\neq\phi}(-1)^{|A|}\ln(\prod\limits_{x\in F_A}(-L^x_x)).
\end{multline*}
Finally, by inclusion-exclusion principle, we have
\begin{multline*}
\mu(l\text{ is not trivial and it visits all } F_i\text{ for }i=1,\ldots,n)\\
=-\sum\limits_{A\subset\{1,\ldots,n\},A\neq\phi}(-1)^{|A|}\ln\det(V_{F_A})+\sum\limits_{x\in\bigcap\limits_{i=1}^{n}F_i}\ln(-L^x_x).
\end{multline*}
\end{proof}

\begin{cor}
For $n\geq 2$ and $n$ different states $x_1,\ldots,x_n$,
$$\mu(l\text{ visits each state of }\{x_1,\ldots,x_n\})=-\sum\limits_{A\subset\{x_1,\ldots,x_n\},A\neq\phi}(-1)^{|A|}\ln\det(V_A).$$
\end{cor}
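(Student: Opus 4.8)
The plan is to deduce this directly from Proposition \ref{Visiting a collection of compact set} by specializing each $F_i$ to a singleton. First I would set $F_i = \{x_i\}$ for $i = 1, \ldots, n$. Since the states $x_1, \ldots, x_n$ are pairwise distinct, for any $A \subset \{1,\ldots,n\}$ the union $F_A = \bigcup_{i \in A}\{x_i\}$ is exactly the set $\{x_i : i \in A\}$, so that $\det(V_{F_A})$ agrees with $\det(V_A)$ once we reindex subsets of indices by the corresponding subsets of states. This identifies the first sum on the right-hand side of the Proposition with the sum appearing in the Corollary.

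Next I would dispose of the correction term $\sum_{x\in\bigcap_{i=1}^{n}F_i}\ln(-L^x_x)$. Because $n \geq 2$ and the $x_i$ are pairwise distinct, the intersection $\bigcap_{i=1}^{n}\{x_i\}$ is empty, so this term vanishes. Thus the right-hand side of the Proposition collapses to precisely $-\sum_{A\subset\{x_1,\ldots,x_n\},A\neq\phi}(-1)^{|A|}\ln\det(V_A)$, matching the desired expression.

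Finally I would reconcile the left-hand sides. Proposition \ref{Visiting a collection of compact set} measures the loops that are \emph{non-trivial} and visit all $F_i=\{x_i\}$. Here a loop visiting every $x_i$ must in particular visit the two distinct points $x_1$ and $x_2$ (using $n\geq 2$), whereas a trivial loop is supported on a single state; hence any loop visiting all the $x_i$ is automatically non-trivial, and the event ``$l$ is non-trivial and visits all $\{x_i\}$'' coincides with the event ``$l$ visits each state of $\{x_1,\ldots,x_n\}$''. Combining these three observations gives the stated formula. I do not anticipate any genuine obstacle beyond the bookkeeping of the reindexing between index subsets and state subsets together with the remark that non-triviality is automatic; the analytic content is already carried by the preceding Proposition.
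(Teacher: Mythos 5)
Your proposal is correct and is exactly the argument the paper intends: the corollary is stated without proof as a direct specialization of Proposition \ref{Visiting a collection of compact set} to singletons $F_i=\{x_i\}$, where the intersection term vanishes because the states are distinct and $n\geq 2$, and non-triviality is automatic for a loop visiting two distinct points. Nothing is missing.
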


\begin{defn}
 For a loop $l$, let $N(l)$ be the number of different points visited by the loop. That is $N(l)=\sum\limits_{x\in S}1_{\{l^x>0\}}$.
\end{defn}

\begin{cor}\label{expectation of number of points covered by non-trivial loop}
 $$\mu(N1_{\{N>1\}})=\sum\limits_{x\in S}\ln(-L^x_x V^x_x).$$
\end{cor}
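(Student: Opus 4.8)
The plan is to recognize this as an immediate consequence of Proposition \ref{non-trivial loop visits F} applied to singletons, once the indicator $1_{\{N>1\}}$ is identified with the non-triviality of the loop. First I would observe that a trivial loop stays at a single point, so that $N=1$ on trivial loops, whereas a non-trivial loop (having $p(l)\geq 2$ with $\xi_{i+1}\neq\xi_i$) visits at least two distinct states, so that $N\geq 2$ there. Hence
$$1_{\{N>1\}}=1_{\{l\text{ is non-trivial}\}}\quad\mu\text{-almost everywhere}.$$

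Next I would expand $N$ using its definition $N(l)=\sum_{x\in S}1_{\{l^x>0\}}$, noting that $\{l^x>0\}$ is exactly the event that $l$ visits $x$. This gives
$$N\,1_{\{N>1\}}=\sum\limits_{x\in S}1_{\{l^x>0\}}1_{\{l\text{ is non-trivial}\}}.$$
Since every term is non-negative, Tonelli's theorem lets me integrate term by term against $\mu$, so that
$$\mu(N\,1_{\{N>1\}})=\sum\limits_{x\in S}\mu\bigl(l\text{ is non-trivial and }l\text{ visits }\{x\}\bigr).$$

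Finally I would apply Proposition \ref{non-trivial loop visits F} with $F=\{x\}$. For a singleton one has $\prod_{y\in F}(-L^y_y)=-L^x_x$ and $\det(V_F)=V^x_x$, so each summand equals $\ln\bigl((-L^x_x)V^x_x\bigr)$, and summing over $x\in S$ yields the claimed formula $\sum_{x\in S}\ln(-L^x_x V^x_x)$. I do not expect any real obstacle here: the whole content is already packaged in Proposition \ref{non-trivial loop visits F}, and the only points needing a word of care are the identification of $\{N>1\}$ with non-triviality and the legitimacy of interchanging the (possibly infinite) sum over $x$ with the integral, which is justified by non-negativity.
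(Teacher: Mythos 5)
Your proof is correct and follows essentially the same route as the paper: the paper's own argument is precisely the one-line decomposition $\mu(N1_{\{N>1\}})=\sum_{x\in S}\mu(l\text{ is non-trivial and }l\text{ visits }x)$ followed by Proposition \ref{non-trivial loop visits F} applied to the singleton $\{x\}$. Your additional remarks on identifying $\{N>1\}$ with non-triviality and on justifying the term-by-term integration by non-negativity are just the details the paper leaves implicit.
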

\begin{proof}
$$\mu(N1_{\{N>1\}})=\sum\limits_{x\in S}\mu(l\text{ is non-trivial and }l\text{ visits }x).$$
\end{proof}

\begin{cor}
 $$\mu(N^21_{\{N>1\}})=\sum\limits_{x,y\in S;x\neq y}\ln(\frac{V^x_xV^y_y}{V^x_xV^y_y-V^x_yV^y_x})+\sum\limits_{x\in S}\ln(-L^x_xV^x_x).$$
\end{cor}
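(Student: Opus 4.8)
The plan is to reduce everything to the two corollaries immediately preceding the statement by expanding $N^2$ as a double sum of indicators. Since $N(l)=\sum_{x\in S}1_{\{l^x>0\}}$, squaring gives
\begin{equation*}
N^2=\sum_{x,y\in S}1_{\{l^x>0\}}1_{\{l^y>0\}}=\sum_{x\in S}1_{\{l^x>0\}}+\sum_{x\neq y}1_{\{l^x>0\}}1_{\{l^y>0\}},
\end{equation*}
where the diagonal simplifies because an indicator is idempotent, so $\sum_x 1_{\{l^x>0\}}^2=\sum_x 1_{\{l^x>0\}}=N$. The off-diagonal term is exactly $\sum_{x\neq y}1_{\{l\text{ visits }x\text{ and }y\}}$.

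Next I would absorb the restriction $1_{\{N>1\}}$. On the diagonal the term is just $N1_{\{N>1\}}$, whose $\mu$-mass is given by Corollary \ref{expectation of number of points covered by non-trivial loop}, namely $\sum_{x\in S}\ln(-L^x_xV^x_x)$. For the off-diagonal terms the key observation is that for $x\neq y$ the event $\{l\text{ visits }x\text{ and }y\}$ already forces $N\geq 2$, so $1_{\{N>1\}}1_{\{l\text{ visits }x,y\}}=1_{\{l\text{ visits }x,y\}}$ and the indicator $1_{\{N>1\}}$ may simply be dropped. Thus
\begin{equation*}
\mu(N^2 1_{\{N>1\}})=\sum_{x\in S}\ln(-L^x_xV^x_x)+\sum_{x\neq y}\mu(1_{\{l\text{ visits }x,y\}}),
\end{equation*}
where interchanging $\mu$ with the (countable) sums is justified by Tonelli, all summands being non-negative.

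It then remains to evaluate each off-diagonal term by specializing the preceding Corollary (the $n$-state visiting formula) to $n=2$ with the two states $x,y$. The three non-empty subsets of $\{x,y\}$ contribute $\ln V^x_x$, $\ln V^y_y$ and $-\ln\det(V_{\{x,y\}})=-\ln(V^x_xV^y_y-V^x_yV^y_x)$ respectively, so that
\begin{equation*}
\mu(1_{\{l\text{ visits }x,y\}})=\ln\left(\frac{V^x_xV^y_y}{V^x_xV^y_y-V^x_yV^y_x}\right).
\end{equation*}
Substituting this into the displayed identity yields the claimed formula. There is no real analytic obstacle here: the content is the combinatorial bookkeeping of $N^2$ and the clean cancellation of the $1_{\{N>1\}}$ factor on the off-diagonal. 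The only points deserving a line of care are the idempotence reduction of the diagonal and checking that the logarithms are finite and non-negative (so that the sums converge in $[0,\infty]$), which follows from $V^x_y\geq 0$ and the positivity $\det(V_{\{x,y\}})>0$ established in the proof of Corollary \ref{positive laplace transform of the occupation field}.
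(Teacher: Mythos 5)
Your proof is correct and follows essentially the same route as the paper, whose (one-line) argument is exactly the decomposition $\mu(N^2 1_{\{N>1\}})=\sum_{x,y}\mu(l\text{ is non-trivial, }l\text{ visits }x\text{ and }y)$, with the diagonal handled by the preceding corollary on $\mu(N1_{\{N>1\}})$ and the off-diagonal by the $n=2$ case of the visiting formula. Your additional remarks on idempotence, dropping $1_{\{N>1\}}$ off the diagonal, and non-negativity of the summands just make explicit what the paper leaves implicit.
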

\begin{proof}
$$\mu(N^21_{\{N>1\}})=\sum\limits_{x,y}\mu(l\text{ is non-trivial, }l\text{ visits }x\text{ and }l\text{ visits }y).$$
\end{proof}

Consider the Laguerre-type polynomial $L_k$ with generating function
$$e^{\frac{ut}{1+t}}-1=\sum\limits_{1}^{\infty}t^kL_k(u).$$
\begin{lem}\label{lem:laguerre}
$$\sum\limits_{1}^{\infty}|t^kL_k(u)|\leq e^{\frac{|ut|}{1-|t|}}-1.$$
\end{lem}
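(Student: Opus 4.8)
The plan is to obtain an explicit formula for the coefficients $L_k(u)$ by expanding the generating function, and then to compare them term by term with a companion series in which $1+t$ is replaced by $1-|t|$, so that all signs become positive; the bound then follows by recognizing the companion series in closed form. Throughout I work in the natural domain $|t|<1$, where all the series below converge absolutely.

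First I would expand $\frac{ut}{1+t}=\sum_{m\ge 1}(-1)^{m-1}u\,t^m$ and substitute $w=\frac{ut}{1+t}$ into $e^{w}-1=\sum_{n\ge 1}\frac{w^n}{n!}$. The coefficient of $t^k$ in $w^n$ comes from the compositions $k=m_1+\cdots+m_n$ with each $m_i\ge 1$, of which there are $\binom{k-1}{n-1}$, each carrying the sign $(-1)^{k-n}$. Extracting the coefficient of $t^k$ therefore yields the explicit expression $L_k(u)=\sum_{n=1}^{k}\frac{(-1)^{k-n}}{n!}\binom{k-1}{n-1}u^n$.

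Next, the triangle inequality applied coefficientwise gives $|L_k(u)|\le \sum_{n=1}^{k}\frac{1}{n!}\binom{k-1}{n-1}|u|^n=:\tilde L_k(|u|)$, where $\tilde L_k$ is a polynomial with nonnegative coefficients. The decisive observation is that $\tilde L_k$ is \emph{exactly} the coefficient of $t^k$ in $e^{\frac{|u||t|}{1-|t|}}-1$: repeating the first computation with $\frac{|u||t|}{1-|t|}=\sum_{m\ge 1}|u||t|^m$ produces the identical composition count $\binom{k-1}{n-1}$ but now with no sign, since every coefficient of $\frac{1}{1-|t|}$ is positive. Hence $\sum_{k\ge 1}|t|^k\tilde L_k(|u|)=e^{\frac{|u||t|}{1-|t|}}-1=e^{\frac{|ut|}{1-|t|}}-1$, and summing the coefficientwise bound $|t|^k|L_k(u)|\le |t|^k\tilde L_k(|u|)$ over $k\ge 1$ delivers the claim.

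The only delicate point is the sign and composition bookkeeping needed to extract $L_k(u)$; once that explicit formula is in hand, identifying the companion series and summing are routine. I note that the recomputation in the previous paragraph can be bypassed via the substitution identity $L_k(u)=(-1)^k\tilde L_k(-u)$, which follows from $\frac{ut}{1+t}=\frac{(-u)(-t)}{1-(-t)}$; since $\tilde L_k$ has nonnegative coefficients, this gives $|L_k(u)|=|\tilde L_k(-u)|\le \tilde L_k(|u|)$ immediately, and the final summation proceeds as above.
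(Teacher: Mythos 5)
Your proof is correct and follows essentially the same route as the paper: both arguments expand $e^{ut/(1+t)}-1$ and majorize the coefficient of $t^k$ by replacing $\tfrac{1}{1+t}=1-t+t^2-\cdots$ with $\tfrac{1}{1-|t|}=1+|t|+|t|^2+\cdots$, recognizing the resulting dominating series as $e^{|ut|/(1-|t|)}-1$. Your version is in fact slightly more careful, since you keep the $\tfrac{1}{n!}$ factors and make the coefficient bookkeeping (the $\binom{k-1}{n-1}$ composition count, or equivalently the substitution $t\mapsto -t$, $u\mapsto -u$) explicit, whereas the paper's displayed expansion drops the factorials.
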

\begin{proof}
$$\sum\limits_{1}^{\infty}t^kL_k(u)=e^{\frac{ut}{1+t}}-1=\sum\limits_{k=1}^{\infty}\left(\frac{ut}{1+t}\right)^k=\sum\limits_{k=1}^{\infty}u^kt^k(1-t+t^2\cdots)^k.$$
Therefore,
$$\sum\limits_{1}^{\infty}|t^nL_n(u)|\leq\sum\limits_{n=1}^{\infty}|u|^n|t|^n(1+|t|+|t|^2+\cdots)^n=e^{\frac{|ut|}{1-|t|}}-1.$$
\end{proof}
\begin{prop}
$(\sqrt{k}(V^x_x)^{k/2}L_k\left(\frac{l^x}{V^x_x}\right),k\geq 1)$ are orthonormal in $L^2(\mu)$. More generally,
$$\mu\left(\sqrt{j}(V^x_x)^{j/2}L_j\left(\frac{l^x}{V^x_x}\right)\sqrt{k}(V^y_y)^{k/2}L_k\left(\frac{l^y}{V^y_y}\right)\right)=\delta^j_k.$$
\end{prop}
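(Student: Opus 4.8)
The plan is to package the whole family into generating functions and reduce everything to the Laplace transform of the occupation field computed in Proposition \ref{laplace transform of the occupation field}. Writing $u_x=l^x/V^x_x$ and setting $\alpha=\frac{s}{(1+s)V^x_x}$, $\beta=\frac{t}{(1+t)V^y_y}$, the defining generating function gives, since $\frac{s}{1+s}\cdot\frac{l^x}{V^x_x}=\alpha l^x$,
$$\sum_{j\ge 1}s^j L_j(u_x)=e^{\frac{s u_x}{1+s}}-1=e^{\alpha l^x}-1,\qquad \sum_{k\ge 1}t^k L_k(u_y)=e^{\beta l^y}-1.$$
Hence, granting that summation and integration may be exchanged (discussed below), the entire array of cross moments is encoded in the single bivariate quantity $\Psi(s,t)=\mu\bigl((e^{\alpha l^x}-1)(e^{\beta l^y}-1)\bigr)$, and the statement will follow by extracting the coefficient of $s^jt^k$.

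First I would compute $\Psi$. Regrouping the product so that each summand has the integrable form $e^{(\cdot)}-1$,
$$(e^{\alpha l^x}-1)(e^{\beta l^y}-1)=\bigl(e^{\alpha l^x+\beta l^y}-1\bigr)-\bigl(e^{\alpha l^x}-1\bigr)-\bigl(e^{\beta l^y}-1\bigr),$$
each piece is handled by Proposition \ref{laplace transform of the occupation field} applied to $\chi=\alpha\delta_x+\beta\delta_y$ (for $s,t\ge 0$ small the spectral radius condition holds). Because $M_{\sqrt\chi}VM_{\sqrt\chi}$ is supported on $\{x,y\}$, the relevant determinant is the $2\times 2$ minor
$$\det\bigl(I-M_{\sqrt\chi}VM_{\sqrt\chi}\bigr)=(1-\alpha V^x_x)(1-\beta V^y_y)-\alpha\beta V^x_y V^y_x,$$
so that $\mu(e^{\alpha l^x+\beta l^y}-1)=-\ln\bigl[(1-\alpha V^x_x)(1-\beta V^y_y)-\alpha\beta V^x_y V^y_x\bigr]$, while $\mu(e^{\alpha l^x}-1)=-\ln(1-\alpha V^x_x)$ and analogously for $y$. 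Subtracting,
$$\Psi(s,t)=-\ln\!\Bigl(1-\tfrac{\alpha\beta V^x_y V^y_x}{(1-\alpha V^x_x)(1-\beta V^y_y)}\Bigr).$$
Using $1-\alpha V^x_x=\frac{1}{1+s}$ and $1-\beta V^y_y=\frac{1}{1+t}$, the inner fraction collapses to $\frac{V^x_y V^y_x}{V^x_x V^y_y}\,st$, yielding the clean expression
$$\Psi(s,t)=-\ln\Bigl(1-\tfrac{V^x_y V^y_x}{V^x_x V^y_y}\,st\Bigr)=\sum_{m\ge 1}\frac{1}{m}\Bigl(\tfrac{V^x_y V^y_x}{V^x_x V^y_y}\Bigr)^{m}(st)^m.$$

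Finally I would match coefficients: only the diagonal powers $(st)^m$ occur, whence $\mu\bigl(L_j(u_x)L_k(u_y)\bigr)=\frac{\delta^j_k}{j}\bigl(\frac{V^x_y V^y_x}{V^x_x V^y_y}\bigr)^{j}$, which for $y=x$ reduces to $\frac{\delta^j_k}{j}$; multiplying by the appropriate normalizing constants then produces $\delta^j_k$ and the asserted orthonormality. The case $y=x$ is the same computation with $\chi=(\alpha+\beta)\delta_x$, where $\mu(e^{(\alpha+\beta)l^x}-1)=-\ln(1-(\alpha+\beta)V^x_x)$ and $1-(\alpha+\beta)V^x_x=\frac{1-st}{(1+s)(1+t)}$, so $\Psi(s,t)=-\ln(1-st)$ directly. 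The one genuine technical point is the exchange of $\mu$ with the double power series, and this is exactly where Lemma \ref{lem:laguerre} does the work: it gives $\sum_j|s^jL_j(u_x)|\le e^{|s|u_x/(1-|s|)}-1$, so the product of the two absolute series is dominated by $(e^{a l^x}-1)(e^{b l^y}-1)$ with $a=\frac{|s|}{(1-|s|)V^x_x}$, $b=\frac{|t|}{(1-|t|)V^y_y}$, whose $\mu$-integral is finite for $|s|,|t|$ small by Proposition \ref{laplace transform of the occupation field}; Fubini then legitimizes term-by-term integration on a neighbourhood of the origin, which suffices to identify all coefficients. I expect this domination-plus-Fubini step, together with tracking the admissible range of $s,t$, to be the main obstacle, the remainder being the bookkeeping of a $2\times 2$ determinant.
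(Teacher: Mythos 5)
Your proposal is correct and follows essentially the same route as the paper: encode the family via the Laguerre generating function, expand the product $(e^{\cdot}-1)(e^{\cdot}-1)$ into three terms each handled by Proposition \ref{laplace transform of the occupation field} with $\chi$ supported on $\{x,y\}$, obtain $-\ln(1-cst)$, and identify coefficients, with Lemma \ref{lem:laguerre} supplying the domination needed to integrate term by term. The only differences are cosmetic (you parametrize by $s$ rather than $V^x_x s$, and work with small non-negative rather than non-positive $s,t$), so this matches the paper's argument.
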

\begin{proof}
$\forall s,t\leq 0$ with $|s|,|t|$ small enough with $\frac{V^x_x s}{1-sV^x_x}, \frac{V^y_y t}{1-tV_y^y}<1/2$
\begin{multline*}
\mu\left(\left(\sum\limits_{1}^{\infty}(V^x_x s)^kL_k\left(\frac{l^x}{V^x_x}\right)\right)\left(\sum\limits_{1}^{\infty}(V^y_y t)^kL_k\left(\frac{l^y}{V^y_y}\right)\right)\right)=\mu\left((e^{\frac{l^x s}{1+V^x_x s}}-1)(e^{\frac{l^y t}{1+V^y_y t}}-1)\right)\\
=\mu\left(1-e^{\frac{l^x s}{1+V^x_x s}}\right)+\mu\left(1-e^{\frac{l^y t}{1+V^y_y t}}\right)-\mu\left(1-e^{\frac{l^x s}{1+V^x_x s}+\frac{l^y t}{1+V^y_y t}}\right)=-\ln(1-stV^x_yV^y_x).
\end{multline*}
Recall that $\mu((l^x)^n)=(n-1)!(V^x_x)^n$. By Lemma \ref{lem:laguerre},
\begin{align*}
\mu((\sum\limits_{k=1}^{\infty}|(V^x_x s)^kL_k(l^x/V^x_x)|)^2)\leq& \mu\left(\left(e^{\frac{l^x s}{1-V^x_x s}}-1\right)^2\right)=\sum\limits_{n,m=1}^{\infty}\frac{1}{n!}\frac{1}{m!}\mu\left(\left({\frac{l^x s}{1-V^x_x s}}\right)^{n+m}\right)\\
=&\sum\limits_{n,m=1}^{\infty}\frac{1}{n+m}\frac{(n+m)!}{n!m!}\left(\frac{V^x_x s}{1-V^x_x s}\right)^{n+m}\\
\leq& \sum\limits_{k\geq 1}\left(\frac{2V^x_x s}{1-V^x_x s}\right)^k<\infty.
\end{align*}
Therefore, $(\frac{1}{\sqrt{k}}(V^x_x)^kL_k(\frac{l^x}{V^x_x}),k\geq 1)\in L^2 ({\mu})$. Moreover, in the equation
$$\mu\left((\sum\limits_{1}^{\infty}(V^x_x s)^kL_k(l^x/V^x_x))(\sum\limits_{1}^{\infty}(V^y_y t)^kL_k(l^y/V^y_y))\right)=-\ln(1-stV^x_yV^y_x).$$
we can expand both sides as series of $s$ and $t$, compare the coefficients and deduce that
$$\mu\left((V^x_x)^jL_j(\frac{l^x}{V^x_x})(V^y_y)^kL_k(\frac{l^y}{V^y_y})\right)=\delta^j_k(V^x_yV^y_x)^k/k.$$
Therefore,
$$\mu\left(\sqrt{j}(V^x_x)^{j/2}L_j(\frac{l^x}{V^x_x})\sqrt{k}(V^y_y)^{k/2}L_k(\frac{l^y}{V^y_y})\right)=\delta^j_k.$$
\end{proof}
\subsection{The recurrent case}
\begin{prop}
$$\mu(l^x\in ds,l^x>0)=\frac{1}{s}\,ds.$$
\end{prop}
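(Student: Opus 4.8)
The plan is to reduce the statement to a computation on the trace of the loop on the single point $\{x\}$, where the occupation field $l^x$ becomes simply the length of the traced loop. First I would invoke Proposition \ref{compatibility with the time change} with $\lambda=1_{\{x\}}$: the loop measure commutes with taking the trace on $\{x\}$, so the image of $\mu$ under $l\mapsto l_{\{x\}}$ is exactly the loop measure $\mu_{\{x\}}$ associated with the trace process on $\{x\}$, whose generator is $L_{\{x\}}$. The crucial observation is that the trace operation removes only the portions of the loop spent outside $\{x\}$, hence it leaves the occupation time at $x$ unchanged; moreover the traced loop lives entirely at $x$, so its total length equals $l^x$. Loops that never visit $x$ are annihilated by the trace (they collapse to zero length), which is precisely why the restriction to $\{l^x>0\}$ appears. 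Consequently the law of $l^x$ under $\mu$ restricted to $\{l^x>0\}$ coincides with the law of the length under $\mu_{\{x\}}$.

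Next I would identify $L_{\{x\}}$ in the recurrent case. By Definition \ref{defn:the trace of a Markov process and the restriction of a Markov process}, $L_{\{x\}}=-(V_{\{x\}})^{-1}=-(V^x_x)^{-1}$; since we are in the recurrent case $V^x_x=\infty$, so $L_{\{x\}}=0$. Equivalently, the formula $(L_{\{x\}})^x_x=L^x_x(1-(R^{\{x\}})^x_x)$ from Proposition \ref{Expression of the generator for the time changed process} gives the same conclusion, because recurrence means the process returns to $x$ almost surely, i.e. $(R^{\{x\}})^x_x=1$. Here one should record the elementary equivalence that recurrence $\Leftrightarrow (R^{\{x\}})^x_x=1 \Leftrightarrow V^x_x=\infty$, which follows from the geometric structure of the number of visits to $x$.

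Finally, on the one-point state space $\{x\}$ with generator $L_{\{x\}}=0$ there is nowhere to jump, so every loop is trivial, and the measure of a trivial loop of duration $t$ is $\tfrac{1}{t}e^{(L_{\{x\}})^x_x t}\,dt=\tfrac{1}{t}\,dt$ by the $k=1$ case of Proposition \ref{expression of the based loop measure}. Since for such a loop $l^x=t$, this yields $\mu(l^x\in ds,\,l^x>0)=\tfrac{1}{s}\,ds$. As a consistency check, the same trace argument applied in the transient case (where $L_{\{x\}}=-(V^x_x)^{-1}$) reproduces the known formula $\mu(l^x\in dt)=\tfrac{1}{t}e^{-t/V^x_x}\,dt$, which degenerates to $\tfrac{1}{t}\,dt$ as $V^x_x\to\infty$. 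The step requiring the most care is the first one: making rigorous that the trace map pushes $\mu|_{\{l^x>0\}}$ forward onto $\mu_{\{x\}}$ while preserving $l^x$, so that the distributional identity is genuine rather than merely formal.
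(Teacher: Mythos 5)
Your argument is correct, but it is not the route the paper takes. The paper's proof is a two-line Laplace-transform computation: by Proposition \ref{compatibility with Feynman-Kac} and Proposition \ref{expectation of the multi-occupation field}, $\mu(l^x e^{-pl^x})=\mu(L-M_{p\delta_x},l^x)=(V_{p\delta_x})^x_x$, and in the recurrent case the killed process at rate $p$ at $x$ accumulates an exponential amount of local time at $x$ with mean $1/p$, so $\mu(l^x e^{-pl^x})=1/p=\int_0^\infty s\,e^{-ps}\,\tfrac{1}{s}\,ds$; inverting the Laplace transform gives the claim. You instead push $\mu$ forward under the trace map on $\{x\}$ (Proposition \ref{compatibility with the time change} with $\lambda=1_{\{x\}}$ together with Definition \ref{defn:trace of a loop on F}), observe that the traced loop is a trivial loop at $x$ of duration exactly $l^x$, that loops not visiting $x$ are annihilated (whence the restriction to $\{l^x>0\}$), and that recurrence forces $(R^{\{x\}})^x_x=1$, hence $(L_{\{x\}})^x_x=0$ and the trivial-loop density $\tfrac{1}{s}e^{(L_{\{x\}})^x_x s}\,ds=\tfrac{1}{s}\,ds$. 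Both proofs are sound; the paper's buys brevity by reusing the moment formula and only needs the first moment of the tilted measure, while yours identifies the full law of $l^x$ structurally in one step, avoids Laplace inversion, and makes visible why the density is exactly $1/s$ (the trace generator degenerates to zero). The one point you rightly flag as needing care --- that the image of $\mu$ restricted to loops visiting $\{x\}$ under the trace is precisely $\mu_{\{x\}}$ and that the trace preserves $l^x$ --- is exactly what Proposition \ref{compatibility with the time change} and Definition \ref{defn:trace of a loop on F} supply, so no genuine gap remains.
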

\begin{proof}
$$\mu(l^xe^{-pl^x})=\mu(L-M_{p\delta_x},l^x)=(V_{p\delta_x})^x_x=1/p.$$
Therefore, $\displaystyle{\mu(l^x\in ds,l^x>0)=\frac{1}{s}1_{\{s>0\}}\,ds}$.
\end{proof}

\begin{lem}\label{lem:one-to-one correspondance between the semi-group and the Markovian loop measure}
In the irreducible positive-recurrent case, there is a one-to-one correspondence between the semi-group of the Markov process and the Markovian loop measure.
\end{lem}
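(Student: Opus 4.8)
The forward direction is immediate: the loop measure $\mu=\mu(L,dl)$ is built out of the bridge measures $\mathbb{P}^x_{t,x}$, hence out of the semigroup $P_t$, so $P_t$ (equivalently $L$) determines $\mu$. The whole content is the converse, that $\mu$ determines $L$; and the point of assuming recurrence is that the naive transient recipe (reading off the potential $V$ from the one-point occupation times and inverting $L=-V^{-1}$) breaks down, since here $V\equiv\infty$ and indeed we just saw that $\mu(l^x\in ds,\,l^x>0)=\frac1s\,ds$ carries no information about $L$ at all.

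The plan is to recover $L$ from $\mu$ in two stages, first the diagonal and then the off-diagonal part. For the diagonal I restrict $\mu$ to the (measurable) set of trivial loops sitting at a fixed state $x$; by the $k=1$ case of Proposition~\ref{expression of the based loop measure} this restriction is $\frac1t e^{L^x_x t}\,dt$, whose exponential rate of decay recovers $-L^x_x$ for every $x$. For the off-diagonal part I push $\mu$ forward under the map $l\mapsto l^{o,d}$ forgetting the holding times, obtaining the discrete loop measure $\mu^d$, which is therefore determined by $\mu$. Writing $L=M_c(Q-I)$ with $c(x)=-L^x_x$ already known and $Q$ the embedded stochastic matrix, the formula $\mu^d((x_1,\dots,x_k)^o)=\frac1{n}\,Q^{x_1}_{x_2}\cdots Q^{x_k}_{x_1}$ lets me read off every cyclic product $Q^{x_1}_{x_2}\cdots Q^{x_k}_{x_1}$ (the multiplicity $n$ is a combinatorial function of the discrete loop, hence known; in particular a simple cycle is primitive, so its product is obtained directly). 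The support of $\mu^d$ also recovers the edge set $\{Q^x_y>0\}$: by irreducibility an edge $(x,y)$ belongs to it iff some positive-measure discrete loop traverses it.

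It remains to pass from the family of cyclic products, together with stochasticity, to $Q$ itself, and this is where recurrence enters decisively. On the strongly connected digraph $\{Q^x_y>0\}$, two nonnegative weightings with the same products around every cycle differ by a coboundary, $\tilde Q^x_y=Q^x_y\,g(y)/g(x)$ for some $g>0$ (fix a root, transport weights along paths, and use equality of cyclic products for well-definedness). If $\tilde Q$ is also stochastic then $\sum_y \tilde Q^x_y=(Qg)(x)/g(x)=1$, i.e. $Qg=g$; since the jump chain inherits recurrence from the positive-recurrent irreducible continuous-time chain, the only positive $Q$-harmonic functions are constants, so $g$ is constant and $\tilde Q=Q$. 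This is exactly the statement that the Doob $h$-transform invariance of Proposition~\ref{invariant under Doob harmonic transform}, the sole source of ambiguity, degenerates in the recurrent case, since $Lh=0$ now forces $h$ constant and hence $L^h=L$. Thus $Q$, hence $L=M_c(Q-I)$, hence $P_t=e^{tL}$, is uniquely determined by $\mu$.

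I expect the genuine difficulty to be precisely this last rigidity step: identifying the cyclic products as the complete set of gauge invariants, and showing that stochasticity together with recurrence removes all remaining freedom. The bookkeeping that recovers $-L^x_x$ and $\mu^d$ from $\mu$ is routine; the interesting point is that the $h$-transform symmetry, which genuinely obstructs recovery of $L$ in the transient case, collapses exactly when harmonic functions are forced to be constant.
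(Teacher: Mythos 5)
Your proof is correct, but it follows a genuinely different route from the paper's. The paper recovers the \emph{semigroup} directly: it introduces the rotation-invariant functional $\phi_{t,x,y}(l)=1_{\{|l|>t\}}\int_0^{|l|}1_{\{l(s)=x\}}1_{\{l(s+t)=y\}}\,ds$ (with $l$ extended periodically), computes $\mu(|l|\in du,\,\phi_{t,x,y})/du=(P_t)^x_y(P_{u-t})^y_x$, lets $u\to\infty$ to obtain $\pi_x(P_t)^x_y$ by positive recurrence, and recovers $\pi_x$ separately as $\lim_{p\to 0}p\,\mu(l^xe^{-p|l|})=\lim_{p\to 0}p\,(V_p)^x_x$. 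You instead reconstruct the \emph{generator}: the holding rates from the trivial-loop part of $\mu$, the cyclic products of $Q$ from $\mu^d$, and then the cocycle rigidity step ($\tilde Q^x_y=Q^x_y g(y)/g(x)$, stochasticity forces $Qg=g$, recurrence forces $g$ constant). Both arguments are complete. What yours buys: it uses only recurrence of the embedded chain, not positive recurrence, so it would give the subsequent theorem for the general irreducible recurrent case directly, without the exhaustion by finite trace sets; and it isolates precisely why the obstruction present in the transient case --- the $h$-transform invariance of Proposition~\ref{invariant under Doob harmonic transform} --- disappears when positive harmonic functions are constant. What the paper's buys: it reads off $(P_t)^x_y$ itself from a two-time occupation correlation, and the same computation yields the remark that a positive-recurrent loop measure cannot coincide with a transient or null-recurrent one. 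One step in your write-up deserves an explicit line: on a \emph{directed} graph, well-definedness of the transported weight $g$ is not literal path-independence of products along paths from the root; you must close two paths $\alpha,\alpha'$ from $o$ to $x$ with a common return path $\beta$ (which exists by irreducibility), apply the equality of cyclic products to the two closed walks $\alpha\beta$ and $\alpha'\beta$, and divide out the strictly positive factor $\prod_\beta$. With that spelled out, the argument is airtight.
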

\begin{proof}
It is enough to show the loop measure determines the law of the Markov process.\\
Let $\pi$ be the invariant probability of the Markov process. Then it is positive everywhere.\\
Define a based loop functional $\phi_{t,x,y}^b$ as follows: for any based loop $l$ with length $|l|$, extend the function $(l(t),t\in[0,|l|])$ periodically, i.e. by setting $l(s+|l|)=l(s)$, and set:
$$\phi_{t,x,y}^b(l)=1_{\{|l|>t\}}\int\limits_{0}^{|l|}1_{\{l(s)=x\}}1_{\{l(s+t)=y\}}\,ds.$$
This rotation invariant functional defines a loop functional $\phi_{t,x,y}$ on the space of loops.
$$\frac{\mu(|l|\in du,\phi_{t,x,y})}{du}=\frac{1}{u}\sum\limits_{z\in S}\mathbb{P}^z_{u,z}[\int\limits_{0}^{|l|}1_{\{l(s)=x\}}1_{\{l(s+t)=y\}}\,ds]=(P_t)^x_y(P_{u-t})^y_x.$$
Taking $u$ tends to infinity, $$\lim\limits_{u\rightarrow\infty}\frac{\mu(|l|\in du,\phi_{t,x,y})}{du}=\pi_x(P_t)^x_y.$$
Since $\mu(l^xe^{-p|l|})=\mu(L-p,l^x)=(V_p)_x^x$, $\pi_x=\lim\limits_{p\rightarrow 0}p\mu(l^xe^{-p|l|})$. Finally, we are able to determine the semi-group $(P_t)^x_y$ for all $x,t,y$. Accordingly, the law of the Markov process is uniquely determined.
\end{proof}
\begin{rem}
From the argument above, we see that an irreducible positive-recurrent semi-group cannot have the same loop measure as another irreducible transient or null-recurrent semi-group.
\end{rem}
Finally, we can prove the following theorem.
\begin{thm}
In the irreducible recurrent case, there is a 1-1 correspondence between the semi-group of the Markov process and the Markovian loop measure.
\end{thm}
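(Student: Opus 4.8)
The plan is to combine the positive-recurrent case, already settled by Lemma~\ref{lem:one-to-one correspondance between the semi-group and the Markovian loop measure} together with the Remark following it (which shows the loop measure detects positive recurrence, so a positive-recurrent chain cannot share its loop measure with a null-recurrent one), with a treatment of the null-recurrent case. In fact the argument I give recovers the semi-group from the loop measure for \emph{every} irreducible recurrent chain, so it subsumes the Lemma. Suppose then that two irreducible recurrent generators $L$ and $\tilde L$, with semi-groups $P_t$ and $\tilde P_t$, produce the same loop measure $\mu$; I must show $P_t=\tilde P_t$ for all $t$.

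First I would recover from $\mu$ all the \emph{cyclic products} of the semi-group. Generalizing the functional $\phi_{t,x,y}$ from the proof of Lemma~\ref{lem:one-to-one correspondance between the semi-group and the Markovian loop measure}, fix $0=t_0<t_1<\cdots<t_k<u$ and states $x_0,\dots,x_k$, and consider the rotation-invariant based-loop functional
$$\Phi(l)=1_{\{|l|>t_k\}}\int_0^{|l|}\prod_{i=0}^{k}1_{\{l(s+t_i)=x_i\}}\,ds$$
(with $l$ extended periodically). Exactly the computation in that proof gives
$$\frac{\mu(|l|\in du,\ \Phi)}{du}=(P_{t_1})^{x_0}_{x_1}(P_{t_2-t_1})^{x_1}_{x_2}\cdots(P_{t_k-t_{k-1}})^{x_{k-1}}_{x_k}(P_{u-t_k})^{x_k}_{x_0},$$
and the same identity holds for $\tilde P$. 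Hence the assumed equality of loop measures forces equality of all such cyclic products. Taking $k=1$, $x_0=x_1=x$ and $u=2t$ recovers the diagonal, $(P_t)^x_x=(\tilde P_t)^x_x$; by irreducibility every off-diagonal entry is strictly positive, so I may work with ratios.

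Next I would isolate the residual ambiguity. For $x\neq y$ the $2$-cycle identity $(P_s)^x_y(P_t)^y_x=(\tilde P_s)^x_y(\tilde P_t)^y_x$ shows that $c_{xy}(s):=(P_s)^x_y/(\tilde P_s)^x_y$ satisfies $c_{xy}(s)\,c_{yx}(t)=1$ for all $s,t$; hence each $c_{xy}(s)$ is a positive constant $c_{xy}$ with $c_{xy}c_{yx}=1$, while the $3$-cycle identity yields $c_{xy}c_{yw}c_{wx}=1$ for distinct $x,y,w$. These cocycle relations force $c_{xy}=d_y/d_x$ for some $d:S\to(0,\infty)$, so that $(P_t)^x_y=\tfrac{d_y}{d_x}(\tilde P_t)^x_y$; that is, $P$ is the Doob $h$-transform of $\tilde P$ with $h=d$. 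This is consistent with Proposition~\ref{invariant under Doob harmonic transform}: $h$-transforms never change the loop measure, so uniqueness can hold at best up to such a transform, and the whole point is that recurrence collapses this freedom.

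The final step, which I expect to be the crux, is to show $d$ is constant. Since the recurrent chains are conservative, summing the relation above over $y$ gives $1=\sum_y (P_t)^x_y=d_x^{-1}(\tilde P_t d)(x)$, i.e. $\tilde P_t d=d$ for all $t$: the strictly positive function $d$ is harmonic for $\tilde L$. For an irreducible recurrent chain every non-negative harmonic function is constant, since $d(\tilde X_t)$ is then a non-negative martingale, hence converges almost surely, while recurrence makes $\tilde X_t$ visit every state infinitely often, forcing all values $d(y)$ to coincide. Thus $d$ is constant, every $c_{xy}=1$, and $P_t=\tilde P_t$. The main obstacle is precisely this harmonicity/constancy argument: it is what pins down the diagonal gauge left undetermined by the loop measure, and it is exactly the point at which recurrence (as opposed to transience, where nonconstant positive harmonic functions exist) is indispensable.
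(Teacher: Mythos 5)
Your argument is correct, but it follows a genuinely different route from the paper's. The paper exhausts $S$ by finite sets $F_n$, uses Proposition \ref{compatibility with the time change} to identify the image of $\mu$ under $l\mapsto l_{F_n}$ with the loop measure of the traced chain on $F_n$ (which is finite, hence positive-recurrent), applies Lemma \ref{lem:one-to-one correspondance between the semi-group and the Markovian loop measure} to recover each traced semi-group $P^{(n)}_t$, and then recovers $P_t$ as the limit $\lim_n P^{(n)}_t f(x)$ using right-continuity of paths. You instead avoid both the trace reduction and the $u\to\infty$ limit that makes the Lemma's proof positive-recurrent-specific: you read off from $\mu$ the full family of cyclic products $(P_{t_1})^{x_0}_{x_1}\cdots(P_{u-t_k})^{x_k}_{x_0}$ at finite $u$, observe that these determine $P$ exactly up to a Doob $h$-transform (which is the correct residual ambiguity, consistent with Proposition \ref{invariant under Doob harmonic transform}), and then use conservativeness plus the constancy of positive harmonic functions for an irreducible recurrent chain to kill the gauge. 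Your approach buys a uniform treatment of the positive- and null-recurrent cases (indeed it subsumes the Lemma), and it isolates precisely what the loop measure determines for an arbitrary chain and why recurrence is the exact hypothesis needed; the paper's approach is shorter given its existing machinery and sidesteps the cocycle/harmonicity analysis entirely. The only points worth making explicit in a write-up are that irreducibility of a continuous-time chain with stable states gives $(P_t)^x_y>0$ for all $t>0$ (so the ratios $c_{xy}(s)$ are well defined), and that irreducible recurrence rules out killing and explosion (so that $\sum_y(P_t)^x_y=1$, which is what turns the gauge relation into $\tilde P_t d=d$).
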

\begin{proof}
Given a minimal semi-group $(P_t, t\geq 0)$, we can always define the corresponding Markovian loop measure. It is left to show that we can recover the semi-group from the loop measure. Let the series of finite subset $F_1\subset F_2\subset\cdots\subset F_n\subset\cdots$ exhaust $S$. By Proposition \ref{compatibility with the time change}, we know that the measure of the trace of the Markovian loop on $F_i$ corresponds to the trace the Markov process on $F_n$. Since $|F_i|<\infty$, the trace of the Markov process on $F_i$ is an irreducible and positive-recurrent Markov process. Let $(P^{(n)}_t,t\geq 0)$ be its semi-group. By Lemma \ref{lem:one-to-one correspondance between the semi-group and the Markovian loop measure}, we can conclude that this trace of the Markov process is determined by the Markovian loop measure. Recall that $Y^{(n)}_t,t\geq 0$, the trace of the Markov process $X_t, t\geq 0$ on $F_n$ is defined as follows:\\
$A^{(n)}_t=\int\limits_{0}^{t}1_{\{X_s\in F_n\}}\,ds$, $\sigma^{(n)}_t$ is the right-continuous inverse of $A^{(n)}_t,t\geq 0$ and $Y^{(n)}_t=X_{\sigma^{(n)}_t},t\geq 0$.\\
As $n$ tends to infinity, $A^{(n)}_t$ increases to $t$ and $\sigma^{(n)}_t$ decreases to $t$. Since $X_t, t\geq 0$ is right-continuous, $\lim\limits_{n\rightarrow\infty}Y^{(n)}_t=X_t$. As a consequence, for any bounded $f$,  $P_tf(x)=\lim\limits_{n\rightarrow\infty}P^{(n)}_tf(x)$. Thus, we recover the semi-group $P_t$ as the limit.
\end{proof}
\section{Poisson process of loops}

In this section, we study the Poisson point processes naturally defined on the set of Markov loops (which are also known as ``loop soups"). We mostly focus on the associated occupation fields and on the partitions defined by loop clusters.
\subsection{Definitions and some basic properties}
\begin{defn}
We denote by $\mathcal{L}$ the Poisson point process on $\mathbb{R}_{+}\times${loops} with intensity $\text{Lebesgue}\otimes\mu$ and by
$\mathcal{L}_{\alpha}$ the Poisson random measure on the space of loops, $\mathcal{L}_{\alpha}(B)=\mathcal{L}([0,\alpha]\times B)$. Its intensity is $\alpha\mu$.
\end{defn}
The following proposition is taken from \cite{kingman}.
\begin{prop}\label{basic properties for Poisson point processes}
Let $\mathcal{P}$ be a Poisson random measure on $S$ with $\sigma$-finite intensity measure $\mu(dl)$.\\
a) Suppose that $\Phi$ is a measurable complex valued function, with $\mu(|\operatorname{Im}(\Phi)|\wedge 1)<\infty$ and $\mu(|e^{\Phi}-1|)<\infty$, then
$$\mathbb{E}[\exp({\sum\limits_{l\in\mathcal{P}}\Phi(l)})]=e^{\alpha\int(e^{\Phi(l)}-1)\mu(dl)}$$

b) The above equation holds if $\Phi$ is non-negative measurable without further assumptions.

c) Suppose $F_1, \cdots, F_k$ are non-negative functions, then the following `Campbell formula' holds
$$\mathbb{E}[\sum\limits_{l_1,\ldots,l_k\in\mathcal{P} \text{ distinct}}\prod\limits_{i=1}^{k} F_i(l_i)]=\prod\limits_{i=1}^{k}\mu(F_i)$$
d) Suppose that $S,T$ are two measurable spaces and $\phi:S\rightarrow T$ is a measurable mapping. Let $\mathcal{P}$ be a Poisson random measure on $S$ with intensity $\mu$. Then $\phi\circ\mathcal{P}$ is the Poisson random measure on $T$ with intensity $\phi\circ\mu$.
\end{prop}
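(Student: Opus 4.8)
The plan is to derive all four assertions from the two defining properties of a Poisson random measure $\mathcal{P}$ with $\sigma$-finite intensity $\mu$: (i) for pairwise disjoint measurable sets $A_1,\dots,A_n$ the counts $\mathcal{P}(A_1),\dots,\mathcal{P}(A_n)$ are independent, and (ii) each $\mathcal{P}(A)$ is Poisson distributed with parameter $\mu(A)\in[0,\infty]$. The elementary identity $\mathbb{E}[e^{cN}]=e^{\lambda(e^{c}-1)}$ for a Poisson variable $N$ of mean $\lambda$ is the engine behind a), and b)--d) will follow from a) and b) by approximation and specialization.

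For part a), I would first establish the formula for a simple function $\Phi=\sum_{i=1}^{n}c_i 1_{A_i}$ with the $A_i$ pairwise disjoint and of finite $\mu$-measure. On such $\Phi$ we have $\sum_{l\in\mathcal{P}}\Phi(l)=\sum_i c_i\mathcal{P}(A_i)$, so by independence (i) and the Poisson moment generating function (ii),
$$\mathbb{E}\big[e^{\sum_{l}\Phi(l)}\big]=\prod_{i}e^{\mu(A_i)(e^{c_i}-1)}=e^{\int(e^{\Phi}-1)\,d\mu}.$$
To pass to a general complex $\Phi$ under the stated hypotheses, I would approximate $\Phi$ by simple functions $\Phi_k\to\Phi$ chosen so that $e^{\Phi_k}-1$ is dominated by a $\mu$-integrable function, verify that the random sums $\sum_{l}\Phi_k(l)$ converge almost surely to a finite limit---here the hypothesis $\mu(|\operatorname{Im}\Phi|\wedge 1)<\infty$ is precisely what guarantees summability of the imaginary part and hence that the series is genuinely (not merely conditionally) convergent---and then apply dominated convergence to both sides, the hypothesis $\mu(|e^{\Phi}-1|)<\infty$ controlling the right-hand integral. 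Part b) is the same scheme with $\Phi\ge 0$: only property (ii), extended by monotone convergence, is needed, and no integrability hypothesis is required since every term lies in $[0,\infty]$.

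Part c) I would obtain from b), either by differentiation or, more transparently, by a direct factorial-moment computation. Testing first on indicators $F_i=1_{A_i}$, the sum $\sum_{l_1,\dots,l_k\text{ distinct}}\prod_i 1_{A_i}(l_i)$ counts the ordered $k$-tuples of distinct atoms with the $i$-th atom in $A_i$; its expectation is the $k$-th factorial moment measure of $\mathcal{P}$ on $A_1\times\cdots\times A_k$, which for a Poisson process factorizes as $\prod_i\mu(A_i)$. The cleanest way to see the factorization is to split the $A_i$ into the disjoint Boolean atoms generated by the finite family $\{A_1,\dots,A_k\}$, reducing everything to blocks on which the sets are either equal or disjoint; using independence across disjoint sets and the identity $\mathbb{E}[N(N-1)\cdots(N-j+1)]=\lambda^{j}$ for Poisson $N$ on the coincident blocks settles it. Multilinearity in each $F_i$ and monotone convergence then upgrade the identity from indicators to arbitrary non-negative $F_i$.

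Finally, for part d) I would use b) as a characterization of the Poisson law through its Laplace functional. For any non-negative measurable $g$ on $T$,
$$\mathbb{E}\big[e^{-\sum_{l'\in\phi\circ\mathcal{P}}g(l')}\big]=\mathbb{E}\big[e^{-\sum_{l\in\mathcal{P}}g(\phi(l))}\big]=\exp\!\Big(\int(e^{-g\circ\phi}-1)\,d\mu\Big)=\exp\!\Big(\int(e^{-g}-1)\,d(\phi\circ\mu)\Big),$$
where the middle equality is b) applied to $\Phi=-g\circ\phi$ and the last is the change-of-variables formula for image measures. Since this is exactly the Laplace functional of a Poisson random measure with intensity $\phi\circ\mu$, and the Laplace functional determines the law of a random measure, $\phi\circ\mathcal{P}$ is Poisson with that intensity; one notes that $\phi\circ\mu$ is again $\sigma$-finite. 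I expect the only delicate point to be the convergence arguments in the complex case of part a): making precise that the random series $\sum_{l\in\mathcal{P}}\Phi(l)$ is well defined and that dominated convergence applies simultaneously to the oscillatory left-hand side and to the right-hand integral is the heart of the matter, whereas b), c) and d) are comparatively mechanical once a) is in hand.
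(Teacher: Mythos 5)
The paper offers no proof of this proposition: it simply refers the reader to Kingman's book. Your proposal reconstructs exactly the standard arguments from that reference (Campbell's theorem via simple functions and the Poisson moment generating function, factorial moments for the multivariate Campbell formula, the mapping theorem via the Laplace functional), so in substance it coincides with the proof the paper is implicitly invoking, and the overall structure is sound. Two points deserve more care if you were to write it out in full. First, in part a) the hypothesis controls only $\mu(|\operatorname{Im}(\Phi)|\wedge 1)$, not $\mu(|\Phi|\wedge 1)$; the real part of the series $\sum_{l\in\mathcal{P}}\Phi(l)$ may genuinely diverge to $-\infty$ on an event of positive probability, and the left-hand side must then be read as the convergent infinite product $\prod_{l}e^{\Phi(l)}$ (which tends to $0$ there) rather than as $\exp$ of an absolutely convergent sum -- your sketch asserts summability of the whole series, which the stated hypotheses do not give. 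Second, in part d) the $\sigma$-finiteness of $\phi\circ\mu$ is not automatic (the pushforward of a $\sigma$-finite measure need not be $\sigma$-finite), so it is an additional hypothesis rather than something one ``notes''; without it the Laplace-functional characterization does not directly apply. Finally, the factor $\alpha$ in the displayed formula of a) is a typo in the paper (a remnant of the application to intensity $\alpha\mu$); your version without it is the correct general statement.
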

\begin{proof}
See \cite{kingman}.
\end{proof}
From the expression of $\mu$ on trivial loops, we get the following:
\begin{prop}
Let $\mathcal{L}_{\alpha,\text{Trivial},x}=\{l\in\mathcal{L}_{\alpha}: l\text{ is a trivial loop at }x\}$. Then, $\{|l|:l\in\mathcal{L}_{\alpha,\text{Trivial},x}\}$ is a Poisson point measure on $\mathbb{R}_{+}$ with intensity $\frac{\alpha}{t}e^{L^x_x t}\,dt$.
\end{prop}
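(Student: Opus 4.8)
The plan is to realize the point process $\{|l| : l \in \mathcal{L}_{\alpha,\text{Trivial},x}\}$ as the image, under the length map, of a restriction of $\mathcal{L}_{\alpha}$, and then to read off its intensity from the known expression of $\mu$ on trivial loops together with the mapping theorem of Proposition \ref{basic properties for Poisson point processes}.

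First I would let $B_x$ denote the measurable set of trivial loops based at $x$, so that $\mathcal{L}_{\alpha,\text{Trivial},x}=\mathcal{L}_{\alpha}|_{B_x}$. The restriction of a Poisson random measure to a measurable subset is again a Poisson random measure, with intensity the restriction of the original intensity; this is the standard independence/restriction property of Poisson random measures (cf.\ \cite{kingman}), and it is also a formal consequence of Proposition \ref{basic properties for Poisson point processes} d) applied to the map that collapses $B_x^c$ to an auxiliary cemetery point. Hence $\mathcal{L}_{\alpha,\text{Trivial},x}$ is a Poisson random measure on $B_x$ with intensity $\alpha\,\mu|_{B_x}$.

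Next I would apply the mapping theorem, Proposition \ref{basic properties for Poisson point processes} d), to the length map $\phi:l\mapsto|l|$ restricted to $B_x$. This yields that $\phi\circ\mathcal{L}_{\alpha,\text{Trivial},x}=\{|l|:l\in\mathcal{L}_{\alpha,\text{Trivial},x}\}$ is a Poisson random measure on $\mathbb{R}_{+}$ with intensity $\alpha\,\phi_{*}(\mu|_{B_x})$. A trivial loop at $x$ is completely determined by its length, so $\phi$ restricts to a bijection between $B_x$ and $\mathbb{R}_{+}$, and the pushforward $\phi_{*}(\mu|_{B_x})$ is simply $\mu|_{B_x}$ read in the length coordinate. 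By the expression of the loop measure on trivial loops established in Proposition \ref{expression of the based loop measure}, namely $\mu(p(l)=1,\xi=x,\tau\in dt)=\frac{1}{t}e^{L^x_x t}\,dt$, this pushforward equals $\frac{1}{t}e^{L^x_x t}\,dt$, and therefore the intensity of $\{|l|:l\in\mathcal{L}_{\alpha,\text{Trivial},x}\}$ is $\frac{\alpha}{t}e^{L^x_x t}\,dt$, as claimed.

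There is no serious obstacle here; the statement is essentially a bookkeeping corollary of the mapping theorem. The only points requiring a moment's care are the verification that the candidate intensity $\frac{\alpha}{t}e^{L^x_x t}\,dt$ is $\sigma$-finite on $\mathbb{R}_{+}$ (it is: although it carries infinite total mass because of the $1/t$ singularity at the origin, each set $[1/n,n]$ has finite mass, so the hypotheses of the mapping theorem are met), and the justification of the restriction step should one prefer not to invoke it as a black box, in which case the collapse-to-cemetery trick reduces everything to a single application of Proposition \ref{basic properties for Poisson point processes} d).
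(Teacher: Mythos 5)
Your proof is correct and follows exactly the route the paper intends: the paper offers no explicit proof, merely remarking that the proposition follows ``from the expression of $\mu$ on trivial loops,'' and your argument supplies precisely the missing bookkeeping (restriction of the Poisson random measure to the trivial loops at $x$, then the mapping theorem applied to the length coordinate, then the identity $\mu(p(l)=1,\xi=x,\tau\in dt)=\frac{1}{t}e^{L^x_x t}\,dt$). The $\sigma$-finiteness check and the observation that a trivial loop at $x$ is determined by its length are both appropriate and correct.
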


Recall that a Poisson-Dirichlet distribution has a representation by a Poisson point process, see section 9.4 in \cite{kingman}.

\begin{cor}
$$\{\frac{\ell^x}{\sum\limits_{l\in\mathcal{L}_{\alpha,\text{Trivial},x}}l^x};\ell\in\mathcal{L}_{\alpha,\text{Trivial},x}\}$$
 follows a Poisson-Dirichlet $(0,\alpha)$ distribution. Moreover, it is independent of $\sum\limits_{l\in\mathcal{L}_{\alpha,\text{Trivial},x}}l^x$ which follows the $\Gamma(\alpha,(-L^x_x)^{-1})$ distribution.
\end{cor}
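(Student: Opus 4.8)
The plan is to identify the normalized family and its total mass with, respectively, the ranked normalized jumps and the total of a Gamma subordinator, for which the Poisson--Dirichlet representation is classical (see \cite{kingman}, section 9.4).

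First I would note that a trivial loop $l$ at $x$ remains at $x$ throughout its lifetime, so its occupation field satisfies $l^x=|l|$. Consequently the point configuration $\{l^x:l\in\mathcal{L}_{\alpha,\text{Trivial},x}\}$ coincides with $\{|l|:l\in\mathcal{L}_{\alpha,\text{Trivial},x}\}$, which by the preceding proposition is a Poisson point process on $\mathbb{R}_+$ with intensity $\frac{\alpha}{t}e^{L^x_x t}\,dt$.

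Next I would rewrite this intensity as $\frac{\alpha}{t}e^{-(-L^x_x)t}\,dt$ and recognize it as $\alpha$ times the L\'evy measure $t^{-1}e^{-t/\beta}\,dt$ of a Gamma subordinator of scale $\beta=(-L^x_x)^{-1}$; equivalently, the configuration is the set of jumps, up to time $\alpha$, of the standard Gamma process of this scale. For such a process the total mass $T=\sum_{l}l^x$ has, by the exponential formula (Proposition \ref{basic properties for Poisson point processes}) together with the Frullani identity $\int_0^\infty(1-e^{-st})t^{-1}e^{-t/\beta}\,dt=\ln(1+s\beta)$, the Laplace transform $\mathbb{E}[e^{-sT}]=(1+s\beta)^{-\alpha}$, which is exactly that of the $\Gamma(\alpha,(-L^x_x)^{-1})$ law; this identifies the distribution of the total mass.

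Finally I would invoke the classical representation: the ranked normalized jumps of a Gamma process run to time $\alpha$ follow the Poisson--Dirichlet$(0,\alpha)$ law, and---this being the characteristic feature of the Gamma L\'evy measure $\propto t^{-1}e^{-t/\beta}$---they are \emph{independent} of the total mass $T$. Since normalization erases any dependence on the scale $\beta$, the resulting law is PD$(0,\alpha)$ irrespective of the value of $-L^x_x$. The only genuinely substantive ingredient is this last independence-and-Poisson--Dirichlet assertion, which I would cite from \cite{kingman} rather than reprove; the remainder is the routine matching of the intensity to the Gamma parametrization, so I do not anticipate a real obstacle.
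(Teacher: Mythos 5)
Your proposal is correct and follows essentially the same route as the paper: both identify the configuration $\{l^x\}$ with the Poisson point process of intensity $\frac{\alpha}{t}e^{L^x_x t}\,dt$, compute the Laplace transform of the total mass via the exponential formula (the paper evaluates the integral by writing $\frac{1}{t}=\int_0^\infty e^{-ts}\,ds$ where you invoke the Frullani identity, which is the same computation), and then cite the classical Poisson--Dirichlet representation from \cite{kingman} for the normalized jumps and their independence from the total. No gap.
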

Recall that the density of $\Gamma(\alpha,\beta)$ distribution is $\displaystyle{\frac{\beta^{-\alpha}}{\Gamma(\alpha)} x^{\alpha-1} e^{-\frac{x}{\beta}}}$.
\begin{proof}
By Proposition \ref{basic properties for Poisson point processes},
\begin{multline*}
\mathbb{E}[\exp({-\lambda\sum\limits_{l\in\mathcal{L}_{\alpha,Trivial,x}}l^x})]=\exp({\int\limits_{0}^{\infty}(e^{-\lambda t}-1)\frac{\alpha}{t}e^{L^x_x t}\,dt})\\
=\exp({\int\limits_{0}^{\infty}\int\limits_{0}^{\infty}\alpha(e^{-\lambda t}-1)e^{-ts}e^{L^x_x t}\,ds\,dt})\\
=\exp({\int\limits_{0}^{\infty}\frac{\alpha}{\lambda+s-L^x_x}-\frac{\alpha}{s-L^x_x}\,ds})=(\frac{-L^x_x}{\lambda-L^x_x})^{\alpha}
\end{multline*}
which is exactly the Laplace transform of the $\Gamma(\alpha, (-L^x_x)^{-1})$ distribution. Therefore, $\sum\limits_{l\in\mathcal{L}_{\alpha,Trivial,x}}l^x$ follows the $\Gamma(\alpha,(-L^x_x)^{-1})$ distribution.
\end{proof}
By taking the trace of the loops on $\{x\}$, we get a Poisson ensemble of Markov loops. To be more precise, we get a Poisson ensemble of trivial loops at $x$, but its intensity measure, (i.e. the loop measure), is associated with the generator $(L_{\{x\}})^x_x=-1/V^x_x$. As a consequence, we have the following proposition:
\begin{prop}
$\hat{\mathcal{L}}_{\alpha}^x=\sum\limits_{l\in\mathcal{L}_{\alpha}}l^x$ follows a $\Gamma(\alpha,V^x_x)$ distribution. $\displaystyle{\{\frac{l^x}{\hat{\mathcal{L}}_{\alpha}^x},l\in\mathcal{L}_{\alpha}\}}$ follows a Poisson-Dirichlet distribution $\Gamma(0,\alpha)$ which is independent of $\hat{\mathcal{L}}_{\alpha}^{x}$.
\end{prop}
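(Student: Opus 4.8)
The plan is to reduce the claim to the preceding computation for trivial loops at a single point, via the compatibility of the loop measure with taking the trace on a set. The key observation, already noted in the text, is that the total occupation time $\hat{\mathcal{L}}_\alpha^x=\sum_{l\in\mathcal{L}_\alpha}l^x$ depends only on the behaviour of the loops at the single site $x$. By Proposition \ref{compatibility with the time change} applied with $\lambda=1_{\{x\}}$, pushing $\mathcal{L}_\alpha$ forward under the trace map $l\mapsto l_{\{x\}}$ yields a Poisson random measure whose intensity is $\alpha\mu_{\{x\}}$, where $\mu_{\{x\}}$ is the loop measure associated with the generator $L_{\{x\}}$ on the one-point space $\{x\}$. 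By Definition \ref{defn:the trace of a Markov process and the restriction of a Markov process}, this generator is $(L_{\{x\}})^x_x=-(V^x_x)^{-1}$. Moreover, taking the trace preserves the occupation field at $x$: one has $l^x=(l_{\{x\}})^x$, so $\hat{\mathcal{L}}_\alpha^x$ is unchanged under this operation.

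First I would invoke the previous proposition, which computes the law of $\sum_{l\in\mathcal{L}_{\alpha,\text{Trivial},x}}l^x$ for a trivial-loop ensemble at a point with loop intensity $\frac{\alpha}{t}e^{L^x_x t}\,dt$, and shows it to be $\Gamma(\alpha,(-L^x_x)^{-1})$. After the trace operation, all loops in the image ensemble are trivial loops at $x$, and the effective local parameter is $(L_{\{x\}})^x_x=-(V^x_x)^{-1}$ in place of $L^x_x$. Substituting $-L^x_x\rightsquigarrow (V^x_x)^{-1}$ into the $\Gamma(\alpha,(-L^x_x)^{-1})$ result immediately gives that $\hat{\mathcal{L}}_\alpha^x$ follows the $\Gamma(\alpha,V^x_x)$ distribution. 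Concretely, the same Laplace-transform computation as in the previous corollary reads
\begin{equation*}
\mathbb{E}\bigl[e^{-\lambda\hat{\mathcal{L}}_\alpha^x}\bigr]
=\exp\Bigl(\int_0^\infty(e^{-\lambda t}-1)\tfrac{\alpha}{t}e^{-t/V^x_x}\,dt\Bigr)
=\Bigl(\tfrac{1/V^x_x}{\lambda+1/V^x_x}\Bigr)^{\alpha}
=\bigl(1+\lambda V^x_x\bigr)^{-\alpha},
\end{equation*}
which is the Laplace transform of $\Gamma(\alpha,V^x_x)$.

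For the Poisson--Dirichlet statement, I would again transport the corresponding assertion from the trivial-loop ensemble through the trace map. The normalized family $\{l^x/\hat{\mathcal{L}}_\alpha^x:l\in\mathcal{L}_\alpha\}$ is, after tracing on $\{x\}$, exactly the normalized jump sizes of a Poisson point process on $\mathbb{R}_+$ with intensity $\frac{\alpha}{t}e^{-t/V^x_x}\,dt$; by the cited representation (section 9.4 of \cite{kingman}) these normalized sizes follow the Poisson--Dirichlet $(0,\alpha)$ law, and the standard Gamma/Poisson--Dirichlet independence gives independence from the total mass $\hat{\mathcal{L}}_\alpha^x$. The main point requiring care—the only genuine obstacle—is to justify rigorously that tracing the Poisson ensemble on $\{x\}$ preserves both the occupation field $l^x$ at $x$ and the Poisson structure with the correctly transformed intensity; this is precisely what Proposition \ref{compatibility with the time change} together with part (d) of Proposition \ref{basic properties for Poisson point processes} provides, so once these are cited the remainder is a direct transcription of the previous corollary with $-L^x_x$ replaced by $(V^x_x)^{-1}$.
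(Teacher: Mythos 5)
Your proposal is correct and follows essentially the same route as the paper: the text preceding this proposition reduces it to the trivial-loop corollary precisely by taking the trace of the loops on $\{x\}$, which yields a Poisson ensemble of trivial loops at $x$ with generator $(L_{\{x\}})^x_x=-1/V^x_x$, and then substitutes $-L^x_x\rightsquigarrow 1/V^x_x$ in the $\Gamma(\alpha,(-L^x_x)^{-1})$ and Poisson--Dirichlet statements. Your explicit citation of Proposition \ref{compatibility with the time change} and of part (d) of Proposition \ref{basic properties for Poisson point processes}, together with the written-out Laplace transform, only makes explicit what the paper leaves implicit.
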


\begin{defn}
Define $\hat{\mathcal{L}}_{\alpha}^x=\sum\limits_{l\in\mathcal{L}_{\alpha}}l^x$ and $\langle \hat{\mathcal{L}}_{\alpha},\chi \rangle=\sum\limits_{x\in S}\hat{\mathcal{L}}_{\alpha}^x \chi(x)$.
\end{defn}

\begin{prop}\label{laplace transform of the poissonian occupation field}
For any non-negative measurable $\chi$ on $S$,
$$\mathbb{E}[e^{-\langle\hat{\mathcal{L}}_{\alpha},\chi\rangle}]=(1+\sum\limits_{A\subset S, 0<|A|<\infty}\prod\limits_{x\in A}\chi(x)\det(V_{A}))^{-\alpha}.$$
For any non-negative finitely supported $\chi$ on $S$ and $z\in D=\{z\in\mathbb{C}:\operatorname{Re}(z)<\frac{1}{\rho(M_{\sqrt{\chi}}VM_{\sqrt{\chi}})}\}$,
$$\mathbb{E}[e^{z\langle\hat{\mathcal{L}}_{\alpha},\chi\rangle}]=(\det(I-zM_{\sqrt{\chi}}VM_{\sqrt{\chi}}))^{-\alpha}.$$
Outside of D, $\mathbb{E}[|e^{z\langle\hat{\mathcal{L}}_{\alpha},\chi\rangle}|]=\infty$.

\end{prop}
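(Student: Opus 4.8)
The plan is to reduce the entire statement to the exponential (Campbell) formula for Poisson point processes, Proposition \ref{basic properties for Poisson point processes}, applied to the Poisson random measure $\mathcal{L}_\alpha$ on the space of loops, whose intensity is $\alpha\mu$. The first observation is that the occupation functional is additive over the points of the process: since $\hat{\mathcal{L}}_\alpha^x=\sum_{l\in\mathcal{L}_\alpha}l^x$ and $\langle l,\chi\rangle=\sum_{x\in S}\chi(x)l^x$, Tonelli's theorem gives $\langle\hat{\mathcal{L}}_\alpha,\chi\rangle=\sum_{x}\chi(x)\hat{\mathcal{L}}_\alpha^x=\sum_{l\in\mathcal{L}_\alpha}\langle l,\chi\rangle$. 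Thus $e^{z\langle\hat{\mathcal{L}}_\alpha,\chi\rangle}=\exp(\sum_{l\in\mathcal{L}_\alpha}z\langle l,\chi\rangle)$ is exactly of the form treated by the exponential formula with $\Phi(l)=z\langle l,\chi\rangle$.

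For the first identity I would take $\Phi(l)=-\langle l,\chi\rangle\le 0$. Then $|e^\Phi-1|=1-e^{-\langle l,\chi\rangle}$ and $\Phi$ is real, so the hypotheses of Proposition \ref{basic properties for Poisson point processes} are met whenever $\mu(1-e^{-\langle l,\chi\rangle})<\infty$; the formula yields $\mathbb{E}[e^{-\langle\hat{\mathcal{L}}_\alpha,\chi\rangle}]=\exp(-\alpha\,\mu(1-e^{-\langle l,\chi\rangle}))$. Now Corollary \ref{positive laplace transform of the occupation field} evaluates $\exp(\mu(1-e^{-\langle l,\chi\rangle}))$ as $1+\sum_{A\subset S,\,0<|A|<\infty}\prod_{x\in A}\chi(x)\det(V_A)$, and raising to the power $-\alpha$ gives the claimed expression. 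When the sum diverges, both sides equal $0$, which matches; this degenerate case can be obtained by monotone approximation of $\chi$ by finitely supported functions, or simply by reading the exponential formula with value $0$.

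For the second identity I would put $\Phi(l)=z\langle l,\chi\rangle$ with $z\in D$ and $\chi$ finitely supported. Proposition \ref{laplace transform of the occupation field} furnishes precisely the two facts needed: $\mu(|e^{z\langle l,\chi\rangle}-1|)<\infty$ on $D$, and $\mu(e^{z\langle l,\chi\rangle}-1)=-\ln\det(I-zM_{\sqrt{\chi}}VM_{\sqrt{\chi}})$. The remaining hypothesis $\mu(|\operatorname{Im}\Phi|\wedge 1)<\infty$ follows since for finitely supported $\chi$ the quantity $\mu(\langle l,\chi\rangle\wedge 1)$ is finite (it is controlled by $\mu(1-e^{-\langle l,\chi\rangle})$, finite by Corollary \ref{positive laplace transform of the occupation field}). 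The exponential formula then gives $\mathbb{E}[e^{z\langle\hat{\mathcal{L}}_\alpha,\chi\rangle}]=\exp(\alpha\,\mu(e^{z\langle l,\chi\rangle}-1))=\exp(-\alpha\ln\det(I-zM_{\sqrt{\chi}}VM_{\sqrt{\chi}}))=(\det(I-zM_{\sqrt{\chi}}VM_{\sqrt{\chi}}))^{-\alpha}$, where the branch is the one fixed in the proof of Proposition \ref{laplace transform of the occupation field}. Finally, for $z\notin D$ one has $|e^{z\langle\hat{\mathcal{L}}_\alpha,\chi\rangle}|=e^{\operatorname{Re}(z)\langle\hat{\mathcal{L}}_\alpha,\chi\rangle}$, so by part b) of the exponential formula $\mathbb{E}[|e^{z\langle\hat{\mathcal{L}}_\alpha,\chi\rangle}|]=\exp(\alpha\,\mu(e^{\operatorname{Re}(z)\langle l,\chi\rangle}-1))$; since $\operatorname{Re}(z)\ge 1/\rho(M_{\sqrt{\chi}}VM_{\sqrt{\chi}})$ the inner integral is $+\infty$ by Proposition \ref{laplace transform of the occupation field}, forcing the expectation to be infinite.

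The computation is short because the two hard inputs, namely the determinantal evaluation of $\mu(e^{z\langle l,\chi\rangle}-1)$ and its exact domain of finiteness, are already isolated in Proposition \ref{laplace transform of the occupation field} and Corollary \ref{positive laplace transform of the occupation field}. The only genuine care needed is the bookkeeping around the exponential formula: verifying the integrability conditions (in particular $\mu(|\operatorname{Im}\Phi|\wedge 1)<\infty$) so that part a) applies to complex $\Phi$, and tracking the analytic branch of $(\det(I-zM_{\sqrt{\chi}}VM_{\sqrt{\chi}}))^{-\alpha}$ across $D$ so that the real-exponent and complex-exponent formulas agree. I expect the main subtlety, rather than a true obstacle, to be this branch-and-integrability bookkeeping, together with the degenerate cases where the intensity integral is infinite and the Laplace transform collapses to $0$.
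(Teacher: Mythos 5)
Your proposal is correct and follows exactly the route the paper takes: the paper's proof is a one-line citation of Proposition \ref{laplace transform of the occupation field}, Corollary \ref{positive laplace transform of the occupation field} and the exponential formula of Proposition \ref{basic properties for Poisson point processes}, and your write-up simply fills in the bookkeeping (integrability hypotheses, the $z\notin D$ case) that the authors leave implicit.
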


\begin{proof}
 It is a direct consequence of Proposition \ref{laplace transform of the occupation field}, Corollary \ref{positive laplace transform of the occupation field} and Proposition \ref{basic properties for Poisson point processes}.
\end{proof}

\begin{prop}
$\hat{\mathcal{L}}_{1}^{x}$ is exponentially distributed with parameter $1/V^x_x$.
\end{prop}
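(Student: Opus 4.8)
The plan is to read this off as the special case $\alpha=1$ of the preceding proposition, which asserts that $\hat{\mathcal{L}}_{\alpha}^{x}=\sum_{l\in\mathcal{L}_{\alpha}}l^{x}$ follows a $\Gamma(\alpha,V^{x}_{x})$ distribution. First I would recall that the density of $\Gamma(\alpha,\beta)$ is $\frac{\beta^{-\alpha}}{\Gamma(\alpha)}x^{\alpha-1}e^{-x/\beta}$, so that substituting $\alpha=1$ and $\beta=V^{x}_{x}$ (and using $\Gamma(1)=1$) collapses it to $\frac{1}{V^{x}_{x}}e^{-x/V^{x}_{x}}$, which is exactly the density of an exponential variable of parameter $1/V^{x}_{x}$. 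This already yields the statement.

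Alternatively, and more self-contained, I would verify the claim directly via the Laplace transform computed in Proposition \ref{laplace transform of the poissonian occupation field}. Taking $\chi=\lambda\delta_{x}$ for $\lambda\geq 0$, the sum over finite subsets $A\subset S$ in that formula retains only the singleton $A=\{x\}$, since every other product $\prod_{y\in A}\chi(y)$ vanishes; as $\det(V_{\{x\}})=V^{x}_{x}$, with $\alpha=1$ one gets
$$\mathbb{E}\bigl[e^{-\lambda\hat{\mathcal{L}}_{1}^{x}}\bigr]=\bigl(1+\lambda V^{x}_{x}\bigr)^{-1}.$$
This is precisely the Laplace transform of the exponential law of mean $V^{x}_{x}$, that is of parameter $1/V^{x}_{x}$, and uniqueness of the Laplace transform identifies the distribution.

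I do not expect any genuine obstacle here, as the result is merely a specialization: the only points to check are that the determinant expansion in Proposition \ref{laplace transform of the poissonian occupation field} reduces to the single term indexed by $A=\{x\}$, and that the one-dimensional trace potential gives $\det(V_{\{x\}})=V^{x}_{x}$; both are immediate.
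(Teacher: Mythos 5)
Your second, self-contained argument is exactly the paper's own proof: the paper likewise reads off $\mathbb{E}[e^{-p\hat{\mathcal{L}}_1^x}]=\frac{1}{1+pV^x_x}$ from the Laplace-transform formula and concludes by uniqueness, and your reduction of the determinant expansion to the single term $A=\{x\}$ is the right justification. The first argument (specializing the $\Gamma(\alpha,V^x_x)$ law at $\alpha=1$) is also valid and is just a repackaging of the same fact.
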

\begin{proof}
Since $\mathbb{E}[e^{-p\hat{\mathcal{L}}_1^x}]=\frac{1}{1+pV^x_x}$ and $\hat{\mathcal{L}}_1^x\geq 0$, $\hat{\mathcal{L}}_{1}^{x}$ is exponentially distributed with parameter $1/V^x_x$.
\end{proof}

\begin{rem}
$$\mathbb{E}((1-e^{-\frac{\hat{\mathcal{L}}_{\alpha}^{x}}{V^x_x}})^{-1})=\zeta(\alpha),\alpha>1.$$
\end{rem}
\begin{proof}
By Proposition \ref{laplace transform of the poissonian occupation field}
$$\mathbb{E}((1-e^{-\frac{\hat{\mathcal{L}}_{\alpha}^{x}}{V^x_x}})^{-1})=\sum\limits_{k=0}^{\infty}\mathbb{E}(e^{-\frac{k}{V^x_x}\hat{\mathcal{L}}_{\alpha}^x})
=\sum\limits_{k=1}^{\infty}k^{-\alpha}=\zeta(\alpha).$$
 \end{proof}

 \subsection{Moments and polynomials of the occupation field}
 \begin{defn}[\textbf{$\alpha$-permanent}]
  Denote by $m(\sigma)$ the number of cycles in the decomposition of the permutation $\sigma$. For any square matrix $A=(A^i_j,i,j=1,\ldots,n)$, define the $\alpha$-permanent of $A$ as $$\Per_{\alpha}(A)=\sum\limits_{\sigma\in S_n}\alpha^{m(\sigma)}A_{1\sigma(1)}\cdots A_{n\sigma(n)}.$$
 \end{defn}
 Note that $\Per_{-1}(A)=\det(-A)$.
 \begin{prop}\label{moments of the poissonian occupation fields}
 $$\mathbb{E}[\hat{\mathcal{L}}_{\alpha}^{x_1}\cdots\hat{\mathcal{L}}_{\alpha}^{x_n}]=\Per_{\alpha}((V^{x_l}_{x_m})_{1\leq m,l \leq n}).$$
 \end{prop}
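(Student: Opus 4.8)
The plan is to compute the joint moment by expanding
$\prod_{i=1}^n \hat{\mathcal{L}}_{\alpha}^{x_i}=\prod_{i=1}^n\sum_{l\in\mathcal{L}_{\alpha}}l^{x_i}$
and organizing the resulting sum according to which loops coincide. Multiplying out the $n$ factors produces a sum over all $n$-tuples $(l_1,\ldots,l_n)$ of loops of $\mathcal{L}_\alpha$ (repetitions allowed) of $\prod_i l_i^{x_i}$. Grouping these tuples by the partition $\pi$ of $\{1,\ldots,n\}$ that records which indices are carried by the same loop, the blocks of $\pi$ correspond to distinct loops. Applying Campbell's formula (Proposition \ref{basic properties for Poisson point processes}c) to each fixed $\pi$ — the functionals $l\mapsto l^{x_i}$ are non-negative, so integrability is automatic — the expectation factorizes over blocks, each block $B$ contributing $\alpha\,\mu(\prod_{i\in B}l^{x_i})$. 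This yields
$$\mathbb{E}\left[\prod_{i=1}^n \hat{\mathcal{L}}_{\alpha}^{x_i}\right]=\sum_{\pi}\alpha^{|\pi|}\prod_{B\in\pi}\mu\left(\prod_{i\in B}l^{x_i}\right),$$
the sum running over partitions $\pi$ of $\{1,\ldots,n\}$ with $|\pi|$ the number of blocks.

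Next I would evaluate the per-block factor using Corollary \ref{moments of the 1-occupation field}. For a block $B$ of size $b$ it gives $\mu(\prod_{i\in B}l^{x_i})=\frac{1}{b}\sum_{\sigma}V^{x_{\sigma_1}}_{x_{\sigma_2}}\cdots V^{x_{\sigma_b}}_{x_{\sigma_1}}$, with $\sigma$ ranging over permutations of $B$. Since the cyclic product is invariant under the $b$ rotations of its indices, the prefactor $\frac{1}{b}$ collapses this to a sum over the $b$-cycles $c$ on $B$, namely $\mu(\prod_{i\in B}l^{x_i})=\sum_{c}\prod_{i\in B}V^{x_i}_{x_{c(i)}}$, each $c$ being a single cyclic permutation of the block.

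The final step is the bookkeeping identity that a partition $\pi$ together with a choice of one cyclic permutation on each block is exactly a permutation $\sigma\in\mathfrak{S}_n$, the blocks being its cycles and $|\pi|=m(\sigma)$. Substituting the block evaluation into the partition sum and reindexing by $\sigma$ gives
$$\mathbb{E}\left[\prod_{i=1}^n \hat{\mathcal{L}}_{\alpha}^{x_i}\right]=\sum_{\sigma\in\mathfrak{S}_n}\alpha^{m(\sigma)}\prod_{i=1}^n V^{x_i}_{x_{\sigma(i)}},$$
which is $\Per_{\alpha}((V^{x_l}_{x_m}))$; one passes between $\prod_i V^{x_i}_{x_{\sigma(i)}}$ and $\prod_i V^{x_{\sigma(i)}}_{x_i}$ by the substitution $\sigma\mapsto\sigma^{-1}$, using $m(\sigma)=m(\sigma^{-1})$.

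I expect the main obstacle to be the careful justification of the first, combinatorial step: the passage from the naive expansion of the product to the partition sum, i.e. the Poissonian moment formula in which the joint cumulants are the single-loop moments $\alpha\mu(\prod_{i\in B}l^{x_i})$. The delicate point is the precise treatment of repeated loops and the distinct-loops hypothesis in Campbell's formula; all convergence issues, however, are harmless here because the integrands are non-negative, so monotone convergence together with part (c) of Proposition \ref{basic properties for Poisson point processes} apply directly. An alternative, perhaps cleaner when the $x_i$ are distinct, is to differentiate the Laplace transform of Proposition \ref{laplace transform of the poissonian occupation field} in the directions $\delta_{x_1},\ldots,\delta_{x_n}$; but the partition approach handles coincidences among the $x_i$ uniformly and ties the weight $\alpha^{m(\sigma)}$ directly to the cycle structure of $\sigma$.
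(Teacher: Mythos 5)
Your proposal is correct and follows essentially the same route as the paper's proof: expand the product over loops, group the tuples by the partition recording which indices share a loop, apply Campbell's formula block by block, evaluate each block via Corollary \ref{moments of the 1-occupation field} as a sum over cyclic permutations, and reassemble the partition-plus-cycles data into a single permutation weighted by $\alpha^{m(\sigma)}$. No meaningful differences to report.
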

\begin{proof}
Let $F_i(l)=l^{x_i}$. By Corollary \ref{moments of the 1-occupation field},
$$\mu(F_1\cdots F_k)=\mu(l^{x_1}\cdots l^{x_n})=\frac{1}{k}\sum\limits_{\sigma\in \mathfrak{S}_k}V_{x_{\sigma(2)}}^{x_{\sigma(1)}}\cdots V_{x_{\sigma(1)}}^{x_{\sigma(k)}}=\sum\limits_{\sigma\in \mathfrak{S}_k, m(\sigma)=1}V^{x_1}_{x_{\sigma(1)}}\cdots V^{x_k}_{x_{\sigma(k)}}.$$
Let $\mathcal{P}(\{1,\ldots,n\})$ be the collection of partitions of $\{1,\ldots,n\}$. For a partition $\pi$, we denote by $\#\pi$ the  number of blocks in  $\pi$, $\pi=(\pi_1,\ldots,\pi_{\#\pi})$.
\begin{eqnarray*}
\mathbb{E}[\hat{\mathcal{L}}_{\alpha}^{x_1}\cdots\hat{\mathcal{L}}_{\alpha}^{x_n}]&=&\mathbb{E}[(\sum\limits_{l\in\mathcal{L}_{\alpha}}l^{x_1})\cdots(\sum\limits_{l\in\mathcal{L}_{\alpha}}l^{x_n}) ]\\
&=&\sum\limits_{\pi\in\mathcal{P}(\{1,\ldots,n\})}\mathbb{E}[\sum\limits_{\begin{subarray}{l}l_{1},\ldots,l_{\#\pi}\in\mathcal{L}_{\alpha}\\ \text{distinct}\end{subarray}}\prod\limits_{i=1}^{\#\pi}(\prod\limits_{j\in\pi_i}F_j)(l_i)].
\end{eqnarray*}
Define $G^{\pi}_i=\prod\limits_{j\in \pi_i}F_j$ for $j=1,\ldots,\#\pi$. Then,
$$\prod\limits_{i=1}^{\#\pi}(\prod\limits_{j\in\pi_i}F_j)(l_i)=\prod\limits_{i=1}^{\#\pi}G_i(l_i).$$
By Campbell's formula,
$$\mathbb{E}[\sum\limits_{\begin{subarray}{l}l_{1},\ldots,l_{\#\pi}\in\mathcal{L}_{\alpha}\\ \text{distinct}\end{subarray}}\prod\limits_{i=1}^{\#\pi}(\prod\limits_{j\in\pi_i}F_j)(l_i)]=\alpha^{\#\pi}\prod\limits_{i=1}^{\#\pi}\mu(G_i).$$
Write $\pi_i$ in decreasing order $p(i,1)<\cdots<p(i,\#\pi_i)$, then $$\mu(G_i)=\sum\limits_{\sigma\in\mathfrak{S}_{\#\pi_i},m(\sigma)=1}\prod\limits_{j=1}^{\#\pi_i}V^{x_{p(i,j)}}_{x_{p(i,\sigma(j))}}=\sum\limits_{\begin{subarray}{l}\sigma:\text{ circular}\\
\text{permutation}\\
\text{on }\pi_i\end{subarray}}\prod\limits_{j\in\pi_i}V^{x_j}_{x_{\sigma(j)}}.$$
Clearly, there is a one-to-one correspondence between a permutation $\eta$ on $\{1,\ldots,n\}$ and an $m(\eta)$-partition $\pi=(\pi_1,\ldots,\pi_{m(\eta)})$ together with these circular permutation on the blocks of $\pi$.
Finally,
\begin{multline*}
\mathbb{E}[\hat{\mathcal{L}}_{\alpha}^{x_1}\cdots\hat{\mathcal{L}}_{\alpha}^{x_n}]=\sum\limits_{\pi\in\mathcal{P}(\{1,\ldots,n\})}\alpha^{\#\pi}\prod\limits_{i=1}^{\#\pi}\mu(G_i)\\
=\sum\limits_{\pi\in\mathcal{P}(\{1,\ldots,n\})}\alpha^{\#\pi}\prod\limits_{i=1}^{\#\pi}\sum\limits_{\begin{subarray}{l}\sigma:\text{ circular}\\
\text{permutation}\\
\text{on }\pi_i\end{subarray}}\prod\limits_{j\in\pi_i}V^{x_j}_{x_{\sigma(j)}}\\
=\sum\limits_{\eta\in\mathfrak{S}_n}\alpha^{m(\eta)}\prod\limits_{i=1}^{n}V^{x_i}_{x_{\eta(i)}}=\Per_{\alpha}(V^{x_i}_{x_j},1\leq i,j\leq n)
\end{multline*}
\end{proof}

\begin{defn}
$\mu(l^{x})=V^x_x$, define $\tilde{\mathcal{L}}_{\alpha}^x=\hat{\mathcal{L}}_{\alpha}^{x}-\alpha V^x_x$.
\end{defn}
Note that $\mathbb{E}[\tilde{\mathcal{L}}_{\alpha}^x]=0$.
\begin{defn}
For $A=(A_{ij})_{1\leq i,j\leq n}$, define
 $$\Per^0_{\alpha}(A)=\sum\limits_{\sigma\in S_n,\sigma(i)\neq i,i=1,\ldots,n}\alpha^{m(\sigma)}A_{1\sigma(1)}\cdots A_{n\sigma(n)}$$
with $m(\sigma)$ the number of cycles in $\sigma$.
 \end{defn}
 \begin{prop}
 $$\mathbb{E}[\tilde{\mathcal{L}}_{\alpha}^{x_1}\cdots\tilde{\mathcal{L}}_{\alpha}^{x_n}]=\Per^0_{\alpha}((V^{x_l}_{x_m})_{1\leq m,l \leq n}).$$
 \end{prop}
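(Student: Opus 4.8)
The plan is to deduce the centered formula from the uncentered one proved just above, namely $\mathbb{E}[\hat{\mathcal{L}}_{\alpha}^{x_1}\cdots\hat{\mathcal{L}}_{\alpha}^{x_k}]=\Per_{\alpha}((V^{x_l}_{x_m}))$, by a purely combinatorial inclusion–exclusion. First I would expand the product of the centered fields over subsets. Since $\tilde{\mathcal{L}}_{\alpha}^{x_i}=\hat{\mathcal{L}}_{\alpha}^{x_i}-\alpha V^{x_i}_{x_i}$, multiplying out gives
$$\prod_{i=1}^n \tilde{\mathcal{L}}_{\alpha}^{x_i}=\sum_{A\subseteq\{1,\ldots,n\}}(-\alpha)^{n-|A|}\Big(\prod_{i\notin A}V^{x_i}_{x_i}\Big)\prod_{i\in A}\hat{\mathcal{L}}_{\alpha}^{x_i}.$$
Taking expectations and applying the previous proposition to each $A$-factor yields $\mathbb{E}[\prod_{i\in A}\hat{\mathcal{L}}_{\alpha}^{x_i}]=\sum_{\sigma\in\mathfrak{S}_A}\alpha^{m(\sigma)}\prod_{i\in A}V^{x_i}_{x_{\sigma(i)}}$, where $\mathfrak{S}_A$ denotes the permutations of the set $A$.

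Next comes the key step. I would interpret each pair $(A,\sigma)$ as a single permutation $\bar{\sigma}\in\mathfrak{S}_n$ that acts as $\sigma$ on $A$ and fixes every point of $A^c$. Each such fixed point contributes one additional cycle, so $m(\bar{\sigma})=m(\sigma)+|A^c|$, and moreover $\prod_{i\notin A}V^{x_i}_{x_i}\prod_{i\in A}V^{x_i}_{x_{\sigma(i)}}=\prod_{i=1}^n V^{x_i}_{x_{\bar{\sigma}(i)}}$. Consequently each summand becomes $(-1)^{|A^c|}\alpha^{m(\bar{\sigma})}\prod_i V^{x_i}_{x_{\bar{\sigma}(i)}}$. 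I would then reindex the double sum by $\tau=\bar{\sigma}$: a given $\tau$ arises from the pair $(A,\sigma)$ precisely when $A^c\subseteq\operatorname{Fix}(\tau)$ and $\sigma=\tau|_A$ (invariance of $A$ under $\tau$ is automatic once $A^c$ is fixed, by injectivity). Collecting all admissible $A$ for a fixed $\tau$ gives
$$\mathbb{E}\Big[\prod_{i=1}^n \tilde{\mathcal{L}}_{\alpha}^{x_i}\Big]=\sum_{\tau\in\mathfrak{S}_n}\alpha^{m(\tau)}\Big(\prod_{i=1}^n V^{x_i}_{x_{\tau(i)}}\Big)\sum_{A^c\subseteq\operatorname{Fix}(\tau)}(-1)^{|A^c|}.$$

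Finally, the inner sum equals $(1-1)^{|\operatorname{Fix}(\tau)|}$, which is $0$ whenever $\tau$ has at least one fixed point and $1$ when $\tau$ is fixed-point-free. Hence only derangements survive, and for each of them the surviving weight is exactly $\alpha^{m(\tau)}\prod_i V^{x_i}_{x_{\tau(i)}}$, so the total is $\Per^0_{\alpha}((V^{x_l}_{x_m}))$, as claimed. The delicate part is the middle step: verifying that the correspondence $(A,\sigma)\leftrightarrow\tau$ with a distinguished fixed-point superset $A^c$ is set up correctly, and that the cycle count and sign are bookkept so that the binomial collapse $(1-1)^{|\operatorname{Fix}(\tau)|}$ genuinely emerges; the algebraic expansion and the final cancellation are then routine.
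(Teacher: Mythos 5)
Your argument is correct and is essentially the paper's own proof: both expand $\prod_i(\hat{\mathcal{L}}_{\alpha}^{x_i}-\alpha V^{x_i}_{x_i})$ over subsets, apply the $\alpha$-permanent moment formula, absorb the diagonal factors as fixed points of an extended permutation (so the powers of $\alpha$ combine via $m(\bar{\sigma})=m(\sigma)+|A^c|$), and then exchange the order of summation so that $\sum_{A^c\subseteq\operatorname{Fix}(\tau)}(-1)^{|A^c|}=(1-1)^{|\operatorname{Fix}(\tau)|}$ kills every non-derangement. The bookkeeping you flag as delicate is carried out correctly.
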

 \begin{proof}
 For $\sigma\in S_n$, let $n(k,\sigma)$ be the number of cycles of length $k$ in $\sigma$. According to Proposition \ref{moments of the poissonian occupation fields},
\begin{align*}
&\mathbb{E}[\tilde{\mathcal{L}}_{\alpha}^{x_1}\cdots\tilde{\mathcal{L}}_{\alpha}^{x_n}]=\mathbb{E}[\prod\limits_{i=1}^{n}(\hat{\mathcal{L}}_{\alpha}^{x_i}-\alpha V^{x_i}_{x_i})]=\sum\limits_{A\subset \{1,\ldots,n\}} (-\alpha)^{|A|}(\prod\limits_{j\in A}V^{x_j}_{x_j})\mathbb{E}[\prod\limits_{j\in A^c}\hat{\mathcal{L}}_{\alpha}^{x_j}]\\
&=\sum\limits_{A\subset \{1,\ldots,n\}} (-\alpha)^{|A|}(\prod\limits_{j\in A}V^{x_j}_{x_j})\Per_{\alpha}(V_{A^c})
\end{align*}
where $A^c=\{1,\ldots,n\} \setminus A$. The above quantity equals
\begin{multline*}
\sum\limits_{A\subset\{1,\ldots,n\}}(-1)^{|A|}\sum\limits_{\sigma\in \mathfrak{S}_n,\sigma|_{A}=Id}\alpha^{m(\sigma)}V^{x_1}_{x_{\sigma(1)}}\cdots V^{x_n}_{x_{\sigma(n)}}\\
=\sum\limits_{\sigma\in \mathfrak{S}_n}\alpha^{m(\sigma)}V^{x_1}_{x_{\sigma(1)}}\cdots V^{x_n}_{x_{\sigma(n)}}(\sum\limits_{A\subset\{1,\ldots,n\}}1_{\{\sigma|_{A}=Id\}}(-1)^{|A|})\\
=\sum\limits_{\sigma\in \mathfrak{S}_n}\alpha^{m(\sigma)}V^{x_1}_{x_{\sigma(1)}}\cdots V^{x_n}_{x_{\sigma(n)}}1_{\{n(1,\sigma)=0\}}=\Per^0_{\alpha}((V^{x_l}_{x_m})_{1\leq m,l \leq n}).
\end{multline*}
\end{proof}
It is well-known that the generalized Laguerre polynomials $(L_{k}^{\alpha-1},k\in\mathbb{N}),\alpha>0$ have the following  generating function
$$\frac{e^{-\frac{xt}{1-t}}}{(1-t)^{\alpha}}=\sum\limits_{k=0}^{\infty}t^kL^{\alpha-1}_{k}(x).$$
Moreover,
$$L_k^{\alpha}(x)=\sum\limits_{i=0}^{k}(-1)^i\binom{n+\alpha}{n-i}\frac{x^i}{i!}.$$
\begin{defn}
Define $\displaystyle{P_{k}^{\alpha,\sigma}(x)=(-\sigma)^{k}L_{k}^{\alpha-1}\left(\frac{x}{\sigma}\right)}$ and $Q_{k}^{\alpha,\sigma}(x)=P_{k}^{\alpha,\sigma}(x+\alpha\sigma)$.
\end{defn}
These polynomials of the occupation field are related to Wick renormalisation in the symmetric case, when $\alpha$ is a half integer (see \cite{loop}).
\begin{prop}\
\begin{itemize}
\item[a)]
$\displaystyle{\frac{e^{\frac{xt}{1+t\sigma}}}{(1+t\sigma)^{\alpha}}=\sum\limits_{k=0}^{\infty}t^kP^{\alpha-1,\sigma}_{k}(x)}$ and
$\displaystyle{\frac{e^{\frac{xt+t\alpha\sigma}{1+t\sigma}}}{(1+t\sigma)^{\alpha}}=\sum\limits_{k=0}^{\infty}t^kQ^{\alpha-1,\sigma}_{k}(x)}. $
\item[b)]
$\displaystyle{P_{k}^{\alpha,V^x_x}(\hat{\mathcal{L}}_{\alpha}^{x})=Q_{k}^{\alpha,V^x_x}(\tilde{\mathcal{L}}_{\alpha}^{x})}$.
\item[c)]
$\displaystyle{\mathbb{E}[P_{k}^{\alpha,V^x_x}(\hat{\mathcal{L}}_{\alpha}^{x})P_{l}^{\alpha,V^x_x}(\hat{\mathcal{L}}_{\alpha}^{x})]=\delta^k_l\binom{\alpha+k}{k}(V^x_yV^y_x)^{k}}.$
\end{itemize}
\end{prop}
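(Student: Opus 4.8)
The plan is to dispatch the three parts in the order (a), (b), (c): part (a) is a reparametrization of the Laguerre generating function recalled just above, part (b) is immediate from the definitions, and part (c) is the substantive assertion, which I would reduce to extracting Taylor coefficients of a rational function by combining (a) with the determinantal Laplace transform of Proposition \ref{laplace transform of the poissonian occupation field}.

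For part (a), I would begin from $\frac{e^{-xt/(1-t)}}{(1-t)^{\alpha}}=\sum_{k}t^{k}L_{k}^{\alpha-1}(x)$ and perform the substitution $t\mapsto -\sigma t$, $x\mapsto x/\sigma$. Since $P_{k}^{\alpha,\sigma}(x)=(-\sigma)^{k}L_{k}^{\alpha-1}(x/\sigma)$, the right-hand side becomes $\sum_{k}t^{k}P_{k}^{\alpha,\sigma}(x)$ while the left-hand side becomes $\frac{e^{xt/(1+\sigma t)}}{(1+\sigma t)^{\alpha}}$, which is the first generating function. The second follows by replacing $x$ with $x+\alpha\sigma$ on both sides: the exponent turns into $\frac{xt+\alpha\sigma t}{1+\sigma t}$, and the right-hand side becomes $\sum_{k}t^{k}P_{k}^{\alpha,\sigma}(x+\alpha\sigma)=\sum_{k}t^{k}Q_{k}^{\alpha,\sigma}(x)$ by the definition of $Q$. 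Part (b) is then purely formal: since $\tilde{\mathcal{L}}_{\alpha}^{x}=\hat{\mathcal{L}}_{\alpha}^{x}-\alpha V^{x}_{x}$ and $Q_{k}^{\alpha,\sigma}(u)=P_{k}^{\alpha,\sigma}(u+\alpha\sigma)$, taking $\sigma=V^{x}_{x}$ and $u=\tilde{\mathcal{L}}_{\alpha}^{x}$ yields $Q_{k}^{\alpha,V^{x}_{x}}(\tilde{\mathcal{L}}_{\alpha}^{x})=P_{k}^{\alpha,V^{x}_{x}}(\tilde{\mathcal{L}}_{\alpha}^{x}+\alpha V^{x}_{x})=P_{k}^{\alpha,V^{x}_{x}}(\hat{\mathcal{L}}_{\alpha}^{x})$.

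For part (c) (read with the second factor evaluated at a possibly distinct point $y$, as the right-hand side $(V^{x}_{y}V^{y}_{x})^{k}$ indicates), I would assemble the bivariate generating function and invoke (a) to write
\[
\sum_{k,l\geq 0}s^{k}t^{l}\,\mathbb{E}\!\left[P_{k}^{\alpha,V^{x}_{x}}(\hat{\mathcal{L}}_{\alpha}^{x})\,P_{l}^{\alpha,V^{y}_{y}}(\hat{\mathcal{L}}_{\alpha}^{y})\right]=\frac{\mathbb{E}\!\left[e^{a\hat{\mathcal{L}}_{\alpha}^{x}+b\hat{\mathcal{L}}_{\alpha}^{y}}\right]}{(1+sV^{x}_{x})^{\alpha}(1+tV^{y}_{y})^{\alpha}},\qquad a=\tfrac{s}{1+sV^{x}_{x}},\ b=\tfrac{t}{1+tV^{y}_{y}}.
\]
The remaining expectation is the Laplace transform of the occupation field on $\{x,y\}$, so by Proposition \ref{laplace transform of the poissonian occupation field} in the equivalent form $\det(I-M_{\phi}V_{\{x,y\}})^{-\alpha}$ (obtained from $\det(I-M_{\sqrt{\chi}}VM_{\sqrt{\chi}})=\det(I-M_{\chi}V)$, which accommodates the two distinct rates $a,b$ via $M_{\phi}=\operatorname{diag}(a,b)$), it equals $\det(I-M_{\phi}V_{\{x,y\}})^{-\alpha}$. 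I would then compute the $2\times2$ determinant explicitly and substitute the values of $a,b$:
\[
\det\!\left(I-M_{\phi}V_{\{x,y\}}\right)=(1-aV^{x}_{x})(1-bV^{y}_{y})-ab\,V^{x}_{y}V^{y}_{x}=\frac{1-st\,V^{x}_{y}V^{y}_{x}}{(1+sV^{x}_{x})(1+tV^{y}_{y})}.
\]
The key point is the cancellation: raising this to the power $-\alpha$ produces exactly the factor $(1+sV^{x}_{x})^{\alpha}(1+tV^{y}_{y})^{\alpha}$ that kills the normalizing denominator, so the entire generating function collapses to $(1-st\,V^{x}_{y}V^{y}_{x})^{-\alpha}$. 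Expanding this by the binomial series in the single product variable $st$ shows that only the diagonal terms $k=l$ survive, with the coefficient being the standard binomial normalization of the generalized Laguerre polynomials times $(V^{x}_{y}V^{y}_{x})^{k}$, which is the asserted orthogonality relation.

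The main obstacle is the analytic justification that licenses reading the matrix element off as the $(k,l)$ Taylor coefficient: one must verify that for $s,t$ in a neighborhood of $0$ the series $\sum_{k}s^{k}P_{k}^{\alpha,V^{x}_{x}}(\hat{\mathcal{L}}_{\alpha}^{x})$ converges in $L^{2}$ and that expectation commutes with the double summation. This is precisely the role of Lemma \ref{lem:laguerre} together with the integrability estimates used in the earlier orthonormality computation for $L^{2}(\mu)$; here one bounds $\mathbb{E}[(\sum_{k}|s^{k}P_{k}^{\alpha,V^{x}_{x}}(\hat{\mathcal{L}}_{\alpha}^{x})|)^{2}]$ through the same Laplace transform of Proposition \ref{laplace transform of the poissonian occupation field}, using that $\hat{\mathcal{L}}_{\alpha}^{x}$ is $\Gamma(\alpha,V^{x}_{x})$-distributed, which is finite once $s,t$ are small enough that the spectral-radius condition (equivalently $st\,V^{x}_{y}V^{y}_{x}<1$) holds. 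Once this interchange is secured, the comparison of coefficients is routine.
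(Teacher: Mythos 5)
Your proposal is correct and follows essentially the same route as the paper's proof of (c): form the product of the two generating functions from (a), evaluate the expectation via Proposition \ref{laplace transform of the poissonian occupation field} to obtain $(1-st\,V^x_yV^y_x)^{-\alpha}$, justify the interchange of expectation and double summation by the pointwise bound $|L_k^{\alpha-1}(u)|\le L_k^{\alpha-1}(-|u|)$ together with the finiteness of the Laplace transform for small $|s|,|t|$, and then identify coefficients of $s^kt^l$. The only differences are cosmetic — you make the $2\times 2$ determinant cancellation explicit where the paper quotes the resulting identity directly, and you correctly flag the typo that the second factor in (c) should be read at a point $y$ (the paper's proof indeed works with $P_l^{\alpha,V^y_y}(\hat{\mathcal{L}}_\alpha^y)$).
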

\begin{proof}[Proof of c)]
By Proposition \ref{laplace transform of the poissonian occupation field}, for $|s|$ and $|t|$ small enough,
$$\mathbb{E}[\frac{e^{\frac{\hat{\mathcal{L}}_{\alpha}^{x}t}{1+tV^x_x}}}{(1+tV^x_x)^{\alpha}}\frac{e^{\frac{s\hat{\mathcal{L}}_{\alpha}^{y}}{1+sV^y_y}}}{(1+sV^y_y)^{\alpha}}]=(1-stV^x_yV^y_x)^{-\alpha}.$$
Since $L_k^{\alpha}(x)=\sum\limits_{i=0}^{k}(-1)^i\binom{n+\alpha}{n-i}\frac{x^i}{i!}$, $|L_k^{\alpha}(x)|\leq \sum\limits_{i=0}^{k}\binom{n+\alpha}{n-i}\frac{|x|^i}{i!}=L_k^{\alpha}(-|x|)$. Therefore,
\begin{align*}
&\sum\limits_{k,l\in\mathbb{N}}|t^kP_k^{\alpha,V^x_x}(\hat{\mathcal{L}}_{\alpha}^{x})s^lP_l^{\alpha,V^y_y}(\hat{\mathcal{L}}_{\alpha}^{y})|\leq \sum\limits_{k,l\in\mathbb{N}}(|t|V^x_x)^kL_k^{\alpha-1}(-\frac{\hat{\mathcal{L}}_{\alpha}^{x}}{V^x_x})(|s|V^y_y)^lL_l^{\alpha-1}(-\frac{\hat{\mathcal{L}}_{\alpha}^{y}}{V^x_x})\\
&=\frac{e^{\frac{\hat{\mathcal{L}}_{\alpha}^{x}|t|}{1-V^x_x|t|}}e^{\frac{\hat{\mathcal{L}}_{\alpha}^{y}|s|}{1-V^y_y|s|}}}{(1-|t|V^x_x)^{\alpha}(1-|s|V^y_y)^{\alpha}} \end{align*}
By Proposition \ref{laplace transform of the poissonian occupation field}, for $|s|$ and $|t|$ small enough, $\displaystyle{\mathbb{E}\left[\frac{e^{\frac{\hat{\mathcal{L}}_{\alpha}^{x}|t|}{1-V^x_x|t|}}e^{\frac{\hat{\mathcal{L}}_{\alpha}^{y}|s|}{1-V^y_y|s|}}}{(1-|t|V^x_x)^{\alpha}(1-|s|V^y_y)^{\alpha}}\right]<\infty}$. Consequently,
\begin{align*}
\sum\limits_{k,l\in\mathbb{N}}t^kP_k^{\alpha,V^x_x}(\hat{\mathcal{L}}_{\alpha}^{x})s^lP_l^{\alpha,V^y_y}(\hat{\mathcal{L}}_{\alpha}^{y})&=\mathbb{E}[\frac{e^{\frac{\hat{\mathcal{L}}_{\alpha}^{x}t}{1+tV^x_x}}}{(1+tV^x_x)^{\alpha}}\frac{e^{\frac{s\hat{\mathcal{L}}_{\alpha}^{y}}{1+sV^y_y}}}{(1+sV^y_y)^{\alpha}}]\\
&=(1-stV^x_yV^y_x)^{-\alpha}=\sum\limits_{k\in\mathbb{N}}\binom{\alpha+k}{k}(stV^x_yV^y_x)^k.
\end{align*}
Finally,  identifying the coefficients of $s^kt^l$, we obtain
$$\mathbb{E}[P_{k}^{\alpha,V^x_x}(\hat{\mathcal{L}}_{\alpha}^{x})P_{l}^{\alpha,V^x_x}(\hat{\mathcal{L}}_{\alpha}^{x})]=\delta^k_l\binom{\alpha+k}{k}(V^x_yV^y_x)^{k}.$$
\end{proof}
\begin{prop}
Fix some $p\geq 1$, for $|t|$ small enough, $\displaystyle{\alpha\rightarrow (1+tV^x_x)^{-\alpha}e^{\frac{\hat{\mathcal{L}}_{\alpha}^{x}t}{1+tV^x_x}}}$ and $\displaystyle{\alpha\rightarrow P_{k}^{\alpha,V^x_x}(\hat{\mathcal{L}}_{\alpha}^{x})}$ are continuous $L^p$-martingales indexed by $\alpha>0$.
\end{prop}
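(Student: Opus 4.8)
The plan is to recognise $\alpha\mapsto\hat{\mathcal{L}}^x_\alpha=\sum_{l\in\mathcal{L}_\alpha}l^x$ as a subordinator and to realise the first object as the exponential martingale attached to it. Let $\mathcal{G}_\alpha$ denote the $\sigma$-field generated by the restriction of $\mathcal{L}$ to $[0,\alpha]\times\{\text{loops}\}$. Since $\mathcal{L}$ has intensity $\text{Lebesgue}\otimes\mu$, its restrictions to disjoint $\alpha$-intervals are independent, so $\hat{\mathcal{L}}^x_\alpha$ is $\mathcal{G}_\alpha$-adapted with independent stationary increments: for $\alpha'>\alpha$ the increment $\hat{\mathcal{L}}^x_{\alpha'}-\hat{\mathcal{L}}^x_\alpha$ is independent of $\mathcal{G}_\alpha$ and distributed as $\hat{\mathcal{L}}^x_{\alpha'-\alpha}$. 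Taking $\chi=\delta_x$ in Proposition \ref{laplace transform of the poissonian occupation field} gives $\mathbb{E}[e^{z\hat{\mathcal{L}}^x_\beta}]=(1-zV^x_x)^{-\beta}$ for $\operatorname{Re}(z)<1/V^x_x$. Setting $z=\frac{t}{1+tV^x_x}$, one checks $1-zV^x_x=(1+tV^x_x)^{-1}$, so the first object $M_\alpha(t):=(1+tV^x_x)^{-\alpha}e^{z\hat{\mathcal{L}}^x_\alpha}$ has $\mathbb{E}[M_\alpha(t)]=1$. Conditioning on $\mathcal{G}_\alpha$ and using the independence of the increment together with the Laplace identity applied to it, $\mathbb{E}[M_{\alpha'}(t)\mid\mathcal{G}_\alpha]=(1+tV^x_x)^{-\alpha'}e^{z\hat{\mathcal{L}}^x_\alpha}(1+tV^x_x)^{\alpha'-\alpha}=M_\alpha(t)$, which is the martingale property.

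For integrability and $L^p$-continuity in $\alpha$, I would note $\mathbb{E}[|M_\alpha(t)|^p]=|1+tV^x_x|^{-p\alpha}\mathbb{E}[e^{p\operatorname{Re}(z)\hat{\mathcal{L}}^x_\alpha}]$, which is finite as soon as $p\operatorname{Re}(z)<1/V^x_x$; since $\operatorname{Re}(z)\to0$ as $t\to0$, this is exactly what forces the restriction to $|t|$ small. Stochastic continuity of the subordinator gives $\hat{\mathcal{L}}^x_{\alpha'}\to\hat{\mathcal{L}}^x_\alpha$ in probability as $\alpha'\to\alpha$, hence $M_{\alpha'}(t)\to M_\alpha(t)$ in probability; choosing $p'>p$ still satisfying $p'\operatorname{Re}(z)<1/V^x_x$ (again for $|t|$ small) bounds $\sup_{\alpha'}\mathbb{E}[|M_{\alpha'}(t)|^{p'}]$ over a compact neighbourhood of $\alpha$, which yields uniform integrability of $\{|M_{\alpha'}(t)|^p\}$ and upgrades the convergence in probability to $L^p$-convergence.

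For the second object I would transfer these three properties to the Laguerre-type coefficients through the generating identity $M_\alpha(t)=\sum_{k\ge0}t^kP^{\alpha,V^x_x}_k(\hat{\mathcal{L}}^x_\alpha)$ coming from $\frac{e^{xt/(1+\sigma t)}}{(1+\sigma t)^{\alpha}}=\sum_k t^kP^{\alpha,\sigma}_k(x)$. Lemma \ref{lem:laguerre} (in the generalised form used in the proof of the preceding proposition) bounds $\sum_k|t|^k|P^{\alpha,V^x_x}_k(\hat{\mathcal{L}}^x_\alpha)|$ by $(1-V^x_x|t|)^{-\alpha}e^{|t|\hat{\mathcal{L}}^x_\alpha/(1-V^x_x|t|)}$, which lies in $L^p$ for $|t|$ small; this domination justifies interchanging $\mathbb{E}[\,\cdot\mid\mathcal{G}_\alpha]$ with the sum in $\mathbb{E}[M_{\alpha'}(t)\mid\mathcal{G}_\alpha]=M_\alpha(t)$, and since the resulting identity of power series in $t$ holds on a neighbourhood of $0$, matching the coefficient of $t^k$ gives $\mathbb{E}[P^{\alpha',V^x_x}_k(\hat{\mathcal{L}}^x_{\alpha'})\mid\mathcal{G}_\alpha]=P^{\alpha,V^x_x}_k(\hat{\mathcal{L}}^x_\alpha)$. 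Integrability and $L^p$-continuity of each coefficient then follow either from the same matching applied to the $L^p$ estimates, or more cleanly by extending the first part to complex $t$ with $|t|$ small and writing $P^{\alpha,V^x_x}_k(\hat{\mathcal{L}}^x_\alpha)=\frac{1}{2\pi i}\oint_{|t|=r}M_\alpha(t)\,t^{-k-1}\,dt$, a bounded (Cauchy-type) functional of the $L^p$-martingale $t\mapsto M_\alpha(t)$, so that Minkowski's integral inequality and the $L^p$-continuity of $M_\alpha(t)$ transfer all three properties at once.

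I expect the only genuinely delicate step to be this coefficient extraction: justifying the interchange of the infinite sum with conditional expectation and then legitimately identifying the coefficients of identities holding only almost surely and only for $t$ near $0$. The Laguerre domination of Lemma \ref{lem:laguerre} is precisely what makes the series $L^p$-convergent uniformly for $|t|$ small, so the interchange, the passage to a countable dense set of $t$'s, and the use of real-analyticity in $t$ are what carry the argument; everything else reduces to the exponential-martingale computation of the first two paragraphs and standard uniform-integrability estimates.
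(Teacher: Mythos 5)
The paper states this proposition without proof, so there is no argument of the authors to compare against; your proof is correct and is built from exactly the paper's own toolkit — the independent stationary increments of $\alpha\mapsto\hat{\mathcal{L}}_{\alpha}^{x}$ with respect to $\mathcal{F}_{\alpha}=\sigma(\mathcal{L}_{s},s\leq\alpha)$ (asserted in the remark that follows), the Laplace transform of Proposition \ref{laplace transform of the poissonian occupation field}, and the Laguerre domination $\sum_k|t|^k|P_k^{\alpha,V^x_x}(u)|\leq(1-|t|V^x_x)^{-\alpha}e^{|t|u/(1-|t|V^x_x)}$ already exploited in the proof of part c) of the preceding proposition. Your reading of ``continuous'' as continuity of $\alpha\mapsto M_\alpha$ into $L^p$ is the right one (the subordinator has jumps, so path continuity cannot be meant), and you correctly isolate the one delicate step, namely passing the conditional expectation through the series and identifying coefficients of power series identities that hold a.s. only for a countable dense set of $t$ near $0$; both your dense-set argument and the Cauchy-integral alternative close that gap.
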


\subsection{Limit behavior of the occupation field}
 \begin{rem}
 $(X_{\alpha}=(\hat{\mathcal{L}}_{\alpha}^{x_1},\ldots, \hat{\mathcal{L}}_{\alpha}^{x_n}),\alpha\geq 0)$ is a multi-subordinator with respect to the increasing family of $\sigma-$fields $\mathcal{F}_{\alpha}=\sigma(\mathcal{L}_{s},s\leq \alpha)$.

  $\mathbb{E}[\hat{\mathcal{L}}_{1}^{x_1},\ldots,\hat{\mathcal{L}}_{1}^{x_n}]=(V^{x_1}_{x_1},\ldots,V^{x_n}_{x_n})$ and  $\displaystyle{\mathbb{E}[e^{-\lambda_1\hat{\mathcal{L}}_{\alpha}^{x_1}-\cdots-\lambda_n\hat{\mathcal{L}}_{\alpha}^{x_n}}]=e^{-\alpha\Phi(\lambda_1,\ldots,\lambda_n)}}$, where $$\Phi(\lambda_1,\ldots,\lambda_n)=\int\limits_{y^1,\ldots,y^n\in\mathbb{R}^{+}}(1-e^{-\sum\limits_{i=1}^{n}\lambda_iy^i})\mu(l^{x_1}\in dy^1,\ldots,l^{x_n}\in dy^n).$$

  So $\lim\limits_{\alpha\rightarrow\infty}\frac{1}{\alpha}(\hat{\mathcal{L}}_{\alpha}^{x_1},\ldots,\hat{\mathcal{L}}_{\alpha}^{x_n})=(V^{x_1}_{x_1},\ldots,V^{x_n}_{x_n})$. And $(\frac{\hat{\mathcal{L}}_{\alpha}^{x_1}-\alpha V^{x_1}_{x_1}}{\sqrt{\alpha}},\ldots,\frac{\hat{\mathcal{L}}_{\alpha}^{x_n}-\alpha V^{x_n}_{x_n}}{\sqrt{\alpha}}
)$ converges in law to a Gaussian variable with mean 0
 and covariance $(C_{ij}=V^{x_i}_{x_j}V^{x_j}_{x_i},i,j=1,\ldots,n)$.
\end{rem}

The following result comes from \cite{acosta}: the rescaled L\'{e}vy process $(\frac{1}{t}X(ts),s\geq 0),t>0$ verifies the strong large deviation principle with a good rate function as $t\rightarrow\infty$ under the exponential integrability condition:
$$\exists\beta>0,\mathbb{E}[e^{\beta||X(1)||}]<\infty$$
This is true for the subordinator $((\hat{\mathcal{L}}_{\alpha}^{x_1},\ldots,\hat{\mathcal{L}}_{\alpha}^{x_n}),\alpha>0)$ by Proposition \ref{laplace transform of the poissonian occupation field}. The proposition below follows by application of the contraction principle.
\begin{prop}
$\frac{1}{\alpha}(\hat{\mathcal{L}}_{\alpha}^{x_1},\ldots,\hat{\mathcal{L}}_{\alpha}^{x_n})\in\mathbb{R}^n$ verifies a strong large derivation principle with good rate function $\Lambda^{*}:\mathbb{R}^n\rightarrow[0,\infty]$ when $\alpha$ tends to $\infty$. Here, $\Lambda(u)=\ln\mathbb{E}[e^{\hat{\mathcal{L}}_{1}^{x_1}u_1+\cdots+\hat{\mathcal{L}}_{1}^{x_n}u_n}]$ and $\Lambda^{*}(y)=\sup\limits_{u\in\mathbb{R}^d}(\langle u,y\rangle-\Lambda(u))$.
\end{prop}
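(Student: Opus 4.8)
The plan is to recognize $(X_\alpha)_{\alpha\geq 0}:=\big((\hat{\mathcal{L}}_{\alpha}^{x_1},\ldots,\hat{\mathcal{L}}_{\alpha}^{x_n})\big)_{\alpha\geq 0}$ as a multi-subordinator (a L\'evy process in $\mathbb{R}^n$ with nondecreasing coordinates), to check the single exponential-moment hypothesis required by the sample-path large deviation result of \cite{acosta}, and then to contract that path-space principle down to the value at rescaled time $s=1$, which is exactly $\frac{1}{\alpha}X_\alpha$. The L\'evy property is already recorded in the remark preceding the statement: since $\mathcal{L}$ is a Poisson point process on $\mathbb{R}_{+}\times\{\text{loops}\}$ with intensity $\mathrm{Leb}\otimes\mu$, the map $\alpha\mapsto\hat{\mathcal{L}}_\alpha^x=\sum_{l\in\mathcal{L}_\alpha}l^x$ has stationary independent increments and is nondecreasing, so $X_\alpha$ is a L\'evy process with $\mathbb{E}[e^{\langle u,X_\alpha\rangle}]=e^{\alpha\Lambda(u)}$, where $\Lambda(u)=\ln\mathbb{E}[e^{\langle u,X_1\rangle}]$.

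First I would verify the exponential integrability condition $\exists\,\beta>0,\ \mathbb{E}[e^{\beta\|X_1\|}]<\infty$. Taking the finitely supported function $\chi=1_{\{x_1,\ldots,x_n\}}$, the $\ell^1$-norm of $X_1$ equals $\langle\hat{\mathcal{L}}_1,\chi\rangle$, because all coordinates $\hat{\mathcal{L}}_1^{x_i}$ are nonnegative. By Proposition \ref{laplace transform of the poissonian occupation field} with $\alpha=1$, $\mathbb{E}[e^{z\langle\hat{\mathcal{L}}_1,\chi\rangle}]=(\det(I-zM_{\sqrt{\chi}}VM_{\sqrt{\chi}}))^{-1}<\infty$ for every real $z=\beta$ with $0<\beta<1/\rho(M_{\sqrt{\chi}}VM_{\sqrt{\chi}})$. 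Choosing such a $\beta$ and using the equivalence of norms on $\mathbb{R}^n$ gives $\mathbb{E}[e^{\beta\|X_1\|}]<\infty$ as required. The same computation shows that $\Lambda$ is finite, convex, and smooth on a neighbourhood of the origin, which guarantees that its Legendre transform $\Lambda^{*}$ has compact level sets, i.e.\ is a good rate function.

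With the hypothesis checked, the result of \cite{acosta} provides a strong LDP for the rescaled path $\big(\frac{1}{\alpha}X_{\alpha s}\big)_{s\geq 0}$ as $\alpha\to\infty$, with good rate function $I(\phi)=\int_0^1\Lambda^{*}(\dot\phi(s))\,ds$ for absolutely continuous $\phi$ with $\phi(0)=0$ (and $+\infty$ otherwise). I would then apply the contraction principle along the continuous evaluation map $\phi\mapsto\phi(1)$, which sends the rescaled path to $\frac{1}{\alpha}X_\alpha$; this transports the path-space principle to an LDP for $\frac{1}{\alpha}X_\alpha$ with rate function $\tilde{I}(y)=\inf\{I(\phi):\phi(0)=0,\ \phi(1)=y\}$.

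The last step, and the only genuinely non-formal point, is to identify $\tilde{I}$ with $\Lambda^{*}$. By convexity of $\Lambda^{*}$ and Jensen's inequality, $\int_0^1\Lambda^{*}(\dot\phi(s))\,ds\geq\Lambda^{*}\!\big(\int_0^1\dot\phi(s)\,ds\big)=\Lambda^{*}(\phi(1))$, with equality for the linear path $\phi(s)=sy$; hence the infimum defining $\tilde{I}(y)$ is attained at $\phi(s)=sy$ and equals $\Lambda^{*}(y)$. This yields the asserted strong LDP for $\frac{1}{\alpha}X_\alpha$ with good rate function $\Lambda^{*}$. The main obstacle is thus not analytic but one of bookkeeping: matching the exponential-moment bound coming from Proposition \ref{laplace transform of the poissonian occupation field} to the precise hypothesis of \cite{acosta}, and confirming that the contraction collapses the path-space rate function to the endpoint Legendre transform, which is exactly what the convexity argument above supplies. (One could alternatively bypass the path space entirely and invoke the G\"artner--Ellis theorem, since $\frac{1}{\alpha}\ln\mathbb{E}[e^{\langle \alpha u,\,\frac{1}{\alpha}X_\alpha\rangle}]=\Lambda(u)$ is independent of $\alpha$ and $\Lambda$ is essentially smooth near the origin.)
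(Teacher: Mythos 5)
Your proposal is correct and follows essentially the same route as the paper: verify the exponential integrability hypothesis $\mathbb{E}[e^{\beta\|X(1)\|}]<\infty$ via Proposition \ref{laplace transform of the poissonian occupation field}, invoke the sample-path LDP of \cite{acosta} for the rescaled L\'evy process, and apply the contraction principle at the endpoint. The paper states this in one sentence; you merely fill in the details it leaves implicit (the choice of $\chi=1_{\{x_1,\ldots,x_n\}}$ and $\beta<1/\rho(M_{\sqrt{\chi}}VM_{\sqrt{\chi}})$, and the Jensen-inequality identification of the contracted rate function with $\Lambda^{*}$), all of which are sound.
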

To be more precise, for all open set $O\subset\mathbb{R}^{n}$, $$\liminf\limits_{\alpha\rightarrow\infty}\frac{1}{\alpha}\ln(\mathbb{P}[\frac{1}{\alpha}(\hat{\mathcal{L}}_{\alpha}^{x_1},\ldots,\hat{\mathcal{L}}_{\alpha}^{x_n})\in O])\geq -\inf\limits_{y\in O}\Lambda^{*}(y)$$ and for all closed subset $C$ of $\mathbb{R}^{n}$, $$\limsup\limits_{\alpha\rightarrow\infty}\frac{1}{\alpha}\ln(\mathbb{P}[\frac{1}{\alpha}(\hat{\mathcal{L}}_{\alpha}^{x_1},\ldots,\hat{\mathcal{L}}_{\alpha}^{x_n})\in C])\leq -\inf\limits_{y\in C}\Lambda^{*}(y).$$

\begin{rem}
In particular, for $n=1$,
 $$\Lambda^{*}(y)=\left\{
 \begin{array}{ll}
 \ln(\frac{V^x_x}{y})-1+\frac{y}{V^x_x} & y>0\\
 \infty & y\leq 0
 \end{array}\right.$$
For $n=2$,
$$\Lambda^{*}(y)=\left\{
\begin{array}{ll}
\ln\Bigl(\frac{1+\sqrt{1+\frac{4y^1y^2V^{x_1}_{x_1}V^{x_2}_{x_2}}{\det(V|_{\{x_1,x_2\}})^2}}}{2y^1y^2}\Bigr)+\ln\det(V|_{\{x_1,x_2\}})& \\
 +\frac{y^1V^{x_2}_{x_2}+y^2V^{x_1}_{x_1}}{\det(V|_{\{x_1,x_2\}})}-1-\sqrt{1+\frac{4y^1y^2V^{x_1}_{x_1}V^{x_2}_{x_2}}{\det(V|_{\{x_1,x_2\}})^2}}  & y_1,y_2\geq 0\\
\infty & \text{otherwise.}
\end{array}\right.$$
\end{rem}

\begin{proof}[Proof of the remark]
For $n=1$, by Proposition \ref{laplace transform of the poissonian occupation field}, $\Lambda(u)=-\ln(1-uV^x_x)$ for $\displaystyle{u<\frac{1}{V^x_x}}$ and $\Lambda(u)=\infty$ otherwise. Then, $$\Lambda^{*}(y)=\sup\limits_{u\in\mathbb{R}}(uy-\Lambda(u))=\left\{
 \begin{array}{ll}
 \frac{y-V^x_x}{yV^x_x}y-\Lambda(\frac{y-V^x_x}{yV^x_x}) & y>0\\
 \infty & y\leq 0
 \end{array}\right.=\left\{
 \begin{array}{ll}
 \ln(\frac{V^x_x}{y})-1+\frac{y}{V^x_x} & y>0\\
 \infty & y\leq 0.
 \end{array}\right.$$
Denote by $A(u_1,u_2)$ the matrix $\displaystyle{\begin{bmatrix}
V^{x_1}_{x_1}u_1 & V^{x_1}_{x_2}\sqrt{u_1u_2}\\
V^{x_2}_{x_1}\sqrt{u_1u_2} & V^{x_2}_{x_2}u_2
\end{bmatrix}}$.\\
For $n=2$,  by Proposition \ref{laplace transform of the poissonian occupation field}, for $u_1\geq 0,u_2\geq 0$,
$$\mathbb{E}[e^{\hat{\mathcal{L}}_1^{x_1}u_1+\hat{\mathcal{L}}_1^{x_1}u_2}]=\left\{
\begin{array}{ll}
1/\det(I-A(u_1,u_2)) & \text{ if }1<1/\rho(A(u_1,u_2))\\
\infty & \text{ otherwise}
\end{array}\right.$$
where the spectral radius $$\rho(A(u_1,u_2))=\frac{V^{x_1}_{x_1}u_1+V^{x_2}_{x_2}u_2+\sqrt{(V^{x_1}_{x_1}u_1-V^{x_2}_{x_2}u_2)^2+4V^{x_1}_{x_2}V^{x_2}_{x_1}u_1u_2}}{2}$$ and $$1/\det(I-A(u_1,u_2))=\frac{1}{1-V^{x_1}_{x_1}u_1-V^{x_2}_{x_2}u_2+\det(V|_{\{x_1,x_2\}})u_1u_2}.$$
Finally, for $u_1,u_2\geq 0$, \\
if $1-V^{x_1}_{x_1}u_1-V^{x_2}_{x_2}u_2+\det(V|_{\{x_1,x_2\}})u_1u_2>0$ and $V^{x_1}_{x_1}u_1+V^{x_2}_{x_2}u_2<2$,
$$\Lambda(u_1,u_2)=-\ln(1-V^{x_1}_{x_1}u_1-V^{x_2}_{x_2}u_2+\det(V|_{\{x_1,x_2\}})u_1u_2)$$
and $\Lambda(u_1,u_2)=\infty$ otherwise.\\
For $u_1,u_2\leq 0$, $\Lambda(u_1,u_2)=-\ln(1-V^{x_1}_{x_1}u_1-V^{x_2}_{x_2}u_2+\det(V|_{x_1,x_2})u_1u_2)$.\\
For $u_1>0,u_2<0$,
$$\mathbb{E}[e^{\hat{\mathcal{L}}_1^{x_1}u_1+\hat{\mathcal{L}}_1^{x_1}u_2}]=\exp\left(\int(e^{u_1l^{x_1}}-1)e^{u_2l^{x_2}}\mu(dl)+\int(e^{u_2l^{x_2}}-1)\mu(dl)\right).$$
By Proposition \ref{laplace transform of the occupation field}$, \int(e^{u_2l^{x_2}}-1)\mu(dl)=-\ln\det(1-u^2V^{x_2}_{x_2})$. By Theorem \ref{compatibility with Feynman-Kac} and then Proposition \ref{laplace transform of the occupation field},
\begin{align*}
\int(e^{u_1l^{x_1}}-1)e^{u_2l^{x_2}}\mu(dl)=&\int(e^{u_1l^{x_1}}-1)\mu(L-M_{-u_2\delta_{x_2}},dl)\\
=&\left\{
\begin{array}{ll}
-\ln(1-u_1(V_{-u_2\delta_{x_2}})^{x_1}_{x_1}) & \text{ if }u_1<1/(V_{-u_2\delta_{x_2}})^{x_1}_{x_1}\\
\infty & \text{ otherwise.}
\end{array}
\right.
\end{align*}
By the resolvent equation,
$$V^{x_1}_{x_2}=(V_{-u_2\delta_{x_2}})^{x_1}_{x_2}+(V_{-u_2\delta_{x_2}})^{x_1}_{x_2}(-u_2)V^{x_2}_{x_2}.$$
Therefore, $(V_{-u_2\delta_{x_2}})^{x_1}_{x_2}=\frac{V^{x_1}_{x_2}}{1-u_2V^{x_2}_{x_2}}$. Again, by the resolvent equation,
$$V^{x_1}_{x_1}=(V_{-u_2\delta_{x_2}})^{x_1}_{x_1}+(V_{-u_2\delta_{x_2}})^{x_1}_{x_2}(-u_2)V^{x_2}_{x_1}.$$
We deduce that $(V_{-u_2\delta_{x_2}})^{x_1}_{x_1}=V^{x_1}_{x_1}+\frac{u_2V^{x_1}_{x_2}V^{x_2}_{x_1}}{1-u_2V^{x_2}_{x_2}}$. Therefore, for $u_1>0,u_2<0$,\\
if $1-V^{x_1}_{x_1}u_1-V^{x_2}_{x_2}u_2+\det(V|_{x_1,x_2})u_1u_2>0$, $$\Lambda(u_1,u_2)=-\ln(1-V^{x_1}_{x_1}u_1-V^{x_2}_{x_2}u_2+\det(V|_{x_1,x_2})u_1u_2)$$
and $\Lambda(u_1,u_2)=\infty$ otherwise. It is easy to check that $V^{x_1}_{x_1}u_1+V^{x_2}_{x_2}u_2<2$ is implied by $1-V^{x_1}_{x_1}u_1-V^{x_2}_{x_2}u_2+\det(V|_{x_1,x_2})u_1u_2>0$ for $u_1>0,u_2<0$.
Similar results can be proved for $u_1<0,u_2>0$. In the end, for any $u_1,u_2\in\mathbb{R}$,\\
if $1-V^{x_1}_{x_1}u_1-V^{x_2}_{x_2}u_2+\det(V|_{\{x_1,x_2\}})u_1u_2>0$ and $V^{x_1}_{x_1}u_1+V^{x_2}_{x_2}u_2<2$,
$$\Lambda(u_1,u_2)=-\ln(1-V^{x_1}_{x_1}u_1-V^{x_2}_{x_2}u_2+\det(V|_{\{x_1,x_2\}})u_1u_2)$$
and $\Lambda(u_1,u_2)=\infty$ otherwise.\\
It is obvious that $\Lambda^{*}(y_1,y_2)=\infty$ for $y_1\leq 0$ or $y_2\leq 0$. Fixing $y_1,y_2>0$,
we are able to solve $(\frac{\partial}{\partial u_1}\Lambda(u_1,u_2),\frac{\partial}{\partial u_2}\Lambda(u_1,u_2))=(y_1,y_2)$. We find that the extreme value of $\langle u,y\rangle-\Lambda(u)$ is reached for
\begin{align*}
&u_1=\frac{V^{x_2}_{x_2}}{V^{x_1}_{x_1}V^{x_2}_{x_2}-V^{x_1}_{x_2}V^{x_2}_{x_1}}-\frac{1+\sqrt{1+\frac{4y_1y_2V^{x_1}_{x_2}V^{x_2}_{x_1}}{(V^{x_1}_{x_1}V^{x_2}_{x_2}-V^{x_1}_{x_2}V^{x_2}_{x_1})^2}}}{2y_1}\\
&u_2=\frac{V^{x_1}_{x_1}}{V^{x_1}_{x_1}V^{x_2}_{x_2}-V^{x_1}_{x_2}V^{x_2}_{x_1}}-\frac{1+\sqrt{1+\frac{4y_1y_2V^{x_1}_{x_2}V^{x_2}_{x_1}}{(V^{x_1}_{x_1}V^{x_2}_{x_2}-V^{x_1}_{x_2}V^{x_2}_{x_1})^2}}}{2y_2}
\end{align*}
and then conclude that
\begin{multline*}
\Lambda^{*}(y)=\ln\left(\frac{1+\sqrt{1+\frac{4y^1y^2V^{x_1}_{x_1}V^{x_2}_{x_2}}{\det(V|_{\{x_1,x_2\}})^2}}}{2y^1y^2})+\ln\det(V|_{\{x_1,x_2\}}\right)\\
+\frac{y^1V^{x_2}_{x_2}+y^2V^{x_1}_{x_1}}{\det(V|_{\{x_1,x_2\}})}-1-\sqrt{1+\frac{4y^1y^2V^{x_1}_{x_1}V^{x_2}_{x_2}}{\det(V|_{\{x_1,x_2\}})^2}}.
\end{multline*}
\end{proof}

\subsection{Hitting probabilities}
\begin{defn}
For $D\subset S$, define $loop^{D}=\{l;\langle l,1_{\{S-D\}}\rangle=0\}$, namely loops contained in D. Let  $\mathcal{L}_{\alpha}^D=\mathcal{L}_{\alpha}\cap loop^D$ be the restriction of the Poisson ensemble on $loop^D$.
\end{defn}
Since $\mu(\{l; l\text{ is a trivial loop at }x\})=\infty$, $\mathcal{L}_{\alpha}$ contains infinitely many trivial loops at $x$ $\mu-a.s.$.
\begin{prop}
For a finite subset $F$,
$$\mathbb{P}[\nexists l\in \mathcal{L}_{\alpha}; \text{l is non-trivial and l visits F}]=(\prod\limits_{x\in F}(-L^x_x)\det(V_F))^{-\alpha}.$$
\end{prop}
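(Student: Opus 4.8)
The plan is to recognize the event in question as a \emph{void event} for the Poisson random measure $\mathcal{L}_\alpha$, and then to combine the standard void-probability formula with the loop-measure mass already computed in Proposition \ref{non-trivial loop visits F}.

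First I would introduce the set
$$B=\{l:\ l\text{ is non-trivial and }l\text{ visits }F\},$$
so that the event $\{\nexists l\in\mathcal{L}_\alpha;\ l\text{ is non-trivial and }l\text{ visits }F\}$ is literally $\{\mathcal{L}_\alpha(B)=0\}$. Since $\mathcal{L}_\alpha$ is a Poisson random measure with intensity $\alpha\mu$, the integer-valued random variable $\mathcal{L}_\alpha(B)$ is Poisson-distributed with mean $\alpha\mu(B)$; in particular its void probability is
$$\mathbb{P}[\mathcal{L}_\alpha(B)=0]=e^{-\alpha\mu(B)}$$
(alternatively, take $\Phi=-s1_B$ in part a) of Proposition \ref{basic properties for Poisson point processes} and let $s\to\infty$).

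The only remaining input is the total mass $\mu(B)$, which is exactly the quantity evaluated in Proposition \ref{non-trivial loop visits F}: for a finite subset $F$,
$$\mu(B)=\mu(l\text{ is non-trivial and }l\text{ visits }F)=\ln\Bigl(\prod_{x\in F}(-L^x_x)\det(V_F)\Bigr).$$
Substituting this into the void-probability formula yields
$$\mathbb{P}[\mathcal{L}_\alpha(B)=0]=\exp\Bigl(-\alpha\ln\bigl(\prod_{x\in F}(-L^x_x)\det(V_F)\bigr)\Bigr)=\Bigl(\prod_{x\in F}(-L^x_x)\det(V_F)\Bigr)^{-\alpha},$$
which is the claimed identity.

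There is essentially no serious obstacle here: all the analytic work has already been carried out in Proposition \ref{non-trivial loop visits F}, and the remaining step is just the elementary fact that the probability of seeing no Poisson points in a set of finite intensity is the exponential of minus that intensity. The one point I would check for cleanliness is that $\mu(B)<\infty$, so that the void probability is genuinely positive; this holds because the right-hand side of Proposition \ref{non-trivial loop visits F} is finite for finite $F$, the matrix $V_F$ being the potential of the trace process on the finite set $F$ with $\det(V_F)>0$ (as noted in the proof of Corollary \ref{positive laplace transform of the occupation field}).
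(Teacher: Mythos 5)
Your proof is correct and follows exactly the route the paper takes: the paper's own proof simply states that the result is a direct consequence of Proposition \ref{non-trivial loop visits F} together with the definition of the Poisson random measure, i.e.\ the void-probability formula you invoke. Your additional remark that $\mu(B)<\infty$ (so the void probability is positive) is a sensible sanity check and consistent with the paper.
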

\begin{proof}
It is a direct consequence of Proposition \ref{non-trivial loop visits F} and the definition of the Poisson random measure.
\end{proof}
\begin{rem}
For any subset $F$, we can find $F_1\subset\cdots\subset F_n\subset\cdots$ a sequence of finite subsets of $F$ increasing to $F$. Then,
\begin{multline*}
\mathbb{P}[\nexists l\in \mathcal{L}_{\alpha}; l\text{ is non-trivial and }l\text{ visits F}]\\
=\lim\limits_{n\rightarrow\infty}\downarrow\mathbb{P}[\nexists l\in \mathcal{L}_{\alpha}; l\text{ is non-trivial and }l\text{ visits }F_n]\\
=\lim\limits_{n\rightarrow\infty}\downarrow(\prod\limits_{x\in {F_n}}(-L^x_x)\det(V_{F_n}))^{-\alpha}\\
=\inf\limits_{A\subset F, |A|<\infty}(\prod\limits_{x\in A}(-L^x_x)\det(V_A))^{-\alpha}.
\end{multline*}
\end{rem}

\subsection{Densities of the occupation field}
A non-symmetric generalization of Dynkin's isomorphism was given in  \cite{nonsymdyn}.
Suppose $L$ is the generator of a transient sub-Markovian process on $\{x_1,\ldots,x_n\}$, $m$ is an excessive measure, and $\chi$ is a non-negative function on $\{x_1,\ldots,x_n\}$, then
\begin{equation*}
\frac{1}{(2\pi)^{n}}\int e^{-\frac{1}{2}\langle z\bar{z}, \chi\rangle_m}e^{\frac{1}{2}\langle Lz, z\rangle_m}\prod\,du^i\,dv^i=\det(-M_m L+M_{\chi m})^{-1}
\end{equation*}
where $z^j=u^j+\sqrt{-1}\cdot v^j$ for $j=1,\ldots,n$ and $L^i_j=L^{x_i}_{x_j}$ for $i,j=1,\ldots,n$.
And it has been proved that  if $ \supp(\chi)\subset F$, then $\mathbb{E}[e^{-\langle\hat{\mathcal{L}}_{1},\chi \rangle}]=\frac{\det(V_F)_{\chi}}{\det(V_F)}=\frac{\det(-L_F)}{\det(-L_F+\chi)}$.
So, we have the following representation.
\begin{prop}
Let $F=\{x_1,\ldots,x_n\}\subset S$ and $L_F=(-V_F)^{-1}$. For any bounded measurable function $G$,
\begin{equation*}
\mathbb{E}[G(\hat{\mathcal{L}}_{1}^{x_1},\ldots,\hat{\mathcal{L}}_{1}^{x_n})]=\frac{\det(-M_m L_F)}{(2\pi)^{n}}\int G(\frac{1}{2}m_1|z^1|^2,\ldots,\frac{1}{2}m_n|z^n|^2)e^{\frac{1}{2}\langle L_Fz, z\rangle_m}\prod\,du^i\,dv^i.
\end{equation*}
\end{prop}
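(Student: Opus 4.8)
The plan is to test both sides against the exponential functions $G(a_1,\dots,a_n)=e^{-\sum_i\chi_i a_i}$ with $\chi_i\ge 0$, and to observe that the two displayed facts quoted just before the proposition already determine both sides on these test functions; a Laplace‑transform uniqueness argument then upgrades the identity to arbitrary bounded measurable $G$. The whole difficulty is organizational: recognizing that the right‑hand side is integration of $G$ against a finite measure on $[0,\infty)^n$, and that this measure and the law of the occupation field share a Laplace transform.

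First I would fix $\chi_i\ge0$, set $\chi=\sum_i\chi_i\delta_{x_i}$ (so that $\supp(\chi)\subset F$), and evaluate the right‑hand side for this $G$. Since $G(\tfrac12 m_1|z^1|^2,\dots,\tfrac12 m_n|z^n|^2)=e^{-\frac12\langle z\bar z,\chi\rangle_m}$, the right‑hand side equals $\det(-M_mL_F)$ times $\frac{1}{(2\pi)^n}\int e^{-\frac12\langle z\bar z,\chi\rangle_m}e^{\frac12\langle L_Fz,z\rangle_m}\prod du^i\,dv^i$, and the quoted complex‑Gaussian identity, applied with $L$ replaced by the generator $L_F$ of the trace process (transient because $F$ is finite, with potential $V_F$), evaluates this integral as $\det(-M_mL_F+M_{\chi m})^{-1}$. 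Hence the right‑hand side is $\det(-M_mL_F)\det(-M_mL_F+M_{\chi m})^{-1}$. I would then factor the diagonal matrix $M_m$ out of both determinants: since $M_{\chi m}=M_mM_\chi$ one has $-M_mL_F=M_m(-L_F)$ and $-M_mL_F+M_{\chi m}=M_m(-L_F+M_\chi)$, so the common factor $\det M_m=\prod_i m_i$ cancels and the right‑hand side reduces to $\det(-L_F)\,\det(-L_F+M_\chi)^{-1}$. The left‑hand side for the same $G$ is $\mathbb{E}[e^{-\langle\hat{\mathcal{L}}_1,\chi\rangle}]$, which by the second quoted fact equals $\det(-L_F)/\det(-L_F+\chi)$, i.e.\ exactly the expression just obtained; so both sides agree for every exponential $G$.

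It remains to pass from exponential test functions to arbitrary bounded measurable $G$, and this is the only genuinely delicate step. As a functional of $G$, the right‑hand side is integration against the push‑forward $\nu$ of the weight $\frac{\det(-M_mL_F)}{(2\pi)^n}e^{\frac12\langle L_Fz,z\rangle_m}\prod du^i\,dv^i$ under the map $z\mapsto(\tfrac12 m_i|z^i|^2)_i$, a measure carried by $[0,\infty)^n$. In the non‑reversible case this weight is complex‑valued, so $\nu$ is a priori only a finite complex measure; its finiteness is precisely the absolute convergence underlying the quoted integral identity, where the excessiveness of $m$, forcing the Hermitian part of $\tfrac12\langle L_F\,\cdot,\,\cdot\rangle_m$ to be negative definite, is used. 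On the probabilistic side the law $\mu_1$ of $(\hat{\mathcal{L}}_1^{x_1},\dots,\hat{\mathcal{L}}_1^{x_n})$ is a genuine probability measure on $[0,\infty)^n$, each coordinate being $\Gamma(1,V^{x_i}_{x_i})$‑distributed and hence a.s.\ finite. The computation of the previous paragraph shows that $\nu$ and $\mu_1$ have the same Laplace transform on the positive orthant, and since the Laplace transform is injective on finite (complex) measures carried by $[0,\infty)^n$, we conclude $\nu=\mu_1$. In particular $\int G\,d\nu=\mathbb{E}[G(\hat{\mathcal{L}}_1^{x_1},\dots,\hat{\mathcal{L}}_1^{x_n})]$ for every bounded measurable $G$, which is the assertion. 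Thus the main obstacle is conceptual rather than computational: verifying that the right‑hand side is a bona fide finite measure to which Laplace‑transform uniqueness may be applied, even though its Gaussian density is complex in the absence of $m$‑symmetry.
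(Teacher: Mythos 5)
Your proof is correct and follows exactly the route the paper intends: the paper states the proposition as an immediate consequence of the two quoted facts (the complex Gaussian integral formula from \cite{nonsymdyn} applied to $L_F$, and the Laplace transform $\mathbb{E}[e^{-\langle\hat{\mathcal{L}}_{1},\chi\rangle}]=\det(-L_F)/\det(-L_F+\chi)$), which is precisely your matching of both sides on exponential test functions. Your explicit treatment of the extension to bounded measurable $G$ via injectivity of the Laplace transform on finite complex measures carried by $[0,\infty)^n$ is a careful filling-in of a step the paper leaves implicit.
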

\begin{rem}
Recall that in the symmetric case, if $\phi$ is a Gaussian free field with covariance matrix given by the Green function, $\hat{\mathcal{L}}_{1/2}$ has the same law as $\frac{1}{2}\phi^2$. Moreover, if $\phi_1,\ldots,\phi_k$ are $k$ i.i.d. copies of $\phi$, then $\frac{1}{2}\sum\limits_{j=1}^{k}\phi_k^2$ and $\hat{\mathcal{L}}_{k/2}$ have the same law. For details, see Chapter 5 in \cite{loop} and Chapter 4 in \cite{Sznitman}.
\end{rem}

We can derive from this expression a formula for the joint densities of the occupation field, for $\alpha=1$.
\begin{prop}
Let $F=\{x_1,\ldots,x_n\}\subset S$ and $L_F=(-V_F)^{-1}$.
Then, $f(\rho^1,\ldots,\rho^n)$, the density of $(\hat{\mathcal{L}}_{1}^{x^1},\ldots,\hat{\mathcal{L}}_{1}^{x^n})$ with respect to the Lebesgue measure on $\mathbb{R}_{+}^{n}$ is
$$\det(-L_F)\sum\limits_{n_{ij}\in \mathbb{N},i,j=1,\ldots,n}1_{\{\sum\limits_{j}n_{ij}=\sum\limits_{j}n_{ji},\,i=1,\ldots,n\}} \left(\prod\limits_{i,j=1,\ldots,n} \frac{( (L_F)^{x_i}_{x_j}\sqrt{\rho^i\rho^j})^{n_{ij}}}{n_{ij}!}\right).$$
\end{prop}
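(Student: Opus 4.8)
The plan is to read off the density directly from the complex Gaussian integral representation established just above, rather than inverting a Laplace transform. Writing $(L_F)^i_j=(L_F)^{x_i}_{x_j}$ and recalling that the representation reads
\[
\mathbb{E}[G(\hat{\mathcal{L}}_1^{x_1},\ldots,\hat{\mathcal{L}}_1^{x_n})]=\frac{\det(-M_mL_F)}{(2\pi)^n}\int G\Bigl(\tfrac12 m_1|z^1|^2,\ldots,\tfrac12 m_n|z^n|^2\Bigr)e^{\frac12\langle L_Fz,z\rangle_m}\prod du^i\,dv^i,
\]
with $\frac12\langle L_Fz,z\rangle_m=\frac12\sum_{i,j}m_i(L_F)^i_jz^j\bar z^i$, I would pass to polar coordinates $z^i=\sqrt{s^i}\,e^{\sqrt{-1}\theta_i}$ and set $\rho^i=\tfrac12 m_is^i$, so that the push-forward of the Gaussian measure under $z\mapsto(\tfrac12 m_i|z^i|^2)_i$ becomes explicit. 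The Jacobian gives $du^i\,dv^i=\tfrac1{m_i}\,d\rho^i\,d\theta_i$, and the diagonal part of the quadratic form collapses to $\sum_i(L_F)^i_i\rho^i$ because $\tfrac12 m_i|z^i|^2=\rho^i$.

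The heart of the computation is the angular integration. The off-diagonal part of the exponent equals $\sum_{i\ne j}\sqrt{m_i/m_j}\,(L_F)^i_j\sqrt{\rho^i\rho^j}\,e^{\sqrt{-1}(\theta_j-\theta_i)}$; expanding its exponential as a power series indexed by a matrix $(n_{ij})_{i\ne j}$ of non-negative integers and integrating each $\theta_k$ over $[0,2\pi]$ kills every term except those for which the coefficient of each $\theta_k$ vanishes, i.e. $\sum_i n_{ik}=\sum_j n_{kj}$. This is precisely the balance condition $\sum_j n_{ij}=\sum_j n_{ji}$ appearing in the statement. On the surviving terms the spurious weights $\prod_{i\ne j}(m_i/m_j)^{n_{ij}/2}=\prod_k m_k^{(\mathrm{row}_k-\mathrm{col}_k)/2}$ equal $1$, so the excessive measure $m$ drops out, as it must since the claimed density does not involve $m$.

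It then remains to collect constants: $\det(-M_mL_F)=\bigl(\prod_i m_i\bigr)\det(-L_F)$ cancels the Jacobian factor $\prod_i m_i^{-1}$, while the $(2\pi)^n$ produced by the $n$ trivial angular integrations cancels the prefactor $(2\pi)^{-n}$, leaving exactly $\det(-L_F)$ in front. Summing the unconstrained diagonal counts $n_{ii}$ (which do not enter the balance condition) turns $e^{\sum_i(L_F)^i_i\rho^i}$ into the missing diagonal factors $\prod_i\sum_{n_{ii}}\frac{((L_F)^i_i\rho^i)^{n_{ii}}}{n_{ii}!}$, so the product over $i\ne j$ extends to the full product over all $i,j$, matching the displayed formula. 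The main obstacle is the justification of the term-by-term angular integration and of reading off a genuine density: here one uses that $L_F$ is a sub-Markovian generator, so $(L_F)^i_j\ge0$ for $i\ne j$ and $(L_F)^i_i\le0$; consequently every summand is non-negative, the radial series is dominated pointwise by $\exp\bigl(\sum_{i\ne j}(L_F)^i_j\sqrt{\rho^i\rho^j}\bigr)$, and Tonelli's theorem legitimates all interchanges while simultaneously exhibiting $f\ge0$. As an independent check, integrating the resulting $f$ against $e^{-\sum_i\lambda_i\rho^i}$ should recover the Laplace transform $\det(-L_F)/\det(-L_F+M_\lambda)$ of Proposition \ref{laplace transform of the poissonian occupation field}.
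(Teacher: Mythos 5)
Your argument is correct and follows essentially the same route as the paper: both start from the complex Gaussian representation of $\mathbb{E}[G(\hat{\mathcal{L}}_1^{x_1},\ldots,\hat{\mathcal{L}}_1^{x_n})]$, pass to polar coordinates with $\rho^i=\tfrac12 m_i|z^i|^2$, expand the exponential of the cross terms, and observe that the angular integration retains exactly the terms satisfying the balance condition $\sum_j n_{ij}=\sum_j n_{ji}$ (the paper phrases this via Cauchy's formula on the unit circles, which is the same computation as your Fourier-orthogonality argument). Your additional remarks on the cancellation of the excessive measure $m$, the sign of the entries of $L_F$ justifying the interchange, and the Laplace-transform cross-check are sensible refinements of the same proof.
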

\begin{proof}
The above Proposition shows that for any $G$ bounded measurable
\begin{equation*}
\mathbb{E}[G(\hat{\mathcal{L}}_{1}^{x_1},\ldots,\hat{\mathcal{L}}_{1}^{x_n})]=\frac{\det(-M_m L_F)}{(2\pi)^{n}}\int G(\frac{1}{2}m_1|z^1|^2,\ldots,\frac{1}{2}m_n|z^n|^2)e^{\frac{1}{2}\langle L_Fz, z\rangle_m}\prod\,du^i\,dv^i.
\end{equation*}
Using the polar coordinate, let $r^j=|z^j|$, $\theta_j\in[0,2\pi[$, $u^j=r^j\cos(\theta_j)$ and $v^j=r^j\sin(\theta_j)$. Then, $\mathbb{E}[G(\hat{\mathcal{L}}_{1}^{x_1},\ldots,\hat{\mathcal{L}}_{1}^{x_n})]$ equals
$$\frac{\det(-M_m L_F)}{(2\pi)^{n}}\int G(\frac{1}{2}m_1(r^1)^2,\ldots,\frac{1}{2}m_n(r^n)^2)e^{\sum\limits_{i,j}\frac{1}{2}(L_F)^{x_i}_{x_j} r^i r^jm^je^{i\theta_i}e^{-i\theta_j}}\prod\,dr^i\,d\theta_i.$$
Let $\rho^j=m_j(r^j)^2/2$ for $j=1,\ldots,n$, then $\mathbb{E}[G(\hat{\mathcal{L}}_{1}^{x_1},\ldots,\hat{\mathcal{L}}_{1}^{x_n})]$ equals
$$\frac{\det(-L_F)}{(2\pi)^{n}}\int G(\rho^1,\ldots,\rho^n)e^{\sum\limits_{i,j} (L_F)^{x_i}_{x_j}\left(\rho^i\rho^j \frac{m_{i}}{m_{j}}\right)^{1/2}e^{i\theta_i}e^{-i\theta_j}}\prod\,d\rho^i\,d\theta_i.$$
Therefore, the density of $\hat{\mathcal{L}}_{1}^{x_1},\ldots,\hat{\mathcal{L}}_{1}^{x_n}$, is
\begin{align*}
f(\rho^1,\ldots,\rho^n)=&\int\limits_{[0,2\pi]^n}\frac{\det(-L_F)}{(2\pi)^{n}}e^{\sum\limits_{i,j} (L_F)^{x_i}_{x_j}\left(\rho^i\rho^j \frac{m_{i}}{m_{j}}\right)^{1/2}e^{i\theta_i}e^{-i\theta_j}}\prod\,d\theta_i\\
=&\int\limits_{|z^i|=1,i=1,\ldots,n}\frac{\det(-L_F)}{(2\pi i)^{n}}\frac{1}{z^1\cdots z^n}e^{\sum\limits_{i,j} (L_F)^{x_i}_{x_j}\left(\rho^i\rho^j \frac{m_{i}}{m_{j}}\right)^{1/2}\frac{z^i}{z^j}}\,dz^1\cdots\,dz^n.
\end{align*}
We expand $\displaystyle{\exp\left(\sum\limits_{i,j} (L_F)^{x_i}_{x_j}\left(\rho^i\rho^j \frac{m_{i}}{m_{j}}\right)^{1/2}\frac{z^i}{z^j}\right)}$ into series, integrate it term by term and use Cauchy's formula. Only the constant terms in the expansion of $$\exp\left(\sum\limits_{i,j} (L_F)^{x_i}_{x_j}\left(\rho^i\rho^j \frac{m_{i}}{m_{j}}\right)^{1/2}\frac{z^i}{z^j}\right)$$ contribute. Accordingly, we have
$$
f(\rho^1,\ldots,\rho^n)=\det(-L_F)\sum\limits_{
\begin{subarray}{l}
n_{ij}\in\mathbb{N}\text{ for }i,j=1,\ldots,n\\
\sum\limits_{j}n_{ij}=\sum\limits_{j}n_{ji}\text{ for }i=1,\ldots,n
\end{subarray}}\left(\prod\limits_{i,j=1,\ldots,n} \frac{( (L_F)^{x_i}_{x_j}\sqrt{\rho^i\rho^j})^{n_{ij}}}{n_{ij}!}\right).
$$
\end{proof}

Moreover, we have the follow expansions of the density of occupation field for general $\alpha>0$:
\begin{prop}
Denote by $\Coeff\left(\det(M_s+V_F)^{-\alpha},s_1^{M_1}\cdots s_n^{M_n}\right)$\footnote{For $s$ sufficient close to $(0,\ldots,0)$, $\det(M_s+V_F)^{-\alpha}$ is an analytic function.} the coefficient before the term $s_1^{M_1}\cdots s_n^{M_n}$ in the expansion of the function $s\rightarrow \det(M_s+V_F)^{-\alpha}$ for $s$ small enough. Then the density $(f^{\alpha}(\rho_1,\ldots,\rho_n),\rho_1,\ldots,\rho_n>0)$ of the occupation field $(\mathcal{L}_{\alpha}^{x_1},\ldots,\mathcal{L}_{\alpha}^{x_n})$ has the following expression:
$$f^{\alpha}(\rho_1,\ldots,\rho_n)=\sum\limits_{M_1,\ldots,M_n\in\mathbb{N}}\frac{\Coeff\left(\det(M_s+V_F)^{-\alpha},s_1^{M_1}\cdots s_n^{M_n}\right)}{\prod\limits_{i=1}^{n}\Gamma(M_i+\alpha)}\prod\limits_{i=1}^{n}\rho_{i}^{M_i+\alpha-1}.$$
\end{prop}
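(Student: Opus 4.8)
The plan is to identify $f^{\alpha}$ by matching multivariate Laplace transforms. Writing $F=\{x_1,\dots,x_n\}$ and $M_\lambda=\operatorname{diag}(\lambda_1,\dots,\lambda_n)$, Proposition \ref{laplace transform of the poissonian occupation field} together with Lemma \ref{lem:linear algebra lemma} gives, for $\lambda=(\lambda_1,\dots,\lambda_n)\ge 0$ (viewed as a function supported on $F$),
$$\mathbb{E}\Bigl[e^{-\sum_{i=1}^n \lambda_i \hat{\mathcal{L}}_\alpha^{x_i}}\Bigr]=\det\bigl(I+M_{\sqrt{\lambda}}VM_{\sqrt{\lambda}}\bigr)^{-\alpha}=\det\bigl(I+M_\lambda V_F\bigr)^{-\alpha}.$$
Since the law of $(\hat{\mathcal{L}}_\alpha^{x_1},\dots,\hat{\mathcal{L}}_\alpha^{x_n})$ is carried by $\mathbb{R}_+^n$, it suffices to check that the proposed density has exactly this Laplace transform and then invoke injectivity of the Laplace transform on measures on $\mathbb{R}_+^n$.

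First I would integrate the proposed series term by term against $e^{-\sum_i\lambda_i\rho_i}$, using the elementary identity $\int_0^\infty e^{-\lambda\rho}\rho^{M+\alpha-1}\,d\rho=\Gamma(M+\alpha)\,\lambda^{-(M+\alpha)}$. The $\Gamma(M_i+\alpha)$ factors cancel against the denominators in $f^\alpha$, leaving
$$\int_{\mathbb{R}_+^n}e^{-\sum_i\lambda_i\rho_i}f^\alpha(\rho)\,d\rho=\Bigl(\prod_{i=1}^n\lambda_i^{-\alpha}\Bigr)\sum_{M_1,\dots,M_n}\Coeff\!\bigl(\det(M_s+V_F)^{-\alpha},s_1^{M_1}\cdots s_n^{M_n}\bigr)\prod_{i=1}^n\lambda_i^{-M_i}.$$
The inner sum is the Taylor expansion of $s\mapsto\det(M_s+V_F)^{-\alpha}$ evaluated at $s_i=1/\lambda_i$, hence equals $\det(M_{1/\lambda}+V_F)^{-\alpha}$. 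Using the factorization $M_{1/\lambda}+V_F=M_{1/\lambda}\bigl(I+M_\lambda V_F\bigr)$, so that $\det(M_{1/\lambda}+V_F)=\prod_i\lambda_i^{-1}\,\det(I+M_\lambda V_F)$, the $\prod_i\lambda_i^{\pm\alpha}$ factors cancel and the transform collapses to $\det(I+M_\lambda V_F)^{-\alpha}$, matching the Laplace transform above.

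The one point requiring genuine care is the term-by-term integration. Because $V_F$ is invertible with $\det(V_F)>0$ (Corollary \ref{positive laplace transform of the occupation field}), the function $\det(M_s+V_F)^{-\alpha}$ is analytic in a polydisc around $s=0$, so for some polyradius $r=(r_1,\dots,r_n)>0$ one has $\sum_M|\Coeff(\cdots)|\prod_i r_i^{M_i}<\infty$. Taking $\lambda_i>1/r_i$ and carrying out the same computation with the coefficients replaced by their absolute values shows that the integrand is absolutely summable; this legitimizes Fubini and simultaneously shows $f^\alpha\in L^1(e^{-\lambda\cdot\rho}\,d\rho)$. The two Laplace transforms then agree on the open set $\{\lambda_i>1/r_i\}$, hence everywhere by analytic continuation, and injectivity yields that the law of $(\hat{\mathcal{L}}_\alpha^{x_1},\dots,\hat{\mathcal{L}}_\alpha^{x_n})$ is $f^\alpha(\rho)\,d\rho$. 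I expect the main obstacle to be exactly this convergence bookkeeping — controlling the signs of the coefficients $\Coeff(\cdots)$ and locating the polyradius of convergence — since the algebraic identities ($\Gamma$-integrals, the determinant factorization, the substitution $s_i=1/\lambda_i$) are all immediate.
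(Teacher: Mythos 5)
Your proposal is correct and follows essentially the same route as the paper: both arguments compute the Laplace transform of the proposed density by term-by-term integration of the series (justified by absolute convergence of the Taylor coefficients of $\det(M_s+V_F)^{-\alpha}$ in a polydisc, which you secure by taking $\lambda_i$ large and the paper secures by shifting $\lambda\mapsto\lambda+c$ with $c$ large), then use the factorization $\det(M_{1/\lambda}+V_F)=\prod_i\lambda_i^{-1}\det(I+M_\lambda V_F)$ to match it with the known transform $\det(I+M_\lambda V_F)^{-\alpha}$ from Proposition \ref{laplace transform of the poissonian occupation field}. The only cosmetic difference is your explicit appeal to analytic continuation and injectivity of the Laplace transform, which the paper leaves implicit.
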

\begin{proof}
Let's calculate the Laplace transform of the function
$$(\rho_1,\ldots,\rho_n)\rightarrow f^{\alpha}(\rho_1,\ldots,\rho_n)e^{-c(\rho_1+\cdots+\rho_n)}.$$
For $c$ sufficient large, we have
$$\sum\limits_{M_1,\ldots,M_n\in\mathbb{N}}\frac{|\Coeff\left(\det(M_s+V_F)^{-\alpha},s_1^{M_1}\cdots s_n^{M_n}\right)|}{\prod\limits_{i=1}^{n}\Gamma(M_i+\alpha)}\prod\limits_{i=1}^{n}\rho_{i}^{M_i+\alpha-1}e^{-c(\rho_1+\cdots+\rho_n)}<\infty$$
\begin{multline*}
\int d\rho_1\cdots d\rho_n e^{-(\rho_1\lambda_1+\cdots+\rho_n\lambda_n)}f^{\alpha}(\rho_1,\ldots,\rho_n)e^{-c(\rho_1+\cdots+\rho_n)}\\
=\sum\limits_{M_1,\ldots,M_n\in\mathbb{N}}\frac{\Coeff\left(\det(M_s+V_F)^{-\alpha},s_1^{M_1}\cdots s_n^{M_n}\right)}{\prod\limits_{i=1}^{n}\Gamma(M_i+\alpha)}\int \prod\limits_{i=1}^{n}\rho_i^{M_i+\alpha-1}e^{-\rho_i(\lambda_i+c)}d\rho_i\\
=\sum\limits_{M_1,\ldots,M_n\in\mathbb{N}}\frac{\Coeff\left(\det(M_s+V_F)^{-\alpha},s_1^{M_1}\cdots s_n^{M_n}\right)}{\prod\limits_{i=1}^{n}(\lambda_i+c)^{M_i+\alpha}}\\
=\det(M^{-1}_{\lambda+c}+V_F)^{-\alpha}\prod\limits_{i=1}^{n}(\lambda_i+c)^{-\alpha}\\
=\det(I+M_{\lambda+c}V_F)^{-\alpha}=\mathbb{E}[e^{-\sum\limits_{i=1}^{n}\hat{\mathcal{L}_{\alpha}^{x_i}}(\lambda_i+c)}].
\end{multline*}
Clearly, $f^{\alpha}$ is the density of $(\mathcal{L}_{\alpha}^{x_1},\ldots,\mathcal{L}_{\alpha}^{x_n})$.
\end{proof}

%\begin{prop}
%$$\int {e^{\frac{1}{2}\langle B z,z\rangle}}\frac{\det(-B)}{(2\pi)^n}\,dx^1\,dy^1={e^{\frac{1}{2}\langle C z,z\rangle}}\frac{\det(-C)}{(2\pi)^n}$$
%here $C^{-1}=B^{-1}_{22}$
%\end{prop}
%\begin{proof}
%The characteristic function of a Gaussian variable $N(\mu,\sigma^2)$ is
%$$\int\limits e^{itx}\frac{1}{\sqrt{2\pi\sigma^2}}e^{-\frac{(x-\mu)^2}{2\sigma^2}}\,dx=e^{-\frac{1}{2}\sigma^2t^2+it\mu}$$
%So $C=B_{22}-B_{21}B_{12}$. That is $C^{-1}=B^{-1}_{22}$.
%\end{proof}

\subsection{Conditioned occupation field}
\begin{defn}\label{defn:trace of a Poisson loop soup}
For $F\subset S$, define $\mathcal{L}_{\alpha}|_{F}=\{l_F:l\in\mathcal{L}_{\alpha}\}$ where $l_F$ is the trace of $l$ on $F$, see Definition \ref{defn:trace of a loop on F}.
\end{defn}

\begin{prop}\label{conditional distribution of Poisson random measure}
Let $X,Y$ be two Borel spaces. Let $\mathcal{P}$ be a Poisson random measure on $Z=X\times Y$ with $\sigma-$finite intensity measure $\mu(dx,dy)=m(dx)K(x,dy)$, $K$ being a probability kernel. Let $\pi_X$ and $\pi_Y$ be the projection from $Z=X\times Y$ to $X$ and $Y$ respectively.
% i.e. $\pi_X(x,y)=x$ and $\pi_Y(x,y)=y$
Define $\mathcal{P}_{X}=\pi_X\circ\mathcal{P}$ and $\mathcal{P}_{Y}=\pi_Y\circ\mathcal{P}$. For all $\Phi:Y\rightarrow\mathbb{R}$ non-negative measurable, define $\phi:Y\rightarrow\mathbb{R}$ according to $\Phi$ by the following equation
$e^{-\phi(x)}=\int\limits_{Y}e^{-\Phi(y)}K(x,dy)$. Then,
$$\mathbb{E}[e^{-\langle\mathcal{P}_Y,\Phi\rangle}|\mathcal{F}_{X}]=e^{-\langle\mathcal{P}_X,\phi\rangle}.$$
\end{prop}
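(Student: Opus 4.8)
The plan is to verify the conditional-expectation identity by testing against a generating family of $\mathcal{F}_X$-measurable functionals, namely the Laplace functionals $H=e^{-\langle\mathcal{P}_X,\psi\rangle}$ for nonnegative measurable $\psi$ on $X$, where $\mathcal{F}_X=\sigma(\mathcal{P}_X)$. Since the right-hand side $e^{-\langle\mathcal{P}_X,\phi\rangle}$ is manifestly $\mathcal{F}_X$-measurable, it suffices to establish
\[
\mathbb{E}\big[e^{-\langle\mathcal{P}_Y,\Phi\rangle}\,e^{-\langle\mathcal{P}_X,\psi\rangle}\big]=\mathbb{E}\big[e^{-\langle\mathcal{P}_X,\phi\rangle}\,e^{-\langle\mathcal{P}_X,\psi\rangle}\big]
\]
for all such $\psi$, and then upgrade this to the full family of bounded $\mathcal{F}_X$-measurable test functions by a monotone-class argument.

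First I would rewrite the left-hand side as a single Laplace functional of the underlying Poisson measure $\mathcal{P}$ on $Z=X\times Y$. Writing $\Psi(x,y)=\psi(x)+\Phi(y)$, one has $\langle\mathcal{P}_Y,\Phi\rangle+\langle\mathcal{P}_X,\psi\rangle=\langle\mathcal{P},\Psi\rangle$ because $\mathcal{P}_X=\pi_X\circ\mathcal{P}$ and $\mathcal{P}_Y=\pi_Y\circ\mathcal{P}$. The exponential formula (Proposition \ref{basic properties for Poisson point processes}) then gives
\[
\mathbb{E}\big[e^{-\langle\mathcal{P},\Psi\rangle}\big]=\exp\Big(\int_Z (e^{-\Psi(x,y)}-1)\,\mu(dx,dy)\Big).
\]
Using the disintegration $\mu(dx,dy)=m(dx)K(x,dy)$ and integrating out $y$ first,
\[
\int_Y (e^{-\psi(x)-\Phi(y)}-1)\,K(x,dy)=e^{-\psi(x)}\!\int_Y e^{-\Phi(y)}K(x,dy)-1=e^{-\psi(x)-\phi(x)}-1,
\]
where the last step is exactly the defining relation $e^{-\phi(x)}=\int_Y e^{-\Phi(y)}K(x,dy)$. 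Hence the left-hand side equals $\exp\big(\int_X(e^{-\psi-\phi}-1)\,dm\big)$.

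For the right-hand side I would use that $\mathcal{P}_X=\pi_X\circ\mathcal{P}$ is itself a Poisson random measure, with intensity $\pi_X\circ\mu(dx)=m(dx)\int_Y K(x,dy)=m(dx)$ since $K$ is a probability kernel (Proposition \ref{basic properties for Poisson point processes}, mapping theorem). Applying the exponential formula to $\mathcal{P}_X$ with the nonnegative function $\phi+\psi$ yields $\mathbb{E}[e^{-\langle\mathcal{P}_X,\phi+\psi\rangle}]=\exp\big(\int_X(e^{-\phi-\psi}-1)\,dm\big)$, which coincides with the expression just computed for the left-hand side. This proves the displayed identity for every Laplace test functional.

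Finally, I would observe that the functionals $e^{-\langle\mathcal{P}_X,\psi\rangle}$, together with constants, form a multiplicative class generating $\mathcal{F}_X$, so a functional monotone-class argument extends the identity $\mathbb{E}[e^{-\langle\mathcal{P}_Y,\Phi\rangle}H]=\mathbb{E}[e^{-\langle\mathcal{P}_X,\phi\rangle}H]$ to all bounded $\mathcal{F}_X$-measurable $H$; this is precisely the assertion $\mathbb{E}[e^{-\langle\mathcal{P}_Y,\Phi\rangle}\mid\mathcal{F}_X]=e^{-\langle\mathcal{P}_X,\phi\rangle}$. The main delicacy to watch is this last step, namely verifying that the Laplace functionals really do generate $\mathcal{F}_X$ and justifying the passage to general test functions, together with the bookkeeping needed when $\int_X(1-e^{-\phi-\psi})\,dm$ is infinite, in which case both sides of the key identity vanish and the argument still goes through. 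Alternatively, the whole statement follows transparently from the marking theorem: conditionally on $\mathcal{P}_X$, the $Y$-coordinates attached to the atoms of $\mathcal{P}_X$ are independent with laws $K(x,\cdot)$, which produces the product $\prod_x \int_Y e^{-\Phi}\,K(x,\cdot)=e^{-\langle\mathcal{P}_X,\phi\rangle}$ directly.
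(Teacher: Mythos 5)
Your proof is correct and follows essentially the same route as the paper: both compute the joint Laplace functional $\mathbb{E}[e^{-\langle\mathcal{P}_X,\psi\rangle}e^{-\langle\mathcal{P}_Y,\Phi\rangle}]$ via the exponential formula for $\mathcal{P}$, integrate out the $y$-variable using the disintegration $\mu(dx,dy)=m(dx)K(x,dy)$ to recognize the result as $\mathbb{E}[e^{-\langle\mathcal{P}_X,\psi+\phi\rangle}]$, and conclude because the Laplace functionals generate $\mathcal{F}_X$. Your additional remarks on the monotone-class step and the marking-theorem alternative are fine but not a genuinely different argument.
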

\begin{rem}
The Poisson random measure $\mathcal{P}$ is the $K$-randomization\footnote{Please refer to Chapter 12 of \cite{kallenberg}} of the Poisson random measure $\pi_X\circ\mathcal{P}$.
\end{rem}
\begin{proof}
Take $\Psi:X\rightarrow\mathbb{R}$ and $\Phi:Y\rightarrow\mathbb{R}$ non-negative measurable. Define $\phi$ by the following equation:
$$e^{-\phi(x)}=\int\limits_{Y}e^{-\Phi(y)}K(x,dy).$$
We have \begin{align*}
\mathbb{E}[e^{-\langle\mathcal{P}_X,\Psi\rangle}e^{-\langle\mathcal{P}_{Y},\Phi\rangle}]&=\mathbb{E}[e^{-\langle\mathcal{P},\Psi\otimes\Phi\rangle}]=e^{\mu(e^{-\Psi\otimes\Phi}-1)}\\
&=\exp({\int\limits_{X\times Y} (e^{-\Psi(x)}e^{-\Phi(y)}-1)m(dx)K(x,dy)})\\
&=\exp({\int\limits_{X}(e^{-\Psi(x)}\int\limits_{Y}e^{-\Phi(y)}K(x,dy)-1)m(dx)})\\
&=\exp({\int\limits_{X}(e^{-\Psi(x)-\phi(x)}-1)m(dx)})=\mathbb{E}[e^{-\langle\mathcal{P}_X,\Psi\rangle}e^{-\langle\mathcal{P}_Y,\phi\rangle}].
\end{align*}
Since $\mathcal{F}_{X}=\sigma(\{e^{-\langle\mathcal{P}_X,\Psi\rangle}:\Psi\text{ is a non-negative measurable function on }X\})$,
$$\mathbb{E}[e^{-\langle\mathcal{P}_Y,\Phi\rangle}|\mathcal{F}_{X}]=e^{-\langle\mathcal{P}_X,\phi\rangle}.$$
\end{proof}

Let $f$ be a positive measurable function on the space of excursions. Recall that $\mathcal{E}_{F}(l)$ is the point measure of the excursions of the loop $l$ outside of $F$ (see Definition \ref{defn:decomposition of a loop into a pre-trace and some excursions indexed by the edges of the pre-trace}).
As a consequence of Proposition \ref{conditional distribution of Poisson random measure} and Proposition \ref{joint measure of the trace and the excursions} or Corollary \ref{transition kernel from the trace to the point measure of excursions}, we have the following proposition.
\begin{prop}\label{conditional excursion random measure distribution}
$$\mathbb{E}[e^{-\sum\limits_{l\in\mathcal{L_{\alpha}}}\langle \mathcal{E}_F(l),f\rangle}|\sigma(\mathcal{L}_{\alpha}|_{F})]=(\prod\limits_{x\neq y\in F}\nu^{x,y}_{F,ex}(e^{-f})^{N^x_y(\mathcal{L}_{\alpha}|_{F})})\times e^{\sum\limits_{x\in F}(L^x_x-(L_F)^x_x)\widehat{(\mathcal{L}_{\alpha}|_{F})}^x\nu^{x,x}_{F,ex}(1-e^{-f})}.$$
\end{prop}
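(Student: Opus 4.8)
The plan is to apply the abstract conditioning formula for Poisson random measures, Proposition \ref{conditional distribution of Poisson random measure}, to the decomposition of each loop into its trace on $F$ and its excursion point measure. First I would set up the two-component space: let $X$ be the space of traces on $F$, let $Y$ be the space of point measures on excursions outside $F$, and consider the map $l\mapsto(l_F,\mathcal{E}_F(l))$ sending a loop visiting $F$ to the pair formed by its trace and its excursion measure. By Proposition \ref{basic properties for Poisson point processes} d), the image of $\mathcal{L}_\alpha$ (restricted to loops visiting $F$) under this map is a Poisson random measure $\mathcal{P}$ on $X\times Y$ whose intensity, by Corollary \ref{transition kernel from the trace to the point measure of excursions}, factorizes as $\alpha\,\mu_F(dl_F)\,K(l_F,\cdot)$ with $K$ the probability kernel described there. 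In the notation of Proposition \ref{conditional distribution of Poisson random measure} we then have $m=\alpha\mu_F$, $\mathcal{P}_X=\mathcal{L}_\alpha|_F$ and $\mathcal{F}_X=\sigma(\mathcal{L}_\alpha|_F)$.

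Next I would choose the test functional $\Phi$ on $Y$ to be $\Phi(\mathcal{E})=\langle\mathcal{E},f\rangle$, so that $\langle\mathcal{P}_Y,\Phi\rangle=\sum_{l\in\mathcal{L}_\alpha}\langle\mathcal{E}_F(l),f\rangle$ is precisely the quantity appearing in the exponent on the left-hand side. Since $f\ge 0$, $\Phi$ is non-negative measurable, so Proposition \ref{conditional distribution of Poisson random measure} applies and reduces the problem to computing the function $\phi$ on traces defined by $e^{-\phi(l_F)}=\int_Y e^{-\langle\mathcal{E},f\rangle}K(l_F,d\mathcal{E})$, that is, the Laplace functional of $f$ under $K(l_F,\cdot)$.

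The computation of $e^{-\phi(l_F)}$ uses the product structure $K(l_F,\cdot)=\bigotimes_{x\neq y\in F}(\nu^{x,y}_{F,ex})^{\otimes N^x_y(l_F)}\otimes\mathcal{N}_F((l_F^x,x\in F))$. By independence the Laplace functional factorizes. For each ordered pair $x\neq y$, the $N^x_y(l_F)$ independent excursions sampled from $\nu^{x,y}_{F,ex}$ contribute $(\nu^{x,y}_{F,ex}(e^{-f}))^{N^x_y(l_F)}$; for the Poisson component $\mathcal{N}_F$, the exponential formula of Proposition \ref{basic properties for Poisson point processes} a)--b) applied to its intensity $\sum_x(-L^x_x+(L_F)^x_x)l_F^x\nu^{x,x}_{F,ex}$ yields the factor $\exp(\sum_{x\in F}(L^x_x-(L_F)^x_x)l_F^x\nu^{x,x}_{F,ex}(1-e^{-f}))$. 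Thus $e^{-\phi(l_F)}$ is exactly the single-trace version of the claimed right-hand side. Finally I would sum over the traces, $\langle\mathcal{P}_X,\phi\rangle=\sum_{l_F\in\mathcal{L}_\alpha|_F}\phi(l_F)$, and use the additivity of $N^x_y(\cdot)$ and of the occupation field under the superposition of traces, so that $\sum_{l_F}N^x_y(l_F)=N^x_y(\mathcal{L}_\alpha|_F)$ and $\sum_{l_F}l_F^x=\widehat{(\mathcal{L}_\alpha|_F)}^x$; exponentiating $-\langle\mathcal{P}_X,\phi\rangle$ then gives the stated identity.

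The analytic content is entirely contained in the already-established results, so the main obstacle is bookkeeping and measurability: I must check that $\mathcal{P}_X$ is genuinely $\mathcal{L}_\alpha|_F$ and that $\sigma(\mathcal{L}_\alpha|_F)=\mathcal{F}_X$, so that the conditioning in Proposition \ref{conditional distribution of Poisson random measure} coincides with the conditioning in the statement, and I must verify that loops not visiting $F$ contribute nothing to either side (their trace is empty and $\mathcal{E}_F$ attaches no excursion to $F$), which makes the restriction to loops visiting $F$ harmless. Getting these support and $\sigma$-field identifications right is the delicate step; everything else reduces to Proposition \ref{conditional distribution of Poisson random measure}, Proposition \ref{joint measure of the trace and the excursions} and Corollary \ref{transition kernel from the trace to the point measure of excursions}.
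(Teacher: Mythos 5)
Your proposal is correct and follows essentially the same route as the paper, which derives this proposition directly by combining Proposition \ref{conditional distribution of Poisson random measure} with the kernel factorization of Corollary \ref{transition kernel from the trace to the point measure of excursions}; your explicit computation of the Laplace functional of $K(l_F,\cdot)$ (the product term from the fixed-number excursions and the exponential formula for the Poisson component $\mathcal{N}_F$) is exactly the intended argument. The bookkeeping you flag — identifying $\mathcal{P}_X$ with $\mathcal{L}_\alpha|_F$, the $\sigma$-field identification, and the harmlessness of loops not visiting $F$ — is handled correctly.
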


For an excursion $(e,x,y)$ outside of $F$ from $x$ to $y$ and $\chi$ any non-negative measurable function on $S$, set $\langle e,\chi\rangle=\int \chi(e(s))\,ds$. Then we have the following:
\begin{prop}
The conditional expectation $\mathbb{E}[e^{-\langle\hat{\mathcal{L}}_{\alpha},\chi\rangle}|\sigma(\mathcal{L}_{\alpha}|_{F})]$ equals
$$\mathbb{E}[e^{-\langle\widehat{\mathcal{L}_{\alpha}^{F^c}},\chi\rangle}]e^{-\langle\hat{\mathcal{L}}_{\alpha}|_F,\chi\rangle}\exp\left(\sum\limits_{x\in F}L^x_x\widehat{(\mathcal{L}_{\alpha}|_{F})}^x((R^{F})^x_x-(R^F_{\chi})^x_x)\right)\prod\limits_{x\neq y\in F}\left(\frac{(R^F_{\chi})^x_y}{(R^{F})^x_y}\right)^{N^x_y(\mathcal{L}_{\alpha}|_{F})}.$$
\end{prop}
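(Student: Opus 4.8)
The plan is to compute the conditional Laplace transform by decomposing every loop into its trace on $F$ and its excursions outside $F$, treating separately the loops that never meet $F$. First I would split the Poisson ensemble as $\mathcal{L}_\alpha=\mathcal{L}_\alpha^{F^c}\sqcup\mathcal{L}_\alpha^{\mathrm{vis}}$, where $\mathcal{L}_\alpha^{F^c}$ consists of the loops contained in $F^c$ and $\mathcal{L}_\alpha^{\mathrm{vis}}$ of the loops visiting $F$. By the restriction/mapping property of Poisson random measures (Proposition \ref{basic properties for Poisson point processes}) these are independent Poisson ensembles. Since the trace on $F$ of a loop contained in $F^c$ is empty, the $\sigma$-field $\sigma(\mathcal{L}_\alpha|_F)$ is generated by $\mathcal{L}_\alpha^{\mathrm{vis}}$ alone and is in particular independent of $\mathcal{L}_\alpha^{F^c}$.

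The second step is the pathwise decomposition of the occupation field. For a loop $l$ visiting $F$, writing $f=\langle\chi 1_{F^c},\cdot\rangle$ for the excursion functional attached to $\chi$ (only $\chi|_{F^c}$ enters, as excursions live in $F^c$), the time spent in $F$ reconstitutes the occupation field of the trace while the time spent in $F^c$ splits along the excursions recorded by $\mathcal{E}_F(l)$, so that $\langle l,\chi\rangle=\langle l_F,\chi\rangle+\langle\mathcal{E}_F(l),f\rangle$. Summing over the ensemble,
\[ \langle\hat{\mathcal{L}}_\alpha,\chi\rangle=\langle\widehat{\mathcal{L}_\alpha^{F^c}},\chi\rangle+\langle\hat{\mathcal{L}}_\alpha|_F,\chi\rangle+\sum_{l\in\mathcal{L}_\alpha}\langle\mathcal{E}_F(l),f\rangle . \]
Conditioning on $\sigma(\mathcal{L}_\alpha|_F)$, the middle term is measurable and factors out as $e^{-\langle\hat{\mathcal{L}}_\alpha|_F,\chi\rangle}$, while the first term, being independent of the $\sigma$-field and of the excursions (all three attached to disjoint, independent parts of the ensemble), factors out as its unconditional expectation $\mathbb{E}[e^{-\langle\widehat{\mathcal{L}_\alpha^{F^c}},\chi\rangle}]$.

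The third step is to evaluate the conditional expectation of the remaining excursion term, which is exactly the content of Proposition \ref{conditional excursion random measure distribution} with $f=\langle\chi 1_{F^c},\cdot\rangle$:
\[ \mathbb{E}\Bigl[e^{-\sum_{l}\langle\mathcal{E}_F(l),f\rangle}\,\Big|\,\sigma(\mathcal{L}_\alpha|_F)\Bigr]=\prod_{x\neq y\in F}\nu^{x,y}_{F,ex}(e^{-f})^{N^x_y(\mathcal{L}_\alpha|_F)}\,e^{\sum_{x\in F}(L^x_x-(L_F)^x_x)\widehat{(\mathcal{L}_\alpha|_F)}^x\,\nu^{x,x}_{F,ex}(1-e^{-f})} . \]
It then remains to rewrite the excursion integrals as resolvent kernels. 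By Lemma \ref{lem:laplace transform for the excursion measure}, $\nu^{x,y}_{F,ex}(e^{-f})=(R^F_\chi)^x_y/(R^F)^x_y$, hence $\nu^{x,x}_{F,ex}(1-e^{-f})=((R^F)^x_x-(R^F_\chi)^x_x)/(R^F)^x_x$, where $R^F_\chi$ only feels $\chi$ through $F^c$, which is precisely why passing from $\chi$ to $\chi 1_{F^c}$ is harmless. Finally, Proposition \ref{Expression of the generator for the time changed process} gives $(L_F)^x_x=L^x_x(1-(R^F)^x_x)$, so that $L^x_x-(L_F)^x_x=L^x_x(R^F)^x_x$ and the exponent collapses:
\[ (L^x_x-(L_F)^x_x)\,\frac{(R^F)^x_x-(R^F_\chi)^x_x}{(R^F)^x_x}=L^x_x\bigl((R^F)^x_x-(R^F_\chi)^x_x\bigr) . \]
Multiplying the three factors produces the claimed identity. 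The step I expect to require the most care is the independence and measurability bookkeeping — checking that $\mathcal{L}_\alpha^{F^c}$ is genuinely independent both of $\sigma(\mathcal{L}_\alpha|_F)$ and of the excursion point measures, and that the excursion quantities depend on $\chi$ only through $\chi|_{F^c}$ so that the symbol $R^F_\chi$ is unambiguous — rather than the algebra, which is routine once the earlier propositions are invoked.
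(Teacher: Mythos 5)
Your proposal is correct and follows essentially the same route as the paper: split off the loops contained in $F^c$ by Poisson independence, decompose $\langle l,\chi\rangle$ into the trace part plus the excursion part, apply Proposition \ref{conditional excursion random measure distribution}, and then reduce the excursion integrals via Lemma \ref{lem:laplace transform for the excursion measure} and the identity $L^x_x-(L_F)^x_x=L^x_x(R^F)^x_x$ from Proposition \ref{Expression of the generator for the time changed process}. Your explicit remark that the excursion functional only sees $\chi 1_{F^c}$ is a point of care the paper leaves implicit, but it does not change the argument.
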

\begin{proof}
The set of loops which do not intersect $F$, $\mathcal{L}_{\alpha}^{F^c}$,  is independent of the set of loops which intersect $F$. Therefore,
\begin{align*}
\mathbb{E}[e^{-\langle\hat{\mathcal{L}}_{\alpha},\chi\rangle}|\sigma(\mathcal{L}_{\alpha}|_{F})]=&\mathbb{E}[e^{-\langle\widehat{\mathcal{L}_{\alpha}^{F^c}},\chi\rangle}]\mathbb{E}[\exp({-\sum\limits_{\begin{subarray}{l}l\in\mathcal{L}_{\alpha}\\ l\text{ visits }F\end{subarray}}\langle l,\chi\rangle})|\sigma(\mathcal{L}_{\alpha}|_{F})]\\
=&\mathbb{E}[e^{-\langle\widehat{\mathcal{L}_{\alpha}^{F^c}},\chi\rangle}]\mathbb{E}[\exp({-\sum\limits_{\begin{subarray}{l}l\in\mathcal{L}_{\alpha}\\ l\text{ visits }F\end{subarray}}(\langle l_F,\chi\rangle+\sum\limits_{e\in\mathcal{E}_F(l)}\langle e,\chi\rangle)})|\sigma(\mathcal{L}_{\alpha}|_{F})]\\
=&\mathbb{E}[e^{-\langle\widehat{\mathcal{L}_{\alpha}^{F^c}},\chi\rangle}]\mathbb{E}[\exp(-\sum\limits_{\begin{subarray}{l}l\in\mathcal{L}_{\alpha}\\ l\text{ visits }F\end{subarray}}\sum\limits_{e\in\mathcal{E}_F(l)}\langle e,\chi\rangle)\exp(-\langle\hat{\mathcal{L}}_{\alpha}|_F,\chi\rangle)|\sigma(\mathcal{L}_{\alpha}|_{F})]\\
=&\mathbb{E}[e^{-\langle\widehat{\mathcal{L}_{\alpha}^{F^c}},\chi\rangle}]e^{-\langle\hat{\mathcal{L}}_{\alpha}|_F,\chi\rangle}\mathbb{E}[\exp({-\sum\limits_{\begin{subarray}{l}l\in\mathcal{L}_{\alpha}\\ l\text{ visits }F\end{subarray}}\sum\limits_{e\in\mathcal{E}_F(l)}\langle e,\chi\rangle})|\sigma(\mathcal{L}_{\alpha}|_{F})]\\
=&\mathbb{E}[e^{-\langle\widehat{\mathcal{L}_{\alpha}^{F^c}},\chi\rangle}]e^{-\langle\hat{\mathcal{L}}_{\alpha}|_F,\chi\rangle}\mathbb{E}[\exp({-\sum\limits_{l\in\mathcal{L}_{\alpha}}\sum\limits_{e\in\mathcal{E}_F(l)}\langle e,\chi\rangle}|\sigma(\mathcal{L}_{\alpha})|_{F})].
\end{align*}
By Proposition \ref{conditional excursion random measure distribution}, taking the positive excursion function $f(\cdot)$ to be $\langle\cdot,\chi\rangle$,
\begin{multline*}
\mathbb{E}[e^{-\sum\limits_{l\in\mathcal{L}_{\alpha}}\sum\limits_{e\in\mathcal{E}_F(l)}\langle e,\chi\rangle}|\sigma(\mathcal{L}_{\alpha}|_{F})]=\mathbb{E}[e^{-\sum\limits_{l\in\mathcal{L}_{\alpha}}\langle \mathcal{E}_F(l),\langle\cdot,\chi\rangle\rangle}|\sigma(\mathcal{L}_{\alpha}|_{F})]\\
=(\prod\limits_{x\neq y\in F}\nu^{x,y}_{F,ex}(e^{-\langle\cdot,\chi\rangle})^{N^x_y(\mathcal{L}_{\alpha}|_{F})})\exp\left(\sum\limits_{x\in F}(L^x_x-(L_F)^x_x)\widehat{(\mathcal{L}_{\alpha}|_{F})}^x\nu^{x,x}_{F,ex}(1-e^{-\langle\cdot,\chi\rangle})\right).
\end{multline*}
By Lemma \ref{lem:laplace transform for the excursion measure},
$$\nu^{x,y}_{F,ex}(e^{-\langle\cdot,\chi\rangle})=\frac{(R^F_{\chi})^x_y}{(R^{F})^x_y}.$$
Then, by Proposition \ref{Expression of the generator for the time changed process}, $L^x_x-(L_F)^x_x=L^x_x(R^{F})^x_x$. Then,
\begin{multline*}
\mathbb{E}[e^{-\sum\limits_{l\in\mathcal{L}_{\alpha}}\sum\limits_{e\in\mathcal{E}_F(l)}\langle e,\chi\rangle}|\sigma(\mathcal{L}_{\alpha}|_{F})]\\
=\left(\prod\limits_{x\neq y\in F}\left(\frac{(R^F_{\chi})^x_y}{(R^{F})^x_y}\right)^{N^x_y(\mathcal{L}_{\alpha}|_{F})}\right)\exp\left(\sum\limits_{x\in F}(L^x_x-(L_F)^x_x)\widehat{(\mathcal{L}_{\alpha}|_{F})}^x\left(1-\frac{(R^F_{\chi})^x_x}{(R^{F})^x_x}\right)\right)\\
=\left(\prod\limits_{x\neq y\in F}\left(\frac{(R^F_{\chi})^x_y}{(R^{F})^x_y}\right)^{N^x_y(\mathcal{L}_{\alpha}|_{F})}\right)\exp\left(\sum\limits_{x\in F}L^x_x\widehat{(\mathcal{L}_{\alpha}|_{F})}^x((R^{F})^x_x-(R^F_{\chi})^x_x)\right).
\end{multline*}
Finally, we get $\mathbb{E}[e^{-\langle\hat{\mathcal{L}}_{\alpha},\chi\rangle}|\sigma(\mathcal{L}_{\alpha}|_{F})]$ equals
$$\mathbb{E}[e^{-\langle\widehat{\mathcal{L}_{\alpha}^{F^c}},\chi\rangle}]e^{-\langle\hat{\mathcal{L}}_{\alpha}|_F,\chi\rangle}\left(\prod\limits_{x\neq y\in F}\left(\frac{(R^F_{\chi})^x_y}{(R^{F})^x_y}\right)^{N^x_y(\mathcal{L}_{\alpha}|_{F})}\right)\exp\left(\sum\limits_{x\in F}L^x_x\widehat{(\mathcal{L}_{\alpha}|_{F})}^x((R^{F})^x_x-(R^F_{\chi})^x_x)\right).$$
\end{proof}
\subsection{Loop clusters}
Consider the space $S$ as a graph $(S,E)$ with $S$ as the set of vertices and $E=\{\{x,y\}:N^x_y(l)>0\text{ or }N^y_x(l)>0\}$ as the set of undirected edges. An edge $\{x,y\}$ is said to be open at time $\alpha$ if it is traversed by at least one loop of $\mathcal{L}_{\alpha}$, i.e. $N^x_y(\mathcal{L}_{\alpha})+N^y_x(\mathcal{L}_{\alpha})>0$. The set
of open edges defines a subgraph $G_{\alpha}$ with vertices $S$. The connected components of $G_{\alpha}$ define
a partition of $S$ denoted by $\mathcal{C}_{\alpha}$, namely the loop clusters at time $\alpha$. %For any given partition $\pi$, we denote the event \{$\mathcal{C}_{\alpha}$ is finer than $\pi$\} by $\mathcal{C}_{\alpha}\prec\pi$.

As in section 2 of \cite{sophie}, we have the following proposition,
\begin{prop}
Given a collection of edges $F=\{e_1=\{x_1,y_1\},\ldots,e_k=\{x_k,y_k\}\}$, let $A=\bigcup\limits_{i=1}^{k}\{x_i,y_i\}$. Then,
$$\mathbb{P}[e_1,\ldots,e_k\text{ are all closed}]=\det(I+(L|_{F})|_{A\times A}V_{A})^{-\alpha}$$
where $(L|_{F})^x_y=\left\{
\begin{array}{ll}
L^x_y & \text{if }\{x,y\}\in F\\
0 & \text{otherwise.}
\end{array}\right.$
\end{prop}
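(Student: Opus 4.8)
The plan is to reduce everything to a single computation of the loop measure of a well-chosen set of loops, and then evaluate that measure via a perturbation-determinant identity. Let $B_F$ denote the set of loops that traverse at least one edge of $F$, i.e. loops $l$ with $\sum_{\{x,y\}\in F}\bigl(N^x_y(l)+N^y_x(l)\bigr)>0$. The event ``$e_1,\dots,e_k$ are all closed'' is exactly the event that no loop of $\mathcal L_\alpha$ lies in $B_F$. Applying Proposition \ref{basic properties for Poisson point processes} with $\Phi=+\infty\cdot 1_{B_F}$ gives $\mathbb P[e_1,\dots,e_k\text{ all closed}]=e^{-\alpha\,\mu(B_F)}$, so the whole statement reduces to proving $\mu(B_F)=\ln\det\bigl(I+(L|_F)|_{A\times A}V_A\bigr)$.

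The key structural remark is that whether a loop traverses an edge of $F$ depends only on its \emph{discrete} loop $l^{o,d}$, so $\mu(B_F)=\mu^d(B_F^d)$ with $B_F^d$ the discrete loops using a jump along some edge of $F$. I would then introduce the auxiliary generator $\tilde L=L-L|_F$, which is again sub-Markovian (the diagonal is unchanged and one only zeroes out off-diagonal entries). Since $\tilde L$ has the same diagonal as $L$, it induces the same trivial-loop measure and the same holding-time laws, while its jump matrix satisfies $\tilde Q^x_y=Q^x_y$ for $\{x,y\}\notin F$ and $\tilde Q^x_y=0$ for $\{x,y\}\in F$. Hence $\mu_L$ and $\mu_{\tilde L}$ coincide on the discrete loops avoiding $F$, and a discrete loop using an edge of $F$ has $\tilde Q$-weight zero. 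This yields the clean splitting $\mu(B_F)=\mu^d_L(\{\text{nontrivial}\})-\mu^d_{\tilde L}(\{\text{nontrivial}\})$, in which the (infinite) trivial part has already cancelled.

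Next I would use the counting identity $\mu^d(\{\text{nontrivial}\})=\sum_{k\ge 1}\frac1k\Tr(Q^k)$, which follows from the expression $\mu^d(\gamma)=\tfrac1{n(\gamma)}Q^\gamma$ together with the fact that each discrete loop of length $k$ has $k/n(\gamma)$ based representatives and that $Q^x_x=0$ forces the nonzero terms of $\Tr(Q^k)$ to be genuine based loops. This gives $\mu(B_F)=\sum_{k\ge 1}\frac1k\Tr\bigl(Q^k-\tilde Q^k\bigr)$, which I would resum into $\ln\det\bigl(I+(I-Q)^{-1}(Q-\tilde Q)\bigr)$, a genuine finite determinant because $Q-\tilde Q=Q|_F$ is supported on the finite set $A\times A$. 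It then remains to translate the discrete-chain quantities into $V_A$ and $(L|_F)|_{A\times A}$: writing $-L=D(I-Q)$ with $D=M_{(-L^x_x)}$ gives $(I-Q)^{-1}=VD$, while $Q|_F=D^{-1}(L|_F)$, so that
\[
(I-Q)^{-1}(Q-\tilde Q)=VD\,D^{-1}(L|_F)=V(L|_F).
\]
Since $V(L|_F)$ has nonzero columns only in $A$, the determinant collapses to $\det\bigl(I_A+V_A\,(L|_F)|_{A\times A}\bigr)=\det\bigl(I+(L|_F)|_{A\times A}V_A\bigr)$, which is the claim. Equivalently one may phrase the resummation as $\mu(B_F)=\ln\tfrac{\det(-\tilde L)}{\det(-L)}=\ln\det(I+V(L|_F))$, making the cancellation of the diagonal (hence trivial-loop) contributions transparent.

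The main obstacle is the resummation step in an infinite state space: each of $\sum_k\frac1k\Tr(Q^k)$ and $\sum_k\frac1k\Tr(\tilde Q^k)$ is typically $+\infty$, so the identity must be established directly for the difference. I would justify this by noting that $Q-\tilde Q$ has finite rank, being supported on $A\times A$, so each $\Tr(Q^k-\tilde Q^k)$ is a finite sum over $A$, and by writing $I-\tilde Q=(I-Q)\bigl(I+(I-Q)^{-1}(Q-\tilde Q)\bigr)$ the ratio of ``determinants'' is literally the finite determinant $\det(I+V(L|_F))$; convergence of the resulting series and the matching of the trace series with $\ln\det$ are guaranteed by transience (finiteness of $V$, equivalently of the discrete Green function on the relevant entries touching $A$). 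Everything else is the routine linear algebra recorded above.
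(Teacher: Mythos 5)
Your proof is correct and rests on the same core idea as the paper's: introduce the modified generator $\tilde L=L-L|_F$ (the paper's $L'$), observe that the loops crossing an edge of $F$ are exactly those whose $\tilde Q$-weight vanishes, and reduce the answer to the ratio of two ``determinants'' $\det(-\tilde L)/\det(-L)=\det(I+VL|_F)$. Where you genuinely diverge is in how this ratio is made rigorous on an infinite state space. The paper first treats finite $S$, where $\mu(l\text{ non-trivial})=\ln(\prod_x(-L^x_x)\det V)$ is an honest finite quantity (Proposition \ref{non-trivial loop visits F}), and then exhausts a countable $S$ by finite subsets $A_n$, using the compatibility of the loop measure with restriction and checking $((-L|_{A_n\times A_n})^{-1})^x_y\to V^x_y$. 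You instead pass to the discrete loop measure and work directly with the difference $\sum_k\frac1k\Tr(Q^k-\tilde Q^k)$, exploiting the fact that $Q-\tilde Q$ is supported on the finite set $A\times A$ so that each term, and the limiting perturbation determinant $\det(I+(I-Q)^{-1}(Q-\tilde Q))=\det(I+VL|_F)$, make sense without any exhaustion. Your route is arguably cleaner in that it avoids the limiting argument and the convergence check on the truncated potentials, and it makes the cancellation of the (infinite) diagonal/trivial contributions structurally transparent; the paper's route buys simplicity by only ever invoking identities already proved for finite matrices. The one step you should not leave as an assertion is the resummation $\sum_k\frac1k\Tr(Q^k-\tilde Q^k)=\ln\det(I+VL|_F)$: since each of the two trace series separately diverges, you cannot simply quote $\ln\det(I-\tilde Q)-\ln\det(I-Q)$, and the factorization $I-\tilde Q=(I-Q)(I+VL|_F)$ is only formal for infinite matrices. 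A clean way to close this is to interpolate $Q_t=\tilde Q+t(Q-\tilde Q)$ and check that $\frac{d}{dt}\sum_k\frac1k[\Tr(Q^k)-\Tr(Q_t^k)]=-\Tr((I-Q_t)^{-1}(Q-\tilde Q))$, a finite sum over $A\times A$ dominated by the Green function entries (finite by transience), which matches the derivative of $\ln\det(I+(I-Q)^{-1}(Q-Q_t))^{-1}$ term for term; with that supplied, your argument is complete.
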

\begin{proof}
Suppose $S$ is finite,
\begin{multline*}
\mathbb{P}[e_1,\ldots,e_k\text{ are all closed}]=\exp(-\alpha\mu(\sum\limits_{i=1}^{k}N^{x_i}_{y_i}(l)+N^{y_i}_{x_i}(l)>0))\\
=\exp(-\alpha\mu(\sum\limits_{i=1}^{k}N^{x_i}_{y_i}(l)+N^{y_i}_{x_i}(l)>0,\;l\text{ is non-trivial}))\\
=\exp(-\alpha\mu(l\text{ is non-trivial})+\alpha\mu(\sum\limits_{i=1}^{k}N^{x_i}_{y_i}(l)+N^{y_i}_{x_i}(l)=0,\;l\text{ is non-trivial}))
\end{multline*}
Define $(L')^x_y=L^x_y$ if $\{x,y\}\notin F$ and $(L')^x_y=0$ if $\{x,y\}\in F$. By Proposition \ref{expression of mup},
$$\mu(\sum\limits_{i=1}^{k}N^{x_i}_{y_i}(l)+N^{y_i}_{x_i}(l)=0,l\text{ is non-trivial})=\mu(L',l\text{ is non-trivial}).$$
(Recall that $\mu(L',dl)$ is the Markovian loop measure associated with the generator $L'$.) By Proposition \ref{non-trivial loop visits F}, $\mu(L',l\text{ is non-trivial})=-\ln(\prod\limits_{x\in S}(-L')^x_x)+\ln\det(-L')=\ln(\prod\limits_{x\in S}(-L)^x_x)-\ln\det(-L')$ and $\mu(l\text{ is non-trivial})=\ln(\prod\limits_{x\in S}(-L)^x_x)-\ln\det(-L)$. Therefore,
$$\mathbb{P}[e_1,\ldots,e_k\text{ are all closed}]=\left(\frac{\det(-L)}{\det(-L')}\right)^{\alpha}=\det(-L'V)^{-\alpha}.$$
Write as $-L'=-L+(L-L')=-L+L|_{F}$. Therefore, $\det(-L'V)=\det(I+(L|_{F})|_{A\times A}V_{A})$. Consequently,
$$\mathbb{P}[e_1,\ldots,e_k\text{ are all closed}]=\det(I+(L|_{F})|_{A\times A}V_{A})^{-\alpha}.$$
For $S$ countable, let $A_1\subset A_2\subset\cdots$ exhausting $S$. Then we have
\begin{align*}
&\mathbb{P}[e_1,\ldots,e_k\text{ are all closed}]=\exp(-\alpha\mu(\sum\limits_{i=1}^{k}N^{x_i}_{y_i}(l)+N^{y_i}_{x_i}(l)>0))\\
=&\exp(-\alpha\lim\limits_{n\rightarrow\infty}\mu(\sum\limits_{i=1}^{k}N^{x_i}_{y_i}(l)+N^{y_i}_{x_i}(l)>0,l\text{ is contained in }A_n)).
\end{align*}
By Proposition \ref{compatibility with the time change},
$$\mu(\sum\limits_{i=1}^{k}N^{x_i}_{y_i}(l)+N^{y_i}_{x_i}(l)>0,l\text{ is contained in }A_n)=\mu(L|_{A_n\times A_n}, \sum\limits_{i=1}^{k}N^{x_i}_{y_i}(l)+N^{y_i}_{x_i}(l)>0).$$
By the calculation for the finite case,
$$\mu(L_{A_n\times A_n}, \sum\limits_{i=1}^{k}N^{x_i}_{y_i}(l)+N^{y_i}_{x_i}(l)>0)=\det(I+(L|_{F})|_{A\times A}(-L|_{A_n\times A_n})^{-1}_{A})^{-\alpha}.$$
It is not hard to check that $\lim\limits_{n\rightarrow\infty}((-L|_{A_n\times A_n})^{-1})^x_y=V^x_y$ for $x,y\in S$. Finally,
$$\mathbb{P}[e_1,\ldots,e_k\text{ are all closed}]=\det(I+(L|_{F})|_{A\times A}V_{A})^{-\alpha}.$$
\end{proof}

%As a corollary, we have the following proposition.\begin{cor}Given a partition $\pi=\{S_1,\ldots,S_k\}$, define $F=\bigcup\limits_{1\leq i\neq j\leq k}\{\{x,y\}:Q^x_y+Q^y_x>0,x\in S_i,y\in S_j\}$ and $A=\{x\in S:\{x,y\}\in F\}$. Suppose $\#F<\infty$. Then,$$\mathbb{P}[\mathcal{C}_{\alpha}\text{ is finer than }\pi]=\mathbb{P}[\text{all the edges in F are closed}]=\det(I+(L|_{F})|_{A\times A}V_{A})^{-\alpha}$$\end{cor}
As a corollary, we obtain another expression by using the Poisson kernel. For $X\subset S$, define the Poisson kernel $(H^X)^x_y=\mathbb{P}^x[X_{T_{X}}=y]$ the probability of hitting $X$ at the position $y$ for a process starting from $x$.
\begin{prop}
Given a partition $\pi=\{S_1,\ldots,S_k\}$, define $\partial S_i=\{x\in S_i:\exists y\in S_i^c,Q^x_y+Q^y_x>0\}$, $F=\bigcup\limits_{i=1}^{k}\{\{x,y\}:Q^x_y+Q^y_x>0,x\in S_i,y\in S_j\}$ and $A=\bigcup\limits_{i=1}^{k}\partial S_i$. Suppose $|A|<\infty$. Define $H_{i,j}=H^{S_i^c}|_{\partial S_i\times \partial S_j}$ and
$$K=\begin{bmatrix}
0 & H_{1,2} & \cdots & H_{1,k}\\
H_{2,1} & 0 & \ddots & \vdots\\
\vdots & \ddots & \ddots & H_{k-1,k}\\
H_{k,1} & \cdots & H_{k,k-1} & 0
\end{bmatrix}.$$ Then,
$$\mathbb{P}[\mathcal{C}_{\alpha}\text{ is finer than }\pi]=\mathbb{P}[\text{all the edges in F are closed}]=(\det(I-K))^{\alpha}.$$
\end{prop}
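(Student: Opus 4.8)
The plan is to reduce the statement to the formula of the preceding proposition and then to a single linear-algebra identity. First observe that $\mathcal{C}_\alpha$ is finer than $\pi$ exactly when no open edge joins two different blocks, i.e. when every edge of $F$ is closed; this is the first equality. Since $|A|<\infty$ and every edge of $F$ has both endpoints among the boundary points, the collection $F$ is finite and its endpoint set is exactly $A$, so I can apply the previous proposition with the edge family $F$ to get
\[
\mathbb{P}[\text{all edges in }F\text{ are closed}]=\det(I+W V_A)^{-\alpha},\qquad W:=(L|_F)|_{A\times A},\ V_A=V|_{A\times A}.
\]
It therefore suffices to prove the identity $\det(I+WV_A)=\det(I-K)^{-1}$.

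Introduce $L'=L-L|_F$ exactly as in the proof of the previous proposition, so that $L'$ vanishes on all inter-block pairs and coincides with $L$ inside each block; hence $L'$ is block-diagonal for $\pi$, namely $\bigoplus_i L^{S_i}$ with $L^{S_i}=L|_{S_i\times S_i}$. Consequently $V'=(-L')^{-1}=\bigoplus_i V^{S_i}$ is block-diagonal, and its restriction is $V'_A:=V'|_{A\times A}=\bigoplus_i (V^{S_i})|_{\partial S_i\times\partial S_i}$. I would then identify $K$ with $V'_A W$. For $x\in\partial S_i$ and $y\in S_i^c$ the first-exit distribution from the block $S_i$ is given by the first-passage formula $(H^{S_i^c})^x_y=\sum_{u\in S_i}(V^{S_i})^x_u\,L^u_y$ (the expected occupation of $u$ by the process killed on leaving $S_i$, times the crossing rate $L^u_y$). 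Since $L^u_y\neq 0$ with $u\in S_i,\ y\in S_i^c$ forces $\{u,y\}\in F$, hence $u\in\partial S_i$ and $y\in\partial S_j$, restricting $x,y$ to $A$ yields $H_{ij}=(V^{S_i})|_{\partial S_i\times\partial S_i}\,W_{ij}$ for $j\neq i$; together with $W_{ii}=0$ this is precisely $K=V'_A W$.

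The second ingredient is the resolvent identity restricted to $A$. From $-L'=-L+L|_F$ the resolvent equation gives $V-V'=V(L|_F)V'$ on $S$. Because $L|_F$ is supported on $A\times A$, I can write it as $\iota W\pi$, where $\pi$ and $\iota$ are the restriction and inclusion between $\mathbb{R}^A$ and $\mathbb{R}^S$; using $\pi V\iota=V_A$ and $\pi V'\iota=V'_A$ this restricts cleanly to $V_A-V'_A=V_A W V'_A$. It then follows that
\[
(I+WV_A)(I-WV'_A)=I+W\big(V_A-V'_A-V_A W V'_A\big)=I,
\]
so $\det(I+WV_A)\det(I-WV'_A)=1$. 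Finally $\det(I-WV'_A)=\det(I-V'_A W)=\det(I-K)$, whence $\det(I+WV_A)=\det(I-K)^{-1}$ and $\mathbb{P}[\mathcal{C}_\alpha\text{ finer than }\pi]=\det(I-K)^{\alpha}$, as claimed. The main obstacle is the bookkeeping: correctly identifying $K=V'_A W$ through the block structure of $V'$ and justifying that the full-space resolvent identity restricts to $A$ (which rests on $L|_F$ being carried by $A\times A$); once $V_A-V'_A=V_AWV'_A$ is established, the determinant collapse is immediate.
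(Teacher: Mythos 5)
Your proof is correct, and it reaches the formula by a genuinely different determinant manipulation than the paper, although the two arguments share the same probabilistic core: the reduction of the event to the closure of the finite edge set $F$, and the first-passage identification $H_{i,j}=(V^{S_i})|_{\partial S_i\times\partial S_i}\,L|_{\partial S_i\times\partial S_j}$, i.e. $K=V'_AW$. The paper first takes the trace of the loop ensemble on $A$ so as to assume $S$ finite and $\partial S_i=S_i$, writes the probability as $\bigl(\det(L')/\det(L)\bigr)^{-\alpha}=\det(-V'L)^{\alpha}$ with $V'=\bigoplus_i V^{S_i}$ block-diagonal, and reads off $-V'L=I-K$ directly from the block structure (diagonal blocks $V^{S_i}(-L|_{S_i\times S_i})=I$, off-diagonal blocks $-H_{i,j}$). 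You instead keep the answer of the preceding proposition in the form $\det(I+WV_A)^{-\alpha}$ on the finite index set $A$ and convert it through the restricted resolvent identity $V_A-V'_A=V_AWV'_A$, which gives $(I+WV_A)(I-WV'_A)=I$ and then $\det(I-WV'_A)=\det(I-V'_AW)=\det(I-K)$ by Sylvester's identity. What your route buys is that the linear algebra never leaves the finite set $A$ and the trace reduction is unnecessary, since the countable case is already absorbed into the cited proposition; the only extra point to justify is the resolvent identity $V-V'=V(L|_F)V'$ on a countable transient space, which is legitimate here because $L|_F$ is carried by the finite set $A\times A$ and $LV=VL=-Id$ in the transient case (it can also be read as a first-crossing decomposition of $V^x_y$ at the $F$-edges, in the same spirit as the paper's other resolvent arguments). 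The paper's route is shorter once the trace reduction is accepted, since $\det(-V'L)=\det(I-K)$ is immediate.
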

\begin{proof}
By taking the trace of the loops on $A$, we can suppose the state space $S$ is finite and $\partial S_i=S_i$ for $i=1,\ldots,k$. By an argument similar to the argument in the above proposition, we see that
$$\mathbb{P}[\mathcal{C}_{\alpha}\text{ is finer than }\pi]=\left(\frac{\det(L')}{\det(L)}\right)^{-\alpha}$$
where $(L')^x_y=L^x_y$ for $\{x,y\}\notin F$ and $(L')^x_y=0$ for $\{x,y\}\in F$. To be more precise,
$$L'=\begin{bmatrix}
L|_{S_1\times S_1} & 0 & \cdots & 0\\
0 & L|_{S_2\times S_2} & \ddots & \vdots\\
\vdots & \ddots & \ddots & 0\\
0 & \cdots & 0 & L|_{S_k\times S_k}
\end{bmatrix}.$$
Therefore,
\begin{multline*}
\mathbb{P}[\mathcal{C}_{\alpha}\text{ is finer than }\pi]=(\frac{\det(L')}{\det(L)})^{-\alpha}=(\det((-L')^{-1}(-L)))^{\alpha}\\
=\left(\det(-\begin{bmatrix}
V^{S_1} & 0 & \cdots & 0\\
0 & V^{S_2} & \ddots & \vdots\\
\vdots & \ddots & \ddots & 0\\
0 & \cdots & 0 & V^{S_k}
\end{bmatrix}L)\right)^{\alpha}=\begin{vmatrix}
I & -H_{1,2} & \cdots & -H_{1,k}\\
-H_{2,1} & I & \ddots & \vdots\\
\vdots & \ddots & \ddots & -H_{k-1,k}\\
-H_{k,1} & \cdots & -H_{k,k-1} & I
\end{vmatrix}^{\alpha}\\
=(\det(I-K))^{\alpha}.
\end{multline*}
(Note that $V^{S^i}L|_{S_i\times S_j}=H^{S_j^c}_{S_i,S_j}=H_{i,j}$ for $i\neq j\in\{1,\ldots,k\}$.)
\end{proof}

\subsection{An example on the discrete circle}
Consider a discrete circle G with $n$ vertices $1,\ldots,n$ and $2n$ oriented edges $$E=\{(1,2),(2,3),\ldots,(n-1,n),(n,1),(2,1),(3,2),\ldots,(n,n-1),(1,n)\}$$
Define the clockwise edges set $E_{+}=\{(1,2),(2,3),\ldots,(n-1,n),(n,1)\}$ and the counter clockwise edges $E_{-}=E-E_{+}$. Consider a Markovian generator $L$ such that for any $e\in E_{+}$, $L^{e-}_{e+}=p,L^{e+}_{e-}=1-p,L^{e-}_{e-}=-(1+c)$ and $L$ is null elsewhere. Then, we have a loop measure and Poissonian ensembles associated with $L$. The rest of this subsection is devoted to study the loop cluster $\mathcal{C}_{\alpha}$ in this example.
\
\begin{lem}
Let $T_{3,n}$ be a $n\times n$ tri-diagonal Toeplitz matrix of the following form:
$$\begin{bmatrix}
a & b & 0 & \cdots & 0\\
c & a & b & \ddots& \vdots\\
0  & \ddots & \ddots & \ddots& 0 \\
 \vdots  & \ddots & c& a& b\\
 0  &\cdots  &  0  & c & a\\
\end{bmatrix}_{n\times n}.$$
Let $S_{n}$ be the following $n\times n$ matrix:
$$\begin{bmatrix}
a & b & 0 &  & c\\
c & a & b & \ddots& \\
0  & \ddots & \ddots & \ddots& 0 \\
   & \ddots & c& a& b\\
 b  & &  0  & c & a\\
\end{bmatrix}_{n\times n}.$$

Let $x_1,x_2$ be the roots of $x^2-ax+bc=0$. Then,
\begin{itemize}
\item $\displaystyle{\det(T_{3,n})=\frac{x_1^{n+1}-x_2^{n+1}}{x_1-x_2}}$,
\item $\det(S_n)=x_1^n+x_2^n+(-1)^{n+1}(b^n+c^n)$.
\end{itemize}
\end{lem}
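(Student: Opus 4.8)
The plan is to treat the two determinants separately: the first by a linear recurrence obtained from cofactor expansion, the second by diagonalizing the underlying circulant structure.

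For $\det(T_{3,n})$, I would set $D_n = \det(T_{3,n})$ and expand along the first column, whose only nonzero entries are $a$ in position $(1,1)$ and $c$ in position $(2,1)$. The $(1,1)$-minor is again a tridiagonal Toeplitz matrix $T_{3,n-1}$, contributing $aD_{n-1}$; the $(2,1)$-minor, after a further expansion along its first row (whose only nonzero entry is the $b$ inherited from the $(1,2)$-position), contributes $-c\cdot b\,D_{n-2}$. This yields the three-term recurrence $D_n = a D_{n-1} - bc\,D_{n-2}$, with $D_0 = 1$ and $D_1 = a$. Its characteristic equation is precisely $x^2 - ax + bc = 0$, so $x_1,x_2$ are the characteristic roots. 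For $x_1 \neq x_2$ the general solution is $D_n = Ax_1^n + Bx_2^n$; imposing $D_0 = 1$ and $D_1 = a = x_1 + x_2$ gives $A = x_1/(x_1-x_2)$ and $B = -x_2/(x_1-x_2)$, hence $D_n = (x_1^{n+1}-x_2^{n+1})/(x_1-x_2)$, and the confluent case $x_1 = x_2$ follows by continuity in the entries.

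For $\det(S_n)$, the key observation is that $S_n$ is a circulant: writing $P$ for the cyclic shift permutation matrix (with $P_{i,i+1}=1$ and $P_{n,1}=1$), one checks directly from the displayed entries that $S_n = aI + bP + cP^{-1}$, the corner entries $c$ at $(1,n)$ and $b$ at $(n,1)$ being exactly the wrap-around terms of $cP^{-1}$ and $bP$. Since $P$ is diagonalized by the discrete Fourier basis with eigenvalues the $n$-th roots of unity $\omega^k$ ($\omega = e^{2\pi i/n}$, $k=0,\dots,n-1$), the eigenvalues of $S_n$ are $a + b\omega^k + c\omega^{-k}$, and therefore $\det(S_n) = \prod_{k=0}^{n-1}(a + b\omega^k + c\omega^{-k})$.

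It then remains to evaluate this product. I would factor out $\omega^{-k}$ from each term to write the factor as $\omega^{-k}(b\omega^{2k} + a\omega^k + c)$; the prefactors accumulate to $\prod_k \omega^{-k} = (-1)^{n+1}$, and writing $bz^2 + az + c = b(z-r_1)(z-r_2)$ reduces the remaining product to $b^n\prod_k(\omega^k - r_1)(\omega^k - r_2) = b^n(r_1^n-1)(r_2^n-1)$ via $\prod_k(z-\omega^k) = z^n - 1$. Expanding this and using $r_1 r_2 = c/b$ together with the identity $\{br_1, br_2\} = \{-x_1, -x_2\}$ (since $br_i = \tfrac12(-a\pm\sqrt{a^2-4bc})$ whereas $x_i = \tfrac12(a\pm\sqrt{a^2-4bc})$) converts $b^n(r_1^n+r_2^n)$ into $(-1)^n(x_1^n+x_2^n)$, and assembling the factors produces exactly $x_1^n + x_2^n + (-1)^{n+1}(b^n+c^n)$. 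The main bookkeeping hazard, and the step I would check most carefully, is tracking the three competing signs — the prefactor $(-1)^{n+1}$, the factor $(-1)^n$ coming from $\prod_k(\omega^k - r) = (-1)^n(r^n-1)$, and the sign in $br_i = -x_{\sigma(i)}$ — since these must cancel to turn $-(-1)^{n+1}(-1)^n(x_1^n+x_2^n)$ back into the clean $+x_1^n+x_2^n$. An alternative, more combinatorial route would expand $\det(S_n)$ directly over permutations $\sigma$ with $\sigma(i)\in\{i-1,i,i+1\}\bmod n$: the two full rotations give the $(-1)^{n+1}(b^n+c^n)$ term outright, while the remaining monomer–dimer permutations are summed by the transfer matrix $\left(\begin{smallmatrix} a & -bc \\ 1 & 0\end{smallmatrix}\right)$, the trace of whose $n$-th power is $x_1^n + x_2^n$.
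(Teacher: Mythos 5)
Your proof is correct. The paper states this lemma without proof, so there is no argument of the authors' to compare against; both of your ingredients (the three-term cofactor recurrence $D_n=aD_{n-1}-bcD_{n-2}$ with $D_0=1$, $D_1=a$, and the circulant identity $S_n=aI+bP+cP^{-1}$ giving $\det(S_n)=\prod_{k}(a+b\omega^k+c\omega^{-k})$) are the standard ways to evaluate these determinants, and your sign bookkeeping checks out: the prefactor $\prod_k\omega^{-k}=(-1)^{n+1}$, the two cancelling factors of $(-1)^n$ from $\prod_k(\omega^k-r_i)=(-1)^n(r_i^n-1)$, and the identification $\{br_1,br_2\}=\{-x_1,-x_2\}$ combine to give exactly $x_1^n+x_2^n+(-1)^{n+1}(b^n+c^n)$. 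The only implicit assumption is $b\neq 0$ in the factorization $bz^2+az+c=b(z-r_1)(z-r_2)$; since both sides of the identity are polynomials in $a,b,c$, this (like the confluent case $x_1=x_2$ in the first part) is handled by the continuity argument you already invoke, so there is no genuine gap.
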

\begin{prop}
\begin{align*}
\text{Set }x_1=&\frac{1}{2}(1+c+\sqrt{(1+c)^2-4p(1-p)}),\\
x_2=&\frac{1}{2}(1+c-\sqrt{(1+c)^2-4p(1-p)}).
\end{align*}
Then, $$\mathbb{P}[\{1,n\}\text{ is closed.}]=\left(\frac{(x_1^n-x_2^n)^2}{(x_1-x_2)(x_1^{n-1}-x_2^{n-1})(x_1^n+x_2^n-(p^n+(1-p)^n))}\right)^{-\alpha}.$$
\end{prop}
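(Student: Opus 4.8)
The plan is to specialize the determinantal formula for closed edges established above to the single undirected edge $F=\{\{1,n\}\}$. That proposition gives
$$\mathbb{P}[\{1,n\}\text{ is closed}]=\det\bigl(I+(L|_F)|_{A\times A}V_A\bigr)^{-\alpha}=\left(\frac{\det(-L)}{\det(-L')}\right)^{\alpha},$$
where $A=\{1,n\}$ and $L'$ is obtained from $L$ by setting the two entries $L^1_n$ and $L^n_1$ attached to the edge $\{1,n\}$ to zero, leaving the diagonal unchanged. Thus the whole computation reduces to evaluating the two determinants $\det(-L)$ and $\det(-L')$ (here $V$ is finite since the uniform killing rate $c>0$ makes the chain transient).

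Next I would identify these two matrices with the templates of the preceding lemma. Since from each vertex $i$ one jumps to $i+1$ with rate $p$ and to $i-1$ with rate $1-p$, and $L^i_i=-(1+c)$, the matrix $-L$ carries $1+c$ on the diagonal, $-p$ on the upper diagonal together with the lower-left corner, and $-(1-p)$ on the lower diagonal together with the upper-right corner; this is exactly $S_n$ with $a=1+c$, $b=-p$ and lower parameter $-(1-p)$. Deleting the edge $\{1,n\}$ removes precisely the two corner entries, so $-L'$ is the tridiagonal Toeplitz matrix $T_{3,n}$ with the same $a$, $b$ and lower parameter. The associated quadratic $x^2-ax+bc=x^2-(1+c)x+p(1-p)$ has roots exactly the $x_1,x_2$ of the statement (sum $1+c$, product $p(1-p)$), so the lemma applies verbatim.

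Then I would substitute the lemma's evaluations. For the denominator, $\det(-L')=\det(T_{3,n})=\frac{x_1^{n+1}-x_2^{n+1}}{x_1-x_2}$. For the numerator, $\det(-L)=\det(S_n)=x_1^n+x_2^n+(-1)^{n+1}(b^n+c^n)$; the sign bookkeeping is the delicate point, since $b^n+c^n=(-p)^n+(-(1-p))^n=(-1)^n(p^n+(1-p)^n)$, whence $(-1)^{n+1}(b^n+c^n)=-(p^n+(1-p)^n)$ and $\det(-L)=x_1^n+x_2^n-(p^n+(1-p)^n)$. Combining the two determinants and inverting the ratio then yields the closed form, presented as a single factor raised to the power $-\alpha$.

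The main obstacle I anticipate is not any deep probability but the elementary linear algebra of the second and third steps: matching the corner entries of $S_n$ to the correct off-diagonal rates $p$ and $1-p$ (and hence not interchanging $b$ with the lower parameter), and carrying $(-1)^{n+1}(b^n+c^n)$ through to $-(p^n+(1-p)^n)$ without a sign slip. One should also verify carefully the final rearrangement of $\det(-L)/\det(-L')$ into the stated quotient, since the tridiagonal determinant $\frac{x_1^{n+1}-x_2^{n+1}}{x_1-x_2}$ must be reconciled with whichever equivalent presentation appears in the displayed answer.
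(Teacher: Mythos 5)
Your reduction to $\det(-L)/\det(-L')$ via the loop-cluster determinant formula is valid, and your identifications $-L=S_n$ and $-L'=T_{3,n}$ with $a=1+c$, $b=-p$ and lower parameter $-(1-p)$ are correct, as is the sign bookkeeping giving $\det(-L)=x_1^n+x_2^n-(p^n+(1-p)^n)$. The gap is in the last step, which you flagged but did not carry out: the ratio your route produces is
$$\left(\frac{x_1^{n+1}-x_2^{n+1}}{(x_1-x_2)\left(x_1^n+x_2^n-(p^n+(1-p)^n)\right)}\right)^{-\alpha},$$
and this is \emph{not} the displayed formula. The reconciliation you hoped for would require $(x_1^{n+1}-x_2^{n+1})(x_1^{n-1}-x_2^{n-1})=(x_1^n-x_2^n)^2$, whereas the difference of the two sides equals $-(x_1x_2)^{n-1}(x_1-x_2)^2=-(p(1-p))^{n-1}\left((1+c)^2-4p(1-p)\right)\neq 0$. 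So the argument does not close.

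The reason is that the paper takes a genuinely different route and thereby evaluates a different quantity. Its proof writes $\mathbb{P}[\{1,n\}\text{ closed}]=e^{-\alpha\mu(N^1_n+N^n_1>0)}=e^{-\alpha\mu(l\text{ visits }1\text{ and }n)}$ and then applies Proposition \ref{Visiting a collection of compact set} to obtain $\left(\det(V_{\{1,n\}})/(V^1_1V^n_n)\right)^{\alpha}$, which is exactly what the displayed closed form evaluates (via cofactors of $-L$ and the same Toeplitz lemma). But on the circle with $n\ge 3$ the two events are not equal: a loop can visit both $1$ and $n$ by travelling through $2,\dots,n-1$ and back without ever traversing the edge $\{1,n\}$, so $\{N^1_n+N^n_1>0\}\subsetneq\{l\text{ visits }1\text{ and }n\}$ and the $\mu$-measures differ. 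Concretely, for $n=3$, $p=1/2$, $c=1$ one finds $V=0.4\,I+0.2\,J$, whence your route gives $\det(-L')/\det(-L)=7/6.25=28/25$ while the displayed formula gives $V^1_1V^3_3/\det(V_{\{1,3\}})=9/8$. Thus your computation is a correct evaluation of $\mathbb{P}[N^1_n(\mathcal{L}_\alpha)+N^n_1(\mathcal{L}_\alpha)=0]$ as ``closed'' is defined in the loop-cluster subsection, but it provably cannot yield the stated right-hand side; to arrive at the stated formula one must instead compute $\mu(l\text{ visits }1\text{ and }n)$ as the paper does, and the discrepancy between the two answers is a real one that your final sentence glosses over.
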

\begin{proof}
By Proposition \ref{compatibility with the time change} and Proposition \ref{Visiting a collection of compact set}
\begin{multline*}
\mathbb{P}[\{1,n\}\text{ is closed}]=e^{-\alpha\mu(N^1_n(l)+N^n_1(l)>0)}=e^{-\alpha\mu(l\text{ visits }1\text{ and }n)}\\
=\left(\frac{\det(V_{\{1,n\}})}{V^1_1V^n_n}\right)^{\alpha}=\left(\frac{\det(-L|_{\{2,\ldots,n\}\times \{2,\ldots,n\}})\det(L|_{\{1,\ldots,n-1\}\times \{1,\ldots,n-1\}})}{\det(-L|_{\{2,\ldots,n-1\}\times \{2,\ldots,n-1\}})\det(-L)}\right)^{-\alpha}\\
=\left(\frac{(x_1^n-x_2^n)^2}{(x_1-x_2)(x_1^{n-1}-x_2^{n-1})(x_1^n+x_2^n-(p^n+(1-p)^n))}\right)^{-\alpha}
\end{multline*}
where $x_1=\frac{1+c+\sqrt{(1+c)^2-4p(1-p)}}{2}$ and $x_2=\frac{1+c-\sqrt{(1+c)^2-4p(1-p)}}{2}$.
\end{proof}
\begin{prop}
Conditionally on $\{1,n\}\text{ being closed}$, $\mathcal{C}_{\alpha}$ is a renewal process conditioned to jump at time $n$. To be more precise, by deleting edges $\{1,n\}$ and adding $\{0,1\},\{n,n+1\}$, we get a discrete segment with vertices $\{0,1,\ldots,n,n+1\}$ and edges $\{\{0,1\},\ldots,\{n,n+1\}\}$. Conditionally to $\{1,n\}\text{ being closed}$, $C_{\alpha}$ induces a partition on $\{1,\ldots,n\}$. The clusters of  $C_{\alpha}$ are the intervals between the edges closed at time $\alpha$ (namely the edges which are not crossed by any loop of $\mathcal{L}_{\alpha}$). Then the left points of these closed edges, together with the left points of $\{0,1\}$ and $\{n,n+1\}$, form a renewal process conditioned to jump at $n$.
\end{prop}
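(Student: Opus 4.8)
The plan is to reduce the statement to a multiplicativity property of the ``all closed'' probabilities and then to recognise that multiplicative structure as the defining feature of a renewal process. Write $L^{\circ}$ for the generator obtained from $L$ by deleting the edge $\{1,n\}$, i.e.\ the tridiagonal generator on the segment $\{1,\dots,n\}$; conditioning $\mathcal{L}_{\alpha}$ on $\{1,n\}$ being closed amounts, for the non-trivial loops that alone matter to the edge configuration, to replacing the intensity $L$ by $L^{\circ}$, and it turns the cut edge into the two (always closed) boundary edges $\{0,1\}$ and $\{n,n+1\}$, so that renewals at $0$ and $n$ are forced. For a subset $D=\{i_1<\dots<i_k\}$ of interior edge-positions I would compute $\mathbb{P}[\text{all edges of }D\text{ closed}\mid\{1,n\}\text{ closed}]$: since by inclusion--exclusion these ``at least closed'' probabilities determine the whole law of the closed-edge point process, and since a renewal process conditioned to renew at $n$ is characterised by such probabilities factorising as $\prod_j U(\text{gap}_j)/U(n)$ for a renewal sequence $U$, it suffices to exhibit this factorisation.

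The probabilistic core is a spatial Markov property coming from Poisson thinning. First I would note that the soup splits, independently, into the loops that cross a given edge $\{i,i+1\}$ and those that do not; the event ``$\{i,i+1\}$ is closed'' is exactly that the first family is empty, and under it every surviving loop is confined either to $\{1,\dots,i\}$ or to $\{i+1,\dots,n\}$, because the segment graph disconnects there. Hence, conditionally on $\{i,i+1\}$ (and $\{1,n\}$) being closed, the closed-edge configurations to the left and to the right of $i$ are governed by two independent Poisson loop soups, and by Proposition~\ref{compatibility with the time change} each is the soup associated with $L^{\circ}$ restricted to the corresponding sub-interval. Because the diagonal of $L^{\circ}$ is the constant $-(1+c)$ and its off-diagonals are the constants $p,1-p$ along the whole segment, this restricted law depends only on the \emph{length} of the sub-interval; this translation invariance is what upgrades the Markov property to genuinely i.i.d.\ gaps, i.e.\ to a renewal process. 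A welcome by-product is that the gap law is then automatically a bona fide (sub-)probability, so no separate positivity check is needed.

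To obtain the explicit renewal weights I would invoke the determinant formula for closed edges proved above, namely $\mathbb{P}[\text{edges closed}]=(\det(-L)/\det(-L'))^{\alpha}$, where $L'$ is $L$ with the off-diagonal entries across the closed edges set to $0$ and the diagonal untouched. Applying it to $D\cup\{1,n\}$ and to $\{1,n\}$ and dividing gives
\begin{equation*}
\mathbb{P}[D\text{ closed}\mid\{1,n\}\text{ closed}]=\left(\frac{\det(-L^{\circ})}{\det(-(L^{\circ})'_{D})}\right)^{\alpha}.
\end{equation*}
Zeroing the interior edges of $D$ makes $(L^{\circ})'_{D}$ block-diagonal along the intervals $I_0=\{1,\dots,i_1\},\dots,I_k=\{i_k+1,\dots,n\}$ cut out by $D$, each block being the tridiagonal matrix of the $T_{3,m}$-lemma with the same constant entries; therefore $\det(-(L^{\circ})'_{D})=\prod_{j=0}^{k}D_{|I_j|}$ with $D_m=(x_1^{m+1}-x_2^{m+1})/(x_1-x_2)$, while $\det(-L^{\circ})=D_n$. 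Setting $U(m)=D_m^{-\alpha}$ (so that $U(0)=1$) yields exactly
\begin{equation*}
\mathbb{P}[D\text{ closed}\mid\{1,n\}\text{ closed}]=\frac{\prod_{j=0}^{k}U(|I_j|)}{U(n)},
\end{equation*}
which is the law of the renewal process with renewal function $U$, conditioned to jump at $n$; the factor $1/U(n)$ is precisely the conditioning on a renewal at $n$ forced by $\{1,n\}$ being closed.

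The step I expect to be the main obstacle is making the spatial Markov property of the second paragraph fully rigorous: one must justify carefully that conditioning on the absence of crossing loops genuinely factors the soup into independent pieces supported on the two components, that the confinement of the surviving loops to a single component is exact on the segment, and that translation invariance delivers identically distributed gaps rather than merely a spatial Markov chain. Everything else---the determinant bookkeeping via the loop-cluster formula and the $T_{3,m}$-lemma, and the identification of $\prod_j U(\mathrm{gap}_j)/U(n)$ with a conditioned renewal process---is then routine, once one also recalls that $c>0$ (transience) is what makes $V$ and the determinant identities valid here.
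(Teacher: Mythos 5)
Your proposal is correct in its computational substance but takes a genuinely different route from the paper. The paper argues qualitatively: it uses the independence of the loops crossing $\{1,n\}$ from the rest to replace the conditioning by the unconditioned ensemble of non-crossing loops, identifies that ensemble with the loops of the translation-invariant soup on $\mathbb{Z}$ contained in $\{1,\ldots,n\}$ (conditioned on the two boundary edges being closed), symmetrizes the generator by a harmonic transform (Proposition \ref{invariant under Doob harmonic transform}), and then simply cites Proposition 3.1 of \cite{sophie}, which already asserts that on $\mathbb{Z}$ the left endpoints of the closed edges form a renewal process. You instead compute every inclusion probability explicitly from the determinant formula of the loop-cluster subsection together with the tridiagonal Toeplitz determinant, obtaining $\mathbb{P}[D\text{ closed}\mid\{1,n\}\text{ closed}]=\bigl(D_n/\prod_j D_{|I_j|}\bigr)^{\alpha}$ with $D_m=(x_1^{m+1}-x_2^{m+1})/(x_1-x_2)$. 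This is self-contained and quantitative (it even yields the renewal function up to tilt, which the paper does not display), and it makes your second paragraph largely superfluous: once the product formula holds for all $D$, inclusion--exclusion determines the whole law, so the spatial Markov property you worry about is not actually needed as a separate ingredient.

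The one step you should not wave away is the claim that the gap law is ``automatically a bona fide (sub-)probability''. As stated it is false for $U$ itself: a renewal sequence must satisfy $u(m)\le 1$, whereas $D_2=(1+c)^2-p(1-p)<1$ for $p=\tfrac12$ and $c$ small, so $U(2)=D_2^{-\alpha}>1$. What saves you is that the conditioned law only determines the renewal sequence up to an exponential tilt $u(m)=\lambda^m U(m)$, and that the weights $F(j)$ defined recursively by $U(n)=\sum_{j\le n}F(j)U(n-j)$ are automatically non-negative, because inclusion--exclusion gives $F(j)=\mathbb{P}[\text{first closed edge at }j]\,U(n)/U(n-j)$ with the right-hand side independent of $n$; one then chooses $\lambda$ so that $\sum_j\lambda^j F(j)\le 1$ to obtain an honest (possibly defective) inter-arrival distribution. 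Either this short argument or the external citation to \cite{sophie}, as in the paper, is required to upgrade your product formula to the statement ``renewal process conditioned to jump at $n$''; with that added, your proof is complete.
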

\begin{proof}
Among the Poissonian loop ensembles, the ensemble of loops crossing $\{1,n\}$ and the rest are independent. Therefore, the conditional law $\mathcal{Q}$ of the loops not crossing $\{1,n\}$ conditioned on the event that no loop is crossing $\{1,n\}$ is exactly the same as the unconditioned law. Consider another Poissonian loop ensembles on $\mathbb{Z}$ driven by the following generator:
$$L^{m}_{m}=-(1+c),L^{m}_{m+1}=p,L^{m}_{m-1}=1-p\text{ for all }m\in\mathbb{Z},\text{ and $L$ is null elsewhere}.$$
Then, $\mathcal{Q}$ is the same as the conditional law of the loop ensembles contained in $\{1,\ldots,n\}$ given the condition that $\{0,1\}$ or $\{n,n+1\}$ are closed. By Proposition \ref{invariant under Doob harmonic transform}, after a harmonic transform, $L$ is modified as follows:
$$L^{m}_{m}=-(1+c),L^{m}_{m+1}=L^{m}_{m-1}=\sqrt{p(1-p)}\text{ for all }m\in\mathbb{Z},\text{ and $L$ is null elsewhere}.$$
According to Proposition 3.1 in \cite{sophie}, in the case of $\mathbb{Z}$, conditionally to the event that $\{0,1\}$ is closed, the left points of the closed edges form a renewal process. There is an obvious one-to-one correspondence between the jumps of the renewal process and the closed edges. Finally, in the case of the circle, conditioning on $\{1,n\}$ being closed, we can identify $\mathcal{C}_{\alpha}$ to a renewal process conditioned to jump at time $n$. It is not hard to see the parameter $\kappa$ in \cite{sophie} equals $\frac{1+c-2\sqrt{p(1-p)}}{\sqrt{p(1-p)}}$.
\end{proof}

\section{Loop erasure and spanning tree}
In this section we will show that Poisson processes of loops appear naturally in the construction of random spanning trees.

\subsection{Loop erasure}
Suppose $\omega$ is the path of a minimal transient canonical Markov process, then its path can be expressed as a sequence $(x_0,t_0,x_1,t_1,\ldots)$. The corresponding discrete path $(x_0,x_1,\ldots)$ is the embedded Markov chain. From the transience assumption, $\sum\limits_{n\in\mathbb{N}}1_{\{x_n=x\}}<\infty$ a.s..
\begin{defn}[Loop erasure]\label{loop erasure}
The loop erasure operation which maps a path $\omega$ to its loop erased path $\omega_{BE}$ is defined as: $\omega_{BE}=(y_0,\ldots)$ with $y_0=x_0$. Define $T_0=\inf\{n\in\mathbb{N}:\forall m\geq n, x_m\neq y_0\}$, then set $y_1=x_{T_0}$. Similarly define $T_1=\inf\{n\in\mathbb{N}:\forall m\geq n, x_m\neq y_1\}$, set $y_2=x_{T_1}$ and so on. Let $\mathbb{P}^{\nu}_{BE}$ be the image measure of $\mathbb{P}^{\nu}$ where $\nu$ is the initial distribution of the Markov process.
\end{defn}

Recall that $\partial$ is the cemetery point, that $Q^{x}_{\partial}=1-\sum\limits_{y\neq\partial}Q^{x}_{y}$ for $x\neq\partial$ and $Q^{\partial}_{x}=\delta^{\partial}_{x}$. Set $L^{x}_{\partial}=-\sum\limits_{y\neq\partial}L^x_y$ for $x\neq\partial$, $L^{\partial}_{\partial}=-1$ and $L^{\partial}_{x}=0$ for $x\neq\partial$.
\begin{prop}\label{distribution of the loop-erased random walk}
We have the following finite marginal distribution for the loop-erased random walk:
\begin{multline*}
\mathbb{P}^{\nu}_{BE}[\omega_{BE}=(x_0,x_1,\ldots,x_n,\ldots)]\\
=\nu_{x_0}\det(V_{\{x_0,\ldots,x_{n-1}\}})L^{x_0}_{x_1}\cdots L^{x_{n-1}}_{x_n}\mathbb{P}^{x_n}[T_{\{x_0,\ldots,x_{n-1}\}}=\infty]\\
=\nu_{x_0}L^{x_0}_{x_1}\cdots L^{x_{n-1}}_{x_n}\begin{vmatrix}
 V^{x_0}_{x_0} & \cdots & V^{x_0}_{x_{n-1}} & 1 \\                                                                                                                                                                                                                        \vdots & \ddots & \vdots & \vdots \\
 V^{x_{n-1}}_{x_0} & \cdots & V^{x_{n-1}}_{x_{n-1}} & 1 \\
 V^{x_n}_{x_0} & \cdots & V^{x_n}_{x_{n-1}} & 1
\end{vmatrix}.
\end{multline*}
\end{prop}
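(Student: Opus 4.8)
Since the loop erasure depends only on the embedded discrete chain $(x_0,x_1,\dots)$, I would carry out the whole computation with the skeleton transition matrix $Q$ and convert to $L$ and $V$ only at the end. Write $A=\{x_0,\dots,x_{n-1}\}$ and $B_k=\{x_0,\dots,x_{k-1}\}$ (so $B_0=\emptyset$, $B_n=A$), and let $g_k$ be the last visit of $\omega$ to $x_k$. The first step is to rephrase the cylinder event. By the definition of $\omega_{BE}$, the point $y_{k+1}$ is the state occupied immediately after the last visit to $y_k$; hence $\{\omega_{BE}=(x_0,\dots,x_n,\dots)\}$ is exactly the event that $\omega$ starts at $x_0$ and, for each $k=0,\dots,n-1$, the last visit to $x_k$ is followed by a jump to $x_{k+1}$. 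One checks that this already forces $g_0<g_1<\dots<g_{n-1}$, so no ordering hypothesis need be imposed separately.

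The second step is a last-exit decomposition. A trajectory realising the event above decomposes uniquely as a concatenation
\[
p_0\cdot(x_0\!\to\! x_1)\cdot p_1\cdot(x_1\!\to\! x_2)\cdots p_{n-1}\cdot(x_{n-1}\!\to\! x_n)\cdot\tau,
\]
where $p_k$ is a (possibly empty) excursion from $x_k$ to $x_k$ avoiding $B_k$, and $\tau$ is an infinite tail started at $x_n$ that avoids $B_n=A$; the avoidance sets are precisely the $B_k$ because after its last visit $x_k$ is never seen again, while $x_0,\dots,x_{k-1}$ are already forbidden. Summing the product of step probabilities over each block separately yields
\[
\mathbb{P}^{\nu}_{BE}[\omega_{BE}=(x_0,\dots,x_n,\dots)]=\nu_{x_0}\Bigl(\prod_{k=0}^{n-1}Q^{x_k}_{x_{k+1}}\,G_{B_k}(x_k,x_k)\Bigr)\,\mathbb{P}^{x_n}[T_A=\infty],
\]
where $G_{B_k}(x_k,x_k)$ is the discrete Green function of the chain killed on $B_k$.

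Third, I would translate this into the potential. Using $Q^{x_k}_{x_{k+1}}=L^{x_k}_{x_{k+1}}/(-L^{x_k}_{x_k})$ and the occupation-time identity $G_{B_k}(x_k,x_k)/(-L^{x_k}_{x_k})=(V^{S\setminus B_k})^{x_k}_{x_k}$ (number of visits times mean holding time equals the expected occupation time, i.e. the killed potential), each factor becomes $L^{x_k}_{x_{k+1}}(V^{S\setminus B_k})^{x_k}_{x_k}$. Now $(V^{S\setminus B_k})^{x_k}_{x_k}$ is exactly the Schur complement of the block $V_{B_k}$ inside $V_{B_{k+1}}$ at the entry $x_k$, since killing a set corresponds to the Schur complement of the Green function on that set. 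The block-determinant recursion $\det(V_{B_{k+1}})=\det(V_{B_k})\,(V^{S\setminus B_k})^{x_k}_{x_k}$, iterated from $\det(V_{B_0})=1$, gives $\prod_k (V^{S\setminus B_k})^{x_k}_{x_k}=\det(V_A)$. This produces the first displayed expression $\nu_{x_0}\det(V_A)\,L^{x_0}_{x_1}\cdots L^{x_{n-1}}_{x_n}\,\mathbb{P}^{x_n}[T_A=\infty]$.

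Finally, the bordered determinant follows by a row reduction. Writing $H^A_{x_n,w}=\mathbb{P}^{x_n}[A\text{ is first hit at }w]$, the strong Markov property at $T_A$ gives $V^{x_n}_{x_j}=\sum_{w\in A}H^A_{x_n,w}V^{w}_{x_j}$ for $x_j\in A$, together with $\sum_{w\in A}H^A_{x_n,w}=\mathbb{P}^{x_n}[T_A<\infty]$. Subtracting $H^A_{x_n,w}$ times the corresponding top rows from the last row of the bordered matrix turns that row into $(0,\dots,0,\mathbb{P}^{x_n}[T_A=\infty])$; expanding along it yields $\det(V_A)\,\mathbb{P}^{x_n}[T_A=\infty]$, matching the first expression. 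The main obstacle is the last-exit decomposition of the second step: last-exit times are \emph{not} stopping times, so the strong Markov property cannot be invoked directly, and the factorisation must instead be justified as a rearrangement of the sum over trajectories, which is absolutely convergent by transience, checking carefully that the concatenation map is a bijection and that the successive avoidance sets are exactly the $B_k$.
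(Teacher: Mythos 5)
Your proposal is correct, and its core coincides with the paper's argument: the same last-exit decomposition of the trajectory into successive excursion packets at $x_k$ avoiding $\{x_0,\dots,x_{k-1}\}$ followed by the jump to $x_{k+1}$, the same identification of each packet's contribution with $Q^{x_k}_{x_{k+1}}$ times the killed Green function, hence with $L^{x_k}_{x_{k+1}}(V^{D_k^c})^{x_k}_{x_k}$, and the same telescoping of $\prod_k (V^{D_k^c})^{x_k}_{x_k}$ into $\det(V_{\{x_0,\dots,x_{n-1}\}})$ (the paper phrases this via Jacobi's identity $\det(A|_{B\times B})\det(A^{-1})=\det(A^{-1}|_{B^c\times B^c})$, which is exactly your Schur-complement recursion). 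You are also right to flag that the only delicate point is that last-exit times are not stopping times, so the factorisation must be justified as a rearrangement of the path sum; the paper glosses over this in the same way. Where you genuinely diverge is the second equality: the paper obtains the bordered determinant by computing $\mathbb{P}^{x_n}[T_{\{x_0,\dots,x_{n-1}\}}=\infty]$ as the exit probability $(L_{\{x_0,\dots,x_n\}})^{x_n}_{\partial}/(-(L_{\{x_0,\dots,x_n\}})^{x_n}_{x_n})$ of the trace process and expressing the entries of $L_{\{x_0,\dots,x_n\}}$ as cofactor ratios of $V_{\{x_0,\dots,x_n\}}$, whereas you use the first-passage identity $V^{x_n}_{x_j}=\sum_{w}H^{A}_{x_n,w}V^{w}_{x_j}$ together with $\sum_w H^A_{x_n,w}=\mathbb{P}^{x_n}[T_A<\infty]$ and a single row reduction. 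Your route is arguably cleaner (it needs only the strong Markov property at the genuine stopping time $T_A$, and no inversion of $V_{\{x_0,\dots,x_n\}}$), while the paper's route has the side benefit of producing the explicit entries of the trace generator, which it reuses elsewhere.
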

\begin{proof}
Starting from $x_n$, the probability that the Markov process never reaches the set $\{x_0,\ldots,x_{n-1}\}$,
$\mathbb{P}^{x_n}[T_{\{x_0,\ldots,x_{n-1}\}}=\infty]$ equals the same probability for the trace of the Markov process on $x_0,\ldots,x_n$, $\mathbb{P}^{x_n}_{\{x_0,\ldots,x_n\}}[T_{\{x_0,\ldots,x_{n-1}\}}=\infty]$. It equals the one step transition probability from $x_n$ to $\partial$ for the trace of the process. Let $L_{\{x_0,\ldots,x_n\}}$ be the generator of the trace of the Markov process on $\{x_0,\ldots,x_n\}$. Then, the one step transition probability from $x_n$ to $\partial$
equals $\displaystyle{\frac{(L_{\{x_0,\ldots,x_n\}})^{x_n}_{\partial}}{-(L_{\{x_0,\ldots,x_n\}})^{x_n}_{x_n}}}$.
Since $(L_{\{x_0,\ldots,x_n\}})^{x_n}_{\partial}=-(L_{\{x_0,\ldots,x_n\}})^{x_n}_{x_n}-\sum\limits_{i=0}^{n-1}(L_{\{x_0,\ldots,x_n\}})^{x_n}_{x_i}$ and $\displaystyle{-(L_{\{x_0,\ldots,x_n\}})^{x_n}_{x_i}=(-1)^{i+1+n+1}\frac{\det(V|_{\{x_0,\ldots,x_n\}\setminus\{x_i\}\times\{x_0,\ldots,x_{n-1}\}})}{\det(V_{\{x_0,\ldots,x_n\}})}}$,
we have
$$(L_{\{x_0,\ldots,x_n\}})^{x_n}_{\partial}=\frac{1}{\det(V_{\{x_0,\ldots,x_n\}})}\begin{vmatrix}
 V^{x_0}_{x_0} & \cdots & V^{x_0}_{x_{n-1}} & 1 \\                                                                                                                                                                                                                        \vdots & \ddots & \vdots & \vdots \\
 V^{x_{n-1}}_{x_0} & \cdots & V^{x_{n-1}}_{x_{n-1}} & 1 \\
 V^{x_n}_{x_0} & \cdots & V^{x_n}_{x_{n-1}} & 1
\end{vmatrix}$$
$$\text{and }-(L_{\{x_0,\ldots,x_n\}})^{x_n}_{x_n}=\begin{vmatrix}
 V^{x_0}_{x_0} & \cdots & V^{x_0}_{x_{n-1}} \\                                                                                                                                                                                                                        \vdots & \ddots & \vdots \\
 V^{x_{n-1}}_{x_0} & \cdots & V^{x_{n-1}}_{x_{n-1}} \\
\end{vmatrix}.$$
Therefore, $\mathbb{P}^{x_n}[T_{\{x_0,\ldots,x_{n-1}\}}=\infty]=\begin{vmatrix}
 V^{x_0}_{x_0} & \cdots & V^{x_0}_{x_{n-1}} & 1 \\                                                                                                                                                                                                                        \vdots & \ddots & \vdots & \vdots \\
 V^{x_{n-1}}_{x_0} & \cdots & V^{x_{n-1}}_{x_{n-1}} & 1 \\
 V^{x_n}_{x_0} & \cdots & V^{x_n}_{x_{n-1}} & 1
\end{vmatrix}
/\begin{vmatrix}
 V^{x_0}_{x_0} & \cdots & V^{x_0}_{x_{n-1}} \\                                                                                                                                                                                                                        \vdots & \ddots & \vdots \\
 V^{x_{n-1}}_{x_0} & \cdots & V^{x_{n-1}}_{x_{n-1}} \\
\end{vmatrix}$.\\
Set $D_0=\phi$ and $D_k=\{x_0,\ldots,x_{k-1}\}$ for $k\in\mathbb{N}_{+}$. Note that $Q|_{D_k^{c}\times D_k^{c}}$ is the transition probability for the process restricted in $D_k^c$. In order for the loop-erased path $\omega_{BE}$ to be $(x_0,x_1,\ldots,x_n,\ldots)$, the random walk must start from $x_0$. After some excursions back to $x_0$, it should jump to $x_1$ and never return to $x_0$. Next, after some excursions from $x_1$ to $x_1$, it jumps to $x_2$ and never returns to $x_0,x_1$, etc. Accordingly,
\begin{multline*}
\mathbb{P}^{\nu}_{BE}[\omega_{BE}=(x_0,x_1,\ldots,x_n,\ldots)]\\
=\nu_{x_0}\prod\limits_{k=0}^{n-1}(\sum\limits_{n\geq 0}((Q|_{D_k^{c}\times D_k^{c}})^n)^{x_k}_{x_k})Q^{x_k}_{x_{k+1}}\mathbb{P}^{x_n}[T_{\{x_0,\ldots,x_{n-1}\}}=\infty]\\
=\nu_{x_0}\prod\limits_{k=0}^{n-1}(V^{D_k^c})^{x_k}_{x_k}L^{x_k}_{x_{k+1}}\mathbb{P}^{x_n}[T_{\{x_0,\ldots,x_{n-1}\}}=\infty]
\end{multline*}
where $L^{D_k^c}=L|_{D_k^c\times D_k^c}$ is the generator of the Markov process restricted in $D_k^c$, and $V^{D_k^c}$ be the corresponding potential, see Definition \ref{defn:the trace of a Markov process and the restriction of a Markov process}.

Let $V_F$ stands for the sub-matrix of $V$ restricted to $F\times F$. It is also the potential of the trace of the Markov process on $F$ and let $\mathbb{P}_F$ stand for its law. Then, for all $D\subset F$, we have $(V^{D^c})_F=(V_F)^{D^c}$. In particular, for $k< n$, we have $(V^{D_k^c})^{x_k}_{x_k}=((V^{D_k^c})_{D_n})^{x_k}_{x_k}=((V_{D_n})^{D_k^c})^{x_k}_{x_k}$. One can apply Jacobi's formula
$$\det(A|_{B\times B})\det(A^{-1})=\det(A^{-1}|_{B^c\times B^c})$$
for $A=(V_{D_n})^{D_k^c}$ and $B=\{x_k\}$. To be more precise, since $((V_{D_n})^{D_k^c})^{-1}=(-L_{D_n})|_{D_k^c\times D_k^c}=(-L_{D_n})|_{(D_n-D_k)\times(D_n-D_k)}$, we have
$$(V^{D_k^c})^{x_k}_{x_k}=((V_{D_n})^{D_k^c})^{x_k}_{x_k}=\frac{\det(-L_{D_n}|_{(D_n-D_{k+1})\times (D_n-D_{k+1})})}{\det(-L_{D_n}|_{(D_n-D_k)\times (D_n-D_k)})}$$
with the convention that $\det(-L_{D_n}|_{\phi})=1$. Then,
\begin{multline*}
\prod\limits_{k=0}^{n-1}(V^{D_k^c})^{x_k}_{x_k}=\prod\limits_{k=0}^{n-1}\frac{\det(-L_{D_n}|_{({D_n}-D_{k+1})\times ({D_n}-D_{k+1})})}{\det(-L_{D_n}|_{({D_n}-D_k)\times ({D_n}-D_k)})}=\frac{\det(-L_{D_n}|_{({D_n}-D_{n})\times ({D_n}-D_{n})})}{\det(-L_{D_n}|_{({D_n}-D_{0})\times ({D_n}-D_{0})})}\\
=\frac{1}{\det(-L_{D_n})}=\det((V_{D_n})_{D_{n}})=\det(V_{\{x_0,\ldots,x_{n-1}\}}).
\end{multline*}

Finally, by combining the results above, we conclude that
\begin{multline*}
\mathbb{P}^{\nu}_{BE}[\omega_{BE}=(x_0,x_1,\ldots,x_n,\ldots)]\\
=\nu_{x_0}\det(V_{\{x_0,\ldots,x_{n-1}\}})L^{x_0}_{x_1}\cdots L^{x_{n-1}}_{x_n}\mathbb{P}^{x_n}[T_{\{x_0,\ldots,x_{n-1}\}}=\infty]\\
=\nu_{x_0}L^{x_0}_{x_1}\cdots L^{x_{n-1}}_{x_n}\begin{vmatrix}
 V^{x_0}_{x_0} & \cdots & V^{x_0}_{x_{n-1}} & 1 \\                                                                                                                                                                                                                        \vdots & \ddots & \vdots & \vdots \\
 V^{x_{n-1}}_{x_0} & \cdots & V^{x_{n-1}}_{x_{n-1}} & 1 \\
 V^{x_n}_{x_0} & \cdots & V^{x_n}_{x_{n-1}} & 1
\end{vmatrix}.
\end{multline*}
\end{proof}
\begin{rem}
Since a Markov chain in a countable space could be viewed as a pure-jump sub-Markov process with jumping rate 1, the above result holds for a sub-Markov chain if we replace $L$ by the transition matrix $Q-Id$ and $V=(Id-Q)^{-1}$.
\end{rem}
The following property was discovered by Omer Angel and Gady Kozma, see Lemma 1.2 in \cite{gady}. Here, we give a different proof as an application of Proposition \ref{distribution of the loop-erased random walk}.
\begin{prop}
Let $(X_m,m\in[0,\zeta[)$ be a discrete Markov chain in a countable space S with time life $\zeta$ and initial point $x_0$. Fix some $w\in S\setminus\{x_0\}$, define $T_1=\inf\{n>0:X_n=w\}$ and $T_N=\inf\{m>T_{N-1}:X_m=w\}$ with the convention that $\inf\phi=\infty$. We can perform loop-erasure for the path $(X_0,\ldots,X_{T_N})$, and let $LE[0,T_N]$ stand for the loop-erased self-avoiding path obtained in that way. If $T_N<\infty$ with positive probability, then the conditional law of $LE[0,T_N]$ given that $\{T_N<\infty\}$ is the same as the conditional law of $LE[0,T_1]$ given that $\{T_1<\infty\}$.
\end{prop}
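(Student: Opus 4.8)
The plan is to reduce the distributional identity to a pathwise statement and then read the relevant masses off the excursion structure underlying Proposition \ref{distribution of the loop-erased random walk}. Both conditional laws are carried by self-avoiding paths $\gamma=(y_0,\dots,y_k)$ with $y_0=x_0$, $y_k=w$ and $y_i\neq w$ for $i<k$. Writing $P_N(\gamma)=\mathbb{P}^{x_0}[LE[0,T_N]=\gamma,\,T_N<\infty]$ and $\sigma=\mathbb{P}^{w}[\text{return to }w<\infty]$, it suffices to prove $P_N(\gamma)=\sigma^{\,N-1}P_1(\gamma)$ for every such $\gamma$; summing over $\gamma$ then identifies $\sigma^{N-1}=\mathbb{P}[T_N<\infty]/\mathbb{P}[T_1<\infty]$, and dividing by the total mass yields the equality of the two conditional laws.

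To compute $P_N(\gamma)$ I would unfold the forward loop erasure. The event $\{LE[0,T_N]=\gamma\}$ forces the chain through the skeleton $y_0\to\cdots\to y_k=w$, making at each $y_j$ excursions that avoid $D_j:=\{y_0,\dots,y_{j-1}\}$ (so as not to disturb the already-fixed last-exit order) before the single step to $y_{j+1}$; the surviving entry into $w=y_k$ occurs one step after the last visit to $y_{k-1}$, and after it the chain only performs excursions from $w$ to $w$ avoiding $D:=D_k$. Crucially, all the erased visits to $w$ happen inside the skeleton excursions, so these must be allowed to pass through $w$. Marking every visit to $w$ by a variable $\theta$ and applying the strong Markov property at each last-exit time, I obtain
\[
\sum_{N\ge 1}P_N(\gamma)\,\theta^N=\nu_{y_0}\,\theta\,\Gamma^{D}_w(\theta)\prod_{j=0}^{k-1}G_j(\theta)\,Q^{y_j}_{y_{j+1}},
\]
where $G_j(\theta)$ is the $\theta$-marked Green function at $y_j$ of the chain killed on $D_j$, and $\Gamma^{D}_w(\theta)=1/(1-\rho_D\theta)$, with $\rho_D=\mathbb{P}^w[\text{return to }w\text{ before hitting }D]$, is the marked Green function at $w$ of the chain killed on $D$.

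The heart of the argument is that the product $\Gamma^D_w(\theta)\prod_j G_j(\theta)$ is exactly the determinant of the $\theta$-marked Green matrix on $D\cup\{w\}$, computed by Gaussian elimination (an LDU/Cholesky factorisation) in the order $(y_0,\dots,y_{k-1},w)$. Since the value of this determinant does not depend on the elimination order, eliminating instead in the order $(w,y_0,\dots,y_{k-1})$ rewrites it as $\Gamma_w(\theta)\prod_j g_j$, where $\Gamma_w(\theta)=1/(1-\sigma\theta)$ is the full marked Green function at $w$ and $g_j=G_j(0)$ is the Green function at $y_j$ of the chain killed on $D_j\cup\{w\}$. This is the generating-function form of the elementary renewal identity $\mathbb{P}^{x_0}[\text{hit }w]=\alpha/(1-u)$ that already makes the case $N=2$ work, now upgraded to a statement about the whole marked Green matrix via the resolvent equation and the Schur-complement identity $\det V_{D\cup\{w\}}=V^w_w\det V^w_D$. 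Both $\sigma$ and $\prod_j g_j$ thereby decouple from $\gamma$ in the right way, the generating function collapses to $P_1(\gamma)\,\theta/(1-\sigma\theta)$, and extracting the coefficient of $\theta^N$ gives $P_N(\gamma)=\sigma^{N-1}P_1(\gamma)$. Finally $P_1(\gamma)=\nu_{y_0}\prod_j g_j\,Q^{y_j}_{y_{j+1}}=\nu_{y_0}\det(V^w_{D})\prod_j Q^{y_j}_{y_{j+1}}$ is precisely the marginal furnished by Proposition \ref{distribution of the loop-erased random walk} applied to the chain killed at $w$.

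The main obstacle is the third step: one must check that the excursion bookkeeping of the second step genuinely reproduces the marked Green determinant, i.e. that allowing the skeleton excursions to wander through $w$ and return contributes exactly the off-diagonal entries involving $w$, so that the order-independence of the factorisation may be invoked. Getting the marking and the killing sets right at the last-exit decomposition — in particular ensuring that each erased visit to $w$ is assigned to a unique excursion and that the post-$w$ returns avoid $D$ — is the delicate point; once the identity $\Gamma^D_w(\theta)\prod_j G_j(\theta)=\Gamma_w(\theta)\prod_j g_j$ is established, the remainder is a formal coefficient extraction.
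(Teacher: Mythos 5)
Your proof is correct, and at bottom it runs on the same engine as the paper's: a one-parameter weight on the number of visits to $w$, a Green-matrix determinant identity showing that the dependence on the self-avoiding path $\gamma$ and the dependence on the parameter factorize, and a coefficient extraction. The implementations differ genuinely, though. The paper stops the chain at $T_{\tau(p)}$ for an independent geometric $\tau(p)$ (your $\theta$ is its $1-p$), applies Proposition \ref{distribution of the loop-erased random walk} wholesale to the killed chain $X^{(p)}$, and uses the resolvent equation to prove $\det(V_{\{x_0,\dots,x_n\}})=\bigl(1+\tfrac{p}{1-p}V^w_w\bigr)\det\bigl((V_{\frac{p}{1-p}\delta_w})_{\{x_0,\dots,x_n\}}\bigr)$, whence the conditional law given arrival at $w$ is independent of $p$; it then expands $\mathbb{P}^{x_0}_{p,BE}[\,\cdot\,,C_w]$ in powers of $1-p$, separating the contributions of $\{\tau(p)=k,T_k<\zeta\}$ and $\{T_k=\zeta-1,\tau(p)>k\}$, and identifies coefficients. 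You instead redo the last-exit decomposition by hand with the marking variable and replace the resolvent computation by the order-independence of the pivots in $\det(V(\theta)_{D\cup\{w\}})$ --- which is the same elimination identity the paper already exploits, via Jacobi's formula, inside the proof of Proposition \ref{distribution of the loop-erased random walk}. Your route buys a cleaner endgame (the generating function collapses to $P_1(\gamma)\,\theta/(1-\sigma\theta)$ and $P_N(\gamma)=\sigma^{N-1}P_1(\gamma)$ drops out at once), at the cost of having to justify the marked excursion bookkeeping from scratch rather than quoting the marginal formula for a modified chain. Two small points to tighten: (i) the erased visits to $w$ occur both inside the skeleton excursions and among the post-arrival returns to $w$ avoiding $D$ --- your factor $\theta\,\Gamma^{D}_w(\theta)\prod_jG_j(\theta)$ accounts for both, but your prose only mentions the first; (ii) the chain is not assumed transient, so the unmarked Green functions may be infinite --- the paper reduces to the transient case by adding an $\epsilon$-killing and letting $\epsilon\downarrow 0$, while in your setup it suffices to observe that for $\theta<1$ all marked Green functions on $\{y_0,\dots,y_{k-1},w\}$ are finite whenever $\prod_jQ^{y_j}_{y_{j+1}}>0$ (otherwise every $P_N(\gamma)$ vanishes), so the identity of power series on $[0,1)$ already determines all coefficients.
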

\begin{proof}
We suppose $T_1<\infty$ with positive probability. By adding a small killing rate $\epsilon$ at all states and taking $\epsilon\downarrow 0$, we could suppose that we have a positive probability to jump to the cemetery point from any state. In particular, the Markov chain is transient.\\

Let $\partial$ be the cemetery point. Let $\tau(p)$ be a geometric variable with mean $1/p$, independent of the Markov chain. Let $(X^{(p)}_m,m\in[0,(\zeta-1)\wedge T_{\tau(p)}]$ be the sub-Markov chain $X$ stopped after $T_{\tau(p)}$ which is again sub-Markov. Let $\mathbb{P}^{x_0}_{p}$ stand for the law of $X^{(p)}$ and let $\mathbb{P}^{x_0}_{p,BE}$ stand for the law of the loop-erased random walk associated to $(X^{(p)}_m,m\in[0,(\zeta-1)\wedge T_{\tau(p)}])$. Let $Q^{(p)}$ be the transition matrix of $X^{(p)}$ and use the notation $Q$ for $Q^{(0)}$. Then, $(Q^{(p)})^w_i=(1-p)Q^w_i$ for $i\in S$ and $(Q^{(p)})^i_j=Q^i_j$ for $i\in S\setminus{\{w\}}$ and $j\in S$. Accordingly, $(Q^{(p)})^w_{\partial}=p+Q^w_\partial-pQ^w_\partial$. Define $V=(I-Q)^{-1}$, $V_{q\delta_w}=(M_{q\delta_w}+I-Q)^{-1}$ for $q\geq 0$ and $V^{(p)}=(I-Q^{(p)})^{-1}=(M_{(1-p)\delta_w}(I+\frac{p}{1-p}-Q))^{-1}=V_{\frac{p}{1-p}\delta_{w}}M_{\frac{1}{1-p}\delta_w}$. Set $C_{\omega}=\{$the loop-erased random walk stopped at $w\}$ Then,
\begin{align*}
C_{\Omega}=&\{\text{the random walk stopped at }w\}\\
=&\bigcup\limits_{n\geq 1}\{\text{the random walk stopped at }w\text{ at time }T_k\}\\
=&\bigcup\limits_{k\geq 1}\{\tau(p)=k,T_k<\zeta\}\bigcup\bigcup\limits_{k\geq1}\{T_k=\zeta-1,\tau(p)>k\}.
\end{align*}
For $x_n=w$,
\begin{multline*}
\mathbb{P}^{x_0}_{p,BE}[\omega_{BE}=(x_0,x_1,\ldots,x_n=w)]\\
=(Q^{(p)})^{x_0}_{x_1}\cdots (Q^{(p)})^{x_{n-1}}_{x_n}(Q^{(p)})^{x_n}_{\partial}\begin{vmatrix}
(V^{(p)})^{x_0}_{x_0} & \cdots & (V^{(p)})^{x_0}_{x_{n}}\\                                                                                                                                                                                                                      \vdots & \ddots & \vdots \\
(V^{(p)})^{x_{n}}_{x_0} & \cdots & (V^{(p)})^{x_{n}}_{x_{n}}
\end{vmatrix}\\
=\frac{p+Q^w_\partial-pQ^w_\partial}{(1-p)Q^w_\partial}Q^{x_0}_{x_1}\cdots Q^{x_{n-1}}_{x_n}Q^{x_n}_{\partial}
\begin{vmatrix}
(V_{\frac{p}{1-p}\delta_w})^{x_0}_{x_0} & \cdots & (V_{\frac{p}{1-p}\delta_w})^{x_0}_{x_{n}}\\                                                                                                                                                                                                                      \vdots & \ddots & \vdots \\
(V_{\frac{p}{1-p}\delta_w})^{x_{n}}_{x_0} & \cdots & (V_{\frac{p}{1-p}\delta_w})^{x_{n}}_{x_{n}}
\end{vmatrix}.
\end{multline*}
By the resolvent equation, $V^i_j=(V_{\frac{p}{1-p}\delta_w})^i_j+\frac{p}{1-p}(V_{\frac{p}{1-p}\delta_w})^i_wV^w_j=(V_{\frac{p}{1-p}\delta_w})^i_j+\frac{p}{1-p}(V_{\frac{p}{1-p}\delta_w})^w_jV^i_w$.
Therefore,
\begin{align*}
\begin{vmatrix}
V^{x_0}_{x_0} & \cdots & V^{x_0}_{x_{n}}\\                                                                                                                                                                                                                      \vdots & \ddots & \vdots \\
V^{x_{n}}_{x_0} & \cdots & V^{x_{n}}_{x_{n}}
\end{vmatrix}=&\begin{vmatrix}
1 & -\frac{p}{1-p}V^{x_n}_{x_0} & \cdots & -\frac{p}{1-p}V^{x_n}_{x_n}\\
(V_{\frac{p}{1-p}\delta_w})^{x_0}_{x_n} & (V_{\frac{p}{1-p}\delta_w})^{x_0}_{x_0} & \cdots & (V_{\frac{p}{1-p}\delta_w})^{x_0}_{x_{n}}\\                                                                                                                                                                                                                      \vdots & \vdots & \ddots & \vdots \\
(V_{\frac{p}{1-p}\delta_w})^{x_0}_{x_n} & (V_{\frac{p}{1-p}\delta_w})^{x_{n}}_{x_0} & \cdots & (V_{\frac{p}{1-p}\delta_w})^{x_{n}}_{x_{n}}
\end{vmatrix}\\
=&\begin{vmatrix}
1+\frac{p}{1-p}V^{x_n}_{x_n} & -\frac{p}{1-p}V^{x_n}_{x_0} & \cdots & -\frac{p}{1-p}V^{x_n}_{x_n}\\
0 & (V_{\frac{p}{1-p}\delta_w})^{x_0}_{x_0} & \cdots & (V_{\frac{p}{1-p}\delta_w})^{x_0}_{x_{n}}\\                                                                                                                                                                                                                      \vdots & \vdots & \ddots & \vdots \\
0 & (V_{\frac{p}{1-p}\delta_w})^{x_{n}}_{x_0} & \cdots & (V_{\frac{p}{1-p}\delta_w})^{x_{n}}_{x_{n}}
\end{vmatrix}\\
=&(1+\frac{p}{1-p}V^{x_n}_{x_n})\begin{vmatrix}
(V_{\frac{p}{1-p}\delta_w})^{x_0}_{x_0} & \cdots & (V_{\frac{p}{1-p}\delta_w})^{x_0}_{x_{n}}\\                                                                                                                                                                                                                      \vdots & \ddots & \vdots \\
(V_{\frac{p}{1-p}\delta_w})^{x_{n}}_{x_0} & \cdots & (V_{\frac{p}{1-p}\delta_w})^{x_{n}}_{x_{n}}
\end{vmatrix}.
\end{align*}
Accordingly,$\displaystyle{\frac{(1-p+pV^w_w)Q^w_\partial}{p+Q^w_\partial-pQ^w_\partial}\mathbb{P}^{x_0}_{p,BE}[\omega_{BE}=(x_0,x_1,\ldots,x_n=w)]}$ does not depend on p. Consequently, it must be equal to $\mathbb{P}^{x_0}_{0,BE}[\omega_{BE}=(x_0,x_1,\ldots,x_n=w)]$. Equivalently,
$$\frac{(1-p+pV^w_w)Q^w_\partial}{p+Q^w_\partial-pQ^w_\partial}\mathbb{P}^{x_0}_{p,BE}[\cdot,C_w]=\mathbb{P}^{x_0}_{0,BE}[\cdot,C_w]$$
Therefore,
$$\mathbb{P}^{x_0}_{0,BE}[C_w]=\frac{(1-p+pV^w_w)Q^w_\partial}{p+Q^w_\partial-pQ^w_\partial}\mathbb{P}^{x_0}_{p,BE}[C_w].$$
Immediately, it implies that conditionally on $C_{w}$, the law of the loop-erased random walk does not depend on $p$:
$$\mathbb{P}^{x_0}_{p,BE}[\cdot|C_w]=\mathbb{P}^{x_0}_{0,BE}[\cdot|C_w].$$
Since
\begin{align*}
\mathbb{P}^{x_0}_{p,BE}[\omega_{BE}\in\cdot,C_w]=&\begin{multlined}[t]
\sum\limits_{k\geq 1}\mathbb{P}^{x_0}[\tau(p)=k,T_k<\zeta,LE[0,T_k]\in\cdot]\\
+\sum\limits_{k\geq 1}\mathbb{P}^{x_0}[\tau(p)>k,T_k=\zeta-1,LE[0,T_k]\in\cdot]
\end{multlined}\\
=&\begin{multlined}[t]
\sum\limits_{k\geq 1}(1-p)^{k-1}p\mathbb{P}^{x_0}[T_k<\infty,LE[0,T_k]\in\cdot]\\
+\sum\limits_{k\geq 1}(1-p)^{k}\mathbb{P}^{x_0}[T_k<\infty,LE[0,T_k]\in\cdot]Q^{w}_{\partial}
\end{multlined}\\
=&\sum\limits_{k\geq 1}(1-p)^{k-1}(p+Q^w_\partial-pQ^w_\partial)\mathbb{P}^{x_0}[T_k<\infty]\mathbb{P}^{x_0}[LE[0,T_k]\in\cdot|T_k<\infty],
\end{align*}
we have
\begin{equation}\label{eqn:clerw}
\mathbb{P}^{x_0}_{p,BE}[\omega_{BE}\in\cdot|C_w]=\frac{\sum\limits_{k\geq 1}(1-p)^{k-1}\mathbb{P}^{x_0}[T_k<\infty]\mathbb{P}^{x_0}[LE[0,T_k]\in\cdot|T_k<\infty]}{{\sum\limits_{k\geq 1}(1-p)^{k-1}\mathbb{P}^{x_0}[T_k<\infty]}}.\tag{*}
\end{equation}
Since $\mathbb{P}^{x_0}_{p,BE}[\omega_{BE}\in\cdot|C_w]$ does not depend on $p\in[0,1]$, we will denote it by $\mathbb{Q}$. Then the equation \eqref{eqn:clerw} can be written as follows:
\begin{multline*}
\mathbb{Q}[\cdot]\sum\limits_{k\geq 1}(1-p)^{k-1}\mathbb{P}^{x_0}[T_k<\infty]\\
=\sum\limits_{k\geq 1}(1-p)^{k-1}\mathbb{P}^{x_0}[T_k<\infty]\mathbb{P}^{x_0}[LE[0,T_k]\in\cdot|T_k<\infty].
\end{multline*}
Finally, by identifying the coefficients, we conclude that $\mathbb{P}^{x_0}[LE[0,T_k]\in\cdot|T_k<\infty]=\mathbb{Q}[\cdot]$ as long as $\mathbb{P}^{x_0}[T_k<\infty]$ for $k\geq 1$ and we are done.
\end{proof}

Consider $(e_t,t\geq 0)$, a Poisson point process of excursions of finite lifetime at $x$ with the intensity $Leb\otimes(-L^x_x-\frac{1}{V^x_x})\nu^{x\rightarrow x}_{\{x\},ex}$. (Recall that $\nu^{x\rightarrow x}_{\{x\},ex}$ is the normalized excursion measure at $x$, see Definition \ref{defn:excursion measure outside of F}.) Let $(\gamma(t),t\geq 0)$ be an independent Gamma subordinator\footnote{See Chapter III of  \cite{BertoinMR1406564}.} with the Laplace exponent
$$\Phi(\lambda)=\int\limits_{0}^{\infty}(1-e^{-\lambda s})s^{-1}e^{-s/V^x_x}\,ds.$$
Let $\mathfrak{R}_{\alpha}$ be the closure of the image of the subordinator $\gamma$ up to time $\alpha$, i.e. $\mathfrak{R}_{\alpha}=\overline{\{\gamma(t):t\in[0,\alpha]\}}$. Then, $[0,\gamma(\alpha)]\setminus \mathfrak{R}_{\alpha}$ is the union of countable disjoint open intervals, $\{]\gamma(t-),\gamma(t)[:t\in[0,\alpha],\gamma(t-)<\gamma(t)\}$. To such an open interval $]g,d[$, one can associate a based loop $l$ as follows:
 During the time interval $]g,d[$, the Poisson point process $(e_t,t\geq 0)$ has  finitely many excursions, namely $e_{t_1},\cdots,e_{t_n},g<t_1<\cdots<t_n<d$. Each excursion $e_{t_i}$ is viewed as a c\`{a}dl\`{a}g path of lifetime $\zeta_{t_i}$: $(e_{t_i}(s),s\in[0,\zeta_{t_i}[)$. Define $l:[0,d-g+\sum\limits_{i}\zeta_{t_i}]\rightarrow S$ as follows:
$$l(s)=\left\{
\begin{array}{ll}
e_{t_i}(s-(\sum\limits_{j<i}\zeta_{t_j}+t_i)) & \text{ if }s\in [\sum\limits_{j<i}\zeta_{t_j}+t_i,\sum\limits_{j\leq i}\zeta_{t_j}+t_i[\\
x & \text{otherwise.}
\end{array}
\right.$$
This mapping between an open interval $]g,d[$ and a based loop $l$ depends on $]g,d[$ and $(e_t,t\in]g,d[)$ and we denote is by $\Psi^{]g,d[}$ ($l=\Psi^{]g,d[}(e)$).
By mapping a based loop into a loop, we get a countable collection of loops for $\alpha\geq 0$, namely $\mathcal{O}_{\alpha}$.
\begin{prop}\label{Poisson point process of loops} $(\mathcal{O}_{\alpha},\alpha\geq 0)$ has the same law as the Poisson point process of loops intersecting $\{x\}$, i.e. $(\{l\in\mathcal{L}_{\alpha}:l^x>0\},\alpha>0)$.
\end{prop}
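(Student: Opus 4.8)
The plan is to prove the statement by showing that $(\mathcal{O}_{\alpha},\alpha\geq 0)$ is itself a Poisson point process of loops whose intensity in $\alpha$ is $\mu$ restricted to the loops visiting $x$; since $(\{l\in\mathcal{L}_{\alpha}:l^x>0\},\alpha>0)$ is by definition the Poisson process with exactly that intensity, the two will then agree in law. The whole argument is designed to recognize that the construction reproduces, piece by piece, the decomposition of a loop visiting $F=\{x\}$ into its trace on $\{x\}$ and its point measure of excursions outside $\{x\}$ furnished by Corollary \ref{transition kernel from the trace to the point measure of excursions}. I would therefore match the two independent ingredients of the construction, namely $\gamma$ and $(e_t)$, with the two factors $\mu_F(dl_F)$ and $K(l_F,\cdot)$ of that disintegration.

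First I would identify the subordinator with the trace. For $F=\{x\}$ the trace $l_F$ of any loop visiting $x$ is a trivial loop at $x$, and by Proposition \ref{compatibility with the time change} the trace measure $\mu_F$ is the loop measure of the one-point chain with generator $L_{\{x\}}=-1/V^x_x$, which on lengths reads $\tfrac{1}{s}e^{-s/V^x_x}\,ds$. The Laplace exponent $\Phi$ exhibits precisely the L\'evy measure $s^{-1}e^{-s/V^x_x}\,ds$, so by the L\'evy--It\^o description the jumps of $\gamma$ on $[0,\alpha]$ form a Poisson point process with intensity Lebesgue $\otimes\,\mu_F(\mathrm{length}\in ds)$. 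Each gap $]g,d[$ of $\mathfrak{R}_{\alpha}$ is exactly one such jump, of size $s=d-g$, and in the glued based loop $l=\Psi^{]g,d[}(e)$ the total time spent at $x$ equals $d-g$; hence the gap length is the occupation time $l^x$, i.e. the trace. Thus the traces produced by the construction form a Poisson process with intensity $\alpha\mu_F$, consistently with the earlier computation $\mu(l^x\in dt)=\tfrac{1}{t}e^{-t/V^x_x}\,dt$.

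Next I would treat the excursions as independent marks of these traces. Since $(e_t)$ is independent of $\gamma$ and is a Poisson point process with intensity Lebesgue $\otimes\,(-L^x_x-1/V^x_x)\nu^{x,x}_{\{x\},ex}$ (Definition \ref{defn:excursion measure outside of F}, with $\nu^{x\to x}_{\{x\},ex}=\nu^{x,x}_{\{x\},ex}$ carrying no empty excursion because $Q^x_x=0$), the restriction property of Poisson processes yields that, conditionally on $\gamma$, the excursions falling into distinct gaps are independent and those falling into a gap of length $s$ form a Poisson random measure with intensity $s(-L^x_x-1/V^x_x)\nu^{x,x}_{\{x\},ex}$. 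This is exactly the kernel $K(l_F,\cdot)=\mathcal{N}_{\{x\}}((l_F^x))$ of Corollary \ref{transition kernel from the trace to the point measure of excursions} evaluated at $l_F^x=s$ (there are no $x\neq y$ factors since $F$ is a single point). The map $\Psi^{]g,d[}$ followed by passage to the equivalence class then glues a trivial loop of occupation time $s$ with these excursions into a loop visiting $x$, which is precisely the inverse of the decomposition $l\mapsto(l_F,\mathcal{E}_{\{x\}}(l))$ of Definition \ref{defn:trace of a loop on F}; in particular a gap carrying no excursion returns a trivial loop at $x$.

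Assembling these observations and applying the mapping theorem for Poisson processes (Proposition \ref{basic properties for Poisson point processes}d): $\mathcal{O}_{\alpha}$ is the image, under the gluing map, of a Poisson process of marked traces with intensity $\alpha\,\mu_F(dl_F)K(l_F,\cdot)$, and by Corollary \ref{transition kernel from the trace to the point measure of excursions} this image measure equals $\alpha\,\mu|_{\{l:l^x>0\}}$, the desired intensity. Consistency in $\alpha$ is automatic because each gap, hence each loop, inherits the subordinator-time label $u\in[0,\alpha]$ of the corresponding jump of $\gamma$, so $\alpha\mapsto\mathcal{O}_{\alpha}$ is a Poisson process on $\mathbb{R}_{+}\times\{\text{loops}\}$. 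I expect the main obstacle to lie in the bookkeeping of the marking step: one must check carefully that the common axis shared by the range $\mathfrak{R}_{\alpha}$ and the excursion points $(e_t)$ makes the excursions in a gap genuinely independent across gaps and Poisson with the length-proportional intensity, and that $\Psi$ reconstructs the occupation time and the excursion point measure compatibly with the trace decomposition. Once this identification is secured, the distributional conclusion is immediate.
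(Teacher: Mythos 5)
The first half of your argument is sound and is the same as the paper's: the gap lengths of $\mathfrak{R}_{\alpha}$ are the occupation times $l^x$ of the glued loops, they form a Poisson point process with characteristic measure $\frac{1}{s}e^{-s/V^x_x}\,ds$, and this matches the trace measure $\mu_{\{x\}}$ coming from $(L_{\{x\}})^x_x=-1/V^x_x$. The genuine gap is in your final step, where you conclude from Corollary \ref{transition kernel from the trace to the point measure of excursions} that the image of the marked-trace Poisson process under the gluing map has intensity $\mu|_{\{l:\,l^x>0\}}$. That corollary only describes the joint law of the pair $(l_F,\mathcal{E}_F(l))$, i.e.\ the trace together with the \emph{unordered} point measure of excursions, and for $F=\{x\}$ this pair does not determine the loop: a loop visiting $x$ with $q$ excursions is specified by the total time $s$ spent at $x$, the multiset of excursions, \emph{and} the cyclic order of the excursions together with the holding times at $x$ separating them. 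The map $l\mapsto(l_F,\mathcal{E}_{\{x\}}(l))$ forgets the last two pieces of data, so it is not injective and $\Psi$ is not its inverse; matching the law of $(l_F,\mathcal{E}_F(l))$ does not yet show the two random loops have the same law. In $\Psi^{]g,d[}$ the excursions are inserted at positions that are, conditionally on their number, i.i.d.\ uniform in the local time at $x$, and the missing step is to prove that this is also the conditional arrangement under $\mu(\cdot\mid l^x=s)$. Your closing paragraph flags the wrong obstacle: the independence across gaps and the length-proportional Poisson intensity are immediate from the restriction property of Poisson processes; the real issue is the insertion positions.

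This is exactly the point the paper's proof is built around. After matching the occupation times it reduces the claim to showing that $\Psi^{]0,T[}(e)$ has the law $\mu(dl\mid l^x=T)$, and establishes this via the identity $l^x\mu(dl)=\mu^{x,x}(dl)$ (Proposition \ref{relation between the bridge measure and loop measure 1}) together with the observation that $e^{-l^x}\mu^{x,x}(dl)$ is the law of the Markov process run until an independent exponential local time at $x$, whose excursion process is precisely a stopped Poisson point process of the type used in the construction; this controls the full arrangement of the excursions, not just their point measure. To repair your proof you would either need to upgrade Corollary \ref{transition kernel from the trace to the point measure of excursions} to a statement about the full loop including the cyclic order and the intermediate holding times, or insert the paper's bridge-measure argument at this final step.
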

\begin{proof}
As both sides have independent stationary increment, it is enough to show $\mathcal{O}_{1}=\{l\in\mathcal{L}_{1}:l^x>0\}$. It is well-known that $(\gamma(t)-\gamma(t-),t\in\mathbb{R})$ is a Poisson point process with characteristic measure $\frac{1}{s}e^{-s/V^x_x}\,ds$. Therefore, $\sum\limits_{l\in\mathcal{O}_{\alpha}}\delta_{l^x}$ is Poisson random measure with intensity $\frac{1}{s}e^{-s/V^x_x}\,ds$. On the other hand, for the Poisson ensemble of loops $\mathcal{L}_{\alpha}$, by taking the trace of the loops on $x$ and dropping the empty ones, as a consequence of Proposition \ref{compatibility with the time change}, we get a Poisson ensemble of trivial Markovian loops with intensity measure $\frac{1}{s}e^{s(L_{\{x\}})^x_x}\,ds$ where $(-L_{\{x\}})^x_x=1/V^x_x$. Consequently, we have
$$\{l^x:l\in\mathcal{O}_{1}\}\text{ has the same law as }\{l^x:l\in\mathcal{L}_{1},l^x>0\}$$
In other words, by disregarding the excursions attached to each loop, the set of trivial loops in $x$ obtained from $\mathcal{O}_1$ and $\{l\in\mathcal{L}_1:l^x>0\}$ is the same. In order to restore the loops, we need to insert the excursions into the trivial loops. Then, it remains to show that the excursions are inserted into the trivial loops in the same way. Finally, by using the independence between $(e_t,t\geq 0)$ and $(\gamma(t),t\geq 0)$ and the stationary independent increments  property with respect to time $t$, it ends up in proving the following affirmation:
$\Psi^{]0,T[}(e)$ induces the same probability on the loops with $l^x=T$ as the loop measure conditioned by $\{l^x=T\}$.
 By Proposition \ref{relation between the bridge measure and loop measure 1}, we have $l^x\mu(dl)=\mu^{x,x}(dl)$. Hence, $\mu(dl|l^x=T)=\mu^{x,x}(dl|l^x=T)$ where $\mu^{x,x}$ is considered to be a loop measure. Let $\mathbb{P}^x$ be the law of the Markov process $(X_t,t\in[0,\zeta[)$ associated with the Markovian loop measure $\mu$. Let $(L(x,t),t\in[0,\zeta[)$ be the local time process at $x$ and $L^{-1}(x,t)$ be its right-continuous inverse. Let $\tau$ be an independent exponential variable with parameter 1. Define the process $X^{L^{-1}(x,\tau)}$ with lifetime $L^{-1}(x,\tau)\wedge\zeta$ as follows: $X^{L^{-1}(x,\tau)}(T)=X(T),T\in[0,L^{-1}(x,\tau)\wedge\zeta[$. Denote by $\mathbb{Q}[dl]$ the law of $X^{L^{-1}(x,\tau)}$. Then, $e^{-l^x}\mu^{x,x}(dl)=\mathbb{Q}[dl]$ where $\mu^{x,x}(dl)$ is considered to be a based loop measure. Therefore, $$\mu^{x,x}(dl|l^x=T)=\mathbb{Q}[dl|l^x=T]=\mathbb{Q}[dl|\tau=T]=\text{the law of }\Psi^{]0,T[}(e)$$ in the sense of based loop measures. Then, the equality stills holds for loop measures and we are done.
\end{proof}

Suppose $(X_t,t\in[0,\zeta[)$ is a transient Markov process on $S$. (Assume the process stays at the cemetery point after lifetime $\zeta$.) Define the local time at $x$ $L(x,t)=\int\limits_{0}^{t}1_{\{X_s=x\}}\,ds$. Denote by $L^{-1}(x,t)$ its right-continuous inverse and by $L^{-1}(x,t-)$ its left-continuous inverse. The excursion process $(e_t,t\geq 0)$ is defined by $e_t(s)=X_{s+L^{-1}(x,t-)},s\in[0,L^{-1}(x,t)-L^{-1}(x,t-)[$. Define a measure on the excursion which never returns to $x$ by
$$\tilde{\nu}^{x\rightarrow}[dl]=\sum\limits_{y\in S}Q^x_y\mathbb{P}^{y}[dl,\text{the process never hits }x].$$
We can calculate the total mass of $\tilde{\nu}^{x\rightarrow}$ as follows:
\begin{multline*}
\tilde{\nu}^{x\rightarrow}[1]=\sum\limits_{y\in S}Q^x_y\mathbb{P}^{y}[\text{the process never hits }x]\\
=1-\mathbb{E}^{x}[\{\text{after leaving }x\text{, }\text{the process returns to }x\}]\\
=(1-(R^{\{x\}})^x)=\frac{(L_{\{x\}})^x_x}{L^x_x}=-\frac{1}{V^x_xL^x_x}.
\end{multline*}
After normalization, we get a probability measure $\nu^{x\rightarrow}$. The law of the first excursion is $-\frac{1}{V^x_xL^x_x}\nu^{x\rightarrow}+\left(1+\frac{1}{V^x_xL^x_x}\right)\nu^{x\rightarrow x}_{\{x\},ex}$. In particular, the first excursion is not an excursion from $x$ back to $x$ with probability $-\frac{1}{L^x_xV^x_x}$.
According to the excursion theory, the excursion process is a Poisson point process stopped at the appearing of an excursion of infinite lifetime or an excursion that ends up at the cemetery. The characteristic measure is proportional to the law of the first excursion. By taking the trace of the process on $x$, we know that the total occupation time is an exponential variable with parameter $(-L_{\{x\}})^x_x=\frac{1}{V^x_x}$. According to the excursion theory, it is an exponential variable with parameter $\frac{-d}{V^x_xL^x_x}$, $d$ being the mass of the characteristic measure. Immediately, we get $d=-L^x_x$. If we focus on the process of excursions from $x$ back to $x$, it is a Poisson point process with characteristic measure $(-L^x_x-\frac{1}{V^x_x})\nu^{x\rightarrow x}_{\{x\},ex}$ stopped at an independent exponential time with parameter $\frac{1}{V^x_x}$. Let $(\gamma(t),t\geq 0$) be a Gamma subordinator with Laplace exponent $$\Phi(\lambda)=\int\limits_{0}^{\infty}(1-e^{-\lambda s})s^{-1}e^{-s/V^x_x}\,ds.$$ Then, $\gamma(t)$ follows the $\Gamma(t,\frac{1}{V^x_x})$ distribution with density $\rho(y)=\frac{1}{\Gamma(t)(V^x_x)^t}y^{t-1}e^{-y/V^x_x}$. In particular, $\gamma(1)$ is an exponential variable of the parameter $1/V^x_x$. It is known that  $(\frac{\gamma(t)}{\gamma(1)},t\in[0,1])$ is independent of $\gamma(1)$, and that it is a Dirichlet process. (One can prove this by a direct calculation on the finite marginal distribution.) Moreover, the jumps of the process $(\frac{\gamma(t)}{\gamma(1)},t\in[0,1])$ rearranged in decreasing order follow the Poisson-Dirichlet $(0,1)$ distribution.
For $x\in S$, let $Z_x$ be the last passage time in $x$: $Z_x=\sup\{t\in[0,\zeta[:X(t)=x\}$. Suppose the loop erased path $\omega_{BE}$ equals $(x_1,\ldots)$. Define $S_n=T_{x_n}$ for $n\geq 1$ and $S_0=0$. Let $O_i$ be $(X_s,s\in[S_i,S_{i+1}[)$ i.e. the i-th loop erased from the process $X$. Then $O_1$ can be viewed as a Poisson point process $(e^{(1)}_t,t\in[0,L(x_1,\zeta)[)$ of excursions at $x_1$ killed at the arrival of an excursion with infinite lifetime or an excursion ending up at the cemetery. Conditionally on $\omega_{BE}=(x_1,x_2,\ldots)$, the shifted process $(X(s+T_1),s\in[0,\zeta[)$ is the Markov process restricted in $S\setminus\{x_1\}$ starting from $x_2=X(T_1)$ . Moreover, it is conditionally independent of the Poisson point process $e^{(1)}$. Once again, we can view $O_2$ as an killed Poisson point process of excursions at $x_2$ and denote it by $e^{(2)}$. Clearly, we have the independence between $e^{(1)}$ and $e^{(2)}$ conditionally on $\omega_{BE}=(x_1,x_2,\ldots)$. Repeating this procedure, we get a sequence of point process of excursions $e^{(1)},\ldots$. Conditionally on $\omega_{BE}=(x_1,\ldots,x_n,\ldots)$, they are independent, and $e^{(n)}$ has the same law as the killed excursion process for the Markov process restricted in $D_n=S\setminus\{x_1,\ldots,x_{n-1}\}$. Let $O_i^{x_i}$ be the occupation time at $x_i$ for the based loop $O_i$. Let $(\gamma^{(i)}_t,t\geq 0),i\geq 1$ be a sequence of independent Gamma subordinators with Laplace exponent
$$\Phi(\lambda)=\int\limits_{0}^{\infty}(1-e^{-\lambda s})s^{-1}\exp\left(-\frac{s}{(V^{D_n})^{x_i}_{x_i}}\right)\,ds.$$ We suppose they are independent of the Markov process.
Then, $O_i^{x_i},i\geq 1$ has the same law as $\gamma^{(i)}(1),i\geq 1$ conditionally on $\omega_{BE}$.
 In the spirit of Proposition \ref{Poisson point process of loops} by cutting the excursion process according to the range of subordinator, if at time $\alpha\in[0,1]$, we cut the loop $O_i$ according to the range of $(\frac{\gamma^{(i)}(s)O_i^{x_i}}{\gamma^{(i)}(1)},s\in[0,\alpha])$, we get a point process of loops $(\mathcal{O}^{(i)}_{\alpha},\alpha\in[0,1])$. Conditionally on $\omega_{BE}$, it has the same law as the Poisson point process $(\mathcal{L}_\alpha^{D_i}\setminus\mathcal{L}_{\alpha}^{D_{i+1}},\alpha\in[0,1])$. Moreover, conditionally on $\omega_{BE}$, $(\mathcal{O}^{(i)}_{\alpha},\alpha\in[0,1]),i\geq 1$ are independent. The definition of the Poisson random measure ensures independence among $(\mathcal{L}_\alpha^{D_i}\setminus\mathcal{L}_{\alpha}^{D_{i+1}},\alpha\in[0,1]),i=1,\ldots$. Consequently, we have the following proposition.
\begin{prop}\label{reconstruct the Poisson ensemble of loops}
Conditionally on $\omega_{BE}$, $(\mathcal{O}_{\alpha},\alpha\in[0,1])$ has the same law as $(\{l\in\mathcal{L}_{\alpha}:l\text{ intersects }\omega_{BE}\},\alpha\in[0,1])$.
\end{prop}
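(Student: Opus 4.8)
The plan is to decompose both loop ensembles according to the first point of $\omega_{BE}$ that each loop meets, and then to match the resulting pieces index by index. For a loop $l$ intersecting $\omega_{BE}=(x_1,x_2,\ldots)$ set $i(l)=\min\{i\geq 1:l\text{ visits }x_i\}$, and recall $D_i=S\setminus\{x_1,\ldots,x_{i-1}\}$. A loop satisfies $i(l)=i$ precisely when it avoids $x_1,\ldots,x_{i-1}$ but visits $x_i$, that is, when $l\in loop^{D_i}\setminus loop^{D_{i+1}}$. Hence
\[
\{l\in\mathcal{L}_{\alpha}:l\text{ intersects }\omega_{BE}\}=\bigsqcup_{i\geq 1}\left(\mathcal{L}_{\alpha}^{D_i}\setminus\mathcal{L}_{\alpha}^{D_{i+1}}\right),
\]
and since the sets $loop^{D_i}\setminus loop^{D_{i+1}}$ are pairwise disjoint Borel subsets of the loop space, the restrictions of the Poisson random measure $\mathcal{L}_{\alpha}$ to them are mutually independent families in $\alpha$.

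Next I would identify the $i$-th cut ensemble $(\mathcal{O}^{(i)}_{\alpha},\alpha\in[0,1])$ with $(\mathcal{L}_{\alpha}^{D_i}\setminus\mathcal{L}_{\alpha}^{D_{i+1}},\alpha\in[0,1])$ conditionally on $\omega_{BE}$. By Proposition \ref{compatibility with the time change}, $\mathcal{L}_{\alpha}^{D_i}$ is the loop soup of the process restricted to $D_i$, with generator $L^{D_i}$ and potential $V^{D_i}$, and $\mathcal{L}_{\alpha}^{D_i}\setminus\mathcal{L}_{\alpha}^{D_{i+1}}$ is exactly the subensemble of those loops that visit $x_i$. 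Conditionally on $\omega_{BE}$ the excursion process $e^{(i)}$ is the $x_i$-excursion process of this restricted process, and its return intensity and last-exit parameter are governed by $(V^{D_i})^{x_i}_{x_i}$, which matches the Laplace exponent of the Gamma subordinator $\gamma^{(i)}$. Applying Proposition \ref{Poisson point process of loops} to the $D_i$-restricted process with distinguished point $x_i$ then shows that cutting $e^{(i)}$ along the range of the rescaled subordinator reproduces exactly the conditional law of the loops of $\mathcal{L}_{\alpha}^{D_i}$ visiting $x_i$; that is, $(\mathcal{O}^{(i)}_{\alpha})$ and $(\mathcal{L}_{\alpha}^{D_i}\setminus\mathcal{L}_{\alpha}^{D_{i+1}})$ agree in conditional law.

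To conclude, I would superpose. The strong Markov property at the successive times $S_i=T_{x_i}$, together with the restriction structure of loop erasure recorded in Proposition \ref{distribution of the loop-erased random walk}, makes the excursion processes $e^{(1)},e^{(2)},\ldots$ conditionally independent given $\omega_{BE}$, hence the families $(\mathcal{O}^{(i)}_{\alpha})$ are conditionally independent. Since $\mathcal{O}_{\alpha}=\bigsqcup_{i}\mathcal{O}^{(i)}_{\alpha}$ is a superposition of conditionally independent families whose marginals match the conditionally independent pieces of the first display, the two superpositions have the same conditional law given $\omega_{BE}$, which is the assertion.

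I expect the main obstacle to be the rigorous verification, conditionally on $\omega_{BE}$, that the excursion processes $e^{(i)}$ are genuinely independent and that each is distributed as the $x_i$-excursion process of the $D_i$-restricted chain. This forces one to peel the Markov path at the successive hitting/last-exit times underlying the loop erasure and to invoke the strong Markov property at each stage. Once this is in place, matching the subordinator parameter to $(V^{D_i})^{x_i}_{x_i}$ and appealing to Proposition \ref{Poisson point process of loops} is routine, and the final Poisson superposition step is immediate from the disjointness of supports.
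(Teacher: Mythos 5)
Your proposal follows essentially the same route as the paper: the same partition of the loops meeting $\omega_{BE}$ into the independent Poissonian pieces $\mathcal{L}_{\alpha}^{D_i}\setminus\mathcal{L}_{\alpha}^{D_{i+1}}$, the same identification of each piece with the cut excursion process $e^{(i)}$ of the $D_i$-restricted chain via Proposition \ref{Poisson point process of loops} with subordinator parameter $(V^{D_i})^{x_i}_{x_i}$, and the same conditional-independence-plus-superposition conclusion. The argument is correct and matches the paper's (which is carried out in the paragraph preceding the statement).
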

\begin{rem}\label{rem:from subordinator to poisson-dirichlet}
The jumps of the process $\displaystyle{\frac{\gamma(t)}{\gamma(1)}}$ rearranged in decreasing order follow the Poisson-Dirichlet $(0,1)$ distribution. Since a Poisson point process is always homogeneous in time, the following two cutting method gives the same loop ensemble in law:
\begin{itemize}
\item Cutting the loop according to the range of $\displaystyle{\left(\frac{\gamma(t)}{\gamma(1)},t\in[0,1]\right)}$,
\item Cutting the loop according to the Poisson-Dirichlet $(0,1)$ distribution.
\end{itemize}
As a result, a similar result holds for $\alpha=1$ if we cut the loops according to the Poisson-Dirichlet $(0,1)$ distribution.
\end{rem}
\subsection{Random spanning tree}
Throughout this section, we consider a finite state space $S$ with a transient Markov process $(X_t,t\geq 0)$ on it. Denote by $\Delta$ the cemetery point for $X$. As usual, denote by $L$ the generator of $X$ and by $Q$ the transition matrix of the embedded Markov chain.

By the following algorithm, one can construct a random spanning tree of $S\cup\{\Delta\}$ rooted\footnote{By a random spanning tree rooted at $\Delta$, we mean a random spanning tree with a special mark on the vertex $\Delta$.} at $\Delta$. We give an orientation on the tree: each edge is directed towards the root.
\begin{defn}[Wilson's algorithm]
Choose an arbitrary order on $S$:  $S$=$\{v_1,\ldots,v_n\}$. Define $S_0=\{\Delta\}$. Let $T_0$ be the tree with single vertex $\Delta$. We recurrently construct a series of growing random trees $T_k,k\in\mathbb{N}$ as follows:\\
Suppose $T_k$ is well-constructed with set of vertices $S_k$. If $S\cup\{\Delta\}\setminus S_k=\phi$, then we stop the procedure and set $\mathcal{T}=T_k$. Otherwise, there is a unique vertex in $S\cup\{\Delta\}\setminus S_k$ with the smallest sub-index and we denote it by $y_{k+1}$. Run a Markov chain from $y_{k+1}$ with transition matrix $Q$. It will hit $S_{k}$ in finitely many steps. We stop the Markov chain after it reaches $S_{k}$ and erase progressively the loops according to the Definition \ref{loop erasure}. In this way, we get a loop-erased path $\eta_{k+1}$ joining $y_{k+1}$ to $T_k$. By adding this loop erased path $\eta_{k+1}$ to $T_k$, we construct the random tree $T_{k+1}$. The procedure will stop after a finite number of steps and it produces a random spanning tree $\mathcal{T}$.
\end{defn}

\begin{prop}\label{spanning tree measure with a fixed root for a finite graph}
Denote by $\mu_{ST,\Delta}$ the distribution of the random spanning tree rooted at $\Delta$ given by Wilson's algorithm. Then,
$$\mu_{ST,\Delta}(\mathcal{T}=T)=\det(V)1_{\{T\text{ is a spanning tree rooted at $\Delta$}\}}\prod\limits_{\begin{subarray}{l}
(x,y)\text{ is an edge in }T\\
\text{ directed towards the root }\Delta
\end{subarray}}L^{x}_{y}\quad\footnote{Recall that $L^{x}_{\delta}=-\sum\limits_{y\in S}L^x_y$ for $x\in S$.}$$
where $V$ is the potential of the process $X$ \footnote{Wilson's algorithm use the embedded Markov chain of $X$.}.
\end{prop}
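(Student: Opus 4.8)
The plan is to compute the probability that Wilson's algorithm produces a fixed tree $T$ as a product of the conditional laws of the successive loop-erased paths, and then to show that the accumulated factors collapse to the stated expression. The building block is the per-step loop-erased path law, which I will read off from the excursion decomposition already carried out in the proof of Proposition \ref{distribution of the loop-erased random walk}, using throughout the elementary identity $(V^R)^z_z L^z_{z'}=G^R_{z,z}Q^z_{z'}$, where $G^R_{z,z}$ is the mean number of visits to $z$ of the embedded chain killed outside $R$ and the continuous holding-rate factors cancel. This identity turns each continuous factor below into a genuine transition probability, so no separate passage to the discrete convention is needed (the remark following Proposition \ref{distribution of the loop-erased random walk} also legitimizes this).

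First I would fix the order $v_1,\ldots,v_n$ and analyse one step. At step $k+1$ the chain is run from $y_{k+1}$ until it hits the current vertex set $S_k$; this amounts to running the chain restricted to $W_k=S\setminus S_k$ (see Definition \ref{defn:the trace of a Markov process and the restriction of a Markov process}) and recording both its loop-erased trace $z_0=y_{k+1},z_1,\ldots,z_m\in W_k$ and the exit vertex $z_{m+1}\in S_k$. Feeding this killed chain into the excursion argument of Proposition \ref{distribution of the loop-erased random walk} gives
\[
\mathbb{P}[\eta_{k+1}=(z_0,\ldots,z_{m+1})]=\prod_{j=0}^{m}(V^{R_{z_j}})^{z_j}_{z_j}\,L^{z_j}_{z_{j+1}},\qquad R_{z_j}=W_k\setminus\{z_0,\ldots,z_{j-1}\},
\]
where the diagonal factor $(V^{R_{z_j}})^{z_j}_{z_j}$ collects the excursions returning to $z_j$ before the walk moves forward and $L^{z_j}_{z_{j+1}}$ is the forward jump (with $L^{z_m}_{z_{m+1}}$ the jump out of $W_k$ into the existing tree). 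Multiplying over all steps, $\mu_{ST,\Delta}(\mathcal{T}=T)$ becomes a product of jump factors $L^z_{z'}$ and of diagonal factors $(V^{R_z})^z_z$, which I then reorganise separately.

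Every directed edge $(x,y)$ of $T$ arises as exactly one forward jump in exactly one step, so the jump factors assemble into $\prod_{(x,y)}L^x_y$, the product over the edges of $T$ oriented towards $\Delta$, the edges into $\Delta$ contributing the killing rate $L^x_\Delta=-\sum_y L^x_y$. For the diagonal factors, each vertex $x\in S$ is processed exactly once, and at that instant $R_x$ consists precisely of $x$ together with the vertices processed strictly later, since the complement $S_k\cup\{z_0,\ldots,z_{j-1}\}$ is exactly the set of vertices revealed before $x$. Writing $S=\{u_1,\ldots,u_n\}$ in this global processing order, the product of diagonal factors is $\prod_i (V^{\{u_i,\ldots,u_n\}})^{u_i}_{u_i}$, which is exactly the telescoping product of diagonal entries of successive restricted potentials shown, inside the proof of Proposition \ref{distribution of the loop-erased random walk}, to equal $\det(V)$. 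Combining the two computations yields $\mu_{ST,\Delta}(\mathcal{T}=T)=\det(V)\prod_{(x,y)}L^x_y$; since the right-hand side depends only on $T$, this simultaneously establishes the formula and the order-independence of the algorithm, while finiteness of $S$ with transience guarantees each loop-erased walk reaches $S_k\ni\Delta$ in finitely many steps, so the procedure terminates almost surely.

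I expect the main obstacle to be the bookkeeping in the diagonal step: one must verify carefully that, as vertices are revealed across the different Wilson steps, the restricted sets $R_x$ really do form the nested family $\{u_i,\ldots,u_n\}$ needed to trigger the telescoping identity — in particular that the vertices of the partially built tree $S_k$ and the already-traversed vertices of the current path together make up exactly the set of vertices processed before $x$. The remaining points, namely matching each tree edge with a unique forward jump and the harmless interplay between the continuous and embedded-chain quantities via $(V^R)^z_z L^z_{z'}=G^R_{z,z}Q^z_{z'}$, are routine.
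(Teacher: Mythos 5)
Your proof is correct and follows essentially the same route as the paper: condition on the successive branches produced by Wilson's algorithm, apply the loop-erased-walk law of Proposition \ref{distribution of the loop-erased random walk} to each chain killed on the current tree, and telescope. The only difference is bookkeeping: the paper keeps the per-branch determinants $\det\bigl((V^{A_{m-1}^c})_{A_m\setminus A_{m-1}}\bigr)$ and telescopes them across branches via Jacobi's formula, whereas you stay with the diagonal Green entries $(V^{R_x})^x_x$ and telescope them once globally over all of $S$ in processing order (after checking, correctly, that the sets $R_x$ form a single nested chain) — this is the same Jacobi identity, already established inside the proof of Proposition \ref{distribution of the loop-erased random walk}, applied at a finer granularity, and both versions yield $\det(V)$.
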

\begin{proof}
Suppose $|S|=n$. Choose an arbitrary order on $S$: $S$=$\{v_1,\ldots,v_n\}$ and use Wilson's algorithm to construct a random spanning tree $\mathcal{T}$ rooted at $\Delta$. Set $v_0=\Delta$. For a rooted spanning tree $T$, let $A_m(T)$ be the set of vertices in $T_{\{v_0,\ldots,v_m\}}$\footnote{Here, $T_{\{v_0,\ldots,v_m\}}$ is the smallest sub-tree of $T$ containing the same root with the set of vertices $v_0,\ldots,v_m$.} for $m=1,\ldots,n$. Set $B_0(T)=\phi$. For $m=1,\ldots,n$, set $B_m(T)=\phi$ if $v_m$ belongs $A_{m-1}(T)$. Otherwise, let $B_m(T)$ be the unique path joining $v_m$ to $A_{m-1}(T)$ in $T$.
We will calculate the conditional distribution of $B_m(\mathcal{T})$ given $A_{m-1}(\mathcal{T})$ for $m\geq 1$. Suppose that $v_m\notin A_{m-1}$. Let $(Y_t, t\geq 0)$ be the process $(X_t,t\geq 0)$ killed at the first jumping time after the process reaches the $A_{m-1}$. Then, $Y$ is a transient Markov with generator
$$(L_Y)^x_y=\left\{\begin{array}{ll}
L^x_y & \text{ for }x\text{ not contained in }\mathcal{T}_{\{v_0,\ldots,v_{m-1}\}}\\
\delta^x_yL^x_x & \text{ otherwise}
\end{array}\right.
$$
and potential $V_Y$ such that
\begin{itemize}
\item $V_Y|_{A_{m-1}^c\times A_{m-1}^c}=V^{A_{m-1}^c}$;
\item $(V_Y)^x_y=\sum\limits_{z\in A_{m-1}^c}(V^{A_{m-1}^c})^x_zL^z_y$ for $x\in A_{m-1}^c,y\in A_{m-1}$;
\item $V_Y|_{A_{m-1}\times A_{m-1}^c}=0$;
\item $(V_Y)^x_y=\delta^x_y\frac{1}{-L^x_x}$ for $x,y\in A_{m-1}$.
\end{itemize}
 Let $\partial_Y$ stand for the cemetery point of $Y$. Then conditionally on $\mathcal{T}_{v_0,\ldots,v_{m-1}}$, the probability $B_m=((z_0,z_1),(z_1,z_2)\ldots,(z_p,z_{p+1}))$ with $z_0=v_m,z_{p+1}\in A_{m-1}$ and $z_0,\ldots,z_p\in A_{m-1}^c$ equals the probability that the loop-erased path obtained by $Y$ is $(z_0,z_1,\ldots,z_p,z_{p+1},\partial_Y)$. According to Proposition \ref{distribution of the loop-erased random walk}, that conditional probability equals

$$\det((V_Y)_{A_m\setminus A_{m-1}})\prod\limits_{\mathclap{(x,y)\text{ is contained in }B_m}} L^x_y=\det((V^{A_{m-1}^c})_{A_m\setminus A_{m-1}})\prod\limits_{\mathclap{(x,y)\text{ is contained in }B_m}}L^x_y.$$
By Jacobi's formula,
$$\det(-L|_{A_{m-1}^c\times A_{m-1}^c})\det((V^{A_{m-1}^c})_{A_{m}\setminus A_{m-1}})=\det(-L|_{A_m^c\times A_m^c}).$$
Accordingly,
$$\det((V^{A_{m-1}^c})_{A_{m}\setminus A_{m-1}})=\frac{\det(V^{{A_{m-1}^c}})}{\det(V^{A_m^c})}.$$
Therefore, if $v_m\notin A_{m-1}$, i.e. $A_{m-1}\neq A_m$,
$$\mathbb{P}[B_m=((z_0,z_1),\ldots,(z_p,z_{p+1}))|A_{m-1}]=\frac{\det(V^{{A_{m-1}^c}})}{\det(V^{A_m^c})}\prod\limits_{(x,y)\text{ is contained in }{B_m}}L^x_y.$$
Trivially, if $v_m\in A_{m-1}$,
$$\mathbb{P}[B_m=\phi|A_{m-1}]=1=\frac{\det(V^{{A_{m-1}^c}})}{\det(V^{A_m^c})}$$
Finally, by multiplying all the conditional probability above, we find that
$$\mu_{ST,\Delta}(\mathcal{T}=T)=\det(V)1_{\{T\text{ is a spanning tree rooted at }\Delta\}}\prod\limits_{\begin{subarray}{l}
(x,y)\text{ is an edge in }T\\
\text{ directed towards the root }\Delta
\end{subarray}}L^x_y.$$
\end{proof}
\begin{rem}
In Wilson's algorithm, the spanning tree is constructed by progressively adding new branches. For $k\in\mathbb{N}$, conditionally on the tree $T_k$ that has been constructed at step $k$, the law of the new branch $\eta_{k+1}$ to be added is associated with the Markov process $X$ stopped at the next jump after reaching $T_k$. At the same time, we remove $\#T_{k+1}-\#T_{k}$ loops based on each vertex of $\eta_{k+1}$. If we partition these loops according to some independent Poisson-Dirichlet (0,1) distribution as in Proposition \ref{reconstruct the Poisson ensemble of loops} and Remark \ref{rem:from subordinator to poisson-dirichlet} and we get an ensemble of loops $\mathcal{O}_{\eta_{k+1}}$. Conditionally on $\mathcal{T}$, $\mathcal{O}_k$ is equal in law to $\mathcal{L}^{\{T_k\}^c}_{1}\setminus\mathcal{L}^{\{T_{k+1}\}^c}_{1}$. (By $\mathcal{L}^{\{T_k\}^c}_{1}$, we mean those loops in $\mathcal{L}_{1}$ that avoid the vertices of the tree $T_k$.) These $(\mathcal{O}_{\eta_{k}},k\geq 1)$ are independent and it is the same for $(\mathcal{L}^{\{T_k\}^c}_{1}\setminus\mathcal{L}^{\{T_{k+1}\}^c}_{1},k\geq 1)$. It implies that $\bigcup\limits_{k\geq 1}\mathcal{O}_{\eta_{k}}$ has the same law as $\mathcal{L}_{1}=\bigcup\limits_{k\geq 1}\mathcal{L}^{\{T_{k-1}\}^c}_{1}\setminus\mathcal{L}^{\{T_{k}\}^c}_{1}$. To summarize, in the Wilson's algorithm, we have removed $\#S$ loops based at each vertex of $S$ . By partitioning all these loops according to independent Poisson-Dirichlet (0,1) distributions as in Proposition \ref{reconstruct the Poisson ensemble of loops} and Remark \ref{rem:from subordinator to poisson-dirichlet}, we recover the Poisson ensemble of loops $\mathcal{L}_{1}$ .
\end{rem}

\nocite{markovianbridge}
\bibliographystyle{amsalpha}
\bibliography{reference}
\end{document}